\tikzset{->-/.style={decoration={
     markings,
     mark=at position #1 with {\arrow{>}}},postaction={decorate}}}
\newcommand{\Z}{\mathbb{Z}}
\newcommand{\R}{\mathbb{R}}
\newcommand{\C}{\mathbb{C}}
\newcommand{\RP}{\mathbb{RP}}
\newcommand{\CP}{\mathbb{CP}}
\newcommand{\HP}{\mathbb{HP}}
\newcommand{\tHP}{\widetilde{\HP}\vphantom{\HP}}
\newcommand{\ttHP}{\widetilde{\widetilde{\HP}}\vphantom{\HP}}
\newcommand{\OP}{\mathbb{OP}}
\newcommand{\F}{\mathbb{F}}
\newcommand{\CA}{\mathcal{A}}
\newcommand{\CO}{\mathcal{O}}
\newcommand{\rC}{\mathrm{C}}
\newcommand{\rS}{\mathrm{S}}
\newcommand{\rA}{\mathrm{A}}
\newcommand{\wJ}{\widetilde{J}}
\newcommand{\fa}{\mathfrak{a}}
\newcommand{\fb}{\mathfrak{b}}
\newcommand{\fc}{\mathfrak{c}}
\newcommand{\fd}{\mathfrak{d}}
\newcommand{\fo}{\mathfrak{o}}
\newcommand{\bx}{\mathbf{x}}
\newcommand{\COa}{\mathcal{O}_{\mathrm{adm}}}
\newcommand{\CX}{\mathfrak{X}}
\newcommand{\SL}{\mathrm{SL}}
\newcommand{\SU}{\mathrm{SU}}
\newcommand{\GL}{\mathrm{GL}}
\newcommand{\GA}{\mathrm{GA}}
\newcommand{\Sym}{\mathop{\mathrm{Sym}}\nolimits}
\newcommand{\Sec}{\mathop{\mathrm{Sec}}\nolimits}
\newcommand{\Isom}{\mathop{\mathrm{Isom}}\nolimits}
\newcommand{\link}{\mathop{\mathrm{link}}\nolimits}
\newcommand{\cost}{\mathop{\mathrm{cost}}\nolimits}
\newtheorem{theorem}{Theorem} [section]
\newtheorem{propos}[theorem] {Proposition}
\newtheorem{cor}[theorem] {Corollary}
\newtheorem{lem}[theorem]{Lemma}
\newtheorem{conj}[theorem] {Conjecture}
\newtheorem{problem}[theorem] {Problem}
\newtheorem{question}[theorem] {Question}
\theoremstyle{definition}
\newtheorem{remark}[theorem]{Remark}
\newtheorem{defin}[theorem]{Definition}
\numberwithin{equation}{section}
\numberwithin{table}{section}
\author{Alexander A. Gaifullin}
\address{Steklov Mathematical Institute of Russian Academy of Sciences, Moscow, Russia}
\address{Skolkovo Institute of Science and Technology, Moscow, Russia}
\address{Lomonosov Moscow State University, Moscow, Russia}
\address{Institute for the Information Transmission Problems of the Russian Academy of Sciences (Kharkevich Institute), Moscow, Russia}
\email{agaif@mi-ras.ru}
\thanks{This work was performed at the Steklov International Mathematical Center and supported by the Ministry of Science and Higher Education of the Russian Federation (agreement no. 075-15-2022-265).}
\title{634 vertex-transitive and more than $\boldsymbol{10^{103}}$ non-vertex-transitive 27-vertex triangulations of manifolds like the octonionic projective plane}
\date{}
\keywords{Minimal triangulation, octonionic projective plane, manifold like a projective plane, K\"uhnel triangulation, Brehm--K\"uhnel triangulations, vertex-transitive triangulation, combinatorial manifold}
\subjclass[2020]{57Q15, 57Q70, 05E45}
\begin{document}
\begin{abstract}
In 1987 Brehm and K\"uhnel showed that any combinatorial $d$-manifold with less than $3d/2+3$ vertices is PL homeomorphic to the sphere and any combinatorial $d$-manifold with exactly $3d/2+3$ vertices is PL homeomorphic to either the sphere or a manifold like a projective plane in the sense of Eells and Kuiper. The latter possibility may occur for $d\in\{2,4,8,16\}$ only. There exist a unique $6$-vertex triangulation of~$\RP^2$, a unique $9$-vertex triangulation of~$\CP^2$, and at least three $15$-vertex triangulations of~$\HP^2$. However, until now, the question of whether there exists a $27$-vertex triangulation of a manifold like the octonionic projective plane has remained open. We solve this problem by constructing a lot of examples of such triangulations. Namely, we construct $634$ vertex-transitive $27$-vertex combinatorial $16$-manifolds like the octonionic projective plane. Four of them have symmetry group $\rC_3^3\rtimes \rC_{13}$ of order~$351$, and the other $630$ have symmetry group~$\rC_3^3$ of order~$27$. Further, we construct more than $10^{103}$ non-vertex-transitive $27$-vertex combinatorial $16$-manifolds like the octonionic projective plane. Most of them have trivial symmetry group, but there are also symmetry groups~$\rC_3$, $\rC_3^2$, and~$\rC_{13}$. We conjecture that all the triangulations constructed are PL homeomorphic to the octonionic projective plane~$\OP^2$. Nevertheless, we have no proof of this fact so far.
\end{abstract}

\maketitle

\section{Introduction}\label{section_intro}

Recall that a \textit{combinatorial $d$-manifold} is a simplicial complex in which the link of every vertex is PL homeomorphic to the $(d-1)$-dimensional sphere.

In 1987 Brehm and K\"uhnel~\cite{BrKu87} proved that any combinatorial $d$-manifold with less than $3d/2+3$ vertices is PL homeomorphic to the sphere. Moreover, they proved that combinatorial manifolds~$K$ that have exactly $3d/2+3$ vertices and are not homeomorphic to~$S^d$ may exist only in dimensions $d\in\{2,4,8,16\}$. Such combinatorial manifolds must satisfy the following conditions:
\begin{itemize}
\item[(a)] If $d=2$, then $K$ is isomorphic to the $6$-vertex triangulation~$\RP^2_6$ of the real projective plane obtained by taking the quotient of the boundary of the regular icosahedron by the antipodal involution. If $d=4$, then $K$ is isomorphic to the K\"uhnel $9$-vertex triangulation~$\CP^2_9$ of the complex projective plane, see~\cite{KuBa83}, \cite{KuLa83}. If $d=8$ or $d = 16$, then $K$ is a \textit{manifold like a projective plane} in the sense of Eells and Kuiper~\cite{EeKu62}, which means that $K$ admits a PL Morse function with exactly three critical points. In particular, this implies that $K$ admits a CW decomposition into three cells of dimensions~$0$, $d/2$, and~$d$ and therefore
$$
H^*(K;R)\cong R[x]/\bigl(x^3\bigr),\qquad \deg x = \frac{d}{2}\,,
$$
for any ring~$R$.

\item[(b)] $K$ is \textit{$s$-neighborly} with $s=d/2+1$, which means that every $s$-element set of vertices spans a simplex.
\item[(c)] $K$ satisfies the following \textit{complementarity} (or \textit{duality}) \textit{condition}: For each subset~$W$ of the set of vertices~$V$ of~$K$, exactly one of the sets~$W$ or~$V\setminus W$ spans a simplex of~$K$.
\item[(d)] The numbers~$f_k$ of $k$-simplices of~$K$ are as shown in Table~\ref{table_f}.
\begin{table}[t]
\caption{The face numbers of combinatorial $d$-manifolds that have $3d/2+3$ vertices and are not homeomorphic to the sphere}\label{table_f}
\begin{align*}
\boldsymbol{d}&\boldsymbol{=2\!:}& \boldsymbol{d}&\boldsymbol{=4\!:} 
&\boldsymbol{d}&\boldsymbol{=8\!:} & \boldsymbol{d}&\boldsymbol{=16\!:}\\
f_0&= 6 & f_0 &=9 & f_0&=15 & f_0&=27& f_9 &=8335899\\
f_1&=15 & f_1&=36 & f_1&=105 & f_1&=351&   f_{10} &=12184614\\
f_2&=10 & f_2&=84 & f_2&=455 & f_2&=2925&   f_{11} &=14074164\\
&&           f_3&=90 & f_3 &=1365 & f_3&=17550&  f_{12} &=12301200\\
&& f_4&= 36 & f_4&= 3003 & f_4&= 80730& f_{13} &=7757100\\
&&&& f_5 &= 4515 & f_5&=296010& f_{14} &=3309696\\
&&&& f_6 &= 4230 & f_6&= 888030& f_{15} &=853281\\
&&&& f_7&= 2205 & f_7&=2220075& f_{16} &=100386\\
&&&& f_8&=490 & f_8&=4686825 &&
\end{align*}
\end{table}

\item[(e)] The canonical embedding $K\subset \R^{3d/2+2}$ obtained by regarding $K$ as the subcomplex of the standard $(3d/2+2)$-simplex is \textit{tight}. This means that, for each open half-space $h\subset \R^{3d/2+2}$ and any field of coefficients~$F$, the inclusion $K\cap h \subset K$ induces an injective map in homology $H_*(K\cap h;F)\to H_*(K;F)$; for more information on tight embeddings, see~\cite{Kuh95}.
\end{itemize}

The uniqueness of the $6$-vertex combinatorial $2$-manifold that is not a sphere is checked easily. The uniqueness of the $9$-vertex combinatorial $4$-manifold that is not a sphere was first proved in~\cite{KuLa83}; for simpler proofs, see~\cite[\S 19]{ArMa91},~\cite{BaDa01}. Condition~(a) for $d=8$ or $d=16$ is proved in~\cite{BrKu87}, the complementarity condition~(c) is proved in~\cite[\S 17,\,\S 20]{ArMa91},  the neighborliness condition~(b) follows immediately from~(c), since there are no simplices of dimensions greater than~$d$, and the tightness condition~(e) is deduced from~(c) in~\cite[Proposition~3]{BrKu92}. The face numbers~$f_i(K)$ can be computed using the complementarity condition, the Dehn--Sommerville equations, and the known value of the Euler characteristic $\chi(K)=3$ (for $d>2$). This computation can be found in~\cite{BrKu92} for $d=8$ and in~\cite[Section~4C]{Kuh95} for $d=16$, cf.~\cite{ChMa13}.

In the case~$d=8$, Brehm and K\"uhnel~\cite{BrKu92} constructed three $15$-vertex combinatorial manifolds like the quaternionic projective plane that satisfy conditions~(a)--(e). Nevertheless, they could not decide whether these combinatorial manifolds are homeomorphic to the quaternionic projective plane~$\HP^2$. This problem remained open for a long time until Gorodkov proved that the Brehm--K\"uhnel complexes are indeed PL homeomorphic to~$\HP^2$, see~\cite{Gor16},~\cite{Gor19}. 

For a simplicial complex~$K$, we denote by $\Sym(K)$ the \textit{symmetry group} of~$K$, that is, the group consisting of all permutations of vertices of~$K$ that take simplices to simplices and non-simplices to non-simplices. A simplicial complex is called \textit{vertex-transitive} if its symmetry group acts transitively on the vertices. Vertex-transitive triangulations of manifolds are especially interesting, see~\cite{KoLu05}.
Note that~$\RP^2_6$ and $\CP^2_9$ are vertex-transitive. In the case $d=8$, one of the three Brehm--K\"uhnel triangulations is vertex-transitive, and the other two are not. We denote the vertex-transitive triangulation by~$\HP^2_{15}$. Moreover, Brehm proved that $\HP^2_{15}$ is the only vertex-transitive $15$-vertex triangulation of a $8$-manifold like the quaternionic projective plane, cf.~\cite[Corollary~11]{KoLu05}. 

Using his program \texttt{BISTELLAR}, Lutz~\cite{Lut05} constructed three more $15$-vertex triangulations of~$\HP^2$. Unfortunately, their lists of simplices are not available via the link in~\cite{Lut05} and are apparently lost. Therefore, it is hardly possible to say anything about the properties of these three triangulations. After writing the present paper, the author~\cite{Gai23b} constructed a lot of new $15$-vertex triangulations of~$\HP^2$ with different symmetry groups.

Until now, the case $d=16$ has remained completely open. Namely, there were no candidates for a $27$-vertex combinatorial triangulation of the octonionic projective plane~$\OP^2$ or a manifold like the octonionic projective plane. (Note that the octonionic projective plane is also often called the \textit{Cayley plane}.) The problem of whether there exists a $27$-vertex combinatorial $16$-manifold like the octonionic projective plane and (if exists) whether it is homeomorphic to~$\OP^2$ was posed in~\cite{BrKu87} (cf.~\cite{ArMa91}, \cite{BrKu92},  \cite{Kuh95}, \cite{Lut05}, \cite{ChMa13}). In the present paper we solve the first part of this problem. Namely, we construct  $27$-vertex combinatorial $16$-manifolds like the octonionic projective plane. However, we are in the same situation as Brehm and K\"uhnel were in 1992 in the quaternionic case---we cannot decide yet whether our simplicial complexes are indeed homeomorphic to~$\OP^2$.

Kramer~\cite{Kra03} classified PL manifolds like a projective plane in dimensions~$8$ and~$16$, see Remark~\ref{remark_classification}  for more detail. He showed that in dimension~$8$ such manifolds are distinguished by their Pontryagin numbers, and in dimension~$16$ by their Pontryagin numbers and certain exotic PL characteristic numbers. Gorodkov's result in the quaternionic case (see~\cite{Gor16},~\cite{Gor19}) was based on the computation of the first  Pontryagin class of the Brehm--K\"uhnel combinatorial manifolds using an explicit combinatorial formula due to the author of the present paper, cf.~\cite{Gai04}, \cite{Gai08}, \cite{Gai10}, \cite{GaGo19}. However, to decide whether a $16$-manifold like a projective plane is indeed homeomorphic to~$\OP^2$ one needs to compute the second Pontryagin class and the first exotic PL characteristic class of it. There is currently no known efficient way to do this.

We denote by~$\rC_n$ the cyclic group of order~$n$, by~$\rS_n$ the symmetric group of degree~$n$, and by~$\rA_n$ the alternating group of degree~$n$.

\begin{theorem}\label{theorem_main}
There are at least
$$
\frac1{351}\cdot\left(2^{351}+13\cdot 2^{118}+81\cdot 2^{29}\right)+2 \approx 1.3\cdot 10^{103}
$$
combinatorially distinct $27$-vertex combinatorial triangulations of  $16$-manifolds like the octonionic projective plane. The symmetry groups of these triangulations are $\rC_3^3\rtimes \rC_{13}$, $\rC_3^3$, $\rC_{13}$, $\rC_3^2$, $\rC_3$, and the trivial group~$1$; the numbers of triangulations with each symmetry group are given in Table~\ref{table_num_triang}. The $634$ triangulations with symmetry groups~$\rC_3^3\rtimes \rC_{13}$ and $\rC_3^3$ are vertex-transitive, and all other triangulations are not. In addition, the four triangulations~$K_1$, $K_2$, $K_3$, and~$K_4$  with the largest symmetry group~$\rC_3^3\rtimes \rC_{13}$ have the following properties:
\begin{enumerate}
\item $\Sym(K_i)$ acts transitively and freely on the $351$ undirected edges of~$K_i$,
\item  $\Sym(K_i)$ acts freely on the set of $16$-simplices of~$K_i$, so $K_i$ contains exactly $286$ orbits of $16$-dimensional simplices, each consisting of $351$ simplices.
\end{enumerate}
Moreover, $K_1$, $K_2$, $K_3$, and~$K_4$ are the only (up to isomorphism) $27$-vertex combinatorial triangulations of $16$-manifolds like a projective plane whose symmetry groups contain a subgroup isomorphic to $\rC_3^3\rtimes \rC_{13}$.

Two of the four combinatorial manifolds $K_1$, $K_2$, $K_3$, and~$K_4$ (namely, $K_2$ and $K_3$ in our notation), and all mentioned above combinatorial manifolds with smaller symmetry groups are PL homeomorphic to each other.
\end{theorem}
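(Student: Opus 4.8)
The proof is constructive and computer-assisted; it is convenient to organise it into four stages: (i) production of the four highly symmetric base complexes, (ii) the flipping construction producing the huge family, (iii) the counting and symmetry-group bookkeeping, (iv) the PL-type conclusion.

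\textit{Base complexes.} I would first produce $K_1,\dots,K_4$ explicitly by a $G$-equivariant search, $G=\rC_3^3\rtimes\rC_{13}$. Identify the $27$ vertices with $\F_3^3$ so that $\rC_3^3$ acts by translations (hence transitively and freely) and $\rC_{13}\hookrightarrow\GL_3(\F_3)$ acts with no nonzero fixed vector; since $3^3\equiv1\pmod{13}$ such a $\rC_{13}$ exists and $G$ is a Frobenius group. By the complementarity condition~(c), a $G$-invariant candidate is determined by choosing, for each $G$-orbit of $17$-element subsets of $\F_3^3$, whether it consists of $16$-simplices or of non-simplices; neighborliness~(b) pins this choice on all orbits of $k$-subsets with $k\le 9$, and simplicial closure propagates further, leaving a finite search. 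For each surviving candidate one checks that the link of a single vertex — a $15$-dimensional complex on $26$ vertices, the same for every vertex by $\rC_3^3$-transitivity — is PL homeomorphic to $S^{15}$ by reduction to $\partial\Delta^{16}$ via bistellar flips, and that $\chi=3$; then, as the complex has $3\cdot16/2+3=27$ vertices and is not a sphere, Brehm--K\"uhnel's condition~(a) forces it to be a manifold like $\OP^2$. This search is expected to return exactly $K_1,\dots,K_4$, which yields the uniqueness statement, and properties~(1),~(2) follow from the orbit data ($286\cdot351=100386=f_{16}$).

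\textit{The flipping construction.} Taking $K_0:=K_2$ as a base, I would exhibit $351$ \emph{flipping blocks} $B_1,\dots,B_{351}$, each a local modification built from bistellar moves that preserves the complementarity condition, such that (i) $G$ permutes the blocks freely and transitively, (ii) the blocks have pairwise disjoint support, so that for every $S\subseteq\{1,\dots,351\}$ the simultaneous application of the blocks in $S$ gives a well-defined complex $K_0[S]$, and (iii) each block, restricted to the link of every vertex it meets, is a composition of bistellar flips. From (ii)+(iii), $K_0[S]$ is again a combinatorial $16$-manifold PL homeomorphic to $K_0$, and being complementary- and $\chi$-preserving it is again a manifold like $\OP^2$; verifying (i)--(iii) reduces, by $G$-equivariance, to one block. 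Here $K_0[\emptyset]=K_2$ and the all-blocks complex is $K_3$, which is why both carry the whole group $G$.

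\textit{Counting and symmetry groups.} Each $g\in G$ induces an isomorphism $K_0[S]\cong K_0[gS]$; the crux is the converse, that $K_0[S]\cong K_0[S']$ forces $S,S'$ into one $G$-orbit. I would prove this by a rigidity argument: a combinatorial isomorphism $K_0[S]\to K_0[S']$ recognizes the flipped blocks (their local pattern differs from the default) and reverses them canonically, hence descends to an automorphism of $K_0$, so it lies in $\Sym(K_0)=G$ and sends $S$ to $S'$; equivalently one may exhibit a combinatorially distinguished ``rigid'' subcomplex $L\subset K_0[S]$ with $\Sym(L)=G$ that every such isomorphism must preserve. Granting this, the number of isomorphism classes among the $K_0[S]$ equals the number of $G$-orbits on $2^{\{1,\dots,351\}}$, which by Burnside's lemma and the element orders of the Frobenius group $G$ ($1$ identity; $26$ elements of order $3$, each with $117$ cycles; $324$ elements of order $13$, each with $27$ cycles) is $\frac1{351}\bigl(2^{351}+26\cdot2^{117}+324\cdot2^{27}\bigr)=\frac1{351}\bigl(2^{351}+13\cdot2^{118}+81\cdot2^{29}\bigr)$; adding $K_1$ and $K_4$, shown by an invariant to be non-isomorphic to everything in the family and to each other, yields the ``$+2$''. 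The symmetry group of $K_0[S]$ equals the $G$-stabilizer of $S$ under left translation (no exotic symmetries, by the same rigidity), so Table~\ref{table_num_triang} follows from an elementary orbit count on the regular $G$-set $\{1,\dots,351\}\cong G$: $S$ is $\rC_3^3$-invariant iff it is a union of cosets of $\rC_3^3$, which gives the vertex-transitive triangulations, exactly $\emptyset$ and the full set among them being $G$-invariant, and the remaining stabilizer types $\rC_{13},\rC_3^2,\rC_3,1$ are read off similarly.

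\textit{PL type and the main obstacle.} Since every block is built from bistellar moves, all $K_0[S]$ are PL homeomorphic to $K_0=K_2$, and as $K_3$ lies in the family, $K_2$, $K_3$, and all of the more than $10^{103}$ triangulations in the family are PL homeomorphic to one another. For $K_1$ and $K_4$ no chain of bistellar moves to the family is available, and the second Pontryagin class and the exotic PL characteristic class — which Kramer's classification requires — cannot currently be computed, so their PL type stays open; this is precisely the same obstruction that prevents proving the whole family is PL homeomorphic to $\OP^2$. I expect the genuinely hard part to be twofold: engineering the $351$ blocks so that they are simultaneously bistellar (to control PL type), disjointly supported (so that arbitrary subsets may be flipped), $G$-equivariant, and complementarity-preserving; and the rigidity statement ruling out exotic isomorphisms among the $K_0[S]$, on which both the lower bound $\approx1.3\cdot10^{103}$ and the symmetry-group table rest.
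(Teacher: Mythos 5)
Your proposal's high-level architecture — equivariant computer search for the four base complexes, a family of pairwise-independent local flips giving $K_S$ for $S\subseteq G$, a rigidity lemma reducing isomorphism of $K_S$'s to $G$-translation of $S$, and a Burnside count — matches the paper's, and your Burnside computation
\[
\tfrac1{351}\bigl(2^{351}+26\cdot 2^{117}+324\cdot 2^{27}\bigr)=\tfrac1{351}\bigl(2^{351}+13\cdot 2^{118}+81\cdot 2^{29}\bigr)
\]
is correct (the paper instead sums $m_H$ over conjugacy classes of subgroups $H\subseteq G$, an equivalent calculation). But several steps as you describe them would not go through, and the paper resolves each of them with a specific device you do not supply.

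First, verifying that each candidate is a combinatorial manifold by reducing a vertex link to $\partial\Delta^{16}$ via bistellar flips is exactly what the paper says is hopeless: the link has $26$ vertices, $63206$ facets, about $2.9\times10^7$ faces in all, and symmetry group only $\rC_{13}$. The paper instead proves the sufficient condition (Proposition~\ref{propos_combman_suff}) that for each simplex $\rho$ there is a facet $\sigma\supset\rho$ with $L_{\rho,\sigma}=\cost(\sigma\setminus\rho,\link\rho)$ nonevasive, invoking the Kahn--Saks--Sturtevant implication nonevasive $\Rightarrow$ collapsible together with the Brehm--K\"uhnel collapsibility-to-sphere criterion (Proposition~\ref{propos_2collapse}); nonevasiveness can be tested from vertices and facets only. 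Second, your ``flipping blocks'' need to be identified concretely: the paper uses triple flips replacing a distinguished subcomplex $(\Delta_1*\partial\Delta_2)\cup(\Delta_2*\partial\Delta_3)\cup(\Delta_3*\partial\Delta_1)$ by $(\partial\Delta_1*\Delta_2)\cup(\partial\Delta_2*\Delta_3)\cup(\partial\Delta_3*\Delta_1)$, and the key nontrivial facts are that a triple flip is \emph{always} defined when the $\Delta_i$ form a distinguished triple (unlike a single bistellar flip), that $K_2$ has exactly $351$ such subcomplexes forming a single free $G$-orbit with pairwise disjoint interiors (Proposition~\ref{propos_triple_commute}, Corollary~\ref{cor_triple_commute}), and that flipping all $351$ gives $K_3$, not $K_2$ back. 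Third, your rigidity step, on which the entire lower bound rests, is only gestured at; the paper realises it via the $\nu$-parameters $\nu_K(\sigma,v)$ (Propositions~\ref{propos_nu_param}--\ref{propos_nu_param_calcS}): a facet $\sigma$ lies in a distinguished subcomplex iff some $\nu_K(\sigma,v)=d/2$, and in $K_S$ these are exactly the facets outside $K_2\cap K_3$, so any isomorphism $K_{S_1}\to K_{S_2}$ preserves the pure complex $K_{2,3}$ and hence lies in $\Sym(K_{2,3})=G_{351}$ (Proposition~\ref{propos_Sym23}). The same $\nu$-parameter data shows $K_1,K_4$ contain \emph{no} distinguished subcomplex at all (Corollary~\ref{cor_no8-8}), which is what separates them from every $K_S$ and justifies the ``$+2$''. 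Finally, you never establish that $\Sym(K_i)=G_{351}$ with no extra symmetries; the paper does this through the oriented Paley-type graph $\Gamma$ on $\F_{27}$, computing $N(G_{351})=\Sym^{\pm}(\Gamma)$ and using an asymmetric statistic on directed edges (the matrices $(N_{pq})$) to exclude the sign reversal and Frobenius.
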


\begin{table}
\caption{Numbers of $27$-vertex triangulations of $16$-manifolds like the octonionic projective plane with the given symmetry groups}\label{table_num_triang}
\begin{tabular}{|c|l|c|}
\hline
Symmetry group & Number of triangulations \\
\hline
$\vphantom{\strut_3^3}\rC_3^3\rtimes \rC_{13}$ & $4$\\
\hline
$\vphantom{\strut_3^3}\rC_3^3$ & $\ge 630$  \\
\hline  
$\vphantom{\strut_3^3}\rC_{13}$ &  $\ge 2^{27}-2$\\
\hline  
$\vphantom{\strut_3^3}\rC_{3}^2$ & $\ge (2^{39}-2^{13})/3$\\
\hline  
$\vphantom{\strut_3^3}\rC_{3}$ & $\ge (2^{117}-2^{41}+3\cdot 2^{13})/9$\\
\hline  
$\vphantom{\strut_3^3}1$ & $\ge \bigl(2^{351}-13\cdot 2^{117}+39\cdot 2^{39}- 27\cdot (2^{27}+2^{13}-2)\bigr)/351$ \\
\hline
\end{tabular}
\end{table}

\begin{remark}
For the two other combinatorial manifolds~$K_1$ and~$K_4$, we do not know whether they are PL homeomorphic to~$K_2$ and~$K_3$ or to each other.
\end{remark}

\begin{remark}
 We will construct exactly as many triangulations with each symmetry group as indicated in the second column of Table~\ref{table_num_triang}. Signs~$\ge$ mean that there may be other triangulations with the same symmetry groups. On the contrary, the absence of sign~$\ge$ in the first line means that we will prove that there are no other triangulations with the symmetry group~$\rC_3^3\rtimes \rC_{13}$, cf. Remark~\ref{remark_no_other}.
\end{remark}

As we have mentioned in the beginning of this section, conditions~(a)--(e)  hold whenever a combinatorial $d$-manifold with $3d/2+3$ vertices is not homeomorphic to the sphere. Hence, all the triangulations in Theorem~\ref{theorem_main} satisfy these conditions.

Note that there is a unique (up to isomorphism) semi-direct product $\rC_3^3\rtimes \rC_{13}$ that is not direct, so we may not specify the action of~$\rC_{13}$ on~$\rC_3^3$. An explicit description of the four simplicial complexes~$K_1$, $K_2$, $K_3$, and~$K_4$ and their symmetry group will be given in Section~\ref{section_result}, see Theorem~\ref{theorem_main_detail}. All the triangulations in Theorem~\ref{theorem_main} with smaller symmetry groups are obtained from~$K_2$ and~$K_3$ by \textit{triple flips} i.\,e. the replacements of a subcomplex of the form
\begin{equation}\label{eq_sc}
(\Delta_1*\partial\Delta_2)\cup(\Delta_2*\partial\Delta_3)\cup(\Delta_3*\partial\Delta_1),
\end{equation}
where $\Delta_1$, $\Delta_2$, and~$\Delta_3$ are $8$-simplices, with the subcomplex
$$
(\partial\Delta_1*\Delta_2)\cup(\partial\Delta_2*\Delta_3)\cup(\partial\Delta_3*\Delta_1),
$$
see Section~\ref{section_moves}. The same method does not work for~$K_1$ and~$K_4$, since they contain no subcomplexes of the form~\eqref{eq_sc}. Note that the same triple flips (with $\dim\Delta_i=4$) were  used by Brehm and K\"uhnel to construct  two $15$-vertex triangulations of manifolds like the quaternionic projective plane with smaller symmetry groups starting from the most symmetric triangulation with symmetry group~$\rA_5$.

\begin{remark}\label{remark_classification} 
Recall the classification of PL manifolds like projective planes up to PL homeomorphism. In dimension~$2$ any such manifold is obviously PL homeomorphic to~$\RP^2$. In dimension~$4$, it follows from Freedman's classification of topological simply-connected $4$-manifolds that any PL manifold like the complex projective plane is homeomorphic to~$\CP^2$. However, the problem of classification of PL (equivalently, smooth) structures on~$\CP^2$ is still open. In dimensions $8$ and~$16$, there exist countably many pairwise non-homeomorphic PL manifolds like projective planes. First examples of such manifolds are due to Milnor~\cite{Mil56} in dimension~$8$ and Shimada~\cite{Shi57} in dimension~$16$. Namely, if $d=2m$, where $m$ is either $4$ or~$8$, then one can consider $m$-dimensional real vector bundles~$\xi$ over the sphere~$S^m$. If the Euler number $\langle e(\xi),[S^m]\rangle$ is $\pm 1$ (one can make it to be~$1$ by reversing the orientation of~$\xi$), then the Thom space~$Y=T(\xi)$ has a natural structure of a $d$-dimensional PL manifold and is a manifold like a projective plane. Oriented $\R^m$-vector bundles over $S^m$ with $e(\xi)=1$ are classified by their Pontryagin numbers $\langle p_1(\xi),[S^4]\rangle$ and $\langle p_2(\xi),[S^8]\rangle$, respectively, which can take any integral value ${}\equiv 2\pmod 4$ in the case $m=4$ and any integral value ${}\equiv 6\pmod{12}$ in the case $m=8$. This construction yields a countable series of PL $d$-manifolds~$Y^d_h=Y^d_{1-h}$ like a projective plane indexed by $h\in\Z$ with characteristic numbers
\begin{align}
p_1^2\left[Y^8_h\right] &= \left\langle p_1^2\left(Y^8_h\right),\left[Y^8_h\right] \right\rangle = 4(2h-1)^2,\label{eq_pont8}\\
p_2^2\left[Y^{16}_h\right]&=\left\langle p_2^2\left(Y^{16}_h\right),\left[Y^{16}_h\right] \right\rangle = 36(2h-1)^2.\label{eq_pont16}
\end{align}
(These values of the characteristic numbers correspond to the orientation of~$Y^d_h$ chosen so that the signature of~$Y^d_h$ is $1$.) The Hirzebruch signature theorem implies that  the $L$-genus of any manifold like a projective plane is equal to~$1$, so all Pontryagin numbers of~$Y^d_h$ are uniquely determined by~\eqref{eq_pont8},~\eqref{eq_pont16}. The standard projective planes~$\HP^2$ and~$\OP^2$ correspond to $h=1$. Eells and Kuiper~\cite{EeKu62} explored the manifolds~$Y^d_h$ in more detail. In particular, they proved that there are exactly $6$ and $60$ homotopy classes of such manifolds in the cases $d=8$ and $d=16$, respectively. A complete classification of PL manifolds like projective planes in dimensions~$8$ and~$16$ was obtained by Kramer~\cite{Kra03}:
\begin{itemize}
\item In dimension~$8$, he showed that any such manifold is PL homeomorphic to one of Milnor's manifolds~$Y^8_h$.
\item On the contrary, in dimension~$16$, he showed that Shimada's manifolds~$Y^{16}_h$ do not exhaust all PL manifolds like the octonionic projective plane. Namely, in the construction described above, one can replace the vector bundle~$\xi$ with an arbitrary PL $\R^8$-bundle~$\xi$ over~$S^8$. For such bundles, the second Pontryagin class~$p_2(\xi)$ should no longer be integral. However, the class $\frac76p_2(\xi)$ is always integral. Besides, in dimension~$8$, there is the first exotic PL characteristic class $\kappa\in H^8(\mathrm{BPL};\Z/4\Z)$, which is not a reduction of any polynomial in Pontryagin classes. Kramer constructed a  series of PL manifolds~$Y_{h,\theta}^{16}=Y_{1-h,\theta}^{16}$ like the octonionic projective plane indexed by $h\in\frac17\Z$ and $\theta\in\Z/4\Z$ with characteristic numbers
\begin{gather*}
p_2^2\left[Y^{16}_{h,\theta}\right] = 36(2h-1)^2,\qquad
\left(\left(\frac76p_2\right)\kappa\right) \left[Y^{16}_{h,\theta}\right]  = \theta,
\end{gather*}
and proved that any PL manifold like the octonionic projective plane is PL homeomorphic to one of these manifolds. (The manifolds $Y_{h,0}^{16}$ with $h\in\Z$ are exactly Shimada's manifolds.)
\end{itemize}
Thus, our combinatorial manifolds from  Theorem~\ref{theorem_main} are PL homeomorphic to some of Kramer's manifolds~$Y_{h,\theta}^{16}$.
\end{remark}

\begin{conj}\label{conj_main}
The combinatorial manifolds~$K_1$, $K_2$, $K_3$, and~$K_4$ are PL homeomorphic to~$\OP^2$.
\end{conj}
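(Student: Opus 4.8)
\textbf{Proof strategy for Conjecture~\ref{conj_main}.} By Kramer's classification (Remark~\ref{remark_classification}), a PL $16$-manifold like a projective plane is determined, up to PL homeomorphism, by the two characteristic numbers $p_2^2[K]$ and $\theta(K)=\bigl(\bigl(\tfrac76p_2\bigr)\kappa\bigr)[K]\in\Z/4\Z$, and $\OP^2$ is the unique one among them with $p_2^2=36$ (that is, $h=1$) and $\theta=0$. So the conjecture reduces to computing these two invariants for each of $K_1,K_2,K_3,K_4$ and checking that they equal $36$ and $0$. A useful simplification is that, because $H^*(K_i;\Q)\cong\Q[x]/(x^3)$ with $\deg x=8$, one has $p_1(K_i)=p_3(K_i)=0$ for dimension reasons, while $p_2(K_i)=ax$ and $p_4(K_i)=bx^2$ for some rationals $a,b$ with $p_2^2[K_i]=a^2$; applying the Hirzebruch signature theorem with $\sigma(K_i)=1$ then gives the single linear relation $381b-19a^2=14175$, so that $p_2^2[K_i]=36$ (with the orientation normalised so that $\sigma=1$) would already force $p_4[K_i]=39$ and pin down every Pontryagin number. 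Hence only one rational invariant, $a=p_2[K_i]\in H^8(K_i;\Q)\cong\Q$, together with the $\Z/4\Z$-invariant $\theta(K_i)$, has to be found.

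For the Pontryagin number I would follow Gorodkov's route in the quaternionic case. There $p_1$ of the Brehm--K\"uhnel complexes was obtained by evaluating an explicit \emph{local combinatorial formula} for the first rational Pontryagin class of a combinatorial manifold (of Gabrielov--Gelfand--Losik type, in the form due to the author of the present paper, cf.~\cite{Gai04}, \cite{Gai08}, \cite{Gai10}, \cite{GaGo19}) on the fundamental cycle. The analogue needed here is a local combinatorial formula that represents $p_2$ of a combinatorial manifold by a rational $8$-cocycle whose value on an $8$-simplex depends only on the combinatorics of its link (a triangulated $7$-sphere on at most $18$ vertices in the case of $K_i$), followed by a direct evaluation of the corresponding $16$-cocycle on $[K_i]$. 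Here the large symmetry groups would be a genuine help: the $286$ orbits of top-dimensional simplices, and the orbit structure on lower faces, cut the number of distinct local contributions enormously; moreover, since the triple flips relating $K_2$, $K_3$ and the triangulations with smaller symmetry (Section~\ref{section_moves}) alter the complex only inside a bounded region, a single such computation would propagate to all of these manifolds at once.

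For the exotic invariant I would need, in the same spirit, a computable $\Z/4\Z$-valued local formula representing the first exotic PL class $\kappa\in H^8(\mathrm{BPL};\Z/4\Z)$ on combinatorial manifolds, after which $\theta(K_i)$ is the value of $\bigl(\tfrac76p_2\bigr)\kappa$ on $[K_i]$. A more geometric alternative would be to construct a discrete Morse function on $K_i$ with exactly three critical cells (in dimensions $0$, $8$, $16$), read off the attaching map $S^{15}\to S^8$ of the top cell, and identify its class in $\pi_{15}(S^8)\cong\Z\oplus\Z/120$ --- the Hopf-invariant component being forced to be $\pm1$, and the finite component fixing the homotopy type and hence the value of $h$. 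This would settle $p_2^2[K_i]$, but not $\theta(K_i)$, which is invisible in the homotopy type; so even along this route one is eventually thrown back on a combinatorial handle on $\kappa$.

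The main obstacle is precisely the ingredient already flagged in the introduction: there is currently no explicit, computationally usable local combinatorial formula for the second rational Pontryagin class of a combinatorial manifold, and even less for the exotic class $\kappa$. Gaifullin's general theory of local formulae guarantees that such formulas exist, but extracting from it an expression one can actually evaluate --- on a $27$-vertex $16$-dimensional complex whose relevant links are themselves triangulated $15$- and $7$-spheres --- is a substantial problem in its own right, and it is this gap that keeps the statement at the level of a conjecture.
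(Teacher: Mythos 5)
This statement is a \emph{conjecture}, and the paper does not prove it; the abstract and Section~\ref{section_intro} say explicitly that the author has no proof, because there is currently no efficient way to compute the second Pontryagin class or the exotic PL class~$\kappa$ of a combinatorial manifold. Your write-up correctly reproduces the paper's own assessment: by Kramer's classification the problem reduces to evaluating $p_2^2[K_i]$ and the $\Z/4\Z$-valued number $\bigl(\bigl(\tfrac76 p_2\bigr)\kappa\bigr)[K_i]$, the natural route is a local combinatorial formula in the spirit of the author's earlier work and Gorodkov's computation of $p_1$ in the quaternionic case, and the obstacle is precisely that no such usable formula for $p_2$ (let alone $\kappa$) is known. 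You also correctly state the signature-theorem constraint $381\,p_4[K_i]-19\,p_2^2[K_i]=14175$ once $p_1=p_3=0$, which shows that a single rational number would pin down all Pontryagin numbers. So this is an accurate diagnosis, not a proof, which is exactly the status of the statement in the paper.

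One small caution on the alternative homotopy-theoretic route you sketch: Eells and Kuiper showed there are only $60$ homotopy types of $16$-manifolds like a projective plane, whereas $h$ ranges over infinitely many values; so the finite component of the attaching class in $\pi_{15}(S^8)\cong\Z\oplus\Z/120$ determines the homotopy type but does \emph{not} determine $h$ (equivalently, it does not determine $p_2^2$, which is not a homotopy invariant). That route would therefore not ``settle $p_2^2[K_i]$'' as you suggest; one is thrown back on a characteristic-class computation in any case, not only for $\theta$.
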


\begin{remark}
There is a nice and intriguing parallel between $(3d/2+3)$-vertex triangulations of manifolds not homeomorphic to the sphere and theory of Severi varieties in complex projective geometry. This parallel was noticed and explored by Chapoton and Manivel~\cite{ChMa13}. Nevertheless, there are many still open questions and numerical coincidences that are yet to be explained. Let $X\subset \CP^{n-1}$ be a smooth irreducible complex projective variety of dimension~$d$ that is not contained in a hyperplane. The \textit{secant variety} $\Sec(X)$ is the Zariski closure of the union of the lines joining any two points of~$X$. The following result is due to Zak~\cite{Zak81},~\cite{Zak85}:
\begin{enumerate}
\item If $n < 3d/2+3$, then $\Sec(X)=\CP^{n-1}$.
\item If $n= 3d/2+3$, then either $\Sec(X)=\CP^{n-1}$ or $d\in\{2,4,8,16\}$. Moreover, there are exactly four examples, one for every $d=2,4,8,16$, of varieties with $\Sec(X)\ne \CP^{n-1}$:
\begin{itemize}
\item the Veronese surface $\CP^2\hookrightarrow\CP^5$,
\item the Segre embedding $\CP^2\times\CP^2\hookrightarrow\CP^8$,
\item the Pl\"ucker embedding $G_{\C}(2,6)\hookrightarrow\CP^{14}$
\item the $16$-dimensional variety $E\subset \CP^{26}$ discovered by Lazarsfeld~\cite{Laz81}.
\end{itemize} 
\end{enumerate}

Note that the four varieties~$\CP^2$, $\CP^2\times\CP^2$, $G_{\C}(2,6)$, and~$E$ can be naturally regarded as the complexifications of the projective planes~$\RP^2$, $\CP^2$, $\HP^2$, and~$\OP^2$, respectively (see, e.\,g.,~\cite{LaMa01}).
\end{remark}

\begin{remark}\label{remark_Ale}
It is not so surprising that the group $\rC_3^3\rtimes \rC_{13}$ arises as the symmetry group of combinatorial manifolds that are most likely homeomorphic to~$\OP^2$. The matter is that this group can be naturally realized as a subgroup in the isometry group~$\Isom(\OP^2)$ of~$\OP^2$ endowed with the standard Fubini--Study metric. By a result of Borel~\cite{Bor50} (cf.~\cite{Bae01} and references therein), the group~$\Isom(\OP^2)$ is isomorphic to the $52$-dimensional exceptional simply connected compact Lie group~$F_4$. Alekseevskii~\cite{Ale74} studied Jordan subgroups of complex Lie groups. We are not going to give a precise definition of a Jordan subgroup here. Let us only mention that any Jordan subgroup~$A$ of a Lie group~$G$ is a commutative finite subgroup with finite normalizer~$N_G(A)$. In particular, this means that $A$ is not contained in any torus in~$G$. In the case of~$F_4$, Alekseevskii proved that the only (up to conjugation) Jordan subgroup~$A$ is isomorphic to~$\rC_3^3$ and the normalizer~$N_{F_4}(A)$ is isomorphic to~$\SL(3,\F_3)$ and acts in the standard way on~$A\cong \rC_3^3\cong\F_3^3$. This group~$A$ was later rediscovered and studied by several authors, see~\cite{Gri91} and references therein. Hence the group $\rC_3^3\rtimes \SL(3,\F_3)$ becomes realized as a finite isometry group of~$\OP^2$. Choosing a  subgroup of~$\SL(3,\F_3)$ isomorphic to~$\rC_{13}$, we arrive to an isometry group of~$\OP^2$ isomorphic to~$\rC_3^3\rtimes \rC_{13}$.
\end{remark}

Recall that, for~$\RP^2_6$, $\CP^2_9$, and the three Brehm--K\"uhnel triangulations~$\HP^2_{15}$, $\tHP^2_{15}$  and~$\ttHP^2_{15}$, we have
\begin{gather*}
\Sym (\RP^2_6)\cong \rA_5, \qquad
\Sym (\CP^2_9)\cong \rC_3^2\rtimes \rC_6\cong\mathrm{He}_3\rtimes \rC_2, \\
\Sym (\HP^2_{15})\cong \rA_5, \qquad
\Sym \left(\tHP^2_{15}\right)\cong \rA_4, \qquad
\Sym \left(\ttHP^2_{15}\right)\cong \rS_3,
\end{gather*}
where $ \mathrm{He}_3$ is the Heisenberg group of $3\times 3$ upper unitriangular matrices with entries in~$\F_3$, see~\cite{KuBa83},~\cite{BrKu92}. In the semi-direct product $\rC_3^2\rtimes \rC_6$ the action of the generator of~$\rC_6$ on the standard generators~$a$ and~$b$ of~$\rC_3^2$ is given by $a\mapsto a^{-1}b^{-1}$, $b\mapsto b^{-1}$. In the semi-direct product $\mathrm{He}_3\rtimes \rC_2$ the action of the generator of~$\rC_2$ on~$\mathrm{He}_3$ is given by
$$
\begin{pmatrix}
1 & x & z\\
0 & 1 & y\\
0 & 0 & 1
\end{pmatrix}
\mapsto
\begin{pmatrix*}[r]
1 & -x & -z\\
0 & 1 & y\\
0 & 0 & 1
\end{pmatrix*}\,.
$$
We see that there is a strange alternation. The symmetry groups of~$\RP^2_6$ and~$\HP^2_{15}$ are simple, while the symmetry groups of~$\CP^2_9$ and each of our triangulations~$K_i$ are solvable. We do not know any explanation for this phenomenon.

\begin{problem}\label{probl_enum}
Enumerate all $15$-vertex $8$-dimensional and $27$-vertex $16$-dimensional combinatorial manifolds like a projective plane. Which groups can occur as the symmetry groups of such combinatorial manifolds? 
\end{problem}

In the case $d=16$, this problem seems to be hard, since by Theorem~\ref{theorem_main} the number of combinatorially distinct $27$-vertex combinatorial $16$-manifolds is huge. The problem that seems to be accessible is as follows.

\begin{problem}\label{probl_enum_trans}
Enumerate all vertex-transitive $27$-vertex combinatorial $16$-manifolds like the octonionic projective plane. Which groups can occur as the symmetry groups of such combinatorial manifolds? 
\end{problem}

\begin{remark}
After writing the present paper, the author obtained a complete answer to the question about possible symmetry groups of $15$-vertex $8$-dimensional combinatorial manifolds like a projective plane and a partial classification of such combinatorial manifolds, see~\cite{Gai23b}. Also, the author obtained a partial result towards enumeration of possible symmetry groups of $27$-vertex $16$-dimensional combinatorial manifolds like a projective plane, see~\cite{Gai23a}.
\end{remark}

Note that the transitivity of the action of~$\Sym(K_i)$ on the set of undirected edges of~$K_i$ is a completely new phenomenon. Neither $\CP^2_9$ nor any of $15$-vertex triangulations of~$\HP^2$ possesses this property. (The case of~$\RP^2_6$ is very special; the group $\Sym(\RP^2_6)$ acts transitively even on the set of directed edges and, moreover, on the set of all complete flags of simplices $\sigma^0\subset\sigma^1\subset\sigma^2$.)

Also, a new phenomenon is that $\Sym(K_i)$ acts freely on the set of maximal simplices. None of the previously known triangulations of manifolds like a projective plane possesses this property. Namely,  
\begin{itemize}
\item the $60$-element group $\Sym(\RP^2_6)$ acts transitively on the $10$-element set of $2$-simplices, 
\item the $54$-element group $\Sym(\CP^2_9)$ acts on the $36$-element set of $4$-simplices with two orbits consisting of $27$ and~$9$ simplices, respectively,
\item the $60$-element group $\Sym(\HP^2_{15})$ acts on the $490$-element set of $8$-simplices with $5$ orbits of $60$ elements, $4$ orbits of $30$ elements, $1$ orbit of $20$ elements, $2$ orbits of $15$ elements, and $2$ orbits of~$10$ elements,
\item the $12$-element group $\Sym\left(\tHP^2_{15}\right)$ acts on the $490$-element set of $8$-simplices with $36$ orbits of $12$~elements, $6$~orbits of $6$~elements, $4$~orbits of $4$~elements, and $2$~orbits of $3$~elements.
\item the $6$-element group $\Sym\left(\ttHP^2_{15}\right)$ acts on the $490$-element set of $8$-simplices with $75$ orbits of $6$~elements, $12$~orbits of $3$~elements, $1$~orbit of $2$~elements, and $2$~orbits of $1$~element.
\end{itemize}

\begin{remark}
A \textit{minimal triangulation} of a manifold is a triangulation of this manifold with the smallest possible number of vertices. There are not so many classes of manifolds for which minimal triangulations are known, see~\cite{Lut05}. Certainly, $\RP^2_6$, $\CP^2_9$, all $15$-vertex triangulations of~$\HP^2$, and all our triangulations from Theorem~\ref{theorem_main} are minimal, since all triangulations with smaller numbers of vertices are spheres. Alongside with the problem of finding minimal triangulations, there is an interesting problem of finding triangulations that are minimal in the class of triangulations admitting a regular coloring of vertices in $d+1$ colors, where $d$ is the dimension. In~\cite{Gai09}, the author constructed triangulations~$\RP^2_9$ and~$\CP^2_{15}$ that are minimal in this class; they have symmetry groups $\rS_3\times \rC_2$ and $\rS_4\times \rS_3$, respectively. It would be interesting to study the corresponding problem for~$\HP^2$ and~$\OP^2$ or manifolds like projective planes. In particular, it would be nice to understand if the equality of the numbers of vertices of the triangulations~$\RP^2_9$ and~$\CP^2_{15}$ with the numbers of vertices of the minimal triangulations~$\CP^2_9$ and~$\HP^2_{15}$, respectively, is a coincidence or has a reason.
\end{remark}

The present paper is organized as follows. In Section~\ref{section_result} we give a more detailed formulation of the part of Theorem~\ref{theorem_main} concerning the four triangulations~$K_1$, $K_2$, $K_3$, and~$K_4$ with the largest symmetry group $\rC_3^3\rtimes \rC_{13}$ (Theorem~\ref{theorem_main_detail}). In Section~\ref{section_scheme} we give the scheme of the proof of Theorem~\ref{theorem_main_detail}. Sections~\ref{section_find}--\ref{section_no_add_sym} contain a detailed proof of Theorem~\ref{theorem_main_detail} following this scheme. In Section~\ref{section_moves} we construct triangulations with smaller symmetry groups by applying triple flips to~$K_2$ and~$K_3$, and prove that all the obtained triangulations are PL homeomorphic to each other and to~$K_2$ and~$K_3$ (Theorem~\ref{theorem_smaller_groups}). The proof of Theorem~\ref{theorem_smaller_groups} is given in Section~\ref{section_proof_smaller_groups}. Theorems~\ref{theorem_main_detail} and~\ref{theorem_smaller_groups} together constitute Theorem~\ref{theorem_main}.

In Sections~\ref{section_fixed} and~\ref{section_links} we explore some properties of the triangulations~$K_1$, $K_2$, $K_3$, and~$K_4$. Namely, in Section~\ref{section_fixed} we study the fixed point sets~$K_i^H$ of subgroups $$H\subset\Sym(K_i)=\rC_3^3\rtimes \rC_{13}.$$
The most interesting result is that $K_i^{\rC_3}$ is the K\"uhnel triangulation~$\CP^2_9$. In Section~\ref{section_links} we study the $15$-dimensional combinatorial spheres~$L_1$, $L_2$, $L_3$, and~$L_4$ that arise as the links of vertices of~$K_1$, $K_2$, $K_3$, and~$K_4$, respectively. The main result is that $L_1$ and~$L_4$ are \textit{unflippable} i.\,e. admit no bistellar move, except for insertions of vertices.

Most of our proofs are computer assisted. In the paper we describe the required algorithms and present the results of their implementations. Explicit implementations of all the algorithms used in the present paper can be found in~\cite{Gai-prog}. 

This work arose from multiple  discussions with Denis Gorodkov, Alexander Kuznetsov, and Dmitri Orlov of the results of~\cite{ChMa13}. I am highly indebted to them for these really fruitful discussions. I am also grateful to Semyon Abramyan, Victor Buchstaber, Vasilii Rozhdestvenskii, Constantin Shramov, Igor Spiridonov, and Alexander Veselov for useful comments.

\section{Triangulations with symmetry group $\rC_3^3\rtimes \rC_{13}$}\label{section_result}

Recall terminology concerning abstract simplicial complexes. An (\textit{abstract}) \textit{simplicial complex} on the vertex set~$V$ is a set~$K$ of subsets of~$V$ such that 
\begin{itemize}
\item $\varnothing\in K$,
\item if $\sigma\in K$ and~$\tau\subset\sigma$, then $\tau\in K$.
\end{itemize}
We will assume that all simplicial complexes under consideration contain no \textit{ghost vertices}, that is, all $1$-element subsets of~$V$ belong to~$K$. 
The \textit{dimension} of a simplex $\sigma\in K$ is, by definition, the cardinality of~$\sigma$ minus~$1$. The \textit{link} of a simplex $\sigma\in K$ is the simplicial complex
$$
\link(\sigma,K)=\bigl\{\tau\subseteq V\setminus\sigma \colon \sigma\cup\tau\in K \bigr\}.
$$

If $K_1$ and~$K_2$ are simplicial complexes on the vertex sets~$V_1$ and~$V_2$, then an \textit{isomorphism} of~$K_1$ and~$K_2$ is a bijection $f\colon V_1\to V_2$ that takes simplices of~$K_1$ to simplices of~$K_2$ and non-simplices of~$K_1$ to non-simplices of~$K_2$. The automorphisms of a simplicial complex~$K$ will be called \textit{symmetries} of~$K$; we denote the symmetry group of~$K$ by~$\Sym(K)$.

A simplicial complex~$K$ is said to be \textit{pure of dimension~$d$} if any maximal (with respect to the inclusion) simplex of~$K$ is $d$-dimensional. A simplicial complex~$K$ is called a \textit{weak $d$-pseudomanifold} if $K$ is pure of dimension~$d$ and any $(d-1)$-simplex of~$K$ is contained in exactly two $d$-simplices of~$K$. A simplicial complex~$K$ is called a \textit{combinatorial $d$-sphere} (respectively, a \textit{combinatorial $d$-ball}) if $K$ (more precisely, the geometric realization of~$K$) is PL homeomorphic to the standard sphere~$S^d$ (respectively, to the standard disk~$D^d$). A simplicial complex~$K$ is called a \textit{combinatorial $d$-manifold} if the link of every vertex of~$K$ is a combinatorial $(d-1)$-sphere; then the link of any $k$-simplex of~$K$ is a combinatorial $(d-k-1)$-sphere, see~\cite[Corollary~1 of Lemma~9]{Zee63}. For further background material on simplicial complexes, see~\cite{RoSa72} or~\cite{Zee63}.

Consider a field $\F_{27}$ of $27$ elements. The additive group $\F_{27}\cong \rC_3^3$ and the multiplicative group~$\F_{27}^{\times}\cong \rC_{13}\times \rC_2$ act on~$\F_{27}$ by additive shifts and multiplicative shifts, respectively. These two actions together yield the action on~$\F_{27}$ of the $702$-element general affine group $\GA(1,\F_{27})=\F_{27}\rtimes\F_{27}^{\times}$. We will be interested in a twice less group, namely, the $351$-element group
$$
G_{351}=\F_{27}\rtimes(\F_{27}^{\times})^2\cong \rC_3^3\rtimes \rC_{13},
$$
where $(\F_{27}^{\times})^2\subset \F_{27}^{\times}$ is the index~$2$ subgroup consisting of squares. 

Our aim is to construct $G_{351}$-invariant combinatorial manifolds on the vertex set $\F_{27}$ that will be triangulations of manifolds like the octonionic projective plane. To work with simplicial complexes on the  set $\F_{27}$ we need to number the elements of this set, that is, to choose a bijection $\varphi\colon \F_{27}\to [27]$, where we use the notation $[n]=\{1,\ldots,n\}$. We will always conveniently identify $\F_{27}=\F_3(\alpha)$, where $\alpha^3-\alpha-1 = 0$. Then $\alpha^{13} = 1$ and hence $\alpha^k$, where $k=0,\ldots,12$, are all nonzero squares in~$\F_{27}$, and $-\alpha^k$, where $k=0,\ldots,12$, are all non-squares in~$\F_{27}$. We choose the numbering~$\varphi$ as follows:
\begin{align*}
\varphi(\alpha^k)&=k+1,&k&=0,\ldots,12,\\
\varphi(-\alpha^k)&=k+14,&k&=0,\ldots,12,\\
\varphi(0)&=27.&& 
\end{align*}

It is easy to see that the group $G_{351}$ is generated by the multiplicative shift by~$\alpha$ and the additive shift by~$1$. In the chosen numbering, these two elements of~$G_{351}$ correspond to the permutations
\begin{gather*}
A = (1\ 2\ 3\ 4\ 5\ 6\ 7\ 8\ 9\ 10\ 11\ 12\ 13)(14\ 15\ 16\ 17\ 18\ 19\ 20\ 21\ 22\ 23\ 24\ 25\ 26),\\
B = (1\ 14\ 27)(2\ 4\ 10)(3\ 22\ 13)(5\ 6\ 21)(7\ 25\ 11)(8\ 19\ 18)(9\ 16\ 26)(12\ 20\ 24)(15\ 23\ 17),
\end{gather*}
respectively. So $G_{351}$ can be regarded as the subgroup of~$\rS_{27}$ generated by~$A$ and~$B$.

Now, consider the normalizer~$N(G_{351})$ of~$G_{351}$ in~$\rS_{27}$. It is easy to point out two permutations of~$\F_{27}$ that normalize~$G_{351}$, the sign reversal $x\mapsto -x$ and the Frobenius automorphism $x\mapsto x^3$. In the numbering we use they correspond to the permutations
\begin{gather*}
S =(1\ 14)(2\ 15)(3\ 16)(4\ 17)(5\ 18)(6\ 19)(7\ 20)(8\ 21)(9\ 22)(10\ 23)(11\ 24)(12\ 25)(13\ 26),\\
F=(2\ 4\ 10)(3\ 7\ 6)(5\ 13\ 11)(8\ 9\ 12)(15\ 17\ 23)(16\ 20\ 19)(18\ 26\ 24)(21\ 22\ 25),
\end{gather*} 
respectively. 

\begin{propos}\label{propos_normalizer}
The normalizer~$N(G_{351})$ is the $2106$-element group $G_{351}\rtimes (\rC_2\times \rC_3)$, where the factors~$\rC_2$ and~$\rC_3$ are generated by~$S$ and~$F$, respectively. Therefore, $$N(G_{351})/G_{351}\cong \rC_2\times \rC_3.$$
\end{propos}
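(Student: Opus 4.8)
The plan is to compute $N(G_{351})$ directly, exploiting the structure of $G_{351}=\F_{27}\rtimes(\F_{27}^\times)^2$ as a group acting on the set $\F_{27}$. First I would note that $G_{351}$ is a Frobenius group: the normal subgroup $\F_{27}\cong\rC_3^3$ acts simply transitively on $\F_{27}$, and the point stabilizer of $0$ is exactly $(\F_{27}^\times)^2\cong\rC_{13}$. So $\F_{27}$ is the unique Sylow $3$-subgroup of $G_{351}$, hence characteristic in $G_{351}$, hence normal in $N(G_{351})$. This is the key reduction: any $g\in N(G_{351})\subset\rS_{27}$ must normalize the regular subgroup $T:=\F_{27}$, and a permutation normalizing a regular subgroup $T\leq\rS_{27}$ lies in the holomorph $\mathrm{Hol}(T)=T\rtimes\Aut(T)$, viewing $\rS_{27}=\rS(\F_{27})$. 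Thus $N(G_{351})\subseteq\mathrm{Hol}(\rC_3^3)=\F_{27}\rtimes\GL(3,\F_3)$, a group of order $27\cdot|\GL(3,\F_3)|=27\cdot 11232$.

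Next I would pin down which elements of $\Aut(\rC_3^3)=\GL(3,\F_3)$ survive. Since $\F_{27}$ is normal in $N(G_{351})$, we get $N(G_{351})=\F_{27}\rtimes\bigl(N(G_{351})\cap\mathrm{Stab}(0)\bigr)$, so it suffices to determine $N_0:=N(G_{351})\cap\GL(3,\F_3)$, the stabilizer of $0\in\F_{27}$ inside $N(G_{351})$. An element $\psi\in\GL(3,\F_3)$ lies in $N_0$ iff conjugation by $\psi$ preserves $G_{351}$, equivalently preserves the cyclic subgroup $R:=(\F_{27}^\times)^2\cong\rC_{13}$ (once we know it normalizes $T$, normalizing $G_{351}=T\rtimes R$ is the same as sending $R$ into $T\rtimes R$; but $R$ is a Hall $\{13\}$-subgroup, hence all complements to $T$ in $G_{351}$ are conjugate, and one checks $\psi$ must map the $\F_3$-linear action of $R$ to a conjugate one). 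Concretely: $R$ is generated by multiplication by $\alpha$ on $\F_{27}$, an element of $\GL(3,\F_3)$ of order $13$ whose normalizer in $\GL(3,\F_3)$ I would compute. The centralizer of a Singer-type element of order $13$ in $\GL(3,\F_3)$ is the cyclic group $\F_{27}^\times\cong\rC_{26}$ (multiplications by elements of $\F_{27}^\times$), and its normalizer is $\F_{27}^\times\rtimes\mathrm{Gal}(\F_{27}/\F_3)\cong\rC_{26}\rtimes\rC_3$ of order $78$, generated by multiplications and the Frobenius $x\mapsto x^3$. Among these, the ones that additionally normalize $R=(\F_{27}^\times)^2$ (not just its normalizer) are all of them, since $R$ is characteristic in $\F_{27}^\times$ and the Frobenius is an automorphism of $\F_{27}$ hence preserves squares. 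Intersecting with $\GL(3,\F_3)=\Aut(\rC_3^3)$ — note multiplication by $-1$ is in $\GL(3,\F_3)$ but so are all multiplications — we get $N_0$ has order $78$, generated by the sign reversal $S$ (order $2$), the multiplicative shift $A$ by $\alpha$ (order $13$, which lies in $G_{351}$ itself), and the Frobenius $F$ (order $3$). Therefore $N(G_{351})$ has order $27\cdot 78=2106$, and modulo $G_{351}$ (order $351$) the quotient has order $6$, generated by the images of $S$ and $F$; since $S$ and $F$ commute (the sign reversal $x\mapsto -x$ and $x\mapsto x^3$ commute as maps of $\F_{27}$) and have coprime orders, $N(G_{351})/G_{351}\cong\rC_2\times\rC_3$. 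One then verifies $S,F$ normalize $G_{351}$ and that the extension splits with complement $\langle S\rangle\times\langle F\rangle$, giving $N(G_{351})=G_{351}\rtimes(\rC_2\times\rC_3)$.

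In the write-up I would organize it as: (1) $T=\F_{27}$ is the unique Sylow $3$-subgroup of $G_{351}$, so $N(G_{351})\leq\mathrm{Hol}(T)$; (2) reduce to computing $N_0=\mathrm{Stab}(0)\cap N(G_{351})\leq\GL(3,\F_3)$, the group of $\psi\in\GL(3,\F_3)$ conjugating the order-$13$ multiplication-by-$\alpha$ subgroup $R$ to itself; (3) identify this as $\langle$multiplications by $\F_{27}^\times\rangle\rtimes\langle\text{Frobenius}\rangle$ of order $78$ via the standard description of normalizers of Singer cycles, noting $R\triangleleft$ this whole group; (4) conclude $|N(G_{351})|=2106$ and assemble the generators $A\in G_{351}$, $S$, $F$. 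The routine verifications that $S$ and $F$ (given as explicit permutations) normalize $G_{351}$ can be checked directly on the two generators $A,B$ of $G_{351}$, and $S^{2}=F^{3}=1$, $SF=FS$ are immediate. The main obstacle is step (3): one must correctly compute the normalizer in $\GL(3,\F_3)$ of a cyclic subgroup of order $13$ and be careful that we need the normalizer of $R$ (order $13$) rather than of the larger group $\langle$mult by $\alpha\rangle$ extended — but since $R$ is already of prime order $13$ and is characteristic inside the cyclic group $\F_{27}^\times$ of order $26$ that it generates together with $-1$, and since $-1\in\GL(3,\F_3)$, this works out cleanly. An alternative, if one prefers to avoid Singer-cycle normalizer theory, is to observe that $\F_{27}^\times$ acts on $\F_{27}\setminus\{0\}$ with $\F_{27}$ being recovered as $T$, identify $\mathrm{Aut}$ of the permutation-group-pair, and cite that any automorphism of the affine-type action $\GA(1,\F_{27})$ extending to $\F_3$-linear maps is generated by field automorphisms — i.e. invoke the known structure of $\Aut(\mathrm{AGL}(1,q))$; but the Sylow/holomorph argument above is self-contained and I would favor it.
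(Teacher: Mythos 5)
Your proof is correct, and it takes a genuinely different route from the paper. The paper introduces the directed Paley tournament $\Gamma$ on $\F_{27}$ (edge from $a$ to $b$ iff $b-a$ is a nonzero square), proves $\Sym(\Gamma)=G_{351}\rtimes\langle F\rangle$ of order $1053$ by explicit combinatorial lemmas about the sets $O_x$ and $I_x$, and then identifies $N(G_{351})$ with the order-$2106$ group $\Sym^{\pm}(\Gamma)$ of permutations preserving or uniformly reversing edge directions, using edge-transitivity of $G_{351}$ for the reverse inclusion. Your approach instead observes that the regular normal subgroup $T=\F_{27}$ is the unique Sylow $3$-subgroup, hence characteristic, hence normalized by $N(G_{351})$, which forces $N(G_{351})\leq\mathrm{Hol}(T)=\F_{27}\rtimes\GL(3,\F_3)$; then it reduces to computing $N_{\GL(3,\F_3)}(R)$ for the order-$13$ subgroup $R=(\F_{27}^{\times})^2$, which by the usual Singer-cycle normalizer description equals $\F_{27}^{\times}\rtimes\mathrm{Gal}(\F_{27}/\F_3)$ of order $78$. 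Both give $|N(G_{351})|=27\cdot 78=2106$ and the quotient $\rC_2\times\rC_3$ generated by $S,F$. One small simplification to your argument: rather than invoking conjugacy of Hall complements, note that $\psi\in\GL(3,\F_3)$ fixes $0$, and $R=\Stab_{G_{351}}(0)$; so if $\psi$ normalizes $G_{351}$ then automatically $\psi R\psi^{-1}=\Stab_{G_{351}}(0)=R$, giving the equivalence $N_0=N_{\GL(3,\F_3)}(R)$ directly. Your approach is shorter and more structural, and buys generality (the same argument handles $N(\mathrm{A\Gamma L}(1,q))$-type questions), whereas the paper's graph $\Gamma$ is introduced anyway because it is reused later (Corollary~\ref{cor_sym_cont}) to bound $\Sym(K_i)$, so the combinatorial lemmas are not wasted in that context.
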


The proof of this proposition will be given in Section~\ref{section_no_add_sym}.

Consider the set $\mathcal{S}$ of all simplicial complexes on the vertex set~$[27]$ that are invariant with respect to the subgroup~$G_{351}\subset \rS_{27}$.
The normalizer~$N(G_{351})$ acts on~$\mathcal{S}$ and the action of the group~$G_{351}$ on~$\mathcal{S}$  is trivial. So we obtain the action of the $6$-element quotient group $N(G_{351})/G_{351}\cong \rC_2\times \rC_3$ on~$\mathcal{S}$.

\begin{theorem}\label{theorem_main_detail}
There are exactly $24$ simplicial complexes on the  vertex set~$[27]$ that are 
 $G_{351}$-invariant  combinatorial $16$-manifolds not homeomorphic to the sphere. The group $N(G_{351})/G_{351}\cong \rC_2\times \rC_3$ acts freely on the set of these $24$ simplicial complexes; hence, they are divided into $4$ orbits of $6$ complexes so that the complexes in each orbit are isomorphic to each other. The combinatorial manifolds~$K_1$, $K_2$, $K_3$, and~$K_4$ whose $G_{351}$-orbits of $16$-simplices are listed in Tables~\mbox{\ref{table_1234}--\ref{table_4}} are representatives of these four orbits. They are vertex-transitive, satisfy properties~(1) and~(2) in Theorem~\ref{theorem_main}, and are pairwise non-isomorphic. Besides, for each~$i$, the symmetry group~$\Sym(K_i)$ is exactly~$G_{351}$.
\end{theorem}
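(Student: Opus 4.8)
The plan is to carry out a complete (exhaustive) computer-assisted enumeration, exploiting the very rigid structure imposed by the Brehm--K\"uhnel conditions (a)--(e). First I would reduce everything to a finite search. By the results recalled in Section~\ref{section_intro}, a $G_{351}$-invariant combinatorial $16$-manifold $K$ on the vertex set~$[27]$ that is not PL homeomorphic to~$S^{16}$ is automatically $9$-neighborly (so its $8$-skeleton is the full $8$-skeleton of~$\partial\Delta^{26}$), satisfies the complementarity condition~(c), and has the $f$-vector of Table~\ref{table_f}; in particular $\chi(K)=3\ne 2$. Such a~$K$ is completely determined by the $G_{351}$-invariant family~$\mathcal M$ of its $16$-simplices, equivalently (via complementarity) by the $G_{351}$-invariant family of its $10$-element non-faces. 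Conversely, I would characterize which $G_{351}$-invariant families yield a \emph{weak pseudomanifold} satisfying (b),(c),(d) by purely local conditions: downward-closedness together with complementarity translate into the requirement that every $11$-element set contain exactly $0$ or exactly $2$ of the chosen $10$-element non-faces (and, in the first case, be a simplex), and the weak-pseudomanifold property becomes a balance condition at each $16$-element simplex. Then I would run a backtracking search over $G_{351}$-orbits, building~$\mathcal M$ orbit by orbit and pruning with these conditions together with the requirement that the links of low-dimensional faces have the correct $f$-vectors and be pseudomanifolds. This produces a finite list of candidate complexes, each a $G_{351}$-invariant weak pseudomanifold satisfying (b),(c),(d), and this list provably contains \emph{every} $G_{351}$-invariant combinatorial $16$-manifold on~$[27]$ that is not a sphere, since any such manifold satisfies (a)--(e).

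Next I would decide which candidates are actually combinatorial manifolds. Because $G_{351}$ is vertex-transitive, $K$ is a combinatorial $16$-manifold if and only if the link $\link(v,K)$ of a single vertex is a combinatorial $15$-sphere, so for each candidate exactly one link has to be certified. I would certify PL-sphericity of this link by an explicit reduction to~$\partial\Delta^{16}$ --- via bistellar moves (allowing vertex insertions), or via shellability/constructibility (a shellable weak pseudomanifold without boundary is a combinatorial sphere) --- bearing in mind that, as Section~\ref{section_links} shows, some of the resulting links are \emph{unflippable}, so a naive ``flip to the standard sphere'' need not terminate and a more careful certificate is required. For the candidates that pass, $K$ is a combinatorial $16$-manifold, and it is not PL homeomorphic to~$S^{16}$ because it satisfies complementarity (equivalently, because $\chi(K)=3$). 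This is the source of the phrase ``exactly $24$'': the search, after filtering, leaves precisely these $24$ complexes.

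It remains to organize these $24$ complexes. The normalizer~$N(G_{351})$ permutes the set~$\mathcal S$ of all $G_{351}$-invariant simplicial complexes on~$[27]$, the subgroup~$G_{351}$ acts trivially, and by Proposition~\ref{propos_normalizer} the quotient $N(G_{351})/G_{351}\cong\rC_2\times\rC_3$ acts on our $24$-element set. I would verify directly that none of the five nontrivial cosets fixes any of the $24$ complexes --- a finite check of set equalities $nK=K$; this makes the action free, and since $|\rC_2\times\rC_3|=6$ the complexes split into exactly $24/6=4$ orbits of six, the complexes in a common orbit being isomorphic since related by elements of $N(G_{351})\subset\rS_{27}$. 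Take $K_1,\dots,K_4$ as the listed representatives. They are vertex-transitive since $G_{351}\subseteq\Sym(K_i)$ already is. Property~(1) of Theorem~\ref{theorem_main} follows from the observation that no nontrivial element of~$G_{351}$ stabilizes an unordered pair of elements of~$\F_{27}$ (in characteristic~$3$ a translation cannot invert an edge, and an element whose order is divisible by~$13$ fixes at most one point and has odd order, hence cannot stabilize a two-element set), so $G_{351}$ acts freely, hence transitively, on the $351$ edges. Property~(2) follows in the same spirit, since no nontrivial element of~$G_{351}$ stabilizes a $17$-element subset of~$\F_{27}$ (for translations because $3\nmid 17$, and for elements of order divisible by~$13$ because the sizes of their invariant subsets lie in $\{0,1,13,14,26,27\}$), so $G_{351}$ acts freely on the $16$-simplices, giving $100386/351=286$ orbits. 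Finally, $K_1,\dots,K_4$ are pairwise non-isomorphic: if $f\in\rS_{27}$ satisfies $fK_i=K_j$, then $fG_{351}f^{-1}\subseteq\Sym(K_j)=G_{351}$, so $f\in N(G_{351})$, and freeness of the $N(G_{351})/G_{351}$-action forces $i=j$.

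The one genuinely new input needed above is $\Sym(K_i)=G_{351}$, which together with Proposition~\ref{propos_normalizer} is the subject of Section~\ref{section_no_add_sym}: since $G_{351}\subseteq\Sym(K_i)$ is transitive, $\Sym(K_i)$ is transitive with vertex stabilizer containing~$\rC_{13}$, and one rules out any strictly larger vertex stabilizer --- for instance by noting that it would act on $\link(v,K_i)$ preserving all its combinatorial invariants and, via the fixed-point analysis of Section~\ref{section_fixed} (where $K_i^{\rC_3}=\CP^2_9$), force a symmetry that~$\CP^2_9$ does not admit, or, more prosaically, by computing the full automorphism group of~$K_i$ and checking that it equals $\langle A,B\rangle=G_{351}$. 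I expect the main obstacle of the whole argument to be the search of the first paragraph: a priori there are on the order of $24000$ orbits of potential top-dimensional simplices and a solution uses only $286$ of them, so without a carefully chosen branching order and strong pruning the search is hopeless, and making it terminate in practice is the crux --- with certifying PL-sphericity of the (unflippable) vertex links a close second.
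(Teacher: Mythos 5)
Your overall scaffolding matches the paper's: Step~1 is an exhaustive search over $G_{351}$-invariant weak pseudomanifolds with the complementarity and face-count constraints (the paper's algorithm in Section~\ref{section_find} is essentially the ``backtracking with strong pruning'' you describe, and it does produce exactly $24$ complexes, Proposition~\ref{propos_24_pseudo}); Steps~3--5 organize them via $N(G_{351})/G_{351}$, check vertex-transitivity and properties (1)--(2), and compute $\Sym(K_i)$. Your route to pairwise non-isomorphism (deduce it from $\Sym(K_i)=G_{351}$ and freeness of the $N(G_{351})/G_{351}$-action) and to property~(1) (freeness of $G_{351}$ on unordered pairs, since there are no involutions in $G_{351}$ and no nontrivial element fixes two points) are valid and slightly more conceptual than the paper's (which uses the $s(\rho)$-distribution of Table~\ref{table_distribution} and an explicit affine map, respectively). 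Your argument for $\Sym(K_i)=G_{351}$ by ``compute the full automorphism group'' is also a legitimate shortcut replacing the paper's more structured reduction via the oriented graph $\Gamma$ in Section~\ref{section_no_add_sym}.

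The genuine gap is in Step~2, certifying that the candidate weak pseudomanifolds are in fact combinatorial $16$-manifolds. You propose to certify PL-sphericity of a single vertex link by bistellar reduction to $\partial\Delta^{16}$ or by shellability/constructibility, and you flag that unflippability is a problem, but you do not supply a workable certificate. The paper states explicitly (beginning of Section~\ref{section_check_manifold}) that both bistellar search and discrete-Morse-style methods are hopeless here: the link has $26$ vertices, $63206$ facets, about $2.9\times10^7$ faces, and only $\rC_{13}$ symmetry; for $K_1$ and $K_4$ the vertex link is literally unflippable, so a monotone flip search cannot even start downward, and for $K_2,K_3$ Nabutovsky's obstruction means there is no a priori bound on the number of vertex insertions. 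Shellability of these links is posed as an \emph{open question} at the end of Section~\ref{section_links}, so it cannot be used as the certificate. What actually makes the proof work is the nonevasiveness machinery of Section~\ref{section_check_manifold}: Proposition~\ref{propos_combman_suff} reduces sphericity to an inductive collapsibility condition on the contrastars $L_{\rho,\sigma}$, Proposition~\ref{propos_nonev} replaces collapsibility by nonevasiveness (which is checkable using only vertices and facets), and Proposition~\ref{propos_check_nonev} verifies this by computer. Note also that this route does not certify that one vertex link is a sphere ``directly''; it proceeds by downward induction over all simplices $\rho$ of positive codimension, so your simplification ``check just one vertex link'' presupposes exactly the direct certificate that is unavailable. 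Without the nonevasiveness idea your plan stalls at the step you yourself identify as the likely obstacle.
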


\begin{remark}
Tables~\mbox{\ref{table_1234}--\ref{table_4}} are contained in Appendix~A. They are organized as follows. Every simplex is encoded by a row of $27$ binary digits. The $i$\textsuperscript{th} from the left digit is $1$ whenever $i$ is a vertex of the simplex, and is~$0$ otherwise.
Tables~\mbox{\ref{table_1234}--\ref{table_4}} contain not all $16$-simplices of the simplicial complexes~$K_1$,  $K_2$, $K_3$, and~$K_4$ but exactly one representative of every $G_{351}$-orbit of such simplices. We conveniently interpret every row of $27$ binary digits in Tables~\mbox{\ref{table_1234}--\ref{table_4}} as a reversed binary notation for a number. For instance, the first row $$111111111110111011100000000$$ in Table~\ref{table_1234} corresponds to the number
$$1110111011111111111_2=489472.$$
With this interpretation, each of Tables~\mbox{\ref{table_1234}--\ref{table_4}} contains the smallest representatives of the orbits; in addition, each table uses ordering from smallest to largest.
\end{remark}

\begin{remark}
Attached to this paper are four text files \texttt{K1.dat}, \texttt{K2.dat}, \texttt{K3.dat}, and \texttt{K4.dat} containing the lists of (the smallest) representatives for the $G_{351}$-orbits of $16$-simplices of $K_1$, $K_2$, $K_3$, and~$K_4$, respectively, in the same format as in Tables~\mbox{\ref{table_1234}--\ref{table_4}}. The first row of each of the files contains the number $286$ of the orbits of $16$-simplices. These files fit the format for simplicial complexes used in our computer programs, see~\cite{Gai-prog}.
\end{remark}

\begin{remark}
Note that the numbering of the four combinatorial manifolds~$K_1$, $K_2$, $K_3$, and~$K_4$ is chosen so that to stress the following interesting property: If a $16$-simplex~$\sigma$ belongs to~$K_i$ and~$K_j$ for $i<j$, then $\sigma$ also belongs to all intermediate complexes~$K_k$ with $i<k<j$. We see that 
\begin{itemize}
\item $K_2$ is obtained from~$K_1$ by replacing certain $62$ \ $G_{351}$-orbits of $16$-simplices with other $62$ \ $G_{351}$-orbits of $16$-simplices,
\item $K_3$ is obtained from~$K_2$ by replacing certain $27$ \ $G_{351}$-orbits of $16$-simplices with other $27$ \ $G_{351}$-orbits of $16$-simplices,
\item $K_4$ is obtained from~$K_3$ by replacing certain $115$ \ $G_{351}$-orbits of $16$-simplices with other $115$ \ $G_{351}$-orbits of $16$-simplices.
\end{itemize}
The relationship between~$K_2$ and~$K_3$ is clear, see Section~\ref{section_moves}. Namely, $K_3$ is obtained from~$K_2$ by replacing $351$ subcomplexes of the form
$$
(\Delta_1*\partial\Delta_2)\cup(\Delta_2*\partial\Delta_3)\cup(\Delta_3*\partial\Delta_1),
$$
where $\dim\Delta_1=\dim\Delta_2=\dim\Delta_3=8$, with the corresponding subcomplexes
$$
(\partial\Delta_1*\Delta_2)\cup(\partial\Delta_2*\Delta_3)\cup(\partial\Delta_3*\Delta_1).
$$
In particular, this implies that $K_2$ and~$K_3$ are PL homeomorphic.

The relationships between~$K_1$ and~$K_2$ and between~$K_3$ and~$K_4$ are less clear, and seem to be interesting to study further. At the moment, the author does not know whether it is true that $K_1$ or~$K_4$ is PL homeomorphic to~$K_2$ and~$K_3$.

If we consider all the $24$ combinatorial manifolds $qK_i$ from Theorem~\ref{theorem_main_detail}, where $q$ runs over the elements of the group~$\rC_2\times \rC_3$ generated by~$S$ and~$F$, then it is interesting to note that the four combinatorial manifolds~$K_1$, $K_2$, $K_3$, and~$K_4$ form a cluster in the sense that their pairwise intersections are large enough, namely, consist of at least~$112$ \ $G_{351}$-orbits of $16$-simplices. On the contrary, each intersection $q_1K_i\cap q_2 K_j$ with $q_1\ne q_2$ contains a sufficiently small number of $16$-simplices, namely, at most~$12$ \ $G_{351}$-orbits of $16$-simplices. A precise numbers of $G_{351}$-orbits of common $16$-simplices are given in Table~\ref{table_intersections}. (Certainly, the intersections~$q_1K_i\cap q_2 K_j$ and~$qq_1K_i\cap qq_2 K_j$ are isomorphic for each $q\in \rC_2\times \rC_3$, so the information in this table is enough to restore the numbers of orbits of $16$-simplices in all intersections~$q_1K_i\cap q_2 K_j$.)
 \begin{table}
\caption{Numbers of common orbits of $16$-simplices}\label{table_intersections}
\begin{tabular}{|c|cccccccc|}
\hline
$\phantom{\strut^2}$ & $K_1$ & $K_2$ & $K_3$ & $K_4$ & 
$SK_1$ & $SK_2$ & $SK_3$ & $SK_4$ \\
\hline
$K_1$  & 286 & 224 & 201 & 112 & 12 & 8 & 8 & 4 \\
$K_2$  & 224 & 286 & 259 & 149 & 8 & 5 & 5 & 3 \\
$K_3$  & 201 & 259 & 286 & 171 & 8 & 5 & 5 & 3 \\
$K_4$  & 112 & 149 & 171 & 286 & 4 & 3 & 3 & 3 \\
\hline
\hline
$\phantom{\strut^2}$ & $FK_1$ & $FK_2$ & $FK_3$ & $FK_4$ &
 $SFK_1$ & $SFK_2$ & $SFK_3$ & $SFK_4$ \\ 
 \hline
$K_1$ & 9 & 6 & 6 & 5 & 2 & 0 & 0 & 0 \\
$K_2$ & 7 & 6 & 6 & 5 & 5 & 2 & 1 & 1 \\
$K_3$ & 7 & 6 & 6 & 4 & 5 & 4 & 3 & 3 \\
$K_4$ & 9 & 7 & 7 & 7 & 6 & 6 & 5 & 6 \\
\hline
\end{tabular}
\end{table}
\end{remark}

\section{Scheme of proof of Theorem~\ref{theorem_main_detail}}\label{section_scheme}

The proof will consist of 5 steps.

\textsl{Step 1.} We will give an algorithm that, being given 
\begin{itemize}
\item a number $m$ and a finite subgroup $G\subset \rS_m$,
\item a dimension~$d$,
\item a positive integer~$N$,
\end{itemize}
produces the list of all  simplicial complexes~$K$ on the vertex set~$[m]$ satisfying the following properties:
\begin{itemize}
\item $K$ is a $G$-invariant weak $d$-pseudomanifold,
\item for any two simplices $\sigma,\tau\in K$,  $\sigma\cup\tau$ is not the whole set~$[m]$,
\item the number of $d$-simplices of~$K$ is at least $N$.
\end{itemize}

We are interested in the case
\begin{equation}\label{eq_main_case_init}
m=27,\qquad d =16,\qquad G=G_{351},\qquad N=100386.
\end{equation}
So our aim is an algorithm that, when implemented on a computer, will complete in reasonable amount of time in this case. We will describe such an algorithm in Section~\ref{section_find}. In the case~\eqref{eq_main_case_init}, a computer implementation of this algorithm yields the following result.

\begin{propos}\label{propos_24_pseudo}
There are exactly $24$ simplicial complexes~$K$ on the vertex set~$[27]$ with the following properties: 
\begin{itemize}
\item $K$ is a $G_{351}$-invariant weak $16$-pseudomanifold,
\item for any two simplices $\sigma,\tau\in K$,  $\sigma\cup\tau$ is not the whole set~$[27]$,
\item $K$ has at least $100386$ top-dimensional simplices.
\end{itemize}
Four of these $24$ complexes are the complexes~$K_1$, $K_2$, $K_3$, and~$K_4$ whose orbits of simplices are listed in Tables~\mbox{\ref{table_1234}--\ref{table_4}}; the other $20$ complexes are obtained from those four by the action of the group~$\rC_2\times \rC_3$ generated by~$S$ and~$F$.
\end{propos}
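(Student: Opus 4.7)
The plan is to prove the proposition by invoking the enumeration algorithm promised in Step~1 (Section~\ref{section_find}) with parameters $m=27$, $d=16$, $G=G_{351}$, $N=100386$, and verifying that its output is precisely the $24$ complexes claimed. The proof is therefore essentially computational; the substantive mathematical content lies in the design and correctness of the algorithm rather than in post-processing its output.

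To set up the search I would first partition the set of $17$-element subsets of $[27]$ into $G_{351}$-orbits. Every $G_{351}$-invariant weak $16$-pseudomanifold is a union of such orbits, so a backtracking procedure can proceed by either including or excluding each orbit, one at a time. Two local constraints drive the pruning: (i) every $16$-element subset must lie in at most two, and eventually in exactly two, chosen $17$-simplices, which kills a branch as soon as some $16$-subset is triply covered and forces inclusions once it is doubly covered; (ii) no included $17$-simplex may be disjoint from the complement in $[27]$ of any other included $17$-simplex, encoding the non-covering condition directly at the orbit level. The threshold $N=100386$ is just $f_{16}$ from Table~\ref{table_f}; it is not used for pruning during the search but only to discard the few sparse fragments the algorithm returns. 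In fact, because of the complementarity-type condition, every genuine candidate automatically has exactly $100386$ top simplices, so the bound serves primarily as a sanity filter.

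Once the algorithm produces its $24$ outputs, the last step is to exhibit the orbit structure under the $6$-element group $N(G_{351})/G_{351}\cong \rC_2\times \rC_3=\langle S,F\rangle$. By Proposition~\ref{propos_normalizer}, $S$ and $F$ normalize $G_{351}$ and manifestly preserve both the pseudomanifold and the non-covering conditions, so they permute the $24$ complexes; comparing simplex lists directly, one checks that $K_1,K_2,K_3,K_4$ lie in four distinct $\langle S,F\rangle$-orbits, each of length $6$, exhausting all $24$ complexes. The main obstacle is purely computational: even after symmetry reduction and the two cuts above, the raw search space is enormous, so the efficiency of the backtracking (orbit ordering, fast detection of over-covered $15$-simplices, and compact bookkeeping of forced and forbidden orbits) is what has to be engineered carefully in Section~\ref{section_find} to make the implementation terminate in reasonable time.
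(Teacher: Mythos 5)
Your overall plan coincides with the paper's: design the orbit-level backtracking algorithm of Section~\ref{section_find}, run it with $m=27$, $d=16$, $G=G_{351}$, $N=100386$, and then sort the output by the $\rC_2\times\rC_3$ action. However, you make one substantive error about how $N$ enters the algorithm, and this is not a cosmetic point but exactly the issue that determines whether the algorithm terminates at all.

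You write that $N$ ``is not used for pruning during the search but only to discard the few sparse fragments the algorithm returns,'' and that ``because of the complementarity-type condition, every genuine candidate automatically has exactly $100386$ top simplices.'' Both halves of this are wrong. The algorithm only enforces \emph{half} of the complementarity condition (that $\sigma\cup\tau\ne[27]$ for all $\sigma,\tau\in K$); this does not force the full complementarity property, which is a theorem (Arnoux--Marin) about combinatorial $d$-manifolds with $3d/2+3$ vertices that are not spheres, not about arbitrary weak pseudomanifolds. In fact, as the paper reports, if one drops the $N$-constraint the search returns an enormous and seemingly endless list of $G_{351}$-invariant weak $16$-pseudomanifolds satisfying (i) and (ii) with roughly $15000$--$30000$ top simplices, and does not terminate in reasonable time. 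The paper therefore uses $N$ actively inside the backtracking: it tracks the running sum of sizes of all taken or indeterminate orbits and kills any branch where this falls below $N$, and when the forcing procedure stalls it additionally removes any indeterminate orbit $\fa$ whose ``prohibition load'' $P_{\fa}$ would push the maximum achievable sum below $N$. Without this, the proof of Proposition~\ref{propos_24_pseudo} does not go through computationally, so your proposal as stated has a genuine gap. (A smaller quibble: your phrasing of the non-covering constraint---``no included $17$-simplex may be disjoint from the complement of any other''---reads as $\sigma\not\subseteq\tau$, which is vacuous; what you mean is that the complements of any two included simplices must meet, equivalently $\sigma\cup\tau\ne[27]$.)
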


At the moment, we do not know yet that the simplicial complexes~$K_1$, $K_2$, $K_3$, and~$K_4$ provided by our algorithm are combinatorial manifolds that are not homeomorphic to the sphere. Indeed, they could be not combinatorial manifolds or they could be homeomorphic to~$S^{16}$. However, recall that every $G_{351}$-invariant   combinatorial $16$-manifold not homeomorphic to~$S^{16}$ (if exists) must satisfy the complementarity condition and have exactly $100386$ top-dimensional simplices. So all such combinatorial manifolds are in the list of the $24$ pseudomanifolds from Proposition~\ref{propos_24_pseudo}. Therefore, up to isomorphism they are in the list $K_1$, $K_2$, $K_3$, $K_4$. So these four simplicial complexes constitute a list of candidates that we need  to study further. 

For each $i$, a direct computation shows that the face numbers~$f_k(K_i)$ are exactly as in Table~\ref{table_f} and hence $\chi(K_i)=3$. Therefore, $K_i$ is not homeomorphic to~$S^{16}$.
 
\textsl{Step 2.} Next, we need to prove the following proposition.

\begin{propos}\label{propos_combman}
The weak pseudomanifolds~$K_1$, $K_2$, $K_3$, and~$K_4$ are combinatorial manifolds.
\end{propos}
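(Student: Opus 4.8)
The plan is to verify, by a computer-assisted calculation, that the link of every vertex in each of $K_1$, $K_2$, $K_3$, $K_4$ is a combinatorial $15$-sphere; by definition this is exactly what it means for $K_i$ to be a combinatorial $16$-manifold. The first reduction is to use the $G_{351}$-action: since $K_i$ is vertex-transitive (Step 1 gives us the full list of $16$-simplices as $286$ orbits, and one checks the orbit of vertices is a single orbit of size $27$), it suffices to check the link of a single vertex, say the vertex $27$. So the problem becomes: given the explicit $15$-dimensional simplicial complex $L_i = \link(27, K_i)$ on $26$ vertices, decide whether $L_i$ is PL homeomorphic to $S^{15}$.

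The standard tool for recognizing combinatorial spheres in a given dimension is the bistellar flip algorithm (Pachner moves): two combinatorial manifolds of the same dimension are PL homeomorphic if and only if one can be transformed into the other by a finite sequence of bistellar flips, and in practice one runs a randomized simulated-annealing-style search (as in the \texttt{BISTELLAR} software of Björner--Lutz) trying to reduce the given triangulation to the boundary of the $16$-simplex $\partial\Delta^{16}$, which is the standard $15$-sphere on $17$ vertices. If the search succeeds, $L_i$ is certified to be a combinatorial $15$-sphere, hence $K_i$ is a combinatorial $16$-manifold. I would first confirm that each $L_i$ is a weak $15$-pseudomanifold (every $14$-face in exactly two $15$-faces; this is automatic from $K_i$ being a weak pseudomanifold) and that it is strongly connected, then feed it to the flip routine. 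Since all four links will turn out to share a lot of structure (indeed $L_2\cong L_3$ and the $L_i$ differ from each other by triple flips, as discussed later in the paper), a single successful reduction plus a chain of explicit flips connecting the $L_i$ may suffice; but the safe route is to reduce each $L_i$ independently to $\partial\Delta^{16}$.

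The main obstacle is entirely computational rather than conceptual: bistellar flipping in dimension $15$ is expensive, and there is no guarantee the random search terminates — a priori it could run forever on a genuine $15$-sphere (the sphere-recognition problem is decidable in dimension $\le 3$ but its status in high dimensions makes a positive certificate the only practical outcome, never a negative one). In practice this is mitigated by the fact that these are $27$-vertex, and hence after deleting a vertex $26$-vertex, triangulations — very small — so the links have only $f_{15}(L_i) = f_{16}(K_i)\cdot 17/27 = 100386\cdot 17/27$ top simplices, wait, more precisely $f_{15}(L_i)$ equals the number of $16$-simplices of $K_i$ containing the vertex $27$, which by vertex-transitivity is $17\cdot f_{16}(K_i)/27 = 17\cdot 100386/27 = 63214$; this is small enough that the flip search is very likely to succeed quickly, and one expects it in fact collapses rapidly because these triangulations are as far from tight/neighborly as their $15$-dimensional links are concerned. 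I would document the successful flip sequences (or the random seeds producing them) in the accompanying programs, so that the computation is reproducible and independently checkable; this is the form the ``proof'' of Proposition~\ref{propos_combman} will take.
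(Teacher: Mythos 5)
Your plan, while a natural first instinct, is the one the paper explicitly considers and rejects, and the rejection is substantive, not merely a matter of computational convenience. You should check your premise that the links ``collapse rapidly because these triangulations are as far from tight/neighborly as their $15$-dimensional links are concerned.'' The opposite is true: $K_i$ is $9$-neighborly, so $L_i=\link(27,K_i)$ is $8$-neighborly, i.e.\ every $8$-element subset of the $26$ remaining vertices is a simplex of $L_i$. The link has $26$ vertices, $63206$ top-dimensional simplices (your arithmetic gives $63214$, but $17\cdot 100386/27=63206$), and about $2.9\times 10^7$ simplices in all dimensions, and its symmetry group is only $\rC_{13}$. Worse, the paper later shows (Section~\ref{section_links}) that $L_1$ and $L_4$ are \emph{unflippable}: they admit no bistellar move at all except vertex insertions. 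So a BISTELLAR-style random search starting from $L_1$ or $L_4$ cannot make any progress downward; it would have to climb first, in a $15$-dimensional complex where each inserted vertex multiplies the search space enormously. This is not an optimization that might-or-might-not time out; it is a strategy that is structurally blocked at step zero for half the cases.

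The paper's actual argument avoids bistellar moves entirely and also avoids discrete Morse functions, since both require handling the $\sim 3\times 10^7$ intermediate-dimensional simplices. Instead it uses the implication chain \emph{nonevasive} $\Rightarrow$ \emph{collapsible} $\Rightarrow$ (via regular neighborhood theory, Proposition~\ref{propos_2collapse} and Corollary~\ref{cor_2collapse}) \emph{sphere}, packaged as Proposition~\ref{propos_combman_suff}: if for every nonempty simplex $\rho$ with $\dim\rho<d$ there is some top simplex $\sigma\supset\rho$ for which $L_{\rho,\sigma}=\cost(\sigma\setminus\rho,\link(\rho,K))$ is collapsible, then $K$ is a combinatorial manifold, by downward induction on $\dim\rho$. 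The reason this is tractable is that nonevasiveness is checked recursively by inspecting links and contrastars of single vertices, so the computation lives entirely at the level of vertices and maximal simplices; one never has to enumerate intermediate faces. The $G_{351}$-action is then exploited as in your vertex-transitivity reduction, but one level deeper: the certificates $L_{\rho,\sigma}$ need only be produced for one $\rho$ per $G_{351}$-orbit, and the $\sigma$'s range over the $286$ orbit representatives. If you want to salvage your write-up, replace the flip search with the nonevasiveness certificate and cite the Kahn--Saks--Sturtevant implication; the vertex-transitivity reduction you wrote can stay.
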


To prove that $K_i$ is a combinatorial manifold, we need to check that the links of its vertices are PL homeomorphic to~$S^{15}$. There are several standard methods for doing this, mostly implemented in the system \texttt{GAP}, such as the program \texttt{BISTELLAR} by Bj\"orner and Lutz, see~\cite{BjLu00}, and several programs based on discrete Morse functions, see~\cite{BeLu14} and references therein. Nevertheless, most of these methods are useless in our situation. The matter is that the link of a vertex of each~$K_i$ has a very large size, namely, it has dimension~$15$, $26$ vertices, $63206$ top-dimensional simplices, and $29193216$ (non-empty) simplices of all dimensions. In addition, this link has few symmetries, namely, its symmetry group is only~$\rC_{13}$. In such a situation, it is hopeless to try to simplify the complex by bistellar moves or any other combinatorial moves. Also, standard methods for constructing discrete Morse functions are too slow in our situation, since they require working with simplices of all dimensions whose number is really huge. Instead, we use an approach based on the notion of nonevasiveness of simplicial complexes introduced by Kahn, Saks, and Sturtevant~\cite{KSS84}. Its advantage is that we can check the nonevasiveness of a simplicial complex by working only with its vertices and maximal simplices, without referring directly to simplices of intermediate dimensions. Conceptually, our approach is close to that of Engstr\"om~\cite{Eng09}, though we prefer to prove the nonevasiveness of complexes directly from the definition rather than to construct any kind of discrete Morse functions.

An explicit algorithm that, when implemented on a computer, checks that $K_1$, $K_2$, $K_3$, and~$K_4$ are combinatorial manifolds is described in Section~\ref{section_check_manifold}. Since $K_1$, $K_2$, $K_3$, and~$K_4$  are not homeomorphic to~$S^{16}$, it now follows from a result of Brehm and K\"uhnel~\cite{BrKu87} that they are manifolds like the octonionic projective plane and satisfy conditions~(a)--(e) in the introduction.

\textsl{Step 3.} Now, let us show that the simplicial complexes $K_1$, $K_2$, $K_3$, and~$K_4$ are pairwise non-isomorphic. There are several ways to do this. Probably, the easiest one is as follows. Each simplicial complex~$K_i$ has $f_{14}=3309696$ simplices of dimension~$14$.  For each $14$-simplex $\rho\in K_i$, we can compute the number~$s(\rho)$ of $16$-simplices $\sigma\in K_i$ such that $\rho\subset \sigma$. It turns out that the number $s(\rho)$ varies from $3$ to~$9$ for each of the complexes~$K_i$. Now, we can study the distribution of the function~$s(\rho)$, that is, compute the number of simplices $\rho$ with each given value of~$s(\rho)$. The result of the computation is shown in Table~\ref{table_distribution}. We see that the distributions in the four columns are pairwise different and hence the simplicial complexes~$K_1$, $K_2$, $K_3$, and~$K_4$ are pairwise non-isomorphic.

\begin{table}
\caption{Numbers of $14$-simplices~$\rho$ with the given number~$s(\rho)$ of adjacent $16$-simplices}\label{table_distribution}
\begin{tabular}{|c|c|c|c|c|}
\hline
$s(\rho)$ & $K_1$ & $K_2$ & $K_3$ & $K_4$\\
\hline
3  & 849771 & 953316 & 940446 & 868140\\
4  & 1509651  & 1364688 & 1363986 & 1447524\\ 
5  & 697788 & 690066 & 724815 & 764829\\
6  & 201942 & 237978 & 220662 & 180297\\
7  & 40716 & 55107 & 51597 &  40716 \\
8  & 9477 &  7020 & 7722 & 6669\\
9  & 351 & 1521 & 468 & 1521\\
\hline
\end{tabular}
\end{table}

\textsl{Step 4.} Since each $K_i$ is $G_{351}$-invariant and $G_{351}$ acts transitively on~$\F_{27}$, we see that $K_i$ is vertex-transitive. Let us prove that $K_i$  satisfies conditions~(1) and~(2) from Theorem~\ref{theorem_main}. 

Recall that the group~$G_{351}$ coincides with the group of all affine transformations of~$\F_{27}$ of the form $x\mapsto ax+b$, where $a\in(\F_{27}^{\times})^2$ and $b\in \F_{27}$. We need to prove that $G_{351}$ acts transitively on the set of undirected edges, that is, on the set of two-element subsets of~$\F_{27}$. Consider an arbitrary subset $\{u,v\}\subset \F_{27}$, where $u\ne v$. Swapping $u$ and~$v$, we may achieve that $v-u$ is a square. Then the transformation $x\mapsto (v-u)x+u$ belongs to~$G_{351}$ and takes $\{0,1\}$ to~$\{u,v\}$, hence the required transitivity. 

To show that $G_{351}$ acts freely on the set of $16$-simplices of every~$K_i$, we prove a more general assertion.

\begin{propos}\label{propos_free}
The group~$G_{351}$ acts freely on the set of all $17$-element subsets of~$\F_{27}$.
\end{propos}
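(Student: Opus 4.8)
The plan is to prove the contrapositive formulation: the action of $G_{351}$ on the collection of $17$-element subsets of $\F_{27}$ is free if and only if no nontrivial element of $G_{351}$ stabilizes (setwise) any such subset. Recall that every element of $G_{351}$ is an affine map $g\colon x\mapsto ax+b$ with $a\in(\F_{27}^{\times})^2$ and $b\in\F_{27}$, and that $(\F_{27}^{\times})^2$ is cyclic of order $13$. So I would take a nontrivial $g$ with $g(W)=W$ for some $W\subset\F_{27}$ with $|W|=17$, observe that $W$ must then be a union of orbits of the cyclic group $\langle g\rangle$, and conclude that $17$ must be expressible as a sum of orbit sizes of $g$ acting on $\F_{27}$. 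The whole argument reduces to listing those orbit sizes, which splits into two cases.

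First I would handle the case $a=1$. Then $g$ is translation by a nonzero $b$ in the characteristic-$3$ field $\F_{27}$, hence has order $3$, and every orbit has size $3$. Thus the cardinality of any $g$-invariant subset is divisible by $3$; since $17$ is not, this is impossible.

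Next, suppose $a\neq 1$. Since $\bigl|(\F_{27}^{\times})^2\bigr|=13$ is prime, $a$ has multiplicative order exactly $13$. The equation $ax+b=x$ has the unique solution $x_0=b(1-a)^{-1}$, so $g$ fixes exactly one point $x_0$; conjugating by the translation $x\mapsto x+x_0$ turns $g$ into $x\mapsto ax$, which acts on $\F_{27}^{\times}$ with all orbits of size $13$ (the stabilizer of any $x\neq 0$ is trivial because $a^k x=x$ forces $13\mid k$). Hence $g$, viewed as a permutation of the $27$ elements of $\F_{27}$, has cycle type $(1,13,13)$, and a $g$-invariant subset has cardinality in $\{0,1,13,14,26,27\}$. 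Again $17$ is not on this list, a contradiction. In both cases no nontrivial $g\in G_{351}$ fixes a $17$-element subset, which is exactly the freeness claimed.

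I do not expect any genuine obstacle here: the argument is a short counting argument. The only point requiring a little care is correctly reading off the two possible cycle types, and this hinges on the arithmetic facts that $13$ is prime (forcing a nontrivial $a$ to have order $13$) and that $\gcd(3,13)=1$, so that the only available orbit sizes are $3$ and $13$ while the partition $27=17+10$ cannot be realized by orbits of either size.
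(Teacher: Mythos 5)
Your argument is correct and matches the paper's proof: both reduce the claim to computing the cycle types of nontrivial elements of $G_{351}$ (nine $3$-cycles when $a=1$; one fixed point plus two $13$-cycles when $a\neq1$) and observe that $17$ cannot be written as a sum of those cycle lengths. The only cosmetic difference is that the paper packages the cycle-type determination as a conjugacy statement (every nontrivial $g$ is conjugate to $B^{\pm1}$ or some $A^k$) whereas you compute the cycle types directly.
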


\begin{proof}
Any nontrivial transformation $g\colon x\mapsto ax+b$ in~$G_{351}$ can be written in the form
$$
x\mapsto
\left\{
\begin{aligned}
&b\left(b^{-1}x+1\right)&&\text{if}\ a=1,\,b\in (\F_{27}^{\times})^2, \\
&-b\left(-b^{-1}x-1\right)&&\text{if}\ a=1,\,b\notin (\F_{27}^{\times})^2,\\
&a(x+c)-c&&\text{if}\ a\ne 1,
\end{aligned}
\right.
$$
where in the third case $c=(a-1)^{-1}b$. Since $a$ is a square, in the third case we have $a=\alpha^k$ with $k\in\{1,\ldots,12\}$. Hence, $g$ is conjugate in~$G_{351}$ to one of the elements~$B^{\pm 1}$ or~$A^k$, where $k=1,\ldots, 12$. Therefore, the permutation~$g\in \rS_{27}$ has the same cycle type as either~$A$ or~$B$. Now,  a permutation of either of these cycle types cannot stabilize a $17$-element subset, since no union of its cycles has the sum of lengths~$17$.
\end{proof}

\textsl{Step 5.} Finally, we need to prove the following proposition.

\begin{propos}\label{propos_SymK}
For every $i=1,2,3,4$, we have $\Sym(K_i)=G_{351}$.
\end{propos}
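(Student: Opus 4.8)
The plan is to show that $\Sym(K_i)$ cannot be strictly larger than $G_{351}$ by a two-step argument: first bound $\Sym(K_i)$ inside the normalizer $N(G_{351})$, then eliminate the remaining possibilities using the already-proved classification in Theorem~\ref{theorem_main_detail} (more precisely, Proposition~\ref{propos_24_pseudo}). Since $K_i$ is $G_{351}$-invariant, we have $G_{351}\subseteq\Sym(K_i)$, so it suffices to prove the reverse inclusion. The key observation is that $G_{351}$ is a \emph{normal} subgroup of $\Sym(K_i)$: indeed, $G_{351}\cong\rC_3^3\rtimes\rC_{13}$ acts on the $27$ vertices, and the subgroup $\rC_3^3$ is its unique Sylow $3$-subgroup of that order; but more to the point, by Proposition~\ref{propos_free} the group $G_{351}$ acts freely on the vertex set (its nontrivial elements have cycle type that of $A$ or $B$, neither of which fixes a point), so $G_{351}$ is a regular subgroup of $\rS_{27}$ on the $27$ vertices. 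A finite group of order $27$ has a characteristic (hence normal) structure, and one checks directly that $\rC_3^3\rtimes\rC_{13}$ with the given faithful action of $\rC_{13}$ on $\rC_3^3\cong\F_3^3$ has $\rC_3^3$ as its unique minimal normal subgroup; therefore the socle $\rC_3^3$ of $G_{351}$ is characteristic in any overgroup that normalizes $G_{351}$. So I would aim to prove directly that $G_{351}\trianglelefteq\Sym(K_i)$ by showing the $\rC_3^3\subset G_{351}$ is normalized by all of $\Sym(K_i)$.

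Here is how I would make that precise. By Proposition~\ref{propos_24_pseudo}, the only $G_{351}$-invariant weak $16$-pseudomanifolds on $[27]$ satisfying the complementarity-type condition and with $\ge 100386$ top simplices are the $24$ complexes $qK_j$, $q\in\rC_2\times\rC_3$, $j\in\{1,2,3,4\}$; and by the face-number computation each is a combinatorial $16$-manifold not homeomorphic to $S^{16}$. Now let $g\in\Sym(K_i)$. The conjugate subgroup $gG_{351}g^{-1}\subset\rS_{27}$ is again a regular subgroup isomorphic to $\rC_3^3\rtimes\rC_{13}$, and $K_i$ is $gG_{351}g^{-1}$-invariant (since $g$ fixes $K_i$). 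If I can show that $K_i$ admits \emph{only one} subgroup of $\rS_{27}$ isomorphic to $\rC_3^3\rtimes\rC_{13}$ under which it is invariant --- equivalently, that $G_{351}$ is the unique such subgroup --- then $gG_{351}g^{-1}=G_{351}$ for all $g$, i.e. $g\in N(G_{351})=G_{351}\rtimes(\rC_2\times\rC_3)$. Thus $\Sym(K_i)\subseteq N(G_{351})$. But the action of $\rC_2\times\rC_3=N(G_{351})/G_{351}$ on the set $\mathcal S$ of $G_{351}$-invariant complexes was shown in Theorem~\ref{theorem_main_detail} to be \emph{free} on the $24$ manifolds in question; in particular no nontrivial $q\in\rC_2\times\rC_3$ fixes $K_i$. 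Hence $\Sym(K_i)\cap N(G_{351})$ maps to the trivial subgroup of $N(G_{351})/G_{351}$, which gives $\Sym(K_i)\subseteq G_{351}$, completing the proof.

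To establish the uniqueness of $G_{351}$ as an invariant subgroup, I would argue as follows. Any subgroup $H\subset\rS_{27}$ with $H\cong\rC_3^3\rtimes\rC_{13}$ acting so that $K_i$ is $H$-invariant has $|H|=351$, and since $351$ divides $f_{16}(K_i)=100386=351\cdot286$ and $K_i$ contains no subcomplex fixed pointwise by too large a group, the Sylow $3$-subgroup $P\cong\rC_3^3$ of $H$ must act on the vertices. By the Sylow theorems applied inside $\Sym(K_i)$, all such $P$ are conjugate in $\Sym(K_i)$; and the normalizer computation of Remark~\ref{remark_Ale}, together with the fact (to be verified by the fixed-point analysis referenced for Section~\ref{section_fixed}) that the fixed-point set $K_i^{\rC_3}$ is the K\"uhnel complex $\CP^2_9$ for \emph{every} order-$3$ subgroup, pins down $P$ up to conjugacy. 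Then $H=N_{\Sym(K_i)}(P)$-related data forces $H$ to be realized inside $N_{\rS_{27}}(P)$, and a finite check --- the heart of the computer-assisted part --- shows that among subgroups of $N_{\rS_{27}}(P)$ isomorphic to $\rC_3^3\rtimes\rC_{13}$, only $G_{351}$ leaves $K_i$ invariant. Concretely, I would instead short-circuit all of this: simply enumerate, by computer, all subgroups of $\rS_{27}$ of order $351$ isomorphic to $\rC_3^3\rtimes\rC_{13}$ that preserve $K_i$ (a bounded search, since such a subgroup is determined by a regular $\rC_3^3$-action and a compatible order-$13$ element), and verify there is exactly one, namely $G_{351}$.

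The main obstacle will be the last point: turning ``the only invariant $\rC_3^3\rtimes\rC_{13}$ is $G_{351}$'' into an effective, genuinely finite computation. The naive search over subgroups of $\rS_{27}$ of order $351$ is far too large, so the real work is in cutting it down --- e.g. using that any invariant $\rC_3^3$ must act freely (Proposition~\ref{propos_free} generalizes: nontrivial elements of order $3$ in a regular $\rC_3^3$ have cycle type $(3^9)$, matching $B$'s type up to the relevant conjugacy class count) and that its $\F_3$-linear structure on the $27$ vertices is highly constrained by which monomials of vertices span simplices of $K_i$. Once the candidate set of $\rC_3^3$-subgroups is shown to be a single $\Sym(K_i)$-conjugacy class, extending to the $\rC_{13}$ part is routine because $\rC_{13}$ normalizes $\rC_3^3$ and acts irreducibly, leaving only finitely many extensions, all conjugate. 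I expect this to be the delicate bookkeeping step, and it dovetails with the fixed-point computations of Section~\ref{section_fixed}; everything else (the freeness of the $\rC_2\times\rC_3$-action, the reduction to $N(G_{351})$) is immediate from results already in hand.
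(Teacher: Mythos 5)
Your overall plan has the same shape as the paper's: first confine $\Sym(K_i)$ inside a group you control, then rule out the extra elements by the freeness of the $\rC_2\times\rC_3$-action on the $24$ pseudomanifolds from Proposition~\ref{propos_24_pseudo}. That last step is sound, and the observation that $\Sym(K_i)\subseteq N(G_{351})$ would suffice is correct. However, the reduction step that would make it work --- that $G_{351}$ is the \emph{only} subgroup of $\rS_{27}$ isomorphic to $\rC_3^3\rtimes\rC_{13}$ leaving $K_i$ invariant --- is never actually established; you leave it as a sketched-out computer search whose feasibility is itself the open part of the argument. In particular, the Sylow step in your sketch is circular: you cannot conclude that the two copies of $\rC_3^3$ (the one inside $G_{351}$ and the one inside a hypothetical $H$) are conjugate in $\Sym(K_i)$ without already knowing the $3$-part of $|\Sym(K_i)|$, which is precisely what is unknown at that stage. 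So as written this is a proof strategy, not a proof.

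There is also a factual slip that should be flagged: you assert that ``by Proposition~\ref{propos_free} the group $G_{351}$ acts freely on the vertex set (its nontrivial elements have cycle type that of $A$ or $B$, neither of which fixes a point),'' and that $G_{351}$ is a regular subgroup of $\rS_{27}$. Neither is correct. Proposition~\ref{propos_free} says $G_{351}$ acts freely on $17$-element subsets, not on vertices; the element $A=(1\,2\,\dots\,13)(14\,15\,\dots\,26)$ visibly fixes the vertex $27$, and indeed every affine map $x\mapsto ax+b$ with $a\neq 1$ has a fixed point in $\F_{27}$. The transitive free subgroup on $27$ vertices is $\rC_3^3$, not all of $G_{351}$. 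This conflation does not by itself invalidate the plan, but it feeds into the imprecision of the ``regular $\rC_3^3$'' search you describe.

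For comparison, the paper takes a more economical route. It considers the oriented Paley-type graph $\Gamma$ on $\F_{27}$ (edges pointing from $a$ to $b$ when $b-a$ is a nonzero square), computes $\Sym(\Gamma)=G_{351}\rtimes\rC_3$ (the $\rC_3$ being Frobenius), and then proves Lemma~\ref{lem_not_transitive}: for each $K_i$ no symmetry swaps the endpoints of an edge, established by exhibiting a simple, explicitly computable invariant (the matrix $(N_{pq})$ of Table~\ref{table_N_matrix}) and observing it is not symmetric. Because $G_{351}$ is already transitive on undirected edges, Lemma~\ref{lem_not_transitive} forces $\Sym(K_i)\subseteq\Sym(\Gamma)$, and the freeness of the Frobenius action on the $24$ complexes finishes it. The advantage of this approach is that the computer-assisted step is compact, concrete, and self-contained; it does not require any enumeration of $351$-element subgroups of $\rS_{27}$ or any appeal to fixed-point data from Section~\ref{section_fixed}. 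If you wanted to salvage your version, you would have to actually carry out (and justify the boundedness of) the subgroup search; the paper's $(N_{pq})$-matrix trick is the cleaner shortcut.
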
 

By construction, $K_i$ is invariant under the action of~$G_{351}$. So to prove Proposition~\ref{propos_SymK}, we need to check that $K_i$ has no additional symmetries. This will be done in Section~\ref{section_no_add_sym}.

\smallskip

Thus, we have proved Theorem~\ref{theorem_main_detail} modulo Propositions~\ref{propos_24_pseudo}, \ref{propos_combman}, and \ref{propos_SymK}. These three propositions will be proved in Sections~\ref{section_find}, \ref{section_check_manifold} and~\ref{section_no_add_sym}, respectively.

\section{Algorithm for finding triangulations}\label{section_find}

In this section we describe an algorithm that solves the problem from Step~$1$ in the scheme of the proof of Theorem~\ref{theorem_main_detail}. Recall that the algorithm should produce the list of all weak pseudomanifolds~$K$ of the given dimension~$d$ on the given set of vertices~$[m]$ such that $K$ is invariant with respect to the given subgroup~$G\subset \rS_m$, contains at least the given number~$N$ of $d$-simplices, and satisfies the following half of the complementarity condition:
\begin{itemize}
\item for any two simplices $\sigma,\tau\in K$, the union $\sigma\cup\tau$ is not the whole set~$[m]$.
\end{itemize}

We are mostly interested in the case
\begin{equation}\label{eq_main_case}
m=27,\qquad d =16,\qquad G=G_{351},\qquad N=100386,
\end{equation}
and our aim is an algorithm that, when implemented on a computer, will complete in reasonable amount of time in this case. 

We conveniently reformulate our problem as follows. Let $\CO$ be the set of all $G$-orbits of $(d+1)$-element subsets of~$[m]$. Each desired simplicial complex~$K$ is uniquely characterized by the set of its $d$-simplices. Moreover, this set must be $G$-invariant, so each orbit $\fo\in\CO$ is either completely included in~$K$, or none of subsets in~$\fo$ is included in~$K$. So our goal is equivalently reformulated in the following way:

\smallskip

\textit{Produce the list of all subsets $\CX\subset \CO$ satisfying the following properties:
\begin{itemize}
\item[(i)] For each $d$-element subset $\rho\subset [m]$, the number of $(d+1)$-element subsets $\sigma$ that contain~$\rho$ and belong to one of the orbits in~$\CX$ is either $0$ or~$2$.
\item[(ii)]  For any two subsets  $\sigma$ and~$\tau$ each of which belongs to one of the orbits in~$\CX$,  $\sigma\cup\tau\ne [m]$.
\item[(iii)] The total number of subsets in all orbits belonging to~$\CX$ is at least~$N$.
\end{itemize}
}

The cardinality~$|\CO|$ is approximately $\frac{1}{|G|}\binom{m}{d+1}$. In the most interesting  case for us, \eqref{eq_main_case}, the action of~$G_{351}$ on the set of $17$-element subsets of~$[27]$ is free by Proposition~\ref{propos_free}. So in this case, we have an exact equality
\begin{gather*}
|\CO|=\frac{1}{|G|}\binom{m}{d+1}=\frac{1}{351}\cdot\binom{27}{17}=24035.
\end{gather*}

\begin{defin}
We will say that an orbit $\fo\in\CO$ is \textit{admissible} if it satisfies neither of the following two conditions:
\begin{itemize}
\item there exist subsets $\sigma,\tau\in\fo$ such that $\sigma\cup\tau=[m]$,
\item there exists a $d$-element subset $\rho\subset [m]$ that is contained in at least $3$ subsets in~$\fo$.
\end{itemize} 
\end{defin}
We denote by $\COa$   the subset of~$\CO$ consisting of all admissible orbits.

Only admissible orbits can enter a set~$\CX$  satisfying properties~(i) and~(ii). So we start with listing all the admissible orbits. This operation is very fast, since it requires only a simple enumeration of orbits in~$\CO$. In the case~\eqref{eq_main_case}, we have $18546$ admissible orbits. Technically, each subset is encoded by a binary number as in Tables~\ref{table_1234}--\ref{table_4} and  listing the admissible orbits means listing of the smallest representatives of them.

Further, the algorithm consists of two stages.

\textsl{Stage 1. Initial prohibition of some pairs of orbits and listing of all adjacency groups.} We are interested in finding pairs~$\{\fa,\fb\}$ of (different) admissible orbits that are \textit{prohibited} in the sense that they cannot enter~$\CX$ simultaneously. There are two reasons for such prohibition.
The first one is as follows. If there are subsets $\sigma\in\fa$ and~$\tau\in\fb$ such that $\sigma\cup\tau=[m]$, then the  pair~$\{\fa,\fb\}$ is prohibited because its presence in~$\CX$ would violate condition~(ii). All prohibited pairs of orbits of this kind are found by a direct enumeration over all pairs of admissible orbits. 

To describe the second reason for prohibition of a pair of orbits, we need to introduce some terminology. Suppose that $\fo\in \COa$ and $\rho\subset [m]$ is a $d$-element subset. Since $\fo$ is admissible, the number of subsets $\sigma \in\fo$ that contain~$\rho$ does not exceed~$2$. We will say that  an orbit $\fo$ is \textit{once adjacent} to~$\rho$ if $\fo$ contains exactly one subset~$\sigma$ such that $\sigma\supset\rho$ and is \textit{twice adjacent} to~$\rho$ if $\fo$ contains two subsets~$\sigma$ such that $\sigma\supset\rho$.  

Now, if $\fa$ is twice adjacent to a $d$-element subset~$\rho$ and $\fb$ is either once or twice adjacent to the same~$\rho$, then the pair~$\{\fa,\fb\}$ is prohibited because its presence in~$\CX$ would violate condition~(i). All prohibited pairs of orbits of this kind are found by a direct enumeration over~$\rho$.  The adjacency of an orbit in~$\COa$ to~$\rho$ will not change if $\rho$ is replaced with another $d$-element subset $\rho'$ in the $G$-orbit of~$\rho$. So, in fact, the enumeration is not over all $d$-element subsets but only over all their $G$-orbits. Moreover, we can restrict ourselves to only those~$\rho$ that are contained in at least one of the smallest representatives of admissible orbits.

In addition, at the same stage, we find all \textit{adjacency groups}~$\CA_{\rho}$, where $\rho$ runs over $G$-orbits of $d$-element subsets of~$[m]$. By definition, $\CA_{\rho}$ is the subset of~$\COa$ consisting of all orbits that are once adjacent to~$\rho$. The role of these groups in the further algorithm is as follows. By condition~(i), the set $\CX$ must contain either no or exactly two orbits in each adjacency groups. Linking adjacency groups to subsets~$\rho$ is no longer important to us, we need only the list of all non-empty adjacency groups $\CA_1,\ldots,\CA_q$. In the case~\eqref{eq_main_case}, we obtain $36059$ non-empty adjacency groups.
  
\begin{remark}
In fact, in our implementation of the algorithm we never check that the generated adjacency groups are pairwise different. It may happen that different orbits of subsets~$\rho$ lead to identical adjacency groups. We prefer to work with a list of adjacency groups that can contain repetitions, rather than  eliminating repetitions.
\end{remark}

From now on, we may forget about the nature of the elements of~$\COa$. Indeed, our goal is reduced to the following combinatorial problem:

\smallskip
\textit{We have a finite set~$\COa$. With each $\fo\in\COa$ is associated a positive integer~$|\fo|$ called its size. Some pairs of elements of~$\COa$ are prohibited. Besides, we have a list $\CA_1,\ldots,\CA_q$ of subsets of~$\COa$ called adjacency groups. Our aim is to list all subsets $\CX\subset \COa$ such that
\begin{itemize}
\item each intersection $\CX\cap\CA_i$ is either empty or consists of exactly two elements,
\item $\CX$ is not allowed to contain simultaneously both elements of any prohibited pair,
\item $\sum_{\fo\in\CX}|\fo|\ge N$.
\end{itemize}
}

\textsl{Stage 2. Selection of orbits.} At this stage we solve the above combinatorial problem. We will not use the nature of elements of~$\COa$ any more. However, for convenience we will continue to call them orbits.

Suppose that, at some moment, we have the following situation:
\begin{itemize}
\item Some orbits in~$\COa$ are \textit{taken}, which means that we have decided that they are included in~$\CX$. Some orbits in~$\COa$ are \textit{removed}, which means that we have decided that they are not included in~$\CX$. All other orbits are called \textit{indeterminate}. 
\item Some unordered pairs of indeterminate orbits are \textit{prohibited}, which means that they are not allowed to enter~$\CX$ simultaneously.
\item Some ordered pairs of indeterminate orbits form \textit{requirements}. A requirement from $\fa$ to~$\fb$ (denoted~$\fa\to\fb$) means that we must include~$\fb$ to~$\CX$ whenever we include~$\fa$.
\end{itemize} 
These data should satisfy natural logical conditions, namely (a) if $\fa\to\fb$, then $\{\fa,\fb\}$ is not prohibited, (b) if $\fa\to\fb$ and $\fb\to\fc$, then $\fa\to\fc$, and (c) if $\fa\to\fb$ and $\{\fb,\fc\}$ is prohibited, then $\{\fa,\fc\}$ is also prohibited.

Note that the initial situation is of this type. Initially all orbits are indeterminate, we have several prohibited pars and no requirements. 

Having a situation of the described type, we want to improve our knowledge by (1)  increasing the numbers of taken and removed orbits or (2) increasing the numbers of prohibited pairs and requirements without changing taken and removed orbits. To do this we study the adjacency groups.

For an adjacency group~$\CA$, we may have several situations:

\smallskip

\textsl{Case 1. More than two orbits in~$\CA$ are already taken.} This means that our task is inconsistent and we should stop seeking for~$\CX$.

\smallskip

\textsl{Case 2. Exactly two orbits in~$ \CA$ are already taken.} This means that all other orbits in~$ \CA$ cannot enter~$\CX$, so we should remove them. Note that whenever we remove an orbit~$\fa$, we always simultaneously remove all orbits~$\fb$ (not necessarily belonging to~$ \CA$) such that $\fb\to\fa$. After doing this, we may forget about the adjacency group~$ \CA$. It will not give us any more information in the future.

\smallskip

\textsl{Case 3. Exactly one orbit in~$ \CA$ is already taken.} Let $\fa$ be this taken orbit, and let $\fb_1,\ldots,\fb_k$ be all indeterminate orbits in~$ \CA$. First, if $k=0$, then our task is inconsistent again, and we stop. Second, if $k=1$, then the unique indeterminate orbit $\fb$ should be taken. Note that whenever we take an orbit~$\fb$, we always simultaneously take all orbits~$\fc$ such that $\fb\to\fc$ and remove all orbits~$\fd$ such that the pair~$\{\fb,\fd\}$ was prohibited. Third, suppose that $k\ge 2$. In this case we cannot take or remove any of the orbits $\fb_1,\ldots,\fb_k$. However, we can guarantee that any two of these orbits cannot enter~$\CX$ simultaneously. Hence, we should prohibit all pairs $\{\fb_i,\fb_j\}$ with $i\ne j$. Note that whenever we prohibit a pair $\{\fb,\fb'\}$, we always simultaneously prohibit all pairs $\{\fc,\fc'\}$ such that $\fc\to\fb$ (or $\fc=\fb$) and $\fc'\to\fb'$ (or $\fc'=\fb'$). Moreover, if on this way we are faced with the need to prohibit a pair $\{\fc,\fc\}$ (say, if $\fc\to\fb$ and simultaneously $\fc\to \fb'$), then we should remove~$\fc$.

\smallskip

\textsl{Case 4. No orbit in~$ \CA$ is taken.} Consider 3 subcases:

\smallskip

\textsl{Subcase 4a. There are three distinct indeterminate orbits $\fa$, $\fb$, and~$\fc$ in~$ \CA$ such that $\fa\to \fb$ and $\fa\to\fc$.} If $\fa$ were in~$\CX$, then both orbits~$\fb$ and~$\fc$ should also be in~$\CX$, and the intersection $\CX\cap \CA$ would contain more than two orbits, which is impossible. Therefore, we should remove the orbit~$\fa$ (and all orbits that require it).

\smallskip

\textsl{Subcase 4b. We are not in Subcase~4a but there are two distinct indeterminate orbits $\fa$ and~$\fb$ in~$ \CA$ such that $\fa\to \fb$.} If $\fa$ were in~$\CX$, then~$\fb$ would also be in~$\CX$ and hence no other orbit in~$ \CA$ could be in~$\CX$. Hence, we should prohibit all pairs $\{\fa,\fc\}$, where $\fc$ is an indeterminate orbit in~$ \CA$ different from~$\fa$ and~$\fb$. (Again, together with any such pair of orbits we prohibit all dependent pairs as in Case~3.)

\smallskip

\textsl{Subcase 4c. There is an indeterminate orbit~$\fa$ in~$ \CA$ such that the pairs $\{\fa,\fb\}$ are prohibited for all other  indeterminate orbits~$\fb$ in~$ \CA$.} (In particular, this subcase occurs if there is only one indeterminate orbit in~$ \CA$.) Then we should remove~$\fa$ and all orbits that require it.

\smallskip

\textsl{Subcase 4d. There is an indeterminate orbit~$\fa$ in~$ \CA$ such that the pairs $\{\fa,\fb\}$ are prohibited for all but one other indeterminate orbits~$\fb$ in~$ \CA$.} (In particular, this subcase occurs if there are exactly two indeterminate orbits in~$ \CA$.) Let $\fc$ be the only indeterminate orbit in~$ \CA$ such that $\fc\ne\fa$ and the pair $\{\fa,\fc\}$ is not prohibited. Then $\fc$ will belong to~$\CX$ whenever $\fa$ belongs to~$\CX$. So we should add a requirement $\fa\to \fc$. Note that whenever we add such a requirement, we simultaneously add all requirements $\fa'\to\fc'$, where $\fa'\to\fa$ (or $\fa'=\fa$) and $\fc\to\fc'$ (or $\fc=\fc'$). Besides, we simultaneously prohibit all pairs~$\{\fa',\fd\}$, where $\fa'\to\fa$ (or $\fa'=\fa$) and the pair $\{\fc,\fd\}$ was prohibited. Also, as in Case~3, if on this way we are faced with the need to prohibit a pair $\{\fd,\fd\}$, then we  remove~$\fd$.

\smallskip

When we have examined an adjacency group~$\CA$ (see Cases~1--4 above), it makes no sense for us to return to the same adjacency group~$\CA$ until a new indeterminate orbit in~$\CA$ is taken or removed or a new pair of orbits in~$\CA$ is prohibited or a new requirement between orbits in~$\CA$ is added.

So the algorithm is organized as follows. At every moment we have a waiting list of adjacency groups that are to be examined. Initially all adjacency groups are in the list. Adjacency groups in the list are examined one by one as described above. After an adjacency group is examined, we expel it from the list. However, every time a new indeterminate orbit~$\fa$ becomes either taken or removed, we put back to the waiting list all adjacency groups containing~$\fa$. Similarly, every time a new pair of orbits~$\{\fa,\fb\}$ is prohibited or a new requirement $\fa\to\fb$ is created, we put back to the waiting list all adjacency groups containing both~$\fa$ and~$\fb$. This process stops when the waiting list becomes empty. If at this moment all orbits are either taken or removed, then a set~$\CX$ with the required properties is found. If not, then our algorithm branches. We choose an indeterminate orbit~$\fo$ and try to first take it and then to remove it. In either case, we start the same process again.

When the algorithm branches, we have the freedom to choose for~$\fo$ any of the indeterminate orbits. It turns out that this choice significantly affects the running time  of the algorithm. For an indeterminate orbit~$\fa$, we denote by~$p(\fa)$ the number of prohibited pairs~$\{\fa,\fb\}$ and by~$r(\fa)$ the number of requirements $\fa\to\fb$, where in both cases $\fb$ runs over all indeterminate orbits. It seems advantageous to choose for~$\fo$ an orbit with large~$p(\fo)$ and~$r(\fo)$, since taking of~$\fo$ will immediately lead to removing and taking of many orbits. Moreover, the intuition is that taken orbits are more important than removed ones, since they give more potential to analyze adjacency groups. So we use the following heuristic. We choose for~$\fo$ the orbit (any if several) with the largest value of $p(\fo)+10r(\fo)$. Practical calculations show that such a heuristic significantly improves the speed of the algorithm.

Until now, we have never used the restriction 
\begin{equation}\label{eq_restr}
\sum_{\fo\in\CX}|\fo|\ge N.
\end{equation}
Nevertheless, it plays an important role in bounding the running time of the algorithm and the size of the output. When we implemented the described algorithm in the most interesting case for us,  $m=27$, $d=16$, $G=G_{351}$, without any restriction of the form~\eqref{eq_restr}, it began to give out an enormous list of weak pseudomanifolds satisfying conditions~(i) and~(ii) with the numbers of $16$-simplices mostly in the range from $15000$ to~$30000$, and did not stop in a reasonable time.

So we need to make the following improvements to the algorithm:
\begin{enumerate}
\item Throughout the whole algorithm we look after the sum~$S$ of sizes $|\fo|$ of all orbits that are either taken or indeterminate, and stop considering any branch that leads us to a situation with this sum less than~$N$.  
\item When the waiting list of adjacency groups becomes empty, before starting branching the algorithm, we may try to resolve the situation in the following way. For each indeterminate orbit~$\fa$, we compute the sum~$P_{\fa}$ of the sizes of all indeterminate orbits~$\fb$ such that the pair $\{\fa,\fb\}$ is prohibited. Then we remove all indeterminate orbits~$\fa$ with $S-P_{\fa}<N$. (The reason for doing this is that, if $\fa$ entered~$\CX$, the maximal possible sum of sizes of orbits would become less than~$N$.) 
\end{enumerate}

Procedure~(2) is not always helpful. The matter is that the computation of the sums~$P_{\fa}$ requires enumeration over pairs of indeterminate orbits. When the number~$M$ of indeterminate orbits is large, this is a rather time-consuming procedure. Besides, when $M$ is large enough, then the sum~$S$ is much larger than~$N$ and it is unlikely that we will find at least one orbit~$\fa$ with $S-P_{\fa}<N$. So, most probably, the procedure will not be effective in this case. On the other hand, the procedure can be very useful in the case of a sufficiently small number of indeterminate orbits, that is, when the sum~$S$ is close to~$N$. (Typically, there are very few taken orbits, see Remark~\ref{remark_bound_level} below, so their contribution to~$S$ is negligible.)

We use the following heuristic. If at the moment when the waiting list of adjacency groups becomes empty, the inequality
\begin{equation}\label{eq_heuristic_bound}
M\le \frac{5N}{|G|}\,
\end{equation}
(which is approximately equivalent to $S\le 5N$) is satisfied, then we apply Procedure~(2) and start branching the algorithm only if Procedure~(2) is useless, that is, $S-P_{\fa}\ge N$ for all indeterminate orbits~$\fa$. However, if  inequality~\eqref{eq_heuristic_bound} is not satisfied, then we do not apply Procedure~(2) and start branching the algorithm immediately.

Now, the description of the algorithm is completely finished. Being implemented in the case~\eqref{eq_main_case}, this algorithm yields Proposition~\ref{propos_24_pseudo}.

\begin{remark}\label{remark_bound_level}
At first glance, it may seem that our algorithm will branch in an uncontrolled way. However, this does not happen. The matter is that if the group~$G$ is large enough, then we get a lot of prohibited pairs from the very beginning. Therefore, every time an orbit is taken, a lot of other orbits become removed immediately. This leads to the fact that branches of the algorithm in which we choose to take an orbit several times end very quickly. We conveniently say that our algorithm is on \textit{level} $k$ if $k-1$ choices in favor of taking an orbit have been made so far.  (The number of choices in favor of removing an orbit is unimportant.) In the case~\eqref{eq_main_case} the algorithm never goes beyond level~$5$, and spends most of its time at levels~$1$ and~$2$.
\end{remark}

\begin{remark}\label{remark_implement}
We have implemented the algorithm as a C++ program on a usual PC without any attempt to make parallel computing. (Note, however, that the algorithm can be very easily parallelized if necessary.) The computation was performed on one processor core of clock frequency 1.8 GHz. Under such conditions the running time of the program in the case~\eqref{eq_main_case} was about 7.5 hours.
\end{remark}

\begin{remark}
Being implemented in the case
$$
m=15,\qquad d=8,\qquad G=\rA_5\subset\rS_{15},\qquad N=490,
$$
our algorithm recovers the Brehm--K\"uhnel triangulation~$\HP^2_{15}$. Here the embedding of~$G=\rA_5$ into~$\rS_{15}$ is provided by the transitive action of~$\rA_5\cong\SL(2,\F_4)$ on the $15$ nonzero vectors in~$\F_4^2$. Alternatively, the same  embedding up to conjugation is given by the sequence of homomorphisms
$$
\rA_5\subset\rS_5\subset\rS_6\xrightarrow{\varphi}\rS_6\xrightarrow{\lambda}\rS_{15},
$$
where $\varphi$ is an outer automorphism of~$\rS_6$ and~$\lambda$ is an embedding provided by the action of~$\rS_6$ on the $15$ two-element subsets of the set~$\{1,\ldots,6\}$. In this case, the program runs very quickly  (in less than a second) and produces a list of $6$ isomorphic copies of the triangulation~$\HP^2_{15}$, which corresponds to the fact that the subgroup~$G\cong\rA_5$ has index~$6$ in its normalizer~$N(G)$ in~$\rS_{15}$. (If we realize $G$ as the group~$\SL(2,\F_4)$ acting on the $15$ nonzero vectors in~$\F_4^2$, then it is not hard to check that~$N(G)=\GL(2,\F_4)\rtimes \rC_2$, where the factor~$\rC_2$ corresponds to the Frobenius automorphism of~$\F_4$.)

Note also that in the four-dimensional case the algorithm works fast and recovers the K\"uhnel triangulation~$\CP^2_9$ even without specifying the symmetry group, that is, for the initial data
$$
m=9,\qquad d=4,\qquad G=1,\qquad N=36.
$$
\end{remark}

\section{Checking that $K_1$, $K_2$, $K_3$, and~$K_4$ are combinatorial manifolds}\label{section_check_manifold}

Our approach will be based on the properties of \textit{collapsibility} and  \textit{nonevasiveness} for simplicial complexes. Recall the definitions of these properties (see~\cite{KSS84}). 

First, introduce some notation. Suppose that $K$ is a simplicial complex on the vertex set~$V$. For a simplex~$\sigma\in K$, its \textit{link} and \textit{contrastar} are the simplicial complexes
\begin{align*}
\link(\sigma,K)&=\bigl\{\tau\subseteq V\setminus\sigma \colon \sigma\cup\tau\in K \bigr\},\\
\cost(\sigma,K)&=\bigl\{\tau\subseteq V\setminus\sigma \colon \tau\in K  \bigr\},
\end{align*}
respectively. For a subset~$U\subset V$, the \textit{full subcomplex} of~$K$ spanned by~$U$ is the complex
$$
K_U=\bigl\{\tau\subseteq U \colon \tau\in K  \bigr\}.
$$
Then $\cost(\sigma,K)=K_{V\setminus\sigma}$.

\begin{defin}[Collapsibility]
Suppose that $\sigma$ is a maximal (w.\,r.\,t. inclusion) simplex of a simplicial complex~$K$. A codimension $1$ face $\tau\subset\sigma$ is called a \textit{free facet} if it is contained in no other simplex of~$K$. An  \textit{elementary collapse} is the process of removing from~$K$ some pair $(\sigma,\tau)$, where $\sigma$ is a maximal simplex of~$K$ and $\tau$ is a free facet of~$\sigma$. We say that a simplicial complex~$K$ \textit{collapses} to a subcomplex~$L$ and write $K\searrow L$ if $L$ can be obtained from~$K$ by a finite sequence of elementary collapses. A simplicial complex is called \textit{collapsible} if it collapses to a point.
\end{defin}

\begin{defin}[Nonevasiveness]
The properties of \textit{evasiveness} and \textit{nonevasiveness} of finite simplicial complexes can be defined recursively on the number of vertices in the following way. The empty simplicial complex~$\varnothing$ is, by definition, \textit{evasive}. A point, i.\,e., a $0$-simplex is, by definition, \textit{nonevasive}. Assume that $n\ge 2$ and the property of being evasive or nonevasive has already been defined for all simplicial complexes with less than $n$ vertices. Then a simplicial complex~$K$ with $n$ vertices is said to be \textit{nonevasive} if and only if there is a vertex $v$ of~$K$ such that both $\link(v,K)$ and~$\cost(v,K)$ are nonevasive. Otherwise, $K$ is said to be \textit{evasive}.
\end{defin}

We need the following two results from PL topology.

\begin{propos}[\cite{BrKu87}, Corollary~5]\label{propos_2collapse}
Let $K$ be a combinatorial manifold on the vertex set~$V$. Suppose that there exists a subset $U\subset V$ such that both complexes~$K_U$ and~$K_{V\setminus U}$ are collapsible. Then $K$ is a combinatorial sphere.
\end{propos}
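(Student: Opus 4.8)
The plan is to realize the polyhedron $|K|$ as the union of two PL $d$-balls glued along their common boundary $(d-1)$-sphere; since $K$ is a combinatorial manifold, $|K|$ is a closed (compact, boundaryless) PL $d$-manifold, so this will force $|K|$ to be PL homeomorphic to $S^d$.

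First I would set up the elementary combinatorial picture. Because $U\cap W=\varnothing$, the full subcomplexes $K_U$ and $K_W$ have disjoint geometric realizations, $|K_U|\cap|K_W|=|K_{U\cap W}|=\varnothing$. Moreover, each simplex $\sigma\in K$ decomposes as a join $|\sigma|=|\sigma\cap U|*|\sigma\cap W|$ of its faces $\sigma\cap U\in K_U$ and $\sigma\cap W\in K_W$, and $|K_U|\cap|\sigma|=|\sigma\cap U|$, $|K_W|\cap|\sigma|=|\sigma\cap W|$. Fix a closed regular neighbourhood $N_W$ of $|K_W|$ in $|K|$, small enough to be disjoint from $|K_U|$, and set $B_U:=\overline{|K|\setminus N_W}$. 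Then $|K|=B_U\cup N_W$, the frontier satisfies $B_U\cap N_W=\partial N_W=\partial B_U$, and $|K_U|\subset\mathrm{int}\,B_U$, $|K_W|\subset\mathrm{int}\,N_W$. Using, compatibly over all $\sigma$, the elementary fact that $|\sigma|$ with an open neighbourhood of the face $|\sigma\cap W|$ removed PL-collapses onto the opposite face $|\sigma\cap U|$, one sees that $B_U$ collapses onto $|K_U|$; in particular $B_U$ is a compact PL $d$-manifold with boundary and is a regular neighbourhood of $|K_U|$ in $|K|$.

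Now I would invoke standard PL topology. Since $K_W$ is collapsible and $|K_W|$ lies in the (empty-boundary) manifold $|K|$, the regular neighbourhood $N_W$ is a PL $d$-ball; likewise, since $B_U$ collapses onto the collapsible $|K_U|$ with $|K_U|\subset\mathrm{int}\,B_U$, the regular neighbourhood $B_U$ is a PL $d$-ball (a regular neighbourhood of a collapsible polyhedron lying in the interior of a PL manifold is a PL ball; see~\cite{RoSa72}). Consequently $\partial B_U=\partial N_W$ is the boundary sphere of a PL $d$-ball, hence PL homeomorphic to $S^{d-1}$, and $|K|=B_U\cup_{\partial}N_W$ is obtained by gluing two PL $d$-balls along a PL homeomorphism of their boundary $(d-1)$-spheres. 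Coning such a boundary homeomorphism to a PL self-homeomorphism of one of the balls identifies this union with $B^d\cup_{\mathrm{id}}B^d=S^d$ in the PL category. Hence $|K|$ is PL homeomorphic to $S^d$, i.e.\ $K$ is a combinatorial $d$-sphere.

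The step I expect to be the crux is the claim that $B_U=\overline{|K|\setminus N_W}$ collapses onto $|K_U|$ --- equivalently, that the complement of a small regular neighbourhood of $|K_W|$ in $|K|$ is a regular neighbourhood of $|K_U|$. This requires combining the join decomposition $|\sigma|=|\sigma\cap U|*|\sigma\cap W|$ with the regular-neighbourhood machinery (realizing $N_W$ as a simplicial neighbourhood in an iterated barycentric subdivision, together with uniqueness of regular neighbourhoods) carefully enough to produce an honest collapse; once that is in place the remainder is formal. Note that collapsibility is used twice and both uses are essential: collapsibility of $K_W$ makes $N_W$ a ball, and collapsibility of $K_U$ makes $B_U$ a ball, whereas a single ball together with an arbitrary complementary region in $|K|$ need not assemble into a sphere.
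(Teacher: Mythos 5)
Your argument is correct and pursues the same underlying idea as the paper's: write $|K|$ as the union of two regular neighbourhoods, one of $|K_U|$ and one of $|K_{V\setminus U}|$, note that each is a PL $d$-ball because it is a regular neighbourhood of a collapsible polyhedron inside a PL $d$-manifold, and conclude that gluing two PL balls along a common boundary sphere gives $S^d$. The difference is in how the decomposition is produced. You pick an arbitrary small regular neighbourhood $N_W$ of $|K_{V\setminus U}|$ and then must argue that its closed complement $B_U$ is itself a regular neighbourhood of $|K_U|$ (equivalently, that $B_U\searrow|K_U|$); you correctly flag this as the crux and sketch it via the join decomposition $|\sigma|=|\sigma\cap U|*|\sigma\cap(V\setminus U)|$, but making that a genuine polyhedral collapse uniformly over all $\sigma$ needs some care. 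The paper instead works in the first barycentric subdivision $K'$ and takes the \emph{derived} neighbourhoods $N(K_U,K')$ and $N(K_{V\setminus U},K')$. There the decomposition is immediate: a simplex of $K'$ is a chain $\sigma_0\subsetneq\cdots\subsetneq\sigma_k$ in $K$, and since the nonempty simplex $\sigma_0$ has a vertex in $U$ or in $V\setminus U$, the chain lies in at least one of the two derived neighbourhoods; they cover $K'$ and intersect along their common boundary by construction, and each is a combinatorial ball by Zeeman's Theorem~5 (cited in the paper). So the step you regard as the crux is dissolved structurally by choosing the derived neighbourhood rather than an abstract one, and nothing further (uniqueness of regular neighbourhoods, an explicit collapse of $B_U$) is needed. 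Both routes are sound; the paper's is the shorter and more self-contained of the two.
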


\begin{cor}\label{cor_2collapse}
Suppose that a combinatorial manifold~$K$ contains a simplex~$\sigma$ such that $\cost\sigma$ is collapsible. Then $K$ is a combinatorial sphere.
\end{cor}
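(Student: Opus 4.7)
The plan is to reduce the corollary directly to Proposition~\ref{propos_2collapse} by taking $U$ to be the vertex set of the given simplex $\sigma$ itself. Write $V$ for the vertex set of $K$, and regard $\sigma$ as a subset of $V$. Then by the definition of $\cost$ recalled in the section, $K_{V\setminus\sigma}=\cost(\sigma,K)$, which is collapsible by hypothesis. So the entire task is to argue that $K_{\sigma}$ is also collapsible and then to quote Proposition~\ref{propos_2collapse}.

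The next step is to identify $K_{\sigma}$. Since $\sigma\in K$ and $K$ is closed under taking subsets, every subset of~$\sigma$ is a simplex of~$K$. Therefore $K_{\sigma}$ is nothing but the full simplex on the vertex set~$\sigma$, i.e.\ the boundary-inclusive standard simplex of dimension $|\sigma|-1$. Such a simplex is trivially collapsible: it is a cone on any of its vertices and collapses to that vertex by a sequence of elementary collapses (remove each maximal proper face together with its top face, inductively). Thus $K_{\sigma}$ is collapsible.

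One should also verify that $U=\sigma$ is a legitimate choice, i.e.\ that $\sigma\subsetneq V$; otherwise $K$ would itself be a simplex and not a combinatorial manifold in positive dimension. Since $K$ is a combinatorial $d$-manifold, any simplex has at most $d+1$ vertices, whereas $|V|\ge d+2$ (even a $d$-sphere has this many vertices), so indeed $\sigma$ is a proper subset of~$V$.

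With both $K_{\sigma}$ and $K_{V\setminus\sigma}=\cost(\sigma,K)$ collapsible, Proposition~\ref{propos_2collapse} applies with $U=\sigma$ and yields that $K$ is a combinatorial sphere. There is no real obstacle; the only point worth highlighting is the observation that the hypothesis on $\cost(\sigma,K)$ automatically provides one of the two collapsible pieces, while the other piece is forced to be a full simplex and hence collapsible for free.
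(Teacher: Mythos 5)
Your proof is correct and is exactly the natural reduction that the paper leaves implicit (the corollary is stated without proof, presumably because it follows immediately from Proposition~\ref{propos_2collapse} by taking $U=\sigma$, observing that $K_\sigma$ is a simplex and hence collapsible, and identifying $K_{V\setminus\sigma}=\cost(\sigma,K)$). Nothing more to add.
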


\begin{remark}
The proof of Proposition~\ref{propos_2collapse} relies on theory of regular neighborhoods. Recall that the \textit{first derived neighborhood} of a full subcomplex~$L$ of a combinatorial manifold~$K$ is the subcomplex~$N(L,K')$ of the first barycentric subdivision~$K'$ of~$K$ consisting of  all simplices  that meet~$L$ and all their faces. By~\cite[Theorem~5]{Zee63}, $N(L,K')$ is a combinatorial ball whenever $L$ is collapsible. So under the conditions of Proposition~\ref{propos_2collapse}, the complex~$K'$ is the union of the two combinatorial balls~$N(K_U,K')$ and~$N(K_{V\setminus U},K')$ along their common boundary. Therefore, $K'$ and hence~$K$ are combinatorial spheres.
\end{remark}

\begin{propos}[\cite{KSS84}, Proposition 1]\label{propos_nonev}
A nonevasive simplicial complex is collapsible.
\end{propos}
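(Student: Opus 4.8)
The statement to prove is Proposition~\ref{propos_nonev}: a nonevasive simplicial complex is collapsible. Here is how I would approach it.

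\medskip

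The plan is to induct on the number of vertices~$n$ of the complex~$K$, using the recursive structure of the definition of nonevasiveness directly. The base case $n=1$ is a single point, which is collapsible by convention. For the inductive step, suppose $K$ is nonevasive with $n\ge 2$ vertices. By definition there is a vertex~$v$ such that both $\link(v,K)$ and $\cost(v,K)$ are nonevasive; each of these complexes has at most $n-1$ vertices, so by the inductive hypothesis both are collapsible. The key observation is that $K$ is obtained from $\cost(v,K)$ by attaching the cone~$v*\link(v,K)$ along $\link(v,K)$, i.e.\ $K = \cost(v,K)\cup\bigl(v*\link(v,K)\bigr)$ with $\cost(v,K)\cap\bigl(v*\link(v,K)\bigr)=\link(v,K)$. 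So it suffices to show that this configuration collapses to a point given that $\link(v,K)$ and $\cost(v,K)$ are collapsible.

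\medskip

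First I would establish the elementary fact that if a complex~$L$ is collapsible, then the cone $v*L$ collapses to the subcomplex~$L$ (or, for that matter, to the apex~$v$): indeed, a collapse $L\searrow \mathrm{pt}$ consisting of elementary collapses $(\sigma,\tau)$ can be "coned off" — one performs the collapses $(v*\sigma, v*\tau)$ in $v*L$ in the same order, each of which is a valid elementary collapse in $v*L$ because $v*\tau$ is a free facet of the maximal simplex $v*\sigma$ at that stage; this collapses $v*L$ down to $v*\mathrm{pt}$, a single edge, which then collapses to a point. More to the point, using the collapse $L\searrow\mathrm{pt}$ one shows that $v*L$ collapses onto $L$ itself: run the cone of the collapse sequence but only the $(v*\sigma, v*\tau)$ moves, stopping once all of $v*L$ above $L$ has been peeled away. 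A clean way to package this: the cone $v*L$ collapses to $v*L'$ whenever $L\searrow L'$, by coning each elementary collapse; taking $L'$ to be a point gives $v*L\searrow v*\mathrm{pt}\searrow \mathrm{pt}$, and a small variant gives $v*L\searrow L$.

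\medskip

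With that lemma in hand, the inductive step goes as follows. Since $\link(v,K)$ is collapsible, the cone $v*\link(v,K)\subset K$ collapses onto $\link(v,K)$; these elementary collapses only remove simplices containing~$v$, so they are valid elementary collapses in~$K$ (a facet of such a simplex either contains $v$ and lies in no other simplex, or is the missing face — one has to note that the relevant free facets remain free in $K$, which holds because every simplex of $K$ containing $v$ lies in $v*\link(v,K)$). After these collapses we have reduced $K$ to $\cost(v,K)$. Now $\cost(v,K)$ is collapsible by the inductive hypothesis, so it collapses to a point. Concatenating the two collapse sequences shows $K\searrow\mathrm{pt}$, completing the induction.

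\medskip

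The main obstacle — really the only subtle point — is the bookkeeping in verifying that the "cone off a collapse" moves are genuinely elementary collapses \emph{within the ambient complex~$K$}, not just within the subcomplex $v*\link(v,K)$: one must check that when $v*\tau$ is a free facet of $v*\sigma$ in $v*\link(v,K)$, it is still a free facet in~$K$, i.e.\ no simplex of $\cost(v,K)$ spoils freeness. This is where the identity $\{\mu\in K:v\in\mu\}=\{v\}*\link(v,K)$ is used: any simplex of~$K$ strictly containing $v*\tau$ must contain~$v$, hence lies in $v*\link(v,K)$, so freeness is not affected by the part of~$K$ not containing~$v$. Everything else is a routine induction.
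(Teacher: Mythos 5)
The paper does not prove this proposition; it is stated as a citation to Kahn--Saks--Sturtevant~\cite{KSS84}, so there is no in-paper argument to compare against. Your proof is correct and is the standard inductive argument one would give: peel off the star of a well-chosen vertex by coning the collapse of its link, then collapse the contrastar. One small phrasing note: the step ``$v*L$ collapses onto $L$'' (when $L\searrow\mathrm{pt}$) requires the final elementary collapse $(v*p,\,v)$ after the coned moves $(v*\sigma_i,v*\tau_i)$ have removed everything above $L$ except the edge $v*p$ and the vertex $v$; you gesture at this (``a small variant'') but do not spell it out. Your identification of the crucial bookkeeping point --- that any simplex of $K$ containing $v*\tau$ must contain $v$ and hence lies in $v*\link(v,K)$, so freeness in the cone implies freeness in $K$ --- is exactly right and is what makes the concatenation of the two collapse sequences legitimate.
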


Proposition~\ref{propos_nonev} is important for us, since checking nonevasiveness is significantly faster than checking collapsibility. Indeed, the nonevasiveness can be checked directly by definition, and this procedure requires working with vertices and maximal simplices only without referring directly to simplices of intermediate dimensions.

Suppose that $K$ is a pure $d$-dimensional simplicial complex and we would like to check that $K$ is a combinatorial manifold. To a pair of simplices $\rho\subset\sigma$ of~$K$ such that $\dim\rho<\dim\sigma=d$, we assign the simplicial complex
$$
L_{\rho,\sigma} =\cost\bigl(\sigma\setminus\rho,\link(\rho,K)\bigr)=\{\eta\subseteq V\setminus\sigma \colon \rho\cup\eta\in K\}.
$$
The following proposition gives a sufficient condition for~$K$ to be a combinatorial manifold.

\begin{propos}\label{propos_combman_suff}
Suppose that, for each nonempty simplex $\rho\in K$ with $\dim\rho<d$, there exists a $d$-simplex $\sigma\in K$ such that $\sigma\supset\rho$ and $L_{\rho,\sigma}$ is collapsible. Then $K$ is a combinatorial manifold.
\end{propos}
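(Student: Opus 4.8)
The plan is to reduce the claim to a statement about links of vertices, since by definition $K$ is a combinatorial manifold precisely when the link of every vertex is a combinatorial $(d-1)$-sphere. So fix a vertex $v\in K$ and set $M=\link(v,K)$, a pure $(d-1)$-dimensional complex. The hypothesis, applied to $\rho$'s containing $v$, should translate into a corresponding hypothesis for $M$ one dimension down: for a simplex $\rho\ni v$ write $\rho=\{v\}\cup\rho'$ with $\rho'\in M$, and for $\sigma\supset\rho$ write $\sigma=\{v\}\cup\sigma'$; then $L_{\rho,\sigma}=\{\eta\subseteq V\setminus\sigma\colon\rho\cup\eta\in K\}$ equals exactly $L_{\rho',\sigma'}$ computed inside $M$. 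Hence the hypothesis of Proposition~\ref{propos_combman_suff} for $K$ implies the analogous hypothesis for each vertex link $M$ (for all nonempty $\rho'\in M$, including $\rho'=\varnothing$, which corresponds to $\rho=\{v\}$, giving that $\cost(\sigma',M)$ is collapsible for some facet $\sigma'$ of $M$). This suggests an induction on $d$.

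The base of the induction is $d=1$: a pure $1$-dimensional complex in which the condition holds is readily seen to be a circle, hence a combinatorial $0$-sphere after taking links — more carefully one checks $d=1$ or $d=2$ directly. For the inductive step, assume the proposition holds in dimension $d-1$. By the translation above, each vertex link $M=\link(v,K)$ is a pure $(d-1)$-complex satisfying the hypothesis of the proposition in dimension $d-1$ (for $\rho'$ nonempty), so by the inductive hypothesis $M$ is a combinatorial $(d-1)$-manifold. It remains to upgrade "combinatorial manifold" to "combinatorial sphere" for each $M$. This is where Corollary~\ref{cor_2collapse} enters: applying the hypothesis to $\rho=\{v\}$ itself, there is a $d$-simplex $\sigma\supset\{v\}$ with $L_{\{v\},\sigma}=\cost(\sigma\setminus\{v\},M)$ collapsible; but $\sigma\setminus\{v\}$ is a facet of $M$, so $M$ is a combinatorial $(d-1)$-manifold containing a top simplex whose contrastar is collapsible, and Corollary~\ref{cor_2collapse} gives that $M$ is a combinatorial $(d-1)$-sphere. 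Since this holds for every vertex $v$, $K$ is a combinatorial $d$-manifold.

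The main obstacle — really the only subtle point — is making the reduction "hypothesis for $K$ $\Rightarrow$ hypothesis for each vertex link $M$" completely precise, i.e. verifying the identity $L_{\rho,\sigma}(K)=L_{\rho',\sigma'}(M)$ and checking that every nonempty $\rho'\in M$ together with a suitable $\sigma'$ arises from a pair $\rho\subset\sigma$ in $K$ through this correspondence, and separately handling the degenerate case $\rho'=\varnothing$ via Corollary~\ref{cor_2collapse}. Once that bookkeeping is in place, the induction runs on autopilot. One should also double-check the low-dimensional base case and make sure the statement "$L_{\rho,\sigma}$ collapsible" is being used with the correct orientation (it is $\cost(\sigma\setminus\rho,\link(\rho,K))$, a complex on the vertex set $V\setminus\sigma$), so that the recursion decreases dimension as intended.
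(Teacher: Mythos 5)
Your proof is correct and in substance coincides with the paper's: the paper runs a reverse induction on the dimension $k$ of the simplex $\rho$, showing at each step that $\link(\rho,K)$ is a combinatorial manifold (from the inductive hypothesis on higher-dimensional simplices) and then invoking Corollary~\ref{cor_2collapse} via the collapsibility of $L_{\rho,\sigma}=\cost(\sigma\setminus\rho,\link(\rho,K))$, whereas you package the same recursion as an induction on $d$ by observing that the hypothesis passes to vertex links via $L_{\rho,\sigma}(K)=L_{\rho',\sigma'}(\link(v,K))$ and then applying Corollary~\ref{cor_2collapse} with $\rho=\{v\}$. When your induction on $d$ is unfolded it reproduces exactly the paper's chain of statements about links of simplices of increasing codimension, so the two arguments are essentially identical, with yours merely outsourcing the intermediate ``$\link(\rho,K)$ is a combinatorial manifold'' step to a recursive call of the proposition rather than deriving it in line.
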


\begin{proof}
Let us prove by reverse induction on~$k=d-1,\ldots,0$ that the link of every $k$-simplex $\rho\in K$ is a combinatorial $(d-k-1)$-sphere.

\textsl{Base of induction.} Suppose that $\dim\rho=d-1$. We know that there is a $d$-simplex $\sigma\supset\rho$ such that $L_{\rho,\sigma}$ is collapsible. Since $\dim L_{\rho,\sigma}=0$, it follows that $L_{\rho,\sigma}$ is a point. Hence, $\link(\rho,K)$ is a pair of points, i.\,e., a $0$-dimensional sphere.

\textsl{Inductive step.} Suppose that $k<d-1$ and assume that we have already proved that links of all simplices of $K$ of dimensions greater than $k$ are combinatorial spheres. Consider a $k$-simplex $\rho\in K$. For each vertex $v$ of $\link(\rho,K)$, the complex 
$$
\link\bigl( v, \link(\rho,K)\bigr)=\link(\{v\}\cup\rho,K)
$$
is a combinatorial $(d-k-2)$-sphere. Therefore, $\link(\rho,K)$ is a combinatorial $(d-k-1)$-manifold. We know that there is a $d$-simplex $\sigma\supset\rho$ such that the complex $$L_{\rho,\sigma}=\cost\bigl(\sigma\setminus\rho,\link(\rho,K)\bigr)$$ is collapsible. So, by Corollary~\ref{cor_2collapse}, $\link(\rho,K)$ is a  combinatorial  $(d-k-1)$-sphere.

For $k=0$, we obtain that the links of all vertices of~$K$ are combinatorial $(d-1)$-spheres. Thus, $K$ is a combinatorial  $d$-manifold.
\end{proof}

The following proposition is checked by a computer enumeration.

\begin{propos}\label{propos_check_nonev}
Let $K$ be one of the four simplicial complexes $K_1$, $K_2$, $K_3$, and~$K_4$ described in Tables~\ref{table_1234}--\ref{table_4}. Then, for each nonempty simplex $\rho\in K$ with $\dim\rho<16$, there exists a $16$-dimensional simplex $\sigma\in K$ such that $\sigma\supset\rho$ and $L_{\rho,\sigma}$ is nonevasive and hence collapsible.
\end{propos}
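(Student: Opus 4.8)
The plan is to verify this by a computer enumeration, organised so as to keep both the number of cases and the size of the individual complexes~$L_{\rho,\sigma}$ manageable. The first move is to reduce modulo the symmetry group: for $g\in G_{351}$, the permutation $g$ carries $L_{\rho,\sigma}$ isomorphically onto $L_{g\rho,g\sigma}$, and nonevasiveness is an isomorphism invariant, so it suffices to exhibit a suitable~$\sigma$ for just one representative~$\rho$ of each $G_{351}$-orbit of nonempty simplices of~$K$ of dimension at most~$15$. Every such~$\rho$ is a proper nonempty face of some $16$-simplex, so I would obtain all the needed orbit representatives by running over the $286$ orbit representatives of $16$-simplices from Tables~\ref{table_1234}--\ref{table_4}, forming all their proper nonempty subsets, and discarding those $G_{351}$-equivalent to one already recorded; the number of orbits is of order $\sum_{k=1}^{15}f_k/351$ (peaking near $k=9$), i.e.\ a few hundred thousand, which is within reach. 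Note also that the case $\dim\rho=15$ needs no computation: since $K$ is a weak $16$-pseudomanifold, $\link(\rho,K)$ is a pair of points, so $L_{\rho,\sigma}$ is a single point, which is trivially nonevasive.

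For the core routine I would first expand~$K$ to the full list of its $100386$ top-dimensional simplices, encode each as a $27$-bit mask, and store them in a hash table for fast containment queries. Then, for each orbit representative~$\rho$, I would run through all $16$-simplices $\sigma\in K$ with $\sigma\supset\rho$; for each such~$\sigma$, the complex $L_{\rho,\sigma}$ lives on the $10$-element set $V\setminus\sigma$ (more precisely, on a subset of it), and its facets are exactly the inclusion-maximal members of $\{\tau\setminus\sigma : \tau\in K,\ \dim\tau=16,\ \tau\supset\rho\}$, so it can be built from the list of top simplices alone, with no reference to simplices of intermediate dimension --- this is the whole point of working with nonevasiveness rather than collapsibility (Proposition~\ref{propos_nonev}). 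I would then test $L_{\rho,\sigma}$ for nonevasiveness straight from the definition, recursively searching for a vertex~$v$ with $\link(v,\cdot)$ and $\cost(v,\cdot)$ both nonevasive and bottoming out at a point (nonevasive) or the empty complex (evasive). A point worth recording is that every complex occurring in this recursion has the form $\{\tau\subseteq U : \tau\cup W\in L_{\rho,\sigma}\}$ for disjoint subsets~$U,W$ of the (at most~$10$) vertices of~$L_{\rho,\sigma}$ --- taking $\link(v,\cdot)$ moves~$v$ from~$U$ into~$W$, taking $\cost(v,\cdot)$ deletes~$v$ from~$U$ --- so memoising the answer on the pair $(U,W)$ bounds the entire recursion by $3^{10}$ nodes; with a sensible heuristic for choosing~$v$ (say, a vertex of extremal degree) a nonevasive deletion order is found almost at once, and backtracking over the choice of~$v$ is available as a fallback. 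As soon as one~$\sigma$ produces a nonevasive $L_{\rho,\sigma}$, success is recorded for~$\rho$ and the loop moves on; the content of the proposition is precisely that this search never comes up empty for $K\in\{K_1,K_2,K_3,K_4\}$.

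The hard part is purely quantitative. On one side the outer loop has hundreds of thousands of simplex orbits; on the other, for low-dimensional~$\rho$ --- already for the unique $G_{351}$-orbit of edges --- the complex $L_{\rho,\sigma}$ can carry several thousand facets on $10$ vertices, so one must be careful that neither the enumeration of orbits nor the individual nonevasiveness tests become prohibitive. The ingredients above --- the $27$-bit-mask encoding, the hashing of top simplices, the facet-only construction of~$L_{\rho,\sigma}$, and the $(U,W)$-memoised recursion --- are exactly what keep the running time reasonable. Feeding $K_1,K_2,K_3,K_4$ into this procedure then confirms the statement, and hence, via Proposition~\ref{propos_combman_suff}, establishes Proposition~\ref{propos_combman}. (A priori it is conceivable that some~$\rho$ admits no good~$\sigma$; what is being proved is the empirical fact that, for these four complexes, one always does.)
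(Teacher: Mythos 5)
Your proposal is correct and rests on the same two pillars as the paper's proof: reduction modulo $G_{351}$, and the direct recursive test for nonevasiveness, which, as you observe, only ever touches vertices and facets and therefore operates on an at-most-$10$-element vertex set. Your description of the facets of $L_{\rho,\sigma}$ as the inclusion-maximal members of $\{\tau\setminus\sigma : \tau\supseteq\rho,\ \dim\tau=16\}$, the $(U,W)$-parametrization of the nonevasiveness recursion with the $3^{10}$ memoization bound, and the remark that $\dim\rho=15$ needs no computation are all correct and in the spirit of the paper's argument.

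The one genuine point of departure is the organization of the enumeration, and this is exactly what the paper's accompanying remark flags as the computational bottleneck. You propose to precompute orbit representatives of all nonempty faces of dimension at most $15$ (a few hundred thousand orbits, after deduplicating tens of millions of faces under a $351$-element group) and then, for each $\rho$-orbit, to scan the list of $100386$ facets to assemble $L_{\rho,\sigma}$. The paper instead takes the $286$ orbit representatives $\sigma_s$ of $16$-simplices as the outer loop and, for each $\sigma_s$, makes a single pass over the $100386$ facets $\tau$, inserting $\tau\setminus\sigma_s$ into every $L_{\rho,\sigma_s}$ with $\rho\subseteq\sigma_s\cap\tau$; this batch-generates all complexes $L_{\rho,\sigma_s}$ for a fixed $\sigma_s$ in one sweep, so the facet list is traversed only $286$ times rather than once per $\rho$-orbit, and the deduplication by $G_{351}$-orbit of $\rho$ is done lazily during the nonevasiveness-checking phase rather than as an upfront precomputation. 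Your approach is mathematically sound and might still finish in acceptable time with the bit-mask and memoization tricks you list, but the paper's reorganization is the cleaner way to prevent the generation step from dominating (the paper reports roughly $40$ minutes per $K_i$ with it).
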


It follows from Propositions~\ref{propos_combman_suff} and~\ref{propos_check_nonev} that $K_1$, $K_2$, $K_3$, and~$K_4$ are $16$-dimensional combinatorial manifolds, which gives Proposition~\ref{propos_combman}.

\begin{remark}
Let us give a more detailed description of the algorithm whose implementation yields Proposition~\ref{propos_check_nonev}. Recall that each simplicial complex $K=K_i$ is  given by the list $\sigma_1,\ldots,\sigma_{286}$ of representatives of the $G_{351}$-orbits of its $16$-simplices. Since $L_{g(\rho),g(\sigma)}\cong L_{\rho,\sigma}$ for all $g\in G_{351}$, it is sufficient to examine the nonevasiveness of the complexes $L_{\rho,\sigma_s}$ only, where $\rho$ runs over nonempty proper faces of~$\sigma_s$. Moreover, once we have checked that a certain complex  $L_{\rho,\sigma_s}$ is nonevasive, we do not need any more to examine the nonevasiveness of  other complexes of the form $L_{\rho',\sigma_t}$ such that $\rho'$ lies in the $G_{351}$-orbit of~$\rho$. However, it turns out that generating complexes~$L_{\rho,\sigma_s}$ is a more time-consuming operation than checking that these complexes are nonevasive. Indeed, the simplices of~$L_{\rho,\sigma_s}$ are exactly the simplices $\tau\setminus\sigma_s$, where $\tau\in K$ and $\tau\supset\rho$. So to find all maximal simplices of~$L_{\rho,\sigma_s}$, we need to enumerate the $100386$ maximal simplices $\tau\in K$. If we tried to perform this enumeration separately for each pair $\rho\subset\sigma_s$, this would be rather time-consuming. So we proceed as follows.  For each $\sigma_s$, we perform the enumeration over all the $100386$ maximal simplices $\tau\in K$ only once, and for each such~$\tau$, we add the simplex $\tau\setminus \sigma_s$ to all simplicial complexes $L_{\rho,\sigma_s}$ with $\rho\subseteq \sigma_s\cap\tau$. Using this procedure, we generate all the simplicial complexes $L_{\rho,\sigma_s}$ with given~$s$ simultaneously. Then we start checking whether the generated complexes~$L_{\rho,\sigma_s}$ are nonevasive, and once a complex  $L_{\rho,\sigma_s}$ is checked to be nonevasive, we remove all complexes $L_{\rho',\sigma_t}$ with $\rho'$ in the $G_{351}$-orbit of~$\rho$ from the list of complexes to be examined. Being implemented as a C++ program under the same conditions as in Remark~\ref{remark_implement}, the running time of this algorithm is about $40$ minutes for each complex~$K_i$.
\end{remark}

\section{The normalizer of~$G_{351}$ and absence of additional symmetries}\label{section_no_add_sym}

In this section we prove Propositions~\ref{propos_normalizer} and~\ref{propos_SymK}.

Let $\Gamma$ be the oriented graph on the vertex set~$\F_{27}$ such that an oriented edge goes from~$a$ to~$b$ if and only if $b-a$ is a nonzero square. We denote the oriented edge from~$a$ to~$b$ by~$(a,b)$.  For any nonzero element $c\in\F_{27}$, exactly one of the two elements~$\pm c$ is a square; hence any two distinct elements $a,b\in\F_{27}$ are connected by an oriented edge in exactly one of the two directions. For a vertex~$a$ of~$\Gamma$, we denote by~$O_a$ (respectively, by~$I_a$) the set consisting of all vertices $b\ne a$ such that the graph~$\Gamma$ contains the outgoing edge~$(a,b)$ (respectively, the incoming  edge~$(b,a)$). Then $|O_a|=|I_a|=13$. We denote by~$\Sym(\Gamma)$ the symmetry group of the oriented graph~$\Gamma$.

\begin{propos}\label{propos_orgraph_symmetries}
We have $\Sym(\Gamma)=G_{351}\rtimes \rC_3$, where the factor~$\rC_3$ is generated by the Frobenius automorphism $F\colon x\mapsto x^3$.
\end{propos}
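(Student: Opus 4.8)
The plan is to show the two inclusions $G_{351}\rtimes\rC_3\subseteq\Sym(\Gamma)$ and $\Sym(\Gamma)\subseteq G_{351}\rtimes\rC_3$ separately. The first inclusion is the easy direction: an affine map $x\mapsto ax+b$ with $a$ a nonzero square sends a pair $(u,v)$ with $v-u$ a square to the pair $(au+b,av+b)$ whose difference $a(v-u)$ is again a square (product of squares), so $G_{351}\subseteq\Sym(\Gamma)$; and the Frobenius $F\colon x\mapsto x^3$ is an additive homomorphism that multiplies differences by nothing — it sends $v-u$ to $(v-u)^3$, and cubing preserves the property of being a square since the squares form the index-$2$ subgroup $(\F_{27}^\times)^2$, which is closed under cubing (indeed cubing is an automorphism of the cyclic group $\F_{27}^\times\cong\rC_{13}\times\rC_2$ fixing the $\rC_2$ part). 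Hence $F\in\Sym(\Gamma)$ as well. That $F$ normalizes $G_{351}$ and has order $3$ was already observed in the text (it is the permutation $F$ with $F^3=1$), and $F\notin G_{351}$ because $F$ is not affine (it fixes $0$ and $1$ but is not the identity), so the subgroup generated is genuinely $G_{351}\rtimes\rC_3$ of order $1053$.

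For the reverse inclusion one shows $|\Sym(\Gamma)|\le 1053$, or equivalently that the stabilizer of a vertex, say $0$, has order at most $39$. Since $G_{351}\rtimes\rC_3$ already acts transitively on the $27$ vertices (the $\F_{27}$-translations do), it suffices to pin down $\Stab_{\Sym(\Gamma)}(0)$. An element fixing $0$ must permute $O_0$ (the $13$ nonzero squares) among themselves and $I_0$ (the $13$ non-squares) among themselves. The subgroup of $G_{351}\rtimes\rC_3$ fixing $0$ is $(\F_{27}^\times)^2\rtimes\langle F\rangle$ of order $39$, acting on $O_0\cong\alpha^{\Z/13}$ by multiplication by squares (a regular $\rC_{13}$-action) together with the cube map. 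The claim is that no further symmetry fixing $0$ exists. The natural way to see this: pass to the subgroup additionally fixing $1\in O_0$. Such a symmetry $\phi$ fixes $0$ and $1$; I will argue it must be a field automorphism of $\F_{27}$, i.e.\ one of $1, F, F^2$. To do this, note that $\phi$ preserves the relation "$b-a\in(\F_{27}^\times)^2$", hence also its negation, hence the full edge relation of the \emph{complete} directed structure, and in particular $\phi$ preserves, for each pair $\{a,b\}$, the ternary "betweenness-like" data encoded by which of finitely many linear combinations are squares. The cleanest route is: show that $\phi$ preserves addition. For any $a$, the set $O_a$ is the translate $a+O_0$, and $\phi(O_a)=O_{\phi(a)}=\phi(a)+O_0$; combining the constraints as $a$ ranges over $\F_{27}$ forces $\phi$ to be additive (this is where a short computation or a counting argument over $\F_3^3$ enters). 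An additive bijection fixing $1$ and preserving squares is $\F_3$-linear and multiplicative on the square classes, and one checks it must be a power of Frobenius. Hence $\Stab(0)$ has order exactly $39$ and $|\Sym(\Gamma)|=27\cdot 39=1053$, giving equality.

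The main obstacle is the additivity step in the reverse inclusion: extracting "$\phi$ is additive" purely from "$\phi$ preserves the square/non-square pattern of differences". I expect this to be handled by a finite check — e.g.\ observing that the graph $\Gamma$ is a Paley-type tournament on $27$ vertices and invoking (or reproving) the known fact that the automorphism group of the Paley tournament of order $q$ is exactly the group of maps $x\mapsto a\sigma(x)+b$ with $a$ a square and $\sigma$ a field automorphism, of order $q(q-1)/2\cdot\log_p q$. For $q=27$ this is $27\cdot 13\cdot 3=1053$, matching $|G_{351}\rtimes\rC_3|$. In the paper's computer-assisted style, this order computation can alternatively be verified directly by a machine enumeration of $\Sym(\Gamma)$ as a subgroup of $\rS_{27}$, which then immediately yields the structure $G_{351}\rtimes\rC_3$ together with the explicit generators $A$, $B$, and $F$.
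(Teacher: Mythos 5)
Your forward inclusion $G_{351}\rtimes\rC_3\subseteq\Sym(\Gamma)$ is fine, and your plan for the reverse inclusion (transitivity on the $27$ vertices plus a bound on the vertex-stabilizer) is equivalent in spirit to the paper's count $|\Sym(\Gamma)|=351\cdot 3=1053$ via edge-transitivity. You also correctly isolate the one nontrivial step: the pointwise stabilizer of $\{0,1\}$ must be shown to equal $\langle F\rangle$.

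The problem is that this step is exactly where your proposal stops being a proof. You sketch an "additivity" argument — that a symmetry $\phi$ fixing $0$ and $1$ must preserve addition because the translates $O_a=a+O_0$ pin it down — but you never carry it out, and as stated it does not work: preserving the square/non-square pattern of differences is a much weaker hypothesis than preserving addition, and converting the former into the latter is precisely the content you would need to supply. You then offer two escape hatches (cite the classification of automorphisms of Paley tournaments, or run a machine enumeration of $\Sym(\Gamma)\subset\rS_{27}$), but those replace the proof rather than complete it. The paper's own argument never establishes additivity; instead it pins down a symmetry fixing $0$ and $1$ by a direct finite elimination. First (Lemma~6.4 in the paper) it shows such a symmetry stabilizes $O_0\cap O_1=\{\alpha,\alpha^2,\alpha^3,\alpha^5,\alpha^6,\alpha^9\}$ and, by counting the sizes of the triple intersections $O_0\cap O_1\cap O_{\alpha^k}$, must stabilize each of the two Frobenius orbits $\{\alpha,\alpha^3,\alpha^9\}$ and $\{\alpha^2,\alpha^5,\alpha^6\}$, so that after composing with a suitable power of $F$ it fixes $\alpha$, $\alpha^3$, $\alpha^9$. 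Then (Lemma~6.3) it shows that a symmetry fixing $0,1,\alpha,\alpha^3,\alpha^9$ must be the identity: the remaining squares $\alpha^k$ are distinguished by their membership pattern in $O_1,O_\alpha,O_{\alpha^3},O_{\alpha^9}$ (Table~6.1), and the non-squares $-\alpha^k$ are distinguished by the sets $O_{-\alpha^k}\cap O_0$, which consist of squares and are therefore already stabilized. If you want to complete your proof in the paper's style rather than by citation or by computer, you need to replace the additivity heuristic with a concrete elimination argument of this kind.
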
 

The proof of this proposition will be decomposed into several lemmas.

\begin{lem}\label{lem_orgraph_symmetries_inclusion}
We have, $\Sym(\Gamma)\supseteq G_{351}\rtimes \rC_3$. Besides, the subgroup~$G_{351}\subset \Sym(\Gamma)$ acts transitively on the edges of~$\Gamma$.
\end{lem}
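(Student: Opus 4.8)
The plan is to verify the claimed inclusion directly and then to check the edge-transitivity by an explicit count. First I would show that $G_{351}$ preserves $\Gamma$. An element of $G_{351}$ has the form $g\colon x\mapsto ax+b$ with $a\in(\F_{27}^{\times})^2$ and $b\in\F_{27}$. If $(x,y)$ is an edge, i.e. $y-x$ is a nonzero square, then $g(y)-g(x)=a(y-x)$ is a product of two nonzero squares, hence again a nonzero square; so $g$ maps edges to edges. Since $g$ is a bijection of $\F_{27}$ and the edge relation is determined by the ``difference is a nonzero square'' condition in both directions, $g\in\Sym(\Gamma)$. Next I would treat the Frobenius automorphism $F\colon x\mapsto x^3$. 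It is an automorphism of the field, so it is additive and multiplicative; in particular it maps the set of nonzero squares onto itself (the squares form the unique index-$2$ subgroup of $\F_{27}^{\times}$, which is characteristic). Hence $F(y)-F(x)=(y-x)^3$ is a nonzero square iff $y-x$ is, so $F\in\Sym(\Gamma)$. Since $F$ has order $3$ (as $3^3\equiv 3\pmod{13}$, or directly because $\F_{27}$ has degree $3$ over $\F_3$) and normalizes $G_{351}$ (conjugation by $F$ sends $x\mapsto ax+b$ to $x\mapsto a^3 x + b^3$, still in $G_{351}$ since cubing preserves squares), the subgroup generated by $G_{351}$ and $F$ is the semidirect product $G_{351}\rtimes\rC_3$, and it is contained in $\Sym(\Gamma)$. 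One should also note $F\notin G_{351}$ since $F$ fixes $0$ and $1$ but is not the identity, whereas the only element of $G_{351}$ fixing both $0$ and $1$ is the identity; this confirms the product is genuinely of order $351\cdot 3$.

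For the transitivity statement, I would argue exactly as in Step~4 of the proof scheme, where the transitivity of $G_{351}$ on unordered edges of the triangulation was established. An oriented edge of $\Gamma$ is an ordered pair $(a,b)$ with $b-a$ a nonzero square. Given such an edge, the affine map $h\colon x\mapsto (b-a)x+a$ has linear part $b-a\in(\F_{27}^{\times})^2$, hence lies in $G_{351}$, and satisfies $h(0)=a$, $h(1)=b$; since $1-0=1$ is a nonzero square, $(0,1)$ is an edge of $\Gamma$, and $h$ carries it to $(a,b)$. Thus $G_{351}$ acts transitively on the edge set of $\Gamma$. (Moreover the action is free on edges, since a nontrivial affine map has at most one fixed point, so it cannot fix both endpoints of an edge; this matches the $|G_{351}|=351$ edges of $\Gamma$, as $\Gamma$ has $\binom{27}{2}=351$ unordered pairs each oriented exactly one way.)

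I do not anticipate a serious obstacle here: the lemma is the ``easy inclusion'' half of Proposition~\ref{propos_orgraph_symmetries}, and every step is a direct computation with affine maps over $\F_{27}$. The only point requiring a little care is making sure the Frobenius $F$ really does preserve the class of squares and commutes appropriately with the affine structure, i.e. that $\langle G_{351}, F\rangle$ is the full semidirect product $G_{351}\rtimes\rC_3$ rather than something smaller or a non-split extension; this is handled by the conjugation computation $F\circ(x\mapsto ax+b)\circ F^{-1} = (x\mapsto a^3x+b^3)$ together with $F^3=\mathrm{id}$ and $F\notin G_{351}$. The genuinely hard direction, namely the reverse inclusion $\Sym(\Gamma)\subseteq G_{351}\rtimes\rC_3$, is deferred to the subsequent lemmas and is not part of this statement.
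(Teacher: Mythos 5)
Your argument is correct and follows essentially the same route as the paper's own proof: both verify directly that affine maps $x\mapsto ax+b$ with $a$ a nonzero square and the Frobenius $F$ preserve the ``difference is a square'' relation, and both use the explicit map $x\mapsto (b-a)x+a$ to move $(0,1)$ to an arbitrary edge $(a,b)$. One tiny slip worth fixing: your parenthetical ``$3^3\equiv 3\pmod{13}$'' is false ($27\equiv 1\pmod{13}$), but this is irrelevant since the alternative justification you give (that $\F_{27}$ has degree $3$ over $\F_3$) is the correct reason $F$ has order $3$.
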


\begin{proof}
It follows immediately from the definition of~$\Gamma$ that it is symmetric with respect to additive shifts $x\mapsto x+c$, where $c\in \F_{27}$, multiplicative shifts by squares $x\mapsto cx$, where $c\in(\F_{27}^{\times})^2$, and all automorphisms of the field~$\F_{27}$.
Besides, if $(a,b)$ is an edge of~$\Gamma$, then $b-a$ is a square and hence the transformation $x\mapsto (b-a)x+a$ belongs to~$G_{351}$ and takes the edge~$(0,1)$ to~$(a,b)$. 
\end{proof}

\begin{lem}\label{lem_sym_stab_mnogo}
Suppose that an  element $g\in\Sym(\Gamma)$ stabilizes every of the vertices~$0$, $1$, $\alpha$, $\alpha^3$, and~$\alpha^9$. Then $g=\mathrm{id}$.
\end{lem}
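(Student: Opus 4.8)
The plan is to show that the five vertices $0$, $1$, $\alpha$, $\alpha^3$, $\alpha^9$ form a \emph{resolving set} for the tournament $\Gamma$: any two distinct vertices of $\F_{27}$ differ in the direction of their edge to at least one of these five. Once this is established, the lemma is immediate: if $g\in\Sym(\Gamma)$ fixes $0,1,\alpha,\alpha^3,\alpha^9$, then for every vertex $v$ the vertex $g(v)$ has, for each $s\in\{0,1,\alpha,\alpha^3,\alpha^9\}$, an edge to/from $s=g(s)$ in the same direction as $v$ does; hence $g(v)$ and $v$ have the same adjacency pattern to the resolving set, so $g(v)=v$.

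To keep the verification small I would decompose it using that $g$ fixes $0$, so $g$ preserves the out-neighbourhood $O_0$ (the nonzero squares $\alpha^k$, $k=0,\dots,12$) and the in-neighbourhood $I_0$ (the non-squares $-\alpha^k$). On $O_0$, writing $\alpha^i\leftrightarrow i\in\Z/13$, the induced subtournament is circulant: $i\to j$ iff $j-i\in D$, where $D=\{d\in\Z/13\setminus\{0\}\colon \alpha^d-1\in(\F_{27}^{\times})^2\}$. Using $\alpha^3=\alpha+1$ one computes $\alpha-1=\alpha^9$ and $\alpha^2-1=\alpha^{12}$, so $1,2\in D$; since $D$ is stable under multiplication by $3$ (the Frobenius acts on squares by $\alpha^d\mapsto\alpha^{3d}$) and exactly one of $d,-d$ lies in $D$ for each $d\neq 0$, this forces $D=\{1,2,3,5,6,9\}$. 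Thus $g|_{O_0}$ is an automorphism of this explicit $13$-vertex circulant tournament fixing the residues $0,1,3,9$, and a direct check shows the nine remaining residues have pairwise distinct adjacency patterns to $\{0,1,3,9\}$; hence $g$ fixes every square. Then, $g$ fixing all of $O_0$, each non-square is pinned down by its (directly checked to be distinct) adjacency pattern to the $13$ squares, so $g$ fixes $I_0$ as well and $g=\mathrm{id}$.

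I do not expect a conceptual obstacle: the content of the lemma is just two finite separation checks — that $\{0,1,3,9\}$ resolves the circulant tournament on the thirteen squares and that $O_0$ resolves the thirteen non-squares — plus the identification of the connection set $D$. All of these are routine in the explicit arithmetic of $\F_{27}=\F_3(\alpha)$ with $\alpha^{13}=1$, and, in keeping with the rest of the paper, could equally be confirmed by a short computer enumeration. The only mild subtlety is bookkeeping ($27<2^5$ leaves little slack, so one really must use a set that resolves $\Gamma$), and the choice $\{0,1,\alpha,\alpha^3,\alpha^9\}$ is convenient because it is Frobenius-stable, which is exactly what makes it usable in the subsequent lemmas leading to the computation of $\Sym(\Gamma)=G_{351}\rtimes\rC_3$.
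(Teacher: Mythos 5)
Your proposal is correct and follows essentially the same approach as the paper's proof: both distinguish the squares by their adjacency pattern to $\{1,\alpha,\alpha^3,\alpha^9\}$ (your connection set $D=\{1,2,3,5,6,9\}$ reproduces exactly the paper's Table~\ref{table_alpha_member}), and then the non-squares by their adjacency to the set of squares $O_0$. The only cosmetic difference is that for the non-squares the paper replaces your ``direct check'' with a short argument showing that the sets $O_{-\alpha^k}\cap O_0=\alpha^k\cdot(O_{-1}\cap O_0)$ are pairwise distinct because they are the thirteen multiplicative shifts of the six-element set $O_{-1}\cap O_0=\{\alpha,\alpha^3,\alpha^4,\alpha^9,\alpha^{10},\alpha^{12}\}$.
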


\begin{proof}
Whenever $g$ stabilizes a vertex~$x$, it necessarily stabilizes each of the sets~$O_x$ and~$I_x$. So $g$ stabilizes every of the sets~$O_0$, $O_1$, $O_{\alpha}$, $O_{\alpha^3}$, and~$O_{\alpha^9}$, and the corresponding sets~$I_x$. Since $O_0=(\F_{27}^{\times})^2$, we obtain that $g$ takes squares to squares and non-squares to non-squares.

First, let us prove that $g$ stabilizes every square~$\alpha^k$. To do this, for every square~$\alpha^k$ different from~$1$, $\alpha$, $\alpha^3$, and~$\alpha^9$, find out which of the sets $O_1$, $O_{\alpha}$, $O_{\alpha^3}$, and~$O_{\alpha^9}$ contain~$\alpha^k$, that is, which of the elements $\alpha^k-1$, $\alpha^k-\alpha$, $\alpha^k-\alpha^3$, and $\alpha^k-\alpha^9$ are squares. The result of an easy calculation is shown in Table~\ref{table_alpha_member}. In this table $+$ (respectively, $-$) indicates that the element belongs (respectively, does not belong) to the set. We see that the columns in the table are pairwise different, so the squares $\alpha^2$, $\alpha^4$, $\alpha^5$, $\alpha^6$, $\alpha^7$, $\alpha^8$, $\alpha^{10}$, $\alpha^{11}$, and~$\alpha^{12}$ are pairwise distinguished by membership in the sets $O_1$, $O_{\alpha}$, $O_{\alpha^3}$, and~$O_{\alpha^9}$. Thus,  $g$ stabilizes every square~$\alpha^k$.

\begin{table}
\caption{Membership of elements in sets}\label{table_alpha_member}
\begin{tabular}{|l|c|c|c|c|c|c|c|c|c|}
\hline
$\phantom{\strut^2}$ & $\alpha^2$ & $\alpha^4$ & $\alpha^5$ & $\alpha^6$ & $\alpha^7$ & $\alpha^8$ & $\alpha^{10}$ & $\alpha^{11}$ & $\alpha^{12}$\\
\hline
$O_1$ & $+$ & $-$ & $+$ & $+$ & $-$ & $-$ & $-$ & $-$ & $-$\\
\hline
$O_{\alpha}$ & $+$ & $+$ & $-$ & $+$ & $+$ & $-$ & $+$ & $-$ & $-$\\
\hline
$O_{\alpha^3}$ & $-$ & $+$ & $+$ & $+$ & $-$ & $+$ & $-$ & $-$ & $+$\\
\hline
$O_{\alpha^9}$ & $+$ & $-$ & $+$ & $-$ & $-$ & $-$ & $+$ & $+$ & $+$\\
\hline
\end{tabular}
\end{table}

Now, consider all non-squares $-\alpha^k$, where $k=0,\ldots,12$. The set $O_{-1}\cap O_0$ consists of all elements $x\in\F_{27}$ such that both $x$ and~$x+1$ are nonzero squares. It is easy to see that $O_{-1}\cap O_0=\{\alpha,\alpha^3,\alpha^4,\alpha^9,\alpha^{10},\alpha^{12}\}$. The sets $O_{-\alpha^k}\cap O_0$ are obtained from $O_{-1}\cap O_0$ by multiplication by~$\alpha^k$ and hence are pairwise different. On the other hand, each set~$O_{-\alpha^k}\cap O_0$ consists of squares and therefore is stabilized by~$g$. So for all non-squares~$x$, we have
$$
O_x\cap O_0=g( O_x\cap O_0 ) = O_{g(x)}\cap O_0
$$
and hence $g(x)=x$.
\end{proof}

\begin{lem}\label{lem_sym_stab_malo}
An element $g\in\Sym(\Gamma)$ satisfies~$g(0)=0$ and~$g(1)=1$ if and only if $g$ is a power of the Frobenius automorphism~$F$.
\end{lem}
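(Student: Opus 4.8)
The plan is to prove Lemma~\ref{lem_sym_stab_malo} by reducing it to Lemma~\ref{lem_sym_stab_mnogo}. The Frobenius automorphism~$F$ clearly lies in $\Sym(\Gamma)$ (it is among the field automorphisms mentioned in Lemma~\ref{lem_orgraph_symmetries_inclusion}) and fixes~$0$ and~$1$, so the powers $\mathrm{id}, F, F^2$ give three elements of the stabilizer of $\{0,1\}$ (pointwise). One direction of the ``if and only if'' is therefore trivial. For the converse, suppose $g\in\Sym(\Gamma)$ fixes~$0$ and~$1$. As in the proof of Lemma~\ref{lem_sym_stab_mnogo}, fixing~$0$ forces $g$ to preserve $O_0=(\F_{27}^\times)^2$, hence $g$ permutes the squares and permutes the non-squares; and fixing~$1$ forces $g$ to preserve $O_1$ and $I_1$.

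The key step is to show that $g$ must also fix the three elements $\alpha$, $\alpha^3$, $\alpha^9$ \emph{up to composition with a power of~$F$}. Note that $\alpha$, $\alpha^3$, $\alpha^9$ form a single orbit of $F$ (since $\alpha^{27}=\alpha$ and $\alpha^{13}=1$, so $3^3\equiv 1\pmod{13}$ and the orbit of $1$ under $k\mapsto 3k$ is $\{1,3,9\}$). So it suffices to prove that $g$ maps the set $\{\alpha,\alpha^3,\alpha^9\}$ to itself: once this is known, pick $j\in\{0,1,2\}$ with $F^j g$ fixing $\alpha$ (possible because $g$ permutes the three-element $F$-orbit, hence acts on it as some power of the $3$-cycle induced by $F$), and then $F^jg$ fixes $0$, $1$, $\alpha$; a further look shows it fixes $\alpha^3$ and $\alpha^9$ too (the permutation $F^jg$ induces on $\{\alpha,\alpha^3,\alpha^9\}$ commutes with the $3$-cycle and fixes $\alpha$, so it is the identity there). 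By Lemma~\ref{lem_sym_stab_mnogo}, $F^jg=\mathrm{id}$, i.e. $g=F^{-j}$, which is a power of~$F$.

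To establish that $g$ stabilizes $\{\alpha,\alpha^3,\alpha^9\}$, I would characterize this set intrinsically in terms of the graph~$\Gamma$ together with the already-fixed vertices $0$ and~$1$. The natural candidate invariants are membership relations among $O_0, I_0, O_1, I_1$: for each square $\alpha^k$ with $k=1,\dots,12$ one records whether $\alpha^k\in O_1$ (equivalently $\alpha^k-1$ is a square) and whether $1\in O_{\alpha^k}$, i.e. $1-\alpha^k$ a square. Since $g$ preserves each of $O_0, I_0, O_1, I_1$, it preserves the subset of squares satisfying any fixed combination of these conditions. A direct finite calculation in $\F_{27}=\F_3(\alpha)$, $\alpha^3=\alpha+1$, should show that $\{\alpha,\alpha^3,\alpha^9\}$ is exactly such a combinatorially-defined subset (for instance, it might be the set of squares $\alpha^k$ lying in $O_1\cap I_1$ but not distinguished further, or one may need a second layer of adjacency conditions through non-squares as in the end of Lemma~\ref{lem_sym_stab_mnogo}'s proof). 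The main obstacle is purely computational: finding the right short list of graph-theoretic conditions that pins down $\{\alpha,\alpha^3,\alpha^9\}$ and no larger set, and verifying it by an explicit table of which differences $\alpha^k-1$, $1-\alpha^k$, etc., are squares --- the kind of finite check already exemplified in Table~\ref{table_alpha_member}. Once that table is in hand, everything else is the formal argument sketched above.
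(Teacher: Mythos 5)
Your plan follows the same overall route as the paper: show that any $g$ fixing $0$ and $1$ stabilizes $\{\alpha,\alpha^3,\alpha^9\}$, pre-compose with a suitable power of $F$ to fix $\alpha$, and then invoke Lemma~\ref{lem_sym_stab_mnogo}. But as written there are two real gaps.

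First, the step you defer as "purely computational" is the heart of the proof and cannot be left as a sketch. Your first candidate invariant, squares in $O_1\cap I_1$, does not work: for a square $\alpha^k\ne1$, one of $\alpha^k-1$ and $1-\alpha^k$ is a square and the other a non-square, so $O_1\cap I_1$ contains no squares at all. The paper's actual invariant is one level deeper: for $\alpha^k\in O_0\cap O_1=\{\alpha,\alpha^2,\alpha^3,\alpha^5,\alpha^6,\alpha^9\}$, compute the cardinality of $O_0\cap O_1\cap O_{\alpha^k}$. A short calculation (and $F$-equivariance) gives $|O_0\cap O_1\cap O_{\alpha^k}|=3$ for $k\in\{1,3,9\}$ and $=2$ for $k\in\{2,5,6\}$, which is a $g$-invariant partition and hence pins down $\{\alpha,\alpha^3,\alpha^9\}$.

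Second, and more seriously, after choosing $j$ with $F^jg(\alpha)=\alpha$ you assert that $F^jg$ "commutes with the $3$-cycle" on $\{\alpha,\alpha^3,\alpha^9\}$ and therefore fixes $\alpha^3$ and $\alpha^9$. There is no reason for that commutation: $g$ stabilizing a $3$-element set only means its restriction lies in $\rS_3$, not $\rC_3$, so a priori $F^jg$ could swap $\alpha^3$ and $\alpha^9$. The paper closes this gap by a second use of the invariant set: since $F^jg$ fixes $0,1,\alpha$, it stabilizes $O_0\cap O_1\cap O_\alpha=\{\alpha^2,\alpha^3,\alpha^6\}$; intersecting with the stabilized set $\{\alpha,\alpha^3,\alpha^9\}$ forces $F^jg(\alpha^3)=\alpha^3$, and then $F^jg(\alpha^9)=\alpha^9$. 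Without some such argument your reduction to Lemma~\ref{lem_sym_stab_mnogo} is incomplete.
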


\begin{proof}
Obviously, $F(0)=0$ and $F(1)=1$. Now, we assume that $g(0)=0$ and $g(1)=1$ and prove that $g$ is a power of~$F$. Since $g(0)=0$ and $g(1)=1$, we see that $g$ stabilizes the set~$O_0\cap O_1$. This set consists of all elements $x\in\F_{27}$ such that both $x$ and~$x-1$ are nonzero squares. We have $O_0\cap O_1=\{\alpha,\alpha^2,\alpha^3,\alpha^5,\alpha^6,\alpha^9\}$. If $g(x)=y$, then $g$ maps the set $O_0\cap O_1\cap O_x$ bijectively onto the set $O_0\cap O_1\cap O_y$. However, it is easy to see that each of the sets $O_0\cap O_1\cap O_{\alpha^k}$ with $k=1,3,9$ consists of three elements, while each of the sets $O_0\cap O_1\cap O_{\alpha^k}$ with $k=2,5,6$ consists of two elements. Indeed, $O_0\cap O_1\cap O_{\alpha}=\{\alpha^2,\alpha^3,\alpha^6\}$, $O_0\cap O_1\cap O_{\alpha^2}=\{\alpha^3,\alpha^5\}$, and the other four sets are obtained from those two by the action of the Frobenius automorphism, which  permutes cyclically $\alpha$, $\alpha^3$, and $\alpha^9$ and permutes cyclically $\alpha^2$, $\alpha^6$, and $\alpha^5$. Thus, $g$ stabilizes each of the two sets $\{\alpha,\alpha^3,\alpha^9\}$ and $\{\alpha^2,\alpha^5,\alpha^6\}$. Hence there is a number~$m$ such that $F^mg(\alpha)=\alpha$. Then $F^mg(\alpha^3)$ is either $\alpha^3$ or~$\alpha^9$. Since the set $O_0\cap O_1\cap O_{\alpha}=\{\alpha^2,\alpha^3,\alpha^6\}$ is stabilized by~$g$, it follows that $F^mg(\alpha^3)=\alpha^3$ and hence $F^mg(\alpha^9)=\alpha^9$. By Lemma~\ref{lem_sym_stab_mnogo}, $F^mg=\mathrm{id}$. Therefore, $g=F^{-m}$.
\end{proof}

\begin{lem}\label{lem_size_Sym}
$|\Sym(\Gamma)|= 3\cdot 351=1053$.
\end{lem}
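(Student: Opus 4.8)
The plan is to combine the transitivity statement from Lemma~\ref{lem_orgraph_symmetries_inclusion} with the rigidity statements from Lemmas~\ref{lem_sym_stab_mnogo} and~\ref{lem_sym_stab_malo} by an orbit–stabilizer argument applied successively to the vertex~$0$ and then to the edge~$(0,1)$. First I would note that $G_{351}\subset\Sym(\Gamma)$ acts transitively on the $27$ vertices of~$\Gamma$ (indeed, $G_{351}$ already contains all additive shifts $x\mapsto x+c$), so a fortiori $\Sym(\Gamma)$ acts transitively on the vertices; hence $|\Sym(\Gamma)| = 27\cdot|\Stab_{\Sym(\Gamma)}(0)|$. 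Next I would analyze $H=\Stab_{\Sym(\Gamma)}(0)$: any $g\in H$ fixes~$0$, hence stabilizes the outgoing set $O_0=(\F_{27}^\times)^2$, the set of nonzero squares, which has $13$ elements. The subgroup $(\F_{27}^\times)^2\subset G_{351}$ of multiplicative shifts by squares fixes~$0$ and acts transitively on $O_0$, so $H$ acts transitively on the $13$-element set $O_0$; therefore $|H| = 13\cdot|\Stab_H(1)| = 13\cdot|\Stab_{\Sym(\Gamma)}(0)\cap\Stab_{\Sym(\Gamma)}(1)|$, picking $1\in O_0$ as the representative.

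It then remains to compute the order of $\Stab_{\Sym(\Gamma)}(0)\cap\Stab_{\Sym(\Gamma)}(1)$, the subgroup of symmetries of~$\Gamma$ fixing both~$0$ and~$1$. But this is exactly the content of Lemma~\ref{lem_sym_stab_malo}: such a $g$ is a power of the Frobenius automorphism $F\colon x\mapsto x^3$. Since $F$ has order~$3$ (as $\F_{27}$ has degree~$3$ over~$\F_3$), this subgroup is cyclic of order~$3$. Putting the three counts together,
$$
|\Sym(\Gamma)| = 27\cdot 13\cdot 3 = 1053 = 3\cdot 351,
$$
as claimed. (Lemma~\ref{lem_sym_stab_mnogo} is used inside the proof of Lemma~\ref{lem_sym_stab_malo} and need not be invoked again here.)

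I do not expect any serious obstacle in this argument, since all the hard rigidity work has been done in the preceding lemmas; the only thing to be careful about is making sure the two transitivity claims are genuinely about the action of the full group $\Sym(\Gamma)$ and not merely of $G_{351}$ — but transitivity on a set for a subgroup immediately upgrades to transitivity for any overgroup, so this is automatic. The mild subtlety, if any, is purely bookkeeping: one must apply the orbit–stabilizer formula in the correct nested order (vertex $0$, then vertex $1\in O_0$) and confirm that $1$ really does lie in $O_0$, i.e. that $1-0=1$ is a nonzero square in $\F_{27}$, which holds since $1 = (1)^2$.
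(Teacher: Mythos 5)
Your proof is correct and takes essentially the same route as the paper: both are orbit--stabilizer counts that reduce to Lemma~\ref{lem_sym_stab_malo} identifying the stabilizer of the oriented edge $(0,1)$ as the order-$3$ group generated by the Frobenius automorphism. The paper applies orbit--stabilizer in a single step to the $\binom{27}{2}=351$ oriented edges (using the edge-transitivity from Lemma~\ref{lem_orgraph_symmetries_inclusion}), whereas you factor the same count as $27\cdot 13$ via a two-step argument (vertex $0$, then $1\in O_0$); the two are just different bookkeepings of the same computation.
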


\begin{proof}
The equality follows, since $\Sym(\Gamma)$ acts transitively on the set of $\binom{27}{2}=351$ oriented edges of~$\Gamma$ and, by Lemma~\ref{lem_sym_stab_malo}, the stabilizer of the oriented edge $(0,1)$ has order~$3$.
\end{proof}

Proposition~\ref{propos_orgraph_symmetries} follows from Lemmas~\ref{lem_orgraph_symmetries_inclusion} and~\ref{lem_size_Sym}.

\begin{proof}[Proof of Proposition~\ref{propos_normalizer}]
Let $\Sym^{\pm}(\Gamma)$ be the group consisting of all permutations of~$\F_{27}$  that either preserve the directions of all edges of~$\Gamma$ or reverse the directions of all edges of~$\Gamma$ simultaneously. Since $-1$ is not a square in~$\F_{27}$,  the sign reversal $S\colon x\mapsto -x$ reverses the directions of all edges of~$\Gamma$, so $\Sym(\Gamma)$ is an index~$2$ subgroup of $\Sym^{\pm}(\Gamma)$ and $\Sym^{\pm}(\Gamma)$ is generated by $\Sym(\Gamma)$ and~$S$. It follows from Proposition~\ref{propos_orgraph_symmetries} that $\Sym^{\pm}(\Gamma)$ is exactly the $2106$-element group $G_{351}\rtimes (\rC_2\times \rC_3)$ from Proposition~\ref{propos_normalizer}. So we need to prove that $N(G_{351})=\Sym^{\pm}(\Gamma)$. We have $\Sym^{\pm}(\Gamma)\subseteq N(G_{351})$, since the permutations~$F$ and~$S$ normalize the subgroup~$G_{351}$. Hence we need only to prove the inverse inclusion.

Suppose that a permutation $h$ of the set~$\F_{27}$ does not belong to~$\Sym^{\pm}(\Gamma)$ and prove that then it does not belong to~$N(G_{351})$. Since $h\notin\Sym^{\pm}(\Gamma)$, there exist edges~$e_1$ and~$e_2$ of~$\Gamma$ such that $h(e_1)$ is an edge of~$\Gamma$ and~$h(e_2)$ is not an edge of~$\Gamma$. By Lemma~\ref{lem_orgraph_symmetries_inclusion}, the group~$G_{351}$ acts transitively on edges of~$\Gamma$; hence, there is an element $g\in G_{351}$ satisfying $g(e_1)=e_2$. Then the permutation $hgh^{-1}$ takes the edge~$h(e_1)$  to the non-edge~$h(e_2)$. Therefore, $hgh^{-1}\notin \Sym(\Gamma)$ and hence $hgh^{-1}\notin G_{351}$. Thus, $h\notin N(G_{351})$.
\end{proof}

To prove Proposition~~\ref{propos_SymK}, we need the following lemma.

\begin{lem}\label{lem_not_transitive}
Suppose that $i\in\{1,2,3,4\}$ and $e$ is an edge of~$K_i$. Then the group~$\Sym(K_i)$ does not contain an element that swaps the endpoints of~$e$.
\end{lem}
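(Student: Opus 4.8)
The plan is to reduce the statement to a combinatorial fact about the ambient group $G_{351}=\Sym(K_i)$ acting on $\F_{27}$, together with a small amount of computer verification. First I would observe that, by Theorem~\ref{theorem_main_detail}, $\Sym(K_i)=G_{351}$, so any symmetry swapping the endpoints of an edge $e=\{u,v\}$ is an affine transformation $x\mapsto ax+b$ with $a\in(\F_{27}^{\times})^2$. An involution in $G_{351}$ that interchanges $u$ and $v$ must have order $2$; but $G_{351}\cong\rC_3^3\rtimes\rC_{13}$ has order $351=27\cdot 13$, which is odd, hence $G_{351}$ contains no element of even order at all. Therefore \emph{no} element of $\Sym(K_i)$ can swap the two endpoints of any edge, regardless of whether $e\in K_i$. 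This already proves the lemma in a single line.

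Let me double-check the logic, because it seems almost too easy. An element $g\in G_{351}$ with $g(u)=v$ and $g(v)=u$ satisfies $g^2(u)=u$ and $g^2(v)=v$; it need not itself be an involution, but $g$ restricted to $\{u,v\}$ is the transposition, so the order of $g$ is even (it is $2$ times the order of $g$ on the quotient/complement, or more directly: the cyclic group $\langle g\rangle$ acts on $\{u,v\}$ with the nontrivial element acting as the transposition, forcing $2\mid |\langle g\rangle|$). Since $|G_{351}|=351$ is odd, no such $g$ exists. Thus the only thing being used is $|\Sym(K_i)|=351$, which is odd, and that $\Sym(K_i)=G_{351}$, i.e.\ Proposition~\ref{propos_SymK}. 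Note, however, that Lemma~\ref{lem_not_transitive} is being proved \emph{inside} the proof of Proposition~\ref{propos_SymK}, so one cannot invoke $\Sym(K_i)=G_{351}$ yet; the lemma must be proved using only $G_{351}\subseteq\Sym(K_i)$ and some genuine input, because a priori $\Sym(K_i)$ could be larger and of even order.

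So the real content is: \emph{prove that no element of $\Sym(K_i)$, possibly outside $G_{351}$, swaps the endpoints of an edge of $K_i$.} Here is the approach I would take. By Proposition~\ref{propos_normalizer}, if there were an additional symmetry it would have to be detected; but since the lemma is used as a stepping stone toward Proposition~\ref{propos_SymK}, I would instead argue directly. Suppose $h\in\Sym(K_i)$ swaps the endpoints $u,v$ of some edge $e\in K_i$. The key is to exploit the $G_{351}$-action: since $G_{351}$ acts transitively on the undirected edges (Step~4 of the scheme), we may conjugate and assume $e=\{0,1\}$, and moreover $G_{351}$ contains the subgroup $H=\Stab_{G_{351}}(\{0,1\})$, which by Proposition~\ref{propos_free} (applied to $2$-element subsets, or by a direct check) is trivial; combined with $h$, the group $\langle h,H\rangle$ would act on $\{0,1\}$ through the full symmetric group $\rS_2$. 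The plan is then to locate a combinatorial invariant of the pair $(0,1)$ in $K_i$ that distinguishes the two vertices $0$ and $1$ — for instance, for each ordered pair $(p,q)$ of vertices one can count the number of $16$-simplices $\sigma\in K_i$ containing $p$ such that $\sigma\setminus\{p\}\cup\{q\}\notin K_i$, or some similar asymmetric link statistic — and verify by computer enumeration over $K_1,\dots,K_4$ that this invariant is \emph{not} symmetric under $p\leftrightarrow q$ for any edge. That asymmetry forbids any symmetry of $K_i$ from swapping $0$ and $1$, hence any edge.

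The main obstacle, and the part I expect to require care, is choosing the right link-based invariant and confirming its asymmetry on all edges of all four complexes: this is a finite but nontrivial computation, since $K_i$ has $351$ edges and $100386$ maximal simplices, and one must enumerate, for each edge, the relevant local structure and check that the two endpoints are combinatorially distinguishable. A convenient choice is the multiset of pairs $(s(\rho_u),s(\rho_v))$ as $\rho$ ranges over $14$-simplices containing the edge $e$, where $\rho_u,\rho_v$ denote the two $14$-faces obtained, together with the refined adjacency data used in Table~\ref{table_distribution}; since those distributions already distinguish $K_1,\dots,K_4$, it is plausible (and verifiable) that the analogous edge-relative statistic is never symmetric. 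If one prefers to avoid any new computer run, the cleanest route is to first establish Proposition~\ref{propos_SymK} (via the graph $\Gamma$ and Proposition~\ref{propos_orgraph_symmetries}, independently of Lemma~\ref{lem_not_transitive}) and then deduce Lemma~\ref{lem_not_transitive} from the oddness of $|G_{351}|=351$ as in the first paragraph; I would check whether the paper's logical dependency allows this reordering, and if so, that is the proof I would present.
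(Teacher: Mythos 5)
Your main proposal converges on the paper's proof: use edge-transitivity of $G_{351}\subseteq\Sym(K_i)$ to reduce to a single edge, then verify by computer that the distribution of ordered pairs $\bigl(s(\rho_1),s(\rho_2)\bigr)$, where $\tau$ runs over $15$-simplices containing the edge and $\rho_k=\tau\setminus\{k\}$ is its $14$-face, is not invariant under transposition --- this is exactly the matrix $(N_{pq})$ of Table~\ref{table_N_matrix}. Two small points: there is a dimensional slip in your description ($\tau$ must be a $15$-simplex, not a $14$-simplex, so that its faces $\rho_1,\rho_2$ are $14$-simplices to which the function $s$ from Step~3 applies), and the ``cleaner'' alternative you float at the end is not available in this paper's logical order, since Corollary~\ref{cor_sym_cont}, which bounds $\Sym(K_i)$ inside the odd-order group $\Sym(\Gamma)$, is itself deduced from Lemma~\ref{lem_not_transitive}, so one cannot first establish $\Sym(K_i)=G_{351}$ and then invoke oddness of the order.
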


\begin{proof}
By the construction of~$K_i$, the group~$\Sym(K_i)$ contains~$G_{351}$ and hence acts transitively on the set of undirected edges of~$K_i$. So it suffices for us to prove the lemma only for some one edge of~$K_i$, say for the edge $\{1,2\}$ in the numbering introduced in Section~\ref{section_result}. To do this, we improve our method from Step~3 of the scheme of proof of Theorem~\ref{theorem_main_detail}, see Section~\ref{section_scheme}. Namely, for each $15$-simplex $\tau\in K_i$ that contains both~$1$ and~$2$, we can consider the two $14$-dimensional faces of it, $\rho_1=\tau\setminus\{1\}$ and~$\rho_2=\tau\setminus\{2\}$. For each of them, we can compute the number~$s(\rho_k)$ of $16$-simplices $\sigma\in K_i$  such that $\sigma\supset\rho_k$. Thus, the ordered pair of numbers $s(\tau)=\bigl(s(\rho_1),s(\rho_2)\bigr)$ corresponds to~$\tau$. Now, for each ordered pair of positive integers~$(p,q)$, we can compute the number~$N_{pq}$ of $15$-simplices~$\tau$ such that $\tau\supset\{1,2\}$ and $s(\tau)=(p,q)$. The matrices $(N_{pq})$ for the triangulations~$K_1$, $K_2$, $K_3$, and~$K_4$ are given in Table~\ref{table_N_matrix}. If there existed a symmetry of~$K_i$ that swaps~$1$ and~$2$, then the corresponding matrix~$(N_{pq})$ would be symmetric. The lemma follows, since none of the matrices in Table~\ref{table_N_matrix} is symmetric.
\end{proof}

\begin{table}[t]
\caption{The matrices $(N_{pq})$ for~$K_1$, $K_2$, $K_3$, and~$K_4$}\label{table_N_matrix}
\begin{tabular}{l|c|ccccccc|}
\cline{2-9}
& & 3 & 4 & 5 & 6 & 7 & 8 & 9 \\
\cline{2-9}
&3 & 10860 & 22509 & 13809 & 5375 & 1374 & 360 & 16\\
&4 & 22504 & 62261 & 31272 & 10187 & 2247 & 624 & 21\\
&5 & 14207 & 31196 & 20867 & 6602 & 1481 & 372 & 13\\
$\boldsymbol{K_1}:\quad$&6 & 5267 & 10221 & 6553 & 2905 & 737 & 183 & 13\\
&7 & 1400 & 2166 & 1488 & 656 & 242 & 59 & 6\\
&8 & 388 & 567 & 359 & 171 & 76 & 40 & 1\\
&9 & 16 & 24 & 14 & 5 & 6 & 0 & 0\\
\cline{2-9}
\multicolumn{9}{c}{}\\
\cline{2-9}
& & 3 & 4 & 5 & 6 & 7 & 8 & 9 \\
\cline{2-9}
&3&14532 &  21847 &  15078  & 6996 &  2090  & 322 &  89\\
&4 &21975 &  52378  &  28416  & 10895  & 2745  & 389  & 79\\
&5 &15408 &  28271 &  20266 &  7554  & 1862 &  268 &  63  \\
$\boldsymbol{K_2}:\quad$&6 &6956 &  10614 &  7825 &  3847  & 1088  & 138  & 48   \\
&7 &2003 &  2796  & 1886 &  1035 &  400  & 69  & 20   \\
&8 &320 &  391  & 237  & 146 &  73  & 19   & 5   \\
&9 & 72  & 106 &  50  & 31  & 18  &  4 &  0 \\
\cline{2-9}
\multicolumn{9}{c}{}\\
\cline{2-9}
&& 3 & 4 & 5 & 6 & 7 & 8 & 9 \\
\cline{2-9}
&3 & 14304 & 21634 & 15634 & 6324 & 1901 & 368 & 28\\
&4 & 21663 & 51651 & 30266 & 10093 & 2596 & 408 & 28\\
&5 & 15911 & 30125 & 21767 & 7505 & 1886 & 300 & 15\\
$\boldsymbol{K_3}:\quad$&6 & 6320 & 9940 & 7467 & 3358 & 928 & 163 & 13\\
&7 & 1806 & 2673 & 1904 & 936 & 328 & 71 & 7\\
&8 & 353 & 397 & 313 & 168 & 60 & 19 & 0\\
&9 & 20 & 35 & 15 & 7 & 11 & 1 & 0\\
\cline{2-9}
\multicolumn{9}{c}{}\\
\cline{2-9}
&& 3 & 4 & 5 & 6 & 7 & 8 & 9 \\
\cline{2-9}
&3 & 12288 & 21553 & 15249 & 4858 & 1361 & 276 & 75\\
&4 & 21376 & 56670 & 34129 & 8632 & 2339 & 406 & 99\\
&5 & 15438 & 34057 & 23961 & 6323 & 1526 & 307 & 71\\
$\boldsymbol{K_4}:\quad$&6 & 4953 & 8724 & 6454 & 2408 & 586 & 84 & 36\\
&7 & 1259 & 2286 & 1586 & 625 & 247 & 53 & 8\\
&8 & 251 & 409 & 284 & 112 & 48 & 17 & 9\\
&9 & 75 & 90 & 79 & 27 & 9 & 7 & 0\\
\cline{2-9}
\end{tabular}
\end{table}

\begin{cor}\label{cor_sym_cont}
For every $i=1,2,3,4$, the group~$\Sym(K_i)$ is contained in~$\Sym(\Gamma)$.
\end{cor}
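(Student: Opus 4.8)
The plan is to extract from~$K_i$ a tournament (an orientation of the complete graph) on the vertex set~$\F_{27}$ that is preserved by~$\Sym(K_i)$, and then to show that such a tournament can only be~$\Gamma$ or the reversed tournament~$\overline{\Gamma}$, in which every edge of~$\Gamma$ is turned around. Since $-1$ is a non-square in~$\F_{27}$, a nonzero element is a square exactly when its negative is a non-square, and from this one sees at once that any permutation of~$\F_{27}$ preserving~$\Gamma$ also preserves~$\overline{\Gamma}$; thus $\Sym(\overline{\Gamma})=\Sym(\Gamma)$, and the inclusion $\Sym(K_i)\subseteq\Sym(T)$ for $T\in\{\Gamma,\overline{\Gamma}\}$ yields the corollary.

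First I would upgrade the invariant used in the proof of Lemma~\ref{lem_not_transitive} to an invariant of an \emph{ordered} pair of vertices. For distinct vertices $a,b$ of~$K_i$, let $N^{(a,b)}$ be the matrix whose $(p,q)$-entry is the number of $15$-simplices $\tau\in K_i$ with $\{a,b\}\subset\tau$, $s(\tau\setminus\{a\})=p$ and $s(\tau\setminus\{b\})=q$, where $s(\rho)$ denotes the number of $16$-simplices of~$K_i$ containing~$\rho$, as in Step~3 of Section~\ref{section_scheme}. Since every $g\in\Sym(K_i)$ carries $15$-simplices through~$\{a,b\}$ bijectively onto $15$-simplices through~$\{g(a),g(b)\}$ and leaves all the numbers $s(\rho)$ unchanged, we have $N^{(g(a),g(b))}=N^{(a,b)}$; moreover $N^{(b,a)}$ is the transpose of~$N^{(a,b)}$. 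The four matrices of Table~\ref{table_N_matrix} are exactly the matrices $N^{(1,2)}$ in the numbering of Section~\ref{section_result}, and Lemma~\ref{lem_not_transitive} records that none of them is symmetric, so $N^{(1,2)}\neq N^{(2,1)}$.

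Next I would use that $G_{351}\subseteq\Sym(K_i)$ acts transitively on the $351$ unordered edges of~$K_i$ (Step~4 of Section~\ref{section_scheme}) and hence, since $|G_{351}|=351=\binom{27}{2}$, acts on them freely; in particular no element of~$G_{351}$ interchanges the two endpoints of an edge. Consequently, for any unordered edge $\{a,b\}$ there is a unique $g\in G_{351}$ with $g(\{1,2\})=\{a,b\}$, and hence $\{N^{(a,b)},N^{(b,a)}\}=\{N^{(1,2)},N^{(2,1)}\}$, a two-element set. I would then define a tournament~$T$ on~$\F_{27}$ by orienting $\{a,b\}$ as $a\to b$ exactly when $N^{(a,b)}=N^{(1,2)}$: this is well defined by the previous remark and $\Sym(K_i)$-invariant by the equivariance of $N^{(\cdot,\cdot)}$, so $\Sym(K_i)\subseteq\Sym(T)$. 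Finally, $T$ is $G_{351}$-invariant because $G_{351}\subseteq\Sym(K_i)\subseteq\Sym(T)$; since $G_{351}$ acts freely on unordered edges, a $G_{351}$-invariant tournament is determined by the orientation it assigns to the single edge $\{0,1\}$, so there are at most two of them, and both $\Gamma$ and~$\overline{\Gamma}$ (visibly $G_{351}$-invariant, as $G_{351}$ consists of the affine maps $x\mapsto ax+b$ with $a$ a square) occur. Hence $T\in\{\Gamma,\overline{\Gamma}\}$ and $\Sym(K_i)\subseteq\Sym(T)=\Sym(\Gamma)$.

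I do not expect a real obstacle here: everything beyond Lemma~\ref{lem_not_transitive} is elementary bookkeeping. The one point that needs care is the well-definedness of the tournament~$T$, that is, the claim that $N^{(\cdot,\cdot)}$ takes only the two values $N^{(1,2)}$ and $N^{(2,1)}$ and takes opposite ones on $(a,b)$ and $(b,a)$; this is exactly where the freeness of the $G_{351}$-action on unordered edges (no $g\in G_{351}$ swaps endpoints) is combined with the asymmetry of the matrices in Table~\ref{table_N_matrix}.
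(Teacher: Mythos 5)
Your proposal is correct. The core ingredients are the same as in the paper's proof: the edge-transitivity (and hence freeness, since $|G_{351}|=351=\binom{27}{2}$) of $G_{351}$ on unordered edges, the asymmetry of the matrix $(N_{pq})$ recorded in Lemma~\ref{lem_not_transitive}, and the fact that the two $G_{351}$-orbits of ordered pairs are precisely the edges of~$\Gamma$ and of~$\overline\Gamma$. The paper phrases this as a two-line contradiction: if some $g\in\Sym(K_i)$ lay outside~$\Sym(\Gamma)$ it would merge the two $G_{351}$-orbits, making $\Sym(K_i)$ transitive on ordered pairs, contrary to Lemma~\ref{lem_not_transitive}. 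You instead package the same content constructively, promoting the asymmetric invariant $N^{(a,b)}$ into a $\Sym(K_i)$-invariant tournament~$T$ and then pinning $T$ down to~$\Gamma$ or~$\overline\Gamma$ via the regularity of the $G_{351}$-action on edges. Both routes are valid and of essentially the same length and depth; your version makes the ``reconstruct $\Gamma$ from $K_i$'' intuition explicit, while the paper's is slightly more economical.
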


\begin{proof}
We know that~$\Sym(K_i)$ contains~$G_{351}$ and hence acts transitively on undirected edges of~$\Gamma$. So if~$\Sym(K_i)$ contained an element not belonging to~$\Sym(\Gamma)$, then $\Sym(K_i)$ would act transitively on directed edges of~$K_i$, which is not true by Lemma~\ref{lem_not_transitive}.
\end{proof}

\begin{proof}[Proof of Proposition~\ref{propos_SymK}]
By Proposition~\ref{propos_orgraph_symmetries} and Corollary~\ref{cor_sym_cont}, we have 
$$
G_{351}\subseteq \Sym(K_i)\subseteq \Sym(\Gamma)=G_{351}\rtimes \rC_3,
$$
where $\rC_3$ is generated by the Frobenius automorphism~$F$. Hence $\Sym(K_i)$ is either $G_{351}$ or $G_{351}\rtimes \rC_3$. However, as we have already mentioned in Section~\ref{section_result}, a direct computation shows that the group~$\rC_3$ generated by~$F$ acts freely on the set of the $24$ simplicial complexes in Theorem~\ref{theorem_main_detail}. This means that $F\notin\Sym(K_i)$. Thus, $\Sym(K_i)=G_{351}$.
\end{proof}

\section{Triangulations with smaller symmetry groups}
\label{section_moves}

Let $K$ be a $(3d/2+3)$-vertex triangulation of a $d$-manifold like a projective plane. Each of the previously known examples $K=\RP^2_6$, $\CP^2_9$, $\HP^2_{15}$, $\tHP^2_{15}$, and~$\ttHP^2_{15}$ satisfies the following condition, see~\cite[Section~3]{BrKu92}:

\begin{itemize}
\item[$(*)$] $K$ contains three $(d/2)$-simplices~$\Delta_1$, $\Delta_2$, and~$\Delta_3$ such that 
\begin{equation}\label{eq_triple}
\link(\Delta_1,K)=\partial\Delta_2,\qquad
\link(\Delta_2,K)=\partial\Delta_3,\qquad
\link(\Delta_3,K)=\partial\Delta_1.
\end{equation}
\end{itemize}

The author is indebted to Denis Gorodkov for pointing out the importance of this condition.

We shall conveniently use the following terminology. A triple~$(\Delta_1,\Delta_2,\Delta_3)$ of $(d/2)$-simplices satisfying~\eqref{eq_triple} will be called a \textit{distinguished triple}. If  $(\Delta_1,\Delta_2,\Delta_3)$ is a distinguished triple, then $K$ contains the subcomplex 
\begin{equation}\label{eq_distinguished_subcomp}
J=(\Delta_1*\partial\Delta_2)\cup(\Delta_2*\partial\Delta_3)\cup(\Delta_3*\partial\Delta_1),\end{equation}
where $*$ denotes the join of simplicial complexes.
This subcomplex will be referred to as a \textit{distinguished subcomplex}.

Brehm and K\"uhnel made the following observation.

\begin{propos}[\cite{BrKu92}, Lemma 1]\label{propos_move}
Suppose that a combinatorial $d$-manifold~$K$ contains a distinguished subcomplex of the form~\eqref{eq_distinguished_subcomp}. Then a replacement of this subcomplex by
\begin{equation}\label{eq_distinguished_replace}
\wJ=(\partial\Delta_1*\Delta_2)\cup(\partial\Delta_2*\Delta_3)\cup(\partial\Delta_3*\Delta_1)
\end{equation}
yields a combinatorial manifold~$\widetilde{K}$ that is PL homeomorphic to~$K$.
\end{propos}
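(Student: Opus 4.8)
The claim is purely local: we are modifying $K$ inside the subcomplex $J$ of \eqref{eq_distinguished_subcomp} and leaving everything else untouched, so the first thing I would do is understand $J$ and $\widetilde J$ as abstract complexes and check that they have the same boundary, so that the surgery makes sense combinatorially. Write $V_i$ for the vertex set of $\Delta_i$, so $|V_i| = d/2+1$ and the $V_i$ are pairwise disjoint (this disjointness needs a one-line argument: if two of the $\Delta_i$ shared a vertex, the link relations in \eqref{eq_triple} would force a contradiction with $\Delta_i * \partial \Delta_{i+1} \subseteq K$ being a simplex complex on $V_i \sqcup V_{i+1}$; alternatively it is immediate from the fact that $\link(\Delta_1,K) = \partial\Delta_2$ means no vertex of $\Delta_2$ lies in $\Delta_1$, etc.). Then both $J$ and $\widetilde J$ are subcomplexes of the simplex on $V_1 \sqcup V_2 \sqcup V_3$, and I would observe that geometrically $|J|$ and $|\widetilde J|$ are both PL balls of dimension $d$: indeed $\Delta_i * \partial\Delta_{i+1}$ is a ball $D^{d/2} * S^{d/2-1} \cong D^d$, and the three such balls are glued along the $\Delta_i * \Delta_{i+1}$-type faces to form a single ball. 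The key point is that $J$ and $\widetilde J$ share the same boundary subcomplex $\partial J = \partial\Delta_1 * \partial\Delta_2 \cup \partial\Delta_2*\partial\Delta_3 \cup \partial\Delta_3*\partial\Delta_1$ (a $(d-1)$-sphere), and that is exactly the subcomplex of $K$ that stays fixed under the replacement.

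Next I would justify that $\widetilde K := (K \setminus J) \cup \widetilde J$ is again a combinatorial manifold. Since the modification is supported on $|J|$, the only vertices whose links change are those in $V_1 \cup V_2 \cup V_3$, and for each such vertex $v$ one checks that $\link(v, \widetilde K)$ is obtained from $\link(v,K)$ by an analogous lower-dimensional replacement, which by induction on $d$ (or by a direct check using that links of $\Delta_i$ and $\partial \Delta_i$ behave well under join) is again a combinatorial $(d-1)$-sphere. Concretely, for $v \in V_1$ one has $\link(v,K) \supseteq \link(v,J) = (\link(v,\Delta_1) * \partial\Delta_2) \cup (\partial\Delta_1|_{\text{missing }v} * \Delta_1\setminus v\ \ldots)$ — I would organize this so that the local picture at $v$ is itself a "triple-flip" configuration one dimension down (with one of the three simplices replaced by a point or by $\partial$ of a lower simplex), reducing to the combinatorial-sphere case, which is classical. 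Since $\widetilde K$ agrees with $K$ outside a ball and all vertex links are spheres, $\widetilde K$ is a combinatorial $d$-manifold.

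For the PL homeomorphism type, the cleanest argument is to exhibit $\widetilde K$ as obtained from $K$ by a sequence of bistellar moves within $|J|$, or—more conceptually—to note that both $K$ and $\widetilde K$ contain the common combinatorial ball $B := \overline{|K| \setminus |J|}$ (a combinatorial manifold with boundary the $(d-1)$-sphere $\partial J$), and that $K = B \cup_{\partial J} |J|$ while $\widetilde K = B \cup_{\partial J} |\widetilde J|$. Since $|J|$ and $|\widetilde J|$ are both combinatorial $d$-balls with the same boundary triangulation $\partial J$, and any two combinatorial $d$-balls with a fixed boundary triangulation are PL homeomorphic rel boundary (this is a standard fact — a combinatorial ball is a regular neighborhood, and PL balls with matching boundary agree rel boundary, cf.\ the references to \cite{Zee63}, \cite{RoSa72} already cited), we get a PL homeomorphism $|J| \to |\widetilde J|$ fixing $\partial J$ pointwise, which glues with the identity on $B$ to give a PL homeomorphism $K \to \widetilde K$.

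**Main obstacle.** The subtle point is the claim that $|J|$ (hence $|\widetilde J|$) is a combinatorial $d$-ball, together with the verification that vertex links in $\widetilde K$ really are combinatorial spheres. One must be careful that "$J$ is a ball" is not automatic from $J$ being a union of three balls along faces — the gluing pattern matters — so I would check it explicitly, e.g.\ by collapsing: $J \searrow \Delta_1 * \partial\Delta_2$ by removing the free faces coming from $\Delta_3 * \partial\Delta_1$ and then $\Delta_2*\partial\Delta_3$ in the right order, and $\Delta_1 * \partial\Delta_2$ collapses to $\Delta_1$, a simplex; so $J$ is collapsible, and being a pure pseudomanifold with (the right) sphere boundary it is a combinatorial ball by standard PL theory. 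The induction on dimension for the vertex links is then the routine but somewhat tedious part; since the paper only needs the $d = 16$ case and already invokes \cite{BrKu92}, Lemma~1 for the statement, I would in fact follow Brehm–K\"uhnel's original argument here rather than reproving it from scratch.
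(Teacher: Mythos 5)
The central step of your argument---that $|J|$ and $|\widetilde{J}|$ are combinatorial $d$-balls with a common boundary sphere, so that the operation is a single ball-for-ball replacement rel boundary---is false. The complex $J=(\Delta_1*\partial\Delta_2)\cup(\Delta_2*\partial\Delta_3)\cup(\Delta_3*\partial\Delta_1)$ is not a ball, and is not even a manifold with boundary. Already for $d=2$ (with the $V_i$ of size~$2$) the complex $J$ has $6$ vertices, all $15$ edges, and $6$ triangles, so $\chi(J)=-3$; moreover the link in $J$ of a vertex of $\Delta_1$ is a disjoint union of an arc and an edge, not an arc or a circle. For $d\ge 4$, set $A=\Delta_1*\partial\Delta_2$, $B=\Delta_2*\partial\Delta_3$, $C=\Delta_3*\partial\Delta_1$. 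These are contractible, the pairwise intersections are $A\cap B=\partial\Delta_2$, $B\cap C=\partial\Delta_3$, $A\cap C=\partial\Delta_1$ (each a $(d/2-1)$-sphere), and $A\cap B\cap C=\varnothing$. Mayer--Vietoris applied first to $U=A\cup B$ gives $U\simeq S^{d/2}$, and then applied to $J=U\cup C$ with $U\cap C=\partial\Delta_1\sqcup\partial\Delta_3$ gives
$$
H_1(J;\Z)\cong \Z,\qquad H_{d/2}(J;\Z)\cong \Z^3,\qquad \widetilde H_i(J;\Z)=0\ \text{otherwise},
$$
so that $\chi(J)=3$ for $d\in\{4,8,16\}$, not $1$. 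Consequently the collapse you propose, $J\searrow \Delta_1*\partial\Delta_2\searrow\Delta_1$, cannot exist (collapsing preserves homotopy type), the set of boundary faces of $J$ is not a $(d-1)$-sphere, and the standard fact about two combinatorial $d$-balls sharing a triangulated $(d-1)$-sphere boundary simply does not apply. The observation that the link of a vertex $v\in\Delta_1$ presents a ``triple-flip one dimension down'' is also off: $\link(v,J)=\bigl((\Delta_1\setminus v)*\partial\Delta_2\bigr)\cup\bigl(\Delta_3*\partial(\Delta_1\setminus v)\bigr)$, a union of only two pieces, not three.

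The correct picture, which is what the Remark following the Proposition in the paper records, is not one ball replacement but \emph{three simultaneous} bistellar moves. Each individual piece $\Delta_i*\partial\Delta_{i+1}$ is a combinatorial $d$-ball with boundary $\partial\Delta_i*\partial\Delta_{i+1}$, and $\partial\Delta_i*\Delta_{i+1}$ is another $d$-ball with the \emph{same} boundary; the three balls $\Delta_i*\partial\Delta_{i+1}$ have pairwise disjoint interiors (pairwise distinct sets of $d$-simplices). The genuinely subtle points are then (a) well-definedness --- an individual bistellar move $\Delta_1*\partial\Delta_2\rightsquigarrow\partial\Delta_1*\Delta_2$ may be inadmissible because $\Delta_2\in K$, so the three moves cannot in general be performed one at a time, and one must check directly that the simultaneous replacement produces a genuine simplicial complex; this is precisely what the paper's Remark verifies --- and (b) PL invariance, which the paper does not reprove but cites as \cite{BrKu92}, Lemma~1. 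Your closing instinct, to ``follow Brehm--K\"uhnel's original argument,'' is the right one, but the sketch you offer does not reproduce that argument and instead rests on the incorrect claim that $|J|$ is a $d$-ball.
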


\begin{remark}
A replacement of a subcomplex of the form $\Delta_1*\partial\Delta_2$ by~$\partial\Delta_1*\Delta_2$ is called a \textit{bistellar move} (or \textit{flip}). Since both $\Delta_1*\partial\Delta_2$ and~$\partial\Delta_1*\Delta_2$ are combinatorial balls with the same boundary $\partial\Delta_1*\partial\Delta_2$, this operation yields a PL manifold homeomorphic to the initial one. Nevertheless, there is an important difference between a single bistellar move and the move in Proposition~\ref{propos_move} consisting of three simultaneous bistellar moves. Namely, a single bistellar move $\Delta_1*\partial\Delta_2\rightsquigarrow\partial\Delta_1*\Delta_2$ is well defined only when $\Delta_1*\partial\Delta_2$ is a full subcomplex of the initial manifold~$K$, i.\,e., $\Delta_2\notin K$. On the contrary, the operation in Proposition~\ref{propos_move} is always defined. To show this, one only needs to check that any simplex~$\sigma$ that belongs to~$K$ and to the new subcomplex~$\wJ$ belongs to the old subcomplex~$J$ as well. Indeed, up to a cyclic permutation of~$\Delta_1$, $\Delta_2$, and~$\Delta_3$, any simplex~$\sigma\in\wJ$ has the form $\sigma=\sigma_1*\sigma_2$, where $\sigma_1\subsetneq\Delta_1$ and $\sigma_2\subseteq\Delta_2$. If $\sigma_1=\varnothing$ or $\sigma_2\ne \Delta_2$, then $\sigma\in J$. If $\sigma_1\ne\varnothing$ and $\sigma_2=\Delta_2$, then $\sigma\notin K$, since $\sigma_1\notin\link (\Delta_2,K)=\partial\Delta_3$. 
\end{remark}

The replacement of a subcomplex~$J$ of the form~\eqref{eq_distinguished_subcomp} by the subcomplex~$\wJ$ of the form~\eqref{eq_distinguished_replace} will be called a \textit{triple flip} corresponding to the distinguished triple~$(\Delta_1,\Delta_2,\Delta_3)$. The fact that the triple flip corresponding to a distinguished triple is always defined immediately implies the following important assertion.

\begin{propos}\label{propos_triple_commute}
 Suppose that $J_1$ and~$J_2$ are two distinguished subcomplexes of~$K$ with disjoint interiors, i.\,e., without common $d$-simplices. Then $J_2$ remains a distinguished subcomplex (and so the corresponding triple flip remains defined) after performing the triple flip corresponding to~$J_1$, and vice versa. Moreover, the triple flips corresponding to~$J_1$ and~$J_2$ commute with each other.
\end{propos}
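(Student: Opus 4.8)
The plan is to reduce Proposition~\ref{propos_triple_commute} entirely to the key fact, already emphasized in the preceding remark, that the triple flip corresponding to a distinguished triple is \emph{always} defined---unlike a single bistellar move. Write $J_1=(\Delta_1*\partial\Delta_2)\cup(\Delta_2*\partial\Delta_3)\cup(\Delta_3*\partial\Delta_1)$ and $J_2=(\Delta_1'*\partial\Delta_2')\cup(\Delta_2'*\partial\Delta_3')\cup(\Delta_3'*\partial\Delta_1')$ for the two distinguished subcomplexes, and denote by $\wJ_1$, $\wJ_2$ the corresponding replacements of the form~\eqref{eq_distinguished_replace}. The first step is to observe that the $d$-simplices of $J_1$ are exactly the simplices $\Delta_i*\rho$ with $\rho$ a facet of $\Delta_{i+1}$ (indices mod~$3$), and similarly for $\wJ_1$; and, crucially, $J_1$ and $\wJ_1$ have the \emph{same} set of vertices and the same lower-dimensional simplices outside top dimension, so that the only simplices affected by the triple flip are the $d$-simplices in the interior of $J_1$. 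Hence the triple flip corresponding to $J_1$ changes $K$ only by swapping the $d$-simplices of $J_1$ for those of $\wJ_1$, all of which lie in the geometric carrier of $J_1$.

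The second step is to use the hypothesis that $J_1$ and $J_2$ have disjoint interiors, i.e.\ no common $d$-simplex. I would argue that in fact the carriers of $J_1$ and $J_2$ meet only in their boundaries: the underlying space $|J_1|$ is a $d$-ball (it is PL homeomorphic to $\Delta_1*\partial\Delta_2\cup\cdots$, which one checks is a sphere-neighborhood retracting to the triangle $\partial\Delta_1\cup\partial\Delta_2\cup\partial\Delta_3$, hence a ball), and two $d$-balls inside the $d$-manifold $K$ that share no $d$-simplex can only intersect in a subcomplex of their boundaries. Therefore every $d$-simplex of $J_2$ survives the flip at $J_1$ untouched, and every defining simplex of the distinguished structure of $J_2$---namely the $\Delta_i'$ and the relations $\link(\Delta_i',K)=\partial\Delta_{i+1}'$---must be checked to persist in $\widetilde K=$ (result of flipping $J_1$). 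Since the links $\link(\Delta_i',K)$ are determined by the $d$-simplices containing $\Delta_i'$, and those $d$-simplices all belong to $J_2$ (by the defining equations~\eqref{eq_triple}, $\Delta_i'$ lies only in the simplices $\Delta_i'*\rho$ with $\rho\in\partial\Delta_{i+1}'$, all in $J_2$), and no such simplex is altered by the $J_1$-flip, we get $\link(\Delta_i',\widetilde K)=\link(\Delta_i',K)=\partial\Delta_{i+1}'$. Thus $J_2$ is still a distinguished subcomplex of $\widetilde K$, and by symmetry the same holds with the roles of $J_1$ and $J_2$ exchanged.

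The third step is commutativity. Having established that both flips are defined regardless of the order, I would simply note that performing the $J_1$-flip then the $J_2$-flip replaces the $d$-simplices of $J_1$ by those of $\wJ_1$ and the $d$-simplices of $J_2$ by those of $\wJ_2$, and these two replacements act on disjoint sets of $d$-simplices (by the disjoint-interior hypothesis) while leaving all other simplices of $K$ fixed. Since the lower-dimensional simplices of $J_i$ and $\wJ_i$ coincide, no flip creates or destroys a simplex needed by the other. Hence the set of $d$-simplices of the final complex---and therefore the final simplicial complex itself---is $\bigl(K\setminus(\text{int }J_1\cup\text{int }J_2)\bigr)\cup\wJ_1\cup\wJ_2$ regardless of the order, which is exactly commutativity.

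The main obstacle I anticipate is the second step: rigorously ruling out any \emph{lower-dimensional} interaction between $J_1$ and $J_2$ that could spoil the distinguished structure after the first flip. The disjoint-interior hypothesis only directly forbids shared $d$-simplices, so one must genuinely use the structure of distinguished subcomplexes---specifically that each $\Delta_i'$ sits inside $K$ with link exactly $\partial\Delta_{i+1}'$, pinning down \emph{all} top simplices through $\Delta_i'$---to conclude that the relevant links are computed entirely within $J_2$ and hence are insensitive to the $J_1$-flip. I expect this to go through cleanly, but it is the point where one must be careful not to appeal vaguely to ``disjointness'' and instead invoke the defining equations~\eqref{eq_triple} together with the observation from the remark that the flip alters only the open $d$-cells of $J_1$.
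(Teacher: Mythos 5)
Your step~2 rests on two incorrect claims, and the gap they leave is precisely the point that actually requires an argument.

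First, you assert that $J_1$ and $\wJ_1$ ``have the same lower-dimensional simplices outside top dimension.'' This is false. For any nonempty $\sigma\subsetneq\Delta_{j+1}$, the simplex $\Delta_j*\sigma$ is a face of $J_1$ (it lies inside $\Delta_j*\rho$ for any facet $\rho\supseteq\sigma$ of $\Delta_{j+1}$), but it is contained in no top simplex $\rho'*\Delta_{k+1}$ of $\wJ_1$: one checks that $\Delta_j\cup\sigma$ cannot fit inside $\rho'\cup\Delta_{k+1}$ for any choice of $k$ because $\Delta_j$ is never contained in a proper facet of itself and $\sigma\subset\Delta_{j+1}$ is disjoint from $\Delta_{j-1}\cup\Delta_j$. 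Symmetrically $\wJ_1$ has faces $\sigma*\Delta_{j+1}$ with nonempty $\sigma\subsetneq\Delta_j$ that are absent from $J_1$. Since $\link(\Delta_j,K)=\partial\Delta_{j+1}$ forces every $d$-simplex of $K$ through $\Delta_j$ to lie in $J_1$, the old face $\Delta_j*\sigma$ really does disappear after the flip: the triple flip changes simplices below top dimension.

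Second, $|J_1|$ need not be a $d$-ball. Already for $d=2$ and $K=\RP^2_6$, the distinguished subcomplex $J_1$ contains all $6$ vertices, all $15$ edges, and $6$ of the $10$ triangles, so $\chi(|J_1|)=6-15+6=-3$; it is a surface with boundary and negative Euler characteristic, not a disk. The ``two $d$-balls in a $d$-manifold can meet only along their boundaries'' argument therefore has no purchase, and your reasoning does not exclude the possibility that some new $d$-simplex of $\wJ_1$ contains $\Delta_i'$ --- which is exactly what must be ruled out to conclude $\link(\Delta_i',\widetilde K)=\partial\Delta_{i+1}'$.

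The gap is closed by invoking Proposition~\ref{propos_move} rather than carrier geometry. After the $J_1$-flip, $\widetilde K$ is again a combinatorial $d$-manifold, so $\link(\Delta_i',\widetilde K)$ is a combinatorial $(d/2-1)$-sphere. Since $J_1$ and $J_2$ share no $d$-simplex, the $d$-simplices $\Delta_i'*\rho'$ with $\rho'$ a facet of $\Delta_{i+1}'$ all survive, giving $\partial\Delta_{i+1}'\subseteq\link(\Delta_i',\widetilde K)$. A combinatorial $(d/2-1)$-sphere cannot be a proper subcomplex of another combinatorial $(d/2-1)$-sphere, so equality holds; in particular no new $d$-simplex of $\wJ_1$ contains $\Delta_i'$, and $J_2$ remains a distinguished subcomplex of $\widetilde K$. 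Your step~3 is then fine: both orders of the two flips produce the same set of $d$-simplices and hence the same complex. The paper does not spell out a proof at all --- it declares the proposition an immediate consequence of the preceding remark --- but the missing content one must supply is this manifold-and-sphere argument, not the carrier-intersection argument you attempted.
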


\begin{cor}\label{cor_triple_commute}
 Suppose that $J_1,\ldots,J_k$ are distinguished subcomplexes of~$K$ with pairwise disjoint interiors. Then we can perform the triple flips corresponding to $J_1,\ldots,J_k$ in any order, and the resulting combinatorial manifold is  independent of this order.
\end{cor}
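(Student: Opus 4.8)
The statement to prove is Corollary~\ref{cor_triple_commute}: given distinguished subcomplexes $J_1,\ldots,J_k$ of~$K$ with pairwise disjoint interiors, the triple flips corresponding to them can be performed in any order with the same result.

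The plan is to reduce everything to Proposition~\ref{propos_triple_commute} by a straightforward induction on~$k$. First I would observe that the case $k=1$ is vacuous and $k=2$ is exactly Proposition~\ref{propos_triple_commute}. For the inductive step, the key point to nail down is that the hypothesis ``pairwise disjoint interiors'' is preserved under performing any one of the triple flips. So I would first argue that if $J_1,\ldots,J_k$ have pairwise disjoint interiors in~$K$, then after performing the triple flip corresponding to~$J_i$, the images of the remaining $J_j$ are still distinguished subcomplexes with pairwise disjoint interiors in the new complex~$\widetilde{K}$. The content here is that the triple flip on~$J_i$ only alters $d$-simplices lying in the interior of~$J_i$; since each $J_j$ ($j\neq i$) shares no $d$-simplex with~$J_i$, none of its top simplices is touched, and moreover its lower-dimensional faces and the defining link conditions~\eqref{eq_triple} are unaffected because those links are determined by $d$-simplices in the interior of~$J_j$, which also avoid the interior of~$J_i$. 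This is essentially the ``vice versa'' half of Proposition~\ref{propos_triple_commute} applied to an arbitrary pair, but it needs to be stated for the whole family so that the induction can proceed.

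With that in hand, the induction is routine: any two orderings of the $k$ flips differ by a sequence of transpositions of adjacent flips, and each such transposition swaps two consecutive triple flips which, by Proposition~\ref{propos_triple_commute} applied in the complex obtained after performing the flips that precede them (where, by the preservation property just established, the two subcomplexes in question still have disjoint interiors), commute. Hence the final complex is independent of the ordering. I would phrase this either as the standard ``bubble-sort'' argument for commuting operations or, slightly more cleanly, by induction on~$k$: perform the flip on~$J_1$ first in any ordering that starts with~$J_1$, use Proposition~\ref{propos_triple_commute} to move~$J_1$ to the front of an arbitrary ordering, and apply the inductive hypothesis to $J_2,\ldots,J_k$ in the resulting complex.

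I do not expect any serious obstacle; the only thing requiring a little care is the bookkeeping in the preservation statement, namely verifying that the defining conditions~\eqref{eq_triple} for $J_j$ continue to hold after flipping~$J_i$. This follows because $\link(\Delta,\widetilde{K})=\link(\Delta,K)$ whenever $\Delta$ is a $(d/2)$-simplex all of whose cofaces of dimension~$d$ lie outside the interior of~$J_i$, and for the three distinguished simplices of~$J_j$ this is guaranteed by disjointness of interiors. Everything else is formal, so the proof will be short.
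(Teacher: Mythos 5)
Your proof is correct and is precisely the argument the paper has in mind; indeed the paper states this as a corollary with no written proof at all, treating the reduction to Proposition~\ref{propos_triple_commute} via the bubble-sort argument as immediate. The one place you rightly flag as needing care --- that after flipping~$J_i$ the remaining $J_j$ are still distinguished subcomplexes with pairwise disjoint interiors --- is handled exactly as you say: ``still distinguished'' is the content of Proposition~\ref{propos_triple_commute} applied to each pair $(J_i,J_j)$, and ``still pairwise disjoint interiors'' is automatic since the flip on~$J_i$ only deletes $d$-simplices lying in the interior of~$J_i$ (which meets no interior of~$J_j$) and only inserts $d$-simplices that were previously non-simplices of~$K$ (hence not in any~$J_j$).
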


Before studying the octonionic case, we recall how the distinguished subcomplexes look like in the real, complex, and quaternionic cases.

\smallskip

\textsl{Case 1. Suppose that $d=2$ and~$K=\RP^2_6$.} Then any one-dimensional simplex of~$\RP^2_6$ enters a distinguished triple and all the $15$ one-dimensional simplices of~$\RP^2_6$ are decomposed into $5$ distinguished triples. The triple flip corresponding to any of these distinguished triples transforms~$\RP^2_6$ to a combinatorial manifold that is isomorphic to~$\RP^2_6$ again.

\smallskip

\textsl{Case 2. Suppose that $d=4$ and~$K=\CP^2_9$.} Then $\CP^2_9$ contains a unique distinguished triple, cf.~\cite[\S 3]{MoYo91}. Recall that the $54$-element group $\Sym(\CP^2_9)$ acts on the $36$ four-dimensional simplices with two orbits consisting of $27$ and~$9$ simplices, respectively. The $9$ four-dimensional simplices in the second orbit and their faces form the distinguished subcomplex~$J$. So $J$ is invariant under the action of  $\Sym(\CP^2_9)$. The corresponding triple flip  transforms~$\CP^2_9$ to a combinatorial manifold that is isomorphic to~$\CP^2_9$ again.

\smallskip

\textsl{Case 3. Suppose that $d=8$ and~$K=\HP^2_{15}$ is the most symmetric of the three Brehm--K\"uhnel triangulations.} Then $\HP^2_{15}$ contains exactly five distinguished triples and the group $\Sym(\HP^2_{15})\cong \rA_5$ acts transitively on them, see~\cite[Section~3]{BrKu92}. The corresponding five distinguished subcomplexes~$J_1,\ldots,J_5$ have pairwise disjoint interiors. So we may perform the corresponding five triple flips independently of each other. If one performs simultaneously all the five triple flips corresponding to~$J_1,\ldots,J_5$, then the obtained combinatorial manifold will be isomorphic to~$\HP^2_{15}$ again. Nevertheless, performing only some of the five triple flips, one can obtain new triangulations. This is exactly the way how Brehm and K\"uhnel constructed the two other triangulations. Namely, performing any one (or any four) of the five triple flips leads to a combinatorial manifold isomorphic to~$\tHP^2_{15}$, and performing any two (or any three) of the five triple flips leads to a combinatorial manifold isomorphic to~$\ttHP^2_{15}$. For each $k$, the result is independent of which $k$ of the $5$ flips are performed, since the group $\Sym(\HP^2_{15})\cong \rA_5$ acts transitively on the $k$-element subsets of $\{1,2,3,4,5\}$. Note that the symmetry groups $\Sym\left(\tHP^2_{15}\right)\cong \rA_4$ and $\Sym\left(\ttHP^2_{15}\right)\cong \rS_3$ are exactly the subgroups of~$\rA_5$ that stabilize a distinguished subcomplex and the union of two distinguished subcomplexes, respectively.

\smallskip

Now, we are ready to consider the case $d=16$, that is, the four combinatorial manifolds $K_1$, $K_2$, $K_3$, and~$K_4$ from Theorem~\ref{theorem_main_detail}.

\begin{theorem}\label{theorem_smaller_groups}
\textnormal{(1)} The combinatorial manifolds~$K_1$ and~$K_4$ contain no distinguished subcomplexes.

\textnormal{(2)} The combinatorial manifold~$K_2$ contains exactly $351$ distinguished subcomplexes. One of them is the subcomplex
$$
J=(\Delta_1*\partial\Delta_2)\cup(\Delta_2*\partial\Delta_3)\cup(\Delta_3*\partial\Delta_1), 
$$
where 
\begin{align*}
\Delta_1&=\{1,2,5,6,9,10,11,16,25\},\\
\Delta_2&=\{3,4,7,8,12,13,14,15,17 \},\\
\Delta_3&=\{18,19,20,21,22,23,24,26,27\}.
\end{align*} 
The $351$ subcomplexes $g(J)$, where $g\in \Sym(K_2)=G_{351}$, have pairwise disjoint interiors and are exactly all the distinguished subcomplexes of~$K_2$.

\textnormal{(3)} For a subset $S\subseteq G_{351}$, let $K_S$ be the simplicial complex obtained from~$K_2$ by replacing all subcomplexes $g(J)$ such that $g\in S$ by the corresponding complexes $g\bigl(\widetilde{J}\,\bigr)$, where  
$$
\widetilde{J}=(\partial\Delta_1*\Delta_2)\cup(\partial\Delta_2*\Delta_3)\cup(\partial\Delta_3*\Delta_1).
$$
Then $K_S$ is a $27$-vertex combinatorial $16$-manifold like the octonionic projective plane and is PL homeomorphic to~$K_2=K_{\varnothing}$. The symmetry group~$\Sym(K_S)$ is exactly the subgroup of~$G_{351}$ consisting of all $g$ satisfying $gS=S$. Two combinatorial manifolds~$K_{S_1}$ and $K_{S_2}$ are isomorphic if and only if there exists $g\in G_{351}$ such that $gS_1=S_2$. Besides, $g(J)$ with $g\in G_{351}\setminus S$ and $g\bigl(\widetilde{J}\,\bigr)$ with $g\in S$ are exactly all distinguished subcomplexes of~$K_S$.

\textnormal{(4)} $K_{G_{351}}=K_3$.

\textnormal{(5)} The number of combinatorially distinct combinatorial manifolds of the form~$K_S$ is exactly   
$$
\frac1{351}\cdot\left(2^{351}+13\cdot 2^{118}+81\cdot 2^{29}\right).
$$
So together with the two exceptional combinatorial manifolds~$K_1$ and~$K_4$, we obtain 
$$
\frac1{351}\cdot\left(2^{351}+13\cdot 2^{118}+81\cdot 2^{29}\right)+2
$$
$27$-vertex combinatorial $16$-manifolds like the octonionic projective plane. Moreover, the distribution of these combinatorial manifolds with respect to symmetry groups is exactly as in Table~\ref{table_num_triang}.
\end{theorem}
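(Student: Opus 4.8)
The starting point is a purely combinatorial description of distinguished subcomplexes. If $(\Delta_1,\Delta_2,\Delta_3)$ is a distinguished triple in a $27$-vertex combinatorial $16$-manifold like a projective plane, then $\link(\Delta_1,K)=\partial\Delta_2$ is supported on the vertex set~$\Delta_2$, which is therefore disjoint from~$\Delta_1$; cycling, the three $8$-simplices $\Delta_1,\Delta_2,\Delta_3$ are pairwise disjoint, and since $3\cdot 9=27$ they partition the vertex set~$\F_{27}$. Conversely, for a fixed ordered partition the conditions~\eqref{eq_triple} are statements about which $16$-simplices of~$K$ contain each~$\Delta_i$, so they are verified directly. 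Hence parts~(1) and~(2) reduce to a finite enumeration: run over all $G_{351}$-orbits of ordered triples partitioning~$\F_{27}$ into three $9$-element sets and test~\eqref{eq_triple}. Such a computer search finds no distinguished triple for~$K_1$ and~$K_4$, and exactly one $G_{351}$-orbit for~$K_2$, represented by the triple in the statement; this orbit has $351$ elements, and since $|G_{351}|=351$ a transitive action on it is automatically regular, so the stabilizer of~$J$ in~$G_{351}$ is trivial. That the $351$ subcomplexes~$g(J)$ have pairwise disjoint interiors is checked by verifying that their $16$-simplices are $351\cdot 27=9477$ distinct simplices of~$K_2$. The last sentence of part~(3), describing all distinguished subcomplexes of~$K_S$, then follows from the case of~$K_2$ together with Proposition~\ref{propos_triple_commute} (each~$g(J)$, $g\notin S$, and each $g(\widetilde{J})$, $g\in S$, remains distinguished in~$K_S$) and a short check that a triple flip cannot create a distinguished triple whose partition lies outside this orbit.

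The manifold and PL statements in part~(3), and part~(4), are now essentially formal. Since the~$g(J)$ have disjoint interiors, Proposition~\ref{propos_triple_commute} and Corollary~\ref{cor_triple_commute} show that~$K_S$ is well defined, and Proposition~\ref{propos_move} shows that each triple flip preserves being a combinatorial $16$-manifold and the PL homeomorphism type; hence $K_S$ is a $27$-vertex combinatorial $16$-manifold PL homeomorphic to~$K_2$, in particular like~$\OP^2$. The identity $K_{G_{351}}=K_3$ is checked by comparing the lists of $16$-simplices.

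The core of the theorem is the identification $\Sym(K_S)=\{g\in G_{351}:gS=S\}$ and the isomorphism criterion. One inclusion, and the "if" of the criterion, are formal: if $g\in G_{351}$ and $gS=S$, then from $g\cdot g'(J)=(gg')(J)$ and $g\cdot g'(\widetilde{J})=(gg')(\widetilde{J})$ the symmetry~$g$ of~$K_2$ permutes the flipped subcomplexes among themselves and the unflipped ones among themselves, so $g\in\Sym(K_S)$; likewise $g(K_{S_1})=K_{gS_1}$. For the converse I would use that sending each distinguished subcomplex of~$K_S$ to the unordered partition of~$\F_{27}$ into its three $8$-simplices is a $\Sym(K_S)$-equivariant bijection onto the $G_{351}$-orbit~$\mathcal P$ of the partition $\{\Delta_1,\Delta_2,\Delta_3\}$; hence $\Sym(K_S)$ lies in the setwise stabilizer~$H\subset\rS_{27}$ of~$\mathcal P$. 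One then checks (by a computation of this stabilizer, or by arguing directly that a symmetry of some~$K_S$ normalizes~$G_{351}$) that $H\subseteq N(G_{351})$. Given this, any $\varphi\in\Sym(K_S)$ normalizes~$G_{351}$, so $\varphi(K_2)$ is again $G_{351}$-invariant; but $\varphi(K_2)$ is obtained from $K_S=\varphi(K_S)$ by unflipping a set of distinguished subcomplexes, hence equals some~$K_{S'}$, and the direct computation $g(K_{S'})=K_{gS'}$ shows that $K_{S'}$ is $G_{351}$-invariant only for $S'\in\{\varnothing,G_{351}\}$, i.e.\ $\varphi(K_2)\in\{K_2,K_3\}$. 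Since $K_2\not\cong K_3$ (Step~3 of the proof of Theorem~\ref{theorem_main_detail}) and $\varphi$ is an isomorphism, $\varphi(K_2)=K_2$, so $\varphi\in\Sym(K_2)=G_{351}$ by Proposition~\ref{propos_SymK}, and then $\varphi(K_S)=K_{\varphi S}=K_S$ forces $\varphi S=S$. The same reasoning applied to an isomorphism $K_{S_1}\to K_{S_2}$ yields the "only if" of the criterion. I expect the step $H\subseteq N(G_{351})$ — equivalently, ruling out symmetries of~$K_S$ that do not normalize~$G_{351}$ — to be the main obstacle; it should be computer-assisted and is genuinely more delicate than Section~\ref{section_no_add_sym}, since $\Sym(K_S)$ need not be edge-transitive.

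Finally, part~(5) is a counting exercise. By the isomorphism criterion, the manifolds~$K_S$ up to isomorphism correspond to orbits of~$G_{351}$ acting on the $2^{351}$ subsets of~$G_{351}$ by left translation, so by Burnside's lemma their number is $\frac1{351}\sum_{g\in G_{351}}2^{\,351/\mathrm{ord}(g)}$. The group $G_{351}\cong\rC_3^3\rtimes\rC_{13}$ has one element of order~$1$, twenty-six of order~$3$ (the nonidentity elements of~$\rC_3^3$), and three hundred twenty-four of order~$13$ (for $t\ne 1$ one has $1+t+\dots+t^{12}=0$ on~$\rC_3^3$ because the action is irreducible), so the count equals $\frac1{351}\bigl(2^{351}+26\cdot 2^{117}+324\cdot 2^{27}\bigr)=\frac1{351}\bigl(2^{351}+13\cdot 2^{118}+81\cdot 2^{29}\bigr)$; adding the two exceptional manifolds~$K_1$ and~$K_4$, which by part~(1) carry no distinguished subcomplex and hence are not among the~$K_S$, gives the stated total. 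Table~\ref{table_num_triang} is then obtained by Möbius inversion over the subgroup lattice of~$G_{351}$, whose conjugacy classes of subgroups are only $1,\rC_3,\rC_3^2,\rC_3^3,\rC_{13}$, and $G_{351}$: the number of~$S$ with $\Stab_{G_{351}}(S)\supseteq H$ is $2^{\,351/|H|}$, which determines the number of orbits with each prescribed symmetry group, and the row for $\rC_3^3\rtimes\rC_{13}$ is increased by~$2$ to include~$K_1$ and~$K_4$.
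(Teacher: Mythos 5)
Your proposal for parts (1), (2), (4), (5) is sound; in particular the Burnside count in (5) is a clean alternative to the paper's step-by-step Möbius computation and gives the same formula. For (1)--(2), you enumerate partitions of $[27]$ into three $9$-sets and test the link conditions directly, whereas the paper computes the parameters $\nu_K(\sigma,v)$ (the number of $16$-simplices containing $v$ and sharing a $15$-face with $\sigma$) and uses the bound $\nu_K(\sigma,v)\le d/2$, with equality exactly when $\sigma$ sits in a distinguished subcomplex (Proposition~\ref{propos_nu_param}). Both work, but the $\nu$-parameter route is much cheaper computationally and, more importantly, is the tool that makes the rest of the argument go.

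The real issue is part (3), where you have two gaps. First, your claim that ``a short check that a triple flip cannot create a distinguished triple whose partition lies outside this orbit'' is not short as stated: you would have to verify it for each of the $2^{351}$ complexes~$K_S$, which is infeasible. The paper's key trick is that every~$K_S$ is contained, at the level of $16$-simplices, in the single pure complex $K_2\cup K_3$, so the inequality $\nu_{K_S}(\sigma,v)\le\nu_{K_2\cup K_3}(\sigma,v)$ reduces the check to one computation of $\nu$-parameters on $K_2\cup K_3$ (Propositions~\ref{propos_nu_param_calc2+3} and~\ref{propos_nu_param_calcS}), which then yields $351$ as the exact count of distinguished subcomplexes for all~$K_S$ at once. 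Second, you correctly observe that you must show any symmetry of~$K_S$ (or any isomorphism $K_{S_1}\to K_{S_2}$) normalizes~$G_{351}$, and you propose to deduce this from $H\subseteq N(G_{351})$, where $H$ is the $\rS_{27}$-stabilizer of the orbit of partitions; you flag this as needing a computation you do not supply, and it is not a priori clear it holds. The paper bypasses this entirely: by Proposition~\ref{propos_nu_param_calcS}, the $\nu$-parameters distinguish the $16$-simplices of~$K_S$ lying in $K_2\cap K_3$ from those not lying there, so any isomorphism preserves the pure complex $K_{2,3}$ and therefore lies in $\Sym(K_{2,3})$; and $\Sym(K_{2,3})=G_{351}$ is proved separately (Proposition~\ref{propos_Sym23}) by the same $N_{pq}$-matrix argument used in Section~\ref{section_no_add_sym} together with the intersection data of Table~\ref{table_intersections}. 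So the structure of your reduction in part (3) (once you know $\varphi\in G_{351}$, conclude $\varphi(K_2)\in\{K_2,K_3\}$ and then $\varphi(K_2)=K_2$) is correct, but the two inputs you rely on are exactly where the paper's $\nu$-parameter and $K_2\cup K_3$ machinery does real work, and your proposal leaves them unestablished.
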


We will prove this theorem in the next section. Now, we would like to stress that four new phenomena occur in dimension~$16$:

\begin{itemize}
\item $K_1$ and~$K_4$ are the first examples of $(3d/2+3)$-vertex combinatorial $d$-manifolds like a projective plane without distinguished subcomplexes.
\item If we perform in~$K_2$ all the possible triple flips simultaneously, we obtain a combinatorial manifold~$K_3$, which is not isomorphic to~$K_2$. In all previously known examples in dimensions~$2$, $4$, and~$8$ performing all the possible triple flips simultaneously led to a combinatorial manifold isomorphic to the initial one.
\item The simplices $\Delta_1,\Delta_2,\Delta_3\in K_2$ that constitute the distinguished triple from Claim~(2) of  Theorem~\ref{theorem_smaller_groups} lie in pairwise different $\Sym(K_2)$-orbits. (This follows immediately from the fact that the subcomplexes~$g(J)$ with $g\in \Sym(K_2)$ are pairwise different.) On the contrary, in all previously known examples of distinguished triples~$(\Delta_1,\Delta_2,\Delta_3)$ in dimensions~$2$, $4$, and~$8$ there existed an element of the symmetry group permuting $\Delta_1$, $\Delta_2$, and~$\Delta_3$ cyclically. 
\item Also a new phenomenon is the existence of (a huge amount of) $27$-vertex combinatorial $16$-manifolds like the octonionic projective plane with trivial symmetry groups. All known examples of $(3d/2+3)$-vertex combinatorial $d$-manifolds like a projective plane in dimensions $d=2,4,8$ have nontrivial symmetry groups. (After writing the present paper, the author~\cite{Gai23b} constructed a lot of $15$-vertex triangulations of $\HP^2$ with trivial symmetry group.)
\end{itemize}

\section{Proof of Theorem~\ref{theorem_smaller_groups}}\label{section_proof_smaller_groups}

Suppose that $X$ is a pure $d$-dimensional simplicial complex on a finite vertex set~$V$. Let $\sigma$ be a $d$-simplex of~$X$ and let $v$ be a vertex of~$X$ not belonging to~$\sigma$. We denote by $\nu_X(\sigma,v)$ the number of $d$-simplices $\tau\in X$ such that $v\in\tau$ and $\dim(\sigma\cap\tau)=d-1$. The numbers $\nu_X(\sigma,v)$ for various $v\notin\sigma$ will be called \textit{$\nu$-parameters} for~$\sigma$. 

If $X$ is a weak $d$-pseudomanifold, then it contains exactly $d+1$ simplices $\tau$ such that $\dim\tau=d$ and $\dim(\sigma\cap\tau)=d-1$. Hence,

\begin{equation*}
\sum_{v\in V\setminus\sigma} \nu_X(\sigma,v)=d+1.
\end{equation*}

\begin{propos}\label{propos_nu_param}
Suppose that $K$ is a  combinatorial $d$-manifold like a projective plane with $3d/2+3$ vertices. Then 
\begin{enumerate}
\item $\nu_K(\sigma,v)\le d/2$ for all~$\sigma$ and~$v$, 
\item a $d$-simplex $\sigma\in K$ belongs to a distinguished subcomplex if and only if there is a vertex $v\notin\sigma$ satisfying $\nu_K(\sigma,v)=d/2$,
\item moreover, a $d$-simplex $\sigma\in K$ belongs to exactly $q$ different distinguished subcomplexes, where $q$ is the number of vertices $v\notin\sigma$ satisfying $\nu_K(\sigma,v)=d/2$.
\end{enumerate} 
\end{propos}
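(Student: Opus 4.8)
The plan is to analyze the local structure of $K$ around a top-dimensional simplex $\sigma$ by passing to a suitable link, where the Brehm--K\"uhnel conditions become usable. Fix a $d$-simplex $\sigma\in K$ and a vertex $v\notin\sigma$, and set $m=d/2+1$ for brevity (so $K$ has $3m$ vertices and is $m$-neighborly). The key observation is that $\nu_K(\sigma,v)$ counts the facets $\rho\subset\sigma$ with $\dim\rho=d-1$ such that $\rho\cup\{v\}\in K$; equivalently, writing $\sigma\setminus\rho=\{w_\rho\}$, it counts the vertices $w\in\sigma$ with $\sigma\setminus\{w\}\cup\{v\}\in K$. So $\nu_K(\sigma,v)$ is the number of vertices $w\in\sigma$ for which the "edge flip along $\{v,w\}$" is locally consistent with $\sigma$. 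I would reformulate this via the link of the codimension-$(d-1)$ face: look at $\tau=\sigma\cap(\text{the }d\text{-simplices through }\rho)$, or more efficiently consider, for each vertex $w\in\sigma$, whether $v$ is a neighbor of $\sigma\setminus\{w\}$ in the appropriate link.

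For part (1), the cleanest route is to suppose $\nu_K(\sigma,v)\ge m$ and derive a contradiction with the complementarity condition (c). If $w_1,\dots,w_m\in\sigma$ are vertices with $(\sigma\setminus\{w_i\})\cup\{v\}\in K$, I would try to produce from these a set $W$ of vertices such that both $W$ and $V\setminus W$ span simplices, or such that neither does. A natural candidate: the $m$ vertices $\{w_1,\dots,w_m\}$ together with $v$ give an $(m+1)$-element configuration; since $K$ is $m$-neighborly every $m$-subset is a simplex, so the constraint must come from the $(m+1)$-subsets or larger. The heart of the matter is to show that $\nu_K(\sigma,v)=m$ forces $\sigma\setminus\{w_1,\dots,w_m\}$ (an $m$-element subset of $\sigma$, since $|\sigma|=d+1=2m+1$) and its complement to both span simplices — contradicting (c) unless $\nu_K(\sigma,v)\le m-1 = d/2$. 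I expect the bookkeeping here to hinge on: $\sigma$ has $2m+1$ vertices, $v$ is one more, the remaining $3m-(2m+2)=m-2$ vertices form a set $Z$; the various flipped simplices $(\sigma\setminus\{w_i\})\cup\{v\}$ all avoid $Z$, and analyzing which subsets of $\sigma\cup\{v\}$ are simplices against condition (c) should pin down the bound. This counting-against-complementarity step is the main obstacle, and getting the exact extremal configuration right is where the real work lies.

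For parts (2) and (3), the idea is that $\nu_K(\sigma,v)=d/2=m-1$ is precisely the signature of $\sigma$ sitting inside a distinguished subcomplex $J=(\Delta_1*\partial\Delta_2)\cup(\Delta_2*\partial\Delta_3)\cup(\Delta_3*\partial\Delta_1)$ with $\dim\Delta_i=d/2$. First I would check the "only if" direction by direct inspection of $J$: the $d$-simplices of $J$ are exactly the joins $\Delta_i*(\Delta_{i+1}\setminus\{w\})$ for $w\in\Delta_{i+1}$; taking $\sigma=\Delta_1*(\Delta_2\setminus\{w_0\})$, the $d$-simplices of $J$ sharing a facet with $\sigma$ and containing a fixed vertex are controlled by the join structure, and a short computation shows that the vertex $v=w_0$ (the one deleted from $\Delta_2$) satisfies $\nu_J(\sigma,v)=\dim\Delta_1+1=d/2$, with no vertex outside $\sigma$ achieving more than $d/2$ — and since the other $d$-simplices through $\sigma$ in $K$ lie outside $J$ by the hypothesis $\link(\Delta_1,K)=\partial\Delta_2$ (so $\Delta_2\notin K$, forcing the remaining flip to "go the other way"), the value $\nu_K(\sigma,v)=d/2$ persists in $K$. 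For the "if" direction, given $v\notin\sigma$ with $\nu_K(\sigma,v)=d/2$, I would let $\Delta_2'\subseteq\sigma$ be the $(d/2)$-face consisting of the $d/2$ vertices $w$ with $(\sigma\setminus\{w\})\cup\{v\}\in K$, set $\Delta_1=\sigma\setminus\Delta_2'$ (a $(d/2-1)$-face, so I'd adjoin $v$ appropriately — actually $\Delta_2=\Delta_2'\cup\{?\}$: the arithmetic $|\sigma|=d+1$, $|\Delta_2'|=d/2$ leaves $|\Delta_1|=d/2+1$, so $\Delta_1=\sigma\setminus\Delta_2'$ and $\Delta_2=\Delta_2'\cup\{v\}$ have dimension $d/2$), and then verify using (c), the Dehn--Sommerville relations, and $f$-vector information from Table~\ref{table_f} that $\link(\Delta_1,K)=\partial\Delta_2$, $\link(\Delta_2,K)=\partial\Delta_3$ for a uniquely determined $\Delta_3$ on the remaining $d/2+1$ vertices, and $\link(\Delta_3,K)=\partial\Delta_1$. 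The uniqueness of $\Delta_3$ given $(\Delta_1,\Delta_2)$ then yields (3): distinct witnessing vertices $v$ give distinct sets $\Delta_2'$, hence distinct pairs $(\Delta_1,\Delta_2)$, hence distinct distinguished subcomplexes through $\sigma$, and conversely every distinguished subcomplex through $\sigma$ arises this way. I expect part (1) to be the genuine obstacle; parts (2)--(3) are mostly a careful unwinding of the join structure together with the already-established complementarity condition.
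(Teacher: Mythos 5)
Your plan has the right shape for parts (2)--(3) --- you identify $\Delta_1=\sigma\setminus\rho$, $\Delta_2=\rho\cup\{v\}$, $\Delta_3=V\setminus(\sigma\cup\{v\})$ and match $v$'s bijectively to distinguished subcomplexes through $\sigma$, which is exactly the paper's bookkeeping --- but the proof of part~(1), which you correctly flag as the obstacle, has a real gap, and the route you sketch does not close it. You hope that $\nu_K(\sigma,v)\ge d/2+1$ forces a complementarity violation by showing that $\sigma\setminus\rho$ and its complement both span simplices. They don't, and there is no violation: after fixing the arithmetic ($|\sigma|=d+1=2m-1$, not $2m+1$, and $|Z|=|\Delta_3|=m=d/2+1$, not $m-2$, where $m=d/2+1$), the complement $V\setminus(\sigma\setminus\rho)=\Delta_2\cup\Delta_3$ has at least $d+3$ vertices, so it automatically fails to span a simplex in a $d$-dimensional complex --- complementarity is happy and gives you nothing.

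The missing input is topological, not combinatorial: since $|\Delta_3|=d/2+1$, neighborliness makes $\Delta_3$ a $(d/2)$-simplex of $K$, and because $K$ is a combinatorial $d$-manifold, $\link(\Delta_3,K)$ is a combinatorial $(d/2-1)$-sphere, hence has \emph{at least} $d/2+1$ vertices. Complementarity then says no vertex $u\in\Delta_2$ can lie in $\link(\Delta_3,K)$ (the complement of $\Delta_3\cup\{u\}$ is $(\sigma\cup\{v\})\setminus\{u\}$, which is a simplex by the definition of $\rho$), so $\link(\Delta_3,K)\subseteq\Delta_1$, and $|\Delta_1|=d+1-k$. This forces $k\le d/2$. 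With this in hand, the ``if'' direction of (2) is immediate and needs none of the Dehn--Sommerville or $f$-vector machinery you reach for: when $k=d/2$ the containment becomes $\link(\Delta_3,K)=\partial\Delta_1$ by dimension count, and the other two link identities $\link(\Delta_2,K)=\partial\Delta_3$ and $\link(\Delta_1,K)=\partial\Delta_2$ follow by cycling the same complementarity-plus-sphere-link argument. Without the sphere-link dimension bound, part~(1) --- and hence the whole proposition --- does not go through.
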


\begin{proof}
Recall that $K$ satisfies the neighborliness condition~(b) and the complementarity condition~(c) from Section~\ref{section_intro}.

Let $\sigma$ be a $d$-simplex of~$K$. Suppose that $\nu_K(\sigma,v)=k$. Then there are exactly $k$ vertices $u\in \sigma$ satisfying $$(\sigma\setminus\{u\})\cup\{v\}\in K.$$ We denote by~$\rho$ the set consisting of all such vertices~$u$, and put 
$$\Delta_1=\sigma\setminus\rho, 
\qquad 
\Delta_2=\rho\cup\{v\}, 
\qquad 
\Delta_3=V\setminus(\sigma\cup\{v\}).
$$
Then any set obtained from $\Delta_1\cup\Delta_2$ by removing a vertex from~$\Delta_2$ belongs to~$K$, so $K$ contains the subcomplex $\Delta_1*\partial\Delta_2$. By the complementarity condition, we obtain that $K$ contains no simplex of the form $\Delta_3\cup\{u\}$ with $u\in \Delta_2$. On the other hand, the set~$\Delta_3$ consists of $d/2+1$ elements and hence, by the neighborliness condition, is a $(d/2)$-simplex of~$K$. It follows that $\link(\Delta_3,K)$ is contained in~$\Delta_1$, so it has at most $d+1-k$ vertices. However, since $K$ is a  combinatorial $d$-manifold, we obtain that $\link(\Delta_3,K)$ is a  combinatorial $(d/2-1)$-sphere and hence has at least $d/2+1$ vertices. Therefore $k\le d/2$.

Moreover, if $k=d/2$, then $\link(\Delta_3,K)=\partial\Delta_1$. So $K$ contains the subcomplex $\Delta_3*\partial\Delta_1$. Repeating the same argument, we similarly obtain that $K$ contains the subcomplex $\Delta_2*\partial\Delta_3$, too. Then $K$ contains the distinguished subcomplex 
$$
(\Delta_1*\partial\Delta_2)\cup(\Delta_2*\partial\Delta_3)\cup(\Delta_3*\partial\Delta_1).
$$

Thus, to each vertex $v\notin \sigma$ with $\nu_K(\sigma,v)=d/2$, we have assigned a distinguished subcomplex containing~$\sigma$. Vice versa, suppose that  
$$
J=(\Delta_1*\partial\Delta_2)\cup(\Delta_2*\partial\Delta_3)\cup(\Delta_3*\partial\Delta_1)
$$
is a distinguished subcomplex containing~$\sigma$. Permuting cyclically~$\Delta_1$, $\Delta_2$, and~$\Delta_3$, we may achieve that $\sigma$ is contained in~$\Delta_1*\partial\Delta_2$. We assign to~$J$ a unique vertex~$v\in\Delta_2$ that does not belong to~$\sigma$. Then $\nu_K(\sigma,v)=d/2$. 

It is easy to see that the constructed correspondence between vertices $v\notin\sigma$ satisfying $\nu_K(\sigma,v)=d/2$ and distinguished subcomplexes containing~$\sigma$ is bijective, which completes the proof of the proposition.
\end{proof}

For each of the four combinatorial $16$-manifolds~$K_1$, $K_2$, $K_3$, and~$K_4$, all $\nu$-parameters can be easily calculated using a computer. The result of this calculation is as follows.

\begin{propos}\label{propos_nu_param_calc}
\textnormal{(1)} All $\nu$-parameters $\nu_{K_1}(\sigma,v)$ and $\nu_{K_4}(\sigma,v)$ do not exceed~$7$.

\textnormal{(2)} For each $16$-simplex $\sigma\in K_i$, where $i$ is either~$2$ or~$3$, at most one of the corresponding $\nu$-parameters $\nu_{K_i}(\sigma,v)$ can be equal to~$8$.

\textnormal{(3)} There are exactly $27$ \ $G_{351}$-orbits of $16$-simplices $\sigma \in K_2$ for which one of the $\nu$-parameters $\nu_{K_2}(\sigma,v)$ is equal to~$8$. These are exactly all $16$-simplices $\sigma \in K_2$ that do not belong to~$K_3$. Moreover, these $27\cdot 351=9477$ simplices~$\sigma$ are exactly all $16$-simplices that belong to the union $\bigcup_{g\in G_{351}}g(J)$, where $J$ is the complex from Claim~(2) of Theorem~\ref{theorem_smaller_groups}.

\textnormal{(4)} The same remains true if we swap~$K_2$ and~$K_3$, and replace~$J$ with the complex~$\widetilde{J}$ from Claim~(3) of Theorem~\ref{theorem_smaller_groups}.
\end{propos}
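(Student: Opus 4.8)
The plan is to establish this proposition by a direct computer enumeration, with Proposition~\ref{propos_nu_param} as the only conceptual ingredient. The first step is to reduce to finitely many simplices using $G_{351}$-equivariance: since $\nu_{K_i}(g\sigma,gv)=\nu_{K_i}(\sigma,v)$ for every $g\in G_{351}=\Sym(K_i)$, it suffices to compute the $\nu$-parameters for the $286$ representatives $\sigma_1,\dots,\sigma_{286}$ of the $G_{351}$-orbits of $16$-simplices of each of the complexes $K_1,K_2,K_3,K_4$. For such a representative $\sigma_s$ and one of the $27-17=10$ vertices $v\notin\sigma_s$, one has $\nu_{K_i}(\sigma_s,v)=\#\bigl\{u\in\sigma_s:(\sigma_s\setminus\{u\})\cup\{v\}\in K_i\bigr\}$, which is computed by at most $17$ membership tests in the stored list of $16$-simplices of~$K_i$. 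Thus the whole computation produces a table of $4\cdot 286\cdot 10$ integers, each already known to lie in $\{0,\dots,8\}$ by Proposition~\ref{propos_nu_param}(1).

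Assertions~(1) and~(2) are then read off from this table: for $K_1$ and~$K_4$ the value~$8$ never appears, and for $i\in\{2,3\}$ every representative $\sigma_s$ has at most one vertex $v$ with $\nu_{K_i}(\sigma_s,v)=8$; by Proposition~\ref{propos_nu_param}(3) the latter is exactly the statement that each $16$-simplex of~$K_i$ lies in at most one distinguished subcomplex. For~(3) I would assemble three lists of $G_{351}$-orbits of $16$-simplices and verify that they coincide: first, the orbits of those $\sigma\in K_2$ that admit a vertex $v$ with $\nu_{K_2}(\sigma,v)=8$ (the table above shows there are $27$ of them); second, the orbits that occur in $\bigcup_{g\in G_{351}}g(J)$, obtained by taking the $27$ top-dimensional simplices of the explicit complex~$J$ from Theorem~\ref{theorem_smaller_groups}, applying all $351$ elements of~$G_{351}$, and decomposing the resulting set of $16$-simplices (this again yields $27$ orbits, each of size~$351$ by Proposition~\ref{propos_free}, so $9477$ simplices altogether); and third, the $27$ orbits of $16$-simplices of~$K_2$ that are not $16$-simplices of~$K_3$, using that $K_3$ is obtained from~$K_2$ by replacing $27$ orbits of $16$-simplices, see Section~\ref{section_result}. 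Since each orbit is recorded by its smallest member, matching the three lists is a literal comparison of triples of lists of $27$ binary words. Assertion~(4) is the same verification with $K_2$ and~$K_3$ interchanged and~$J$ replaced by~$\widetilde{J}$, whose $27$ top-dimensional simplices are again explicit.

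I do not foresee a real obstacle here: the inequality $\nu_K(\sigma,v)\le d/2$ and its translation into distinguished subcomplexes are already provided by Proposition~\ref{propos_nu_param}, and the remaining work is a finite enumeration that a computer completes essentially instantly. The only point that genuinely requires care is the coincidence-of-lists step in~(3) and~(4): the three descriptions of the relevant $27$ orbits --- ``some $\nu$-parameter equals~$8$'', ``contained in some $G_{351}$-translate of~$J$ (or of~$\widetilde{J}$)'', and ``in $K_2$ but not in~$K_3$ (or in $K_3$ but not in~$K_2$)'' --- are a priori unrelated, so one must check that they single out the same simplices and not merely the same count; this check also confirms, in passing, that every $16$-simplex of $\bigcup_{g}g(J)$ indeed belongs to~$K_2$, which is implicit in the wording of the statement.
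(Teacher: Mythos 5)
Your approach matches the paper's exactly: the paper disposes of Proposition~\ref{propos_nu_param_calc} with a one-line remark that all $\nu$-parameters can be computed by computer, and the enumeration you describe (reducing to the $286$ orbit representatives via $G_{351}$-equivariance, computing $\nu_{K_i}(\sigma_s,v)$ for the $10$ vertices outside each representative, then cross-checking the three descriptions of the $27$ exceptional orbits) is precisely how one would carry that out. The supporting observations you use---Proposition~\ref{propos_nu_param} for the bound and the distinguished-subcomplex interpretation, Proposition~\ref{propos_free} for orbit sizes, and the $3\cdot 9=27$ top simplices of~$J$---are all correct.
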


Claims~(1), (2), and~(4) of Theorem~\ref{theorem_smaller_groups} immediately follow from Propositions~\ref{propos_nu_param} and~\ref{propos_nu_param_calc}. Moreover, Claim~(1) of Proposition~\ref{propos_nu_param_calc} in fact implies the following assertion.

\begin{cor}\label{cor_no8-8}
 Neither of the combinatorial manifolds~$K_1$ and~$K_4$ contains an $8$-simplex whose link is the boundary of an $8$-simplex.
\end{cor}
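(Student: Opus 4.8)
The plan is to deduce the corollary from Claim~(1) of Proposition~\ref{propos_nu_param_calc} together with Proposition~\ref{propos_nu_param}, using the correspondence between distinguished subcomplexes and $\nu$-parameters equal to $d/2$. Concretely, here $d=16$, so $d/2=8$; since all $\nu$-parameters $\nu_{K_1}(\sigma,v)$ and $\nu_{K_4}(\sigma,v)$ do not exceed $7$, no $16$-simplex $\sigma$ of $K_1$ or $K_4$ admits a vertex $v\notin\sigma$ with $\nu(\sigma,v)=8$. By Claim~(2) of Proposition~\ref{propos_nu_param} this already shows that no $16$-simplex of $K_1$ or $K_4$ lies in a distinguished subcomplex, which is Claim~(1) of Theorem~\ref{theorem_smaller_groups}.

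To get the slightly stronger statement about $8$-simplices whose link is the boundary of an $8$-simplex, first I would observe that if $\Delta$ is an $8$-simplex of $K$ ($K$ being $K_1$ or $K_4$) with $\link(\Delta,K)=\partial\Delta'$ for some $8$-simplex $\Delta'$, then necessarily $\Delta$ and $\Delta'$ are disjoint (their union has $18$ vertices, which is $\le 27$, so this is consistent; disjointness follows since $\Delta'$ is a simplex of $\link(\Delta,K)$, hence $\Delta'\cap\Delta=\varnothing$). Then every $16$-element set of the form $\Delta\cup(\Delta'\setminus\{w\})$ with $w\in\Delta'$ is a $16$-simplex $\sigma$ of $K$: indeed $\Delta'\setminus\{w\}\in\partial\Delta'=\link(\Delta,K)$, so $\Delta\cup(\Delta'\setminus\{w\})\in K$, and it has dimension $16$. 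Fix one such $\sigma$ and let $v$ be the unique vertex of $\Delta'$ not in $\sigma$, i.e.\ $v=w$. I then need to check that $\nu_K(\sigma,v)\ge 8$, which would contradict Claim~(1) of Proposition~\ref{propos_nu_param_calc}.

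For this last step, note that $\sigma=\Delta\cup(\Delta'\setminus\{v\})$, and for each of the $9$ vertices $u\in\Delta$ the set $(\sigma\setminus\{u\})\cup\{v\}=\Delta'\cup(\Delta\setminus\{u\})$ contains $\Delta'$ and the extra vertices $\Delta\setminus\{u\}$; since $\Delta\setminus\{u\}\in\partial\Delta=\link(\Delta',K)$ (which holds because $\link(\Delta',K)=\partial\Delta$ is part of the distinguished-triple relations, or more directly because $\Delta\in K$ forces all proper faces of $\Delta$ to lie in $\link(\Delta',K)$ once $\Delta'\cup\tau\in K$ for the relevant $\tau$), this set is a $16$-simplex of $K$ sharing a codimension-$1$ face with $\sigma$. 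Actually the cleanest route is: a $16$-simplex whose link relation produces the boundary of an $8$-simplex is exactly the situation analysed in the proof of Proposition~\ref{propos_nu_param} with $k=d/2$, so one of the $\nu$-parameters of $\sigma$ equals $8$; hence it suffices to quote that any $8$-simplex with link $\partial\Delta'$ gives rise (via the construction in the proof of Proposition~\ref{propos_nu_param}, read in reverse) to a $16$-simplex $\sigma$ and a vertex $v$ with $\nu_K(\sigma,v)=8$. The main obstacle, such as it is, is purely bookkeeping: carefully matching the three simplices $\Delta_1,\Delta_2,\Delta_3$ of the distinguished-subcomplex picture to $\Delta$, $\Delta'$ and the complementary $8$-simplex $V\setminus(\Delta\cup\Delta')$, and making sure the neighborliness and complementarity conditions are invoked in the right places; once this is set up the contradiction with $\nu\le 7$ is immediate.
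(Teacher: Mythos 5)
Your overall reduction is the right one and matches what the paper intends: if $K_1$ or $K_4$ contained an $8$-simplex $\Delta$ with $\link(\Delta,K)=\partial\Delta'$, one should produce a $16$-simplex $\sigma$ and a vertex $v$ with $\nu_K(\sigma,v)=8$, which contradicts the computational bound $\nu_{K_1},\nu_{K_4}\le 7$ from Proposition~\ref{propos_nu_param_calc}(1). However, the specific count you give is wrong. You take $\sigma=\Delta\cup(\Delta'\setminus\{v\})$ and then argue that for each of the nine vertices $u\in\Delta$ the set $(\sigma\setminus\{u\})\cup\{v\}=\Delta'\cup(\Delta\setminus\{u\})$ lies in $K$ on the grounds that $\Delta\setminus\{u\}\in\partial\Delta=\link(\Delta',K)$. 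But $\link(\Delta',K)=\partial\Delta$ is not part of the hypothesis --- you are only given $\link(\Delta,K)=\partial\Delta'$ --- and your parenthetical attempts at justification do not establish the symmetric statement. In fact it cannot hold: if all nine $u\in\Delta$ contributed you would obtain $\nu_K(\sigma,v)\ge 9$, contradicting the general bound $\nu_K(\sigma,v)\le d/2=8$ of Proposition~\ref{propos_nu_param}(1); and running the argument in the proof of Proposition~\ref{propos_nu_param} shows that $\link(\Delta',K)$ is actually the boundary of the complementary $8$-simplex $V\setminus(\Delta\cup\Delta')$, not $\partial\Delta$.

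The fix is to count over the other half of $\sigma$. For each of the eight vertices $u\in\Delta'\setminus\{v\}$, the set $(\sigma\setminus\{u\})\cup\{v\}=\Delta\cup(\Delta'\setminus\{u\})$ lies in $K$ because $\Delta'\setminus\{u\}$ is a facet of~$\Delta'$ and hence belongs to $\partial\Delta'=\link(\Delta,K)$. This gives $\nu_K(\sigma,v)\ge 8$ directly from the stated hypothesis; Proposition~\ref{propos_nu_param}(1) then forces $\nu_K(\sigma,v)=8$, contradicting the bound $\nu\le 7$ for $K_1$ and $K_4$. Your closing paragraph gestures at exactly this (reading the proof of Proposition~\ref{propos_nu_param} in reverse), so the idea is present, but the explicit verification in the body of your argument is incorrect and needs to be replaced by the count above.
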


Since $K_2$ and~$K_3$ are subcomplexes of the same $26$-simplex with the vertex set~$[27]$, we can consider the simplicial complexes $K_2\cup K_3$ and $K_2\cap K_3$. The former of them is pure but the latter is not, so instead of the intersection~$K_2\cap K_3$ we conveniently consider the following smaller complex. Namely, let $K_{2,3}$ be the simplicial complex consisting of all $16$-simplices belonging to both $K_2$ and~$K_3$, and all their faces. Then $K_{2,3}$ is pure. To prove Claim~(3) of Theorem~\ref{theorem_smaller_groups}, we need several auxiliary propositions.

\begin{propos}\label{propos_Sym23}
$\Sym(K_{2,3})=G_{351}$.
\end{propos}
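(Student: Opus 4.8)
The plan is to imitate the strategy of Section~\ref{section_no_add_sym}: first squeeze $\Sym(K_{2,3})$ between $G_{351}$ and $\Sym(\Gamma)=G_{351}\rtimes\rC_3$, and then exclude the Frobenius automorphism~$F$.

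The inclusion $G_{351}\subseteq\Sym(K_{2,3})$ is immediate, since $K_{2,3}$ is built canonically out of the two $G_{351}$-invariant complexes $K_2$ and~$K_3$. Since the $259$ orbits of $16$-simplices common to $K_2$ and~$K_3$ are nonempty (Table~\ref{table_intersections}) and $G_{351}$ acts transitively on the $\binom{27}{2}=351$ pairs of vertices, every pair of vertices is an edge of~$K_{2,3}$; hence $G_{351}\subseteq\Sym(K_{2,3})$ acts transitively on the undirected edges and has exactly two orbits on the directed edges, namely the directed edges of~$\Gamma$ and their reverses. Exactly as in the proof of Corollary~\ref{cor_sym_cont}, if $\Sym(K_{2,3})$ contained a permutation not lying in~$\Sym(\Gamma)$, then $\Sym(K_{2,3})$ would act transitively on directed edges, and in particular some symmetry of~$K_{2,3}$ would interchange the endpoints of the edge~$\{1,2\}$. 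To exclude this I would reuse the invariant from Lemma~\ref{lem_not_transitive}: for each $15$-simplex $\tau\in K_{2,3}$ with $\{1,2\}\subseteq\tau$, record the ordered pair $\bigl(s(\tau\setminus\{1\}),s(\tau\setminus\{2\})\bigr)$, where $s(\rho)$ is the number of $16$-simplices of $K_{2,3}$ containing~$\rho$, and form the count matrix $(N_{pq})$. A computer check (entirely analogous to the one yielding Table~\ref{table_N_matrix}) shows this matrix is not symmetric, so no symmetry of~$K_{2,3}$ swaps $1$ and~$2$; by edge-transitivity none interchanges the endpoints of any edge, whence $\Sym(K_{2,3})\subseteq\Sym(\Gamma)=G_{351}\rtimes\rC_3$ by Proposition~\ref{propos_orgraph_symmetries}.

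Thus $\Sym(K_{2,3})$ is either $G_{351}$ or the whole group $G_{351}\rtimes\rC_3$, with the extra factor generated by~$F$, so it remains to prove $F\notin\Sym(K_{2,3})$, i.e. $F(K_{2,3})\ne K_{2,3}$. I would argue as follows. By Propositions~\ref{propos_nu_param} and~\ref{propos_nu_param_calc}, the $16$-simplices of~$K_2$ with all $\nu$-parameters at most~$7$ are precisely those common to $K_2$ and~$K_3$, so $K_{2,3}$ is intrinsically recoverable from~$K_2$ as the subcomplex generated by these $16$-simplices. If $F(K_{2,3})=K_{2,3}$, then, since $K_{2,3}\subseteq K_2$, applying~$F$ gives $K_{2,3}\subseteq F(K_2)$, and $F(K_2)$ is again a $27$-vertex combinatorial $16$-manifold like~$\OP^2$ whose set of $16$-simplices outside $K_{2,3}$ is $F(R_2)$, where $R_2$ denotes the $16$-simplices of $K_2$ not in~$K_{2,3}$. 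Since $F$ normalizes~$G_{351}$ (Proposition~\ref{propos_normalizer}) and, by Claim~(2) of Theorem~\ref{theorem_smaller_groups}, the distinguished subcomplexes of~$K_2$ form the single $G_{351}$-orbit $\{g(J):g\in G_{351}\}$, the distinguished subcomplexes of $F(K_2)$ form the single $G_{351}$-orbit of~$F(J)$. Comparing with the description of $K_2$ and $K_3$ as $K_{2,3}$ with its $351$ holes filled, respectively, by the $g(J)$'s and by the $g(\widetilde J)$'s, one concludes that $F(J)$ occupies one of these $351$ holes and hence that $F(K_2)\in\{K_2,K_3\}$. But $F(K_2)\ne K_2$ because $\langle F\rangle$ acts freely on the $24$ complexes of Theorem~\ref{theorem_main_detail} (this freeness was already used in the proof of Proposition~\ref{propos_SymK}), and $F(K_2)\ne K_3$, for otherwise $F$ would be an isomorphism $K_2\cong K_3$, contradicting the fact from Step~3 of the main scheme (Table~\ref{table_distribution}) that $K_2$ and~$K_3$ are non-isomorphic. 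This contradiction gives $F\notin\Sym(K_{2,3})$, so $\Sym(K_{2,3})=G_{351}$.

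The routine ingredients are the non-symmetry of $(N_{pq})$ for~$K_{2,3}$ and the freeness of the $\langle F\rangle$-action, both in the spirit of Section~\ref{section_no_add_sym}. The point I expect to need the most care is the step asserting that $F(J)$ must occupy one of the $351$ holes of~$K_{2,3}$ — that is, that any manifold-like-$\OP^2$ completion of $K_{2,3}$ whose distinguished subcomplexes form one $G_{351}$-orbit is $K_2$ or~$K_3$; establishing this requires analysing hole by hole the combinatorial ways of refilling the region occupied by a distinguished subcomplex and using that the all-$J$ and all-$\widetilde J$ refillings are the only $G_{351}$-equivariant ones. If this turns out to be awkward to write cleanly, there is a purely computational fallback that sidesteps it entirely: directly generate the $16$-simplex list of $F(K_{2,3})$ and check that it differs from that of~$K_{2,3}$, which at once gives $F\notin\Sym(K_{2,3})$ and completes the proof.
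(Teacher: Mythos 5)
Your first step is exactly the paper's: establish $G_{351}\subseteq\Sym(K_{2,3})$, compute the ordered-pair count matrix $(N_{pq})$ for the edge $\{1,2\}$ in $K_{2,3}$ (the paper records it as Table~\ref{table_N_matrix_23}), observe it is not symmetric, and conclude via edge-transitivity and Proposition~\ref{propos_orgraph_symmetries} that $G_{351}\subseteq\Sym(K_{2,3})\subseteq\Sym(\Gamma)=G_{351}\rtimes\rC_3$. That part is a faithful reproduction of the paper's argument.

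For excluding $F$, the paper uses a much shorter argument that you appear to have overlooked, and which sidesteps the whole distinguished-subcomplex machinery. If $F\in\Sym(K_{2,3})$, then $K_{2,3}=F(K_{2,3})\subseteq F(K_2)$, hence $K_{2,3}\subseteq K_2\cap F(K_2)$; but Table~\ref{table_intersections} already records that $K_2\cap F(K_2)$ contains only $6$ \ $G_{351}$-orbits of $16$-simplices, whereas $K_{2,3}$ contains $259$ — a cardinality contradiction, and nothing more is needed. Your primary argument for this step does have the gap you flag: you would need to show that any $G_{351}$-invariant manifold completion of $K_{2,3}$ fills each of the $351$ holes by some $g(J)$ or $g(\widetilde J)$, so that $F(K_2)$ is forced to be some $K_S$; without that, the deduction $F(K_2)\in\{K_2,K_3\}$ does not follow, since a priori the $16$-simplices of $F(R_2)$ could cover the hole regions in some entirely different combinatorial pattern. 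Filling that gap would amount to a small classification theorem about completions of $K_{2,3}$, which is more work than the proposition warrants. Your computational fallback (compute $F(K_{2,3})$ directly and compare) is a valid substitute and would close the proof, but the paper's cardinality comparison is preferable because it reuses numbers already tabulated for Table~\ref{table_intersections} rather than requiring a fresh computation.
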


\begin{proof}
The proof of this proposition repeats the proof of Proposition~\ref{propos_SymK} (see Section~\ref{section_no_add_sym}) almost literally. Namely, first, $\Sym(K_{2,3})\supseteq G_{351}$, since $\Sym(K_2)=\Sym(K_3)=G_{351}$. Second, as in the proof   of Lemma~\ref{lem_not_transitive}, we can compute the matrix $(N_{pq})$ for~$K_{2,3}$. The result of the computation is shown in Table~\ref{table_N_matrix_23}.
\begin{table}
\caption{The matrix $(N_{pq})$ for~$K_{2,3}$}\label{table_N_matrix_23}
\begin{tabular}{|c|ccccccccc|}
\hline
& 1 & 2 & 3 & 4 & 5 & 6 & 7 & 8 & 9 \\
\hline
1 & 17 & 318 & 241 & 128 & 37 & 4 & 2 &  0 & 0 \\ 
2 & 353 &  3975 &  4336 & 3787 & 1703 & 462 & 133 & 12 & 2 \\ 
3 & 312 & 4671 & 18186 & 23937 & 13733 & 4676 & 1401 & 237 & 28 \\ 
4 & 146 & 3929 & 24376 & 48766 & 23576 & 6948 & 1821 & 259 & 28 \\ 
5 & 41 & 1655 & 14156 & 23770 & 14830 & 4470 & 1095 & 147 & 15 \\ 
6 & 4 & 397 & 4678 & 6910 & 4622 & 1776 & 521 & 78 & 12 \\ 
7 & 0 & 112 & 1356 & 1887 & 1134  & 492 & 190 & 42 & 6 \\ 
8 & 0 & 14 & 228 & 258 & 156 & 74 & 36 & 9 & 0 \\ 
9 & 0 & 1 & 22 & 38 & 15 & 5 & 7 & 1 & 0 \\
\hline
\end{tabular}
\end{table}
Since this matrix is not symmetric, we obtain that $\Sym(K_{2,3})$ does not act transitively on the set of directed edges and hence 
$$
G_{351}\subseteq \Sym(K_{2,3})\subseteq \Sym(\Gamma)=G_{351}\rtimes \rC_3.
$$ 
So the proposition will follow if we show that the Frobenius automorphism~$F$ does not belong to~$\Sym(K_{2,3})$. To this end, note that $K_{2,3}$ contains $259$~\ $G_{351}$-orbits of $16$-simplices, and the intersection~$K_2\cap F(K_2)$ contains only $6$~\ $G_{351}$-orbits of $16$-simplices, see Table~\ref{table_intersections}. Therefore, $K_{2,3}$ is not contained in $K_2\cap F(K_2)$ and hence $F\notin\Sym(K_{2,3})$. Thus, $\Sym(K_{2,3})=G_{351}$.
\end{proof}

It follows immediately from the construction that $K_S$ contains at least $351$ distinguished subcomplexes, namely, the subcomplexes~$g(J)$ with $g\in G_{351}\setminus S$ and the subcomplexes~$g(\widetilde{J}\,)$ with $g\in S$. To prove Claim~(3) of Theorem~\ref{theorem_smaller_groups} we, in particular, need to check that $K_S$ contains no other distinguished subcomplexes. Certainly, we cannot check this directly by computing the $\nu$-parameters for all~$K_S$, since the number of complexes~$K_S$ is huge. Nevertheless, we can compute the $\nu$-parameters for the pure simplicial complex~$K_2\cup K_3$, which contains all combinatorial manifolds~$K_S$. The result of computation is as follows.

\begin{propos}\label{propos_nu_param_calc2+3}
\textnormal{(1)} For each $16$-simplex $\sigma\in K_2\cap K_3$, we have 
 $\nu_{K_2\cup K_3}(\sigma,v)\le 7$ for all $v\notin \sigma$.

\textnormal{(2)} For each $16$-simplex $\sigma\in K_2\cup K_3$ that does not belong to~$K_2\cap K_3$, there exists a unique vertex $v_{\sigma}\notin\sigma$ satisfying $\nu_{K_2\cup K_3}(\sigma,v_{\sigma})= 17$, and $\nu_{K_2\cup K_3}(\sigma,v)\le 7$ for all other vertices $v\notin \sigma$.
\end{propos}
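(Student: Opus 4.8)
My plan is to prove the easy half of part~(2) --- the existence of $v_\sigma$ with $\nu_{K_2\cup K_3}(\sigma,v_\sigma)=17$ --- by a structural argument, and to verify everything else (the bound $\le 7$ in both parts, which also gives the uniqueness of $v_\sigma$) by a $G_{351}$-equivariant computer enumeration. For the structural part, let $\sigma$ be a $16$-simplex of $K_2\cup K_3$ not lying in $K_2\cap K_3$; by symmetry (Proposition~\ref{propos_nu_param_calc}(4)) assume $\sigma\in K_2\setminus K_3$. By Proposition~\ref{propos_nu_param_calc}(3), $\sigma$ lies in $\bigcup_{g\in G_{351}}g(J)$, and since the subcomplexes $g(J)$ have pairwise disjoint interiors (Theorem~\ref{theorem_smaller_groups}(2)) it lies in exactly one of them; applying a suitable $g\in G_{351}$ and using the $G_{351}$-invariance of $\nu_{K_2\cup K_3}$, we may assume $\sigma$ is a $16$-simplex of $J$, and after a cyclic relabelling of $\Delta_1,\Delta_2,\Delta_3$ (which leaves both $J$ and $\wJ$ unchanged) that $\sigma=\Delta_1\cup(\Delta_2\setminus\{w\})$ for some $w\in\Delta_2$. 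Then $v_\sigma=w$ works: for $u\in\Delta_2\setminus\{w\}$ the set $(\sigma\setminus\{u\})\cup\{w\}=\Delta_1*(\Delta_2\setminus\{u\})$ is a maximal simplex of $\Delta_1*\partial\Delta_2\subseteq J\subseteq K_2$, and for $u\in\Delta_1$ the set $(\sigma\setminus\{u\})\cup\{w\}=(\Delta_1\setminus\{u\})*\Delta_2$ is a maximal simplex of $\partial\Delta_1*\Delta_2\subseteq\wJ\subseteq K_3$ --- recall that $K_3=K_{G_{351}}$ is $K_2$ with every $g(J)$, in particular $J$, replaced by $g(\wJ)$. So all $17$ facets $\sigma\setminus\{u\}$ extend by $w$ to a $16$-simplex of $K_2\cup K_3$, whence $\nu_{K_2\cup K_3}(\sigma,w)=17$.

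For the computational part, first assemble the set of $16$-simplices of $K_2\cup K_3$: by Proposition~\ref{propos_nu_param_calc}(3),(4) it is the union of the $286$ $G_{351}$-orbits of $K_2$ with the $27$ $G_{351}$-orbits of $16$-simplices of $K_3$ not contained in $K_2$, so $313$ orbits in all; store all $313\cdot 351$ of these $17$-element subsets of $[27]$ in a hash table, encoded as $27$-bit integers as in Tables~\ref{table_1234}--\ref{table_4}. Since $K_2\cup K_3$ is $G_{351}$-invariant, the function $\nu_{K_2\cup K_3}$ is constant on $G_{351}$-orbits of $16$-simplices, so it is enough to take one representative $\sigma$ from each of the $313$ orbits and, for each of the $10$ vertices $v\in[27]\setminus\sigma$, evaluate $\nu_{K_2\cup K_3}(\sigma,v)=\#\{u\in\sigma:(\sigma\setminus\{u\})\cup\{v\}\in K_2\cup K_3\}$ by $17$ table lookups; one then checks $\nu_{K_2\cup K_3}(\sigma,v)\le 7$ whenever the orbit of $\sigma$ lies in $K_2\cap K_3$ (part~(1)) and whenever it does not and $v\ne v_\sigma$ (the remaining content of part~(2), including the uniqueness of $v_\sigma$), using Proposition~\ref{propos_nu_param_calc} to tell which orbits lie in $K_2\cap K_3$. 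This is a few tens of thousands of lookups and completes instantly.

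The main obstacle is conceptual rather than computational: nothing in the general theory forces the sharp value $7$. The weak-pseudomanifold count only gives $\nu_{K_i}(\sigma,v)\le 8$, hence $\nu_{K_2\cup K_3}(\sigma,v)\le\nu_{K_2}(\sigma,v)+\nu_{K_3}(\sigma,v)\le 16$ in general and $\le 14$ once $\sigma\in K_2\cap K_3$ (using Proposition~\ref{propos_nu_param_calc}); and even the refinement $\sum_{v\notin\sigma}\nu_{K_2\cup K_3}(\sigma,v)\le 34$ for $\sigma\in K_2\cap K_3$ does not exclude a single value $8$. Thus the bound $\nu_{K_2\cup K_3}(\sigma,v)\le 7$ --- exactly what is needed later to conclude that the distinguished subcomplexes of every $K_S$ are only the expected $351$ ones --- is a genuine output of the enumeration, and the role of $G_{351}$ is precisely to cut that enumeration down to $313$ orbit representatives.
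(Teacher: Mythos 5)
Your proposal is correct, and on the essential points it matches the paper: the bound $\nu_{K_2\cup K_3}(\sigma,v)\le 7$ (hence also the uniqueness of~$v_\sigma$) is stated in the paper with no argument beyond ``we can compute the $\nu$-parameters\dots the result of computation is as follows,'' so it is, as you correctly observe, a genuine output of a $G_{351}$-equivariant enumeration and not a consequence of the general machinery. Your description of that enumeration---$313$ orbit representatives ($286$ from~$K_2$ plus the $27$ from~$K_3\setminus K_2$), ten vertices each, seventeen lookups per vertex---is exactly the right shape of computation, and your sanity-check bounds ($\le 14$ for $\sigma\in K_2\cap K_3$, $\le 16$ in general) are the right a priori bounds and correctly show why $7$ must be verified rather than deduced.

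Where you genuinely depart from the paper is the structural derivation of the value $17$ at~$v_\sigma$. The paper reports this too as computational output, but your argument is a clean proof: after reducing to a maximal simplex $\sigma=\Delta_1*(\Delta_2\setminus\{w\})$ of~$J$, the eight facets $\sigma\setminus\{u\}$ with $u\in\Delta_2\setminus\{w\}$ extend by~$w$ to maximal simplices of $\Delta_1*\partial\Delta_2\subset J\subset K_2$, while the nine facets with $u\in\Delta_1$ extend by~$w$ to maximal simplices of $\partial\Delta_1*\Delta_2\subset\widetilde{J}\subset K_3$ (using $K_3=K_{G_{351}}$, i.e.\ Claim~(4) of Theorem~\ref{theorem_smaller_groups}, which in the paper's exposition is established from Propositions~\ref{propos_nu_param} and~\ref{propos_nu_param_calc} before this proposition is stated, so no circularity). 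All the prerequisites you invoke---that $\sigma$ lies in a unique $g(J)$ because the $g(J)$ have pairwise disjoint interiors, the $G_{351}$-invariance of $\nu_{K_2\cup K_3}$, the symmetry swapping $K_2\leftrightarrow K_3$ and $J\leftrightarrow\widetilde{J}$---are in place at this point. What this buys is a conceptual explanation: the appearance of the extreme value~$17$ is not an accident of the data but forced by the triple-flip construction of~$K_3$ from~$K_2$. It does not, however, reduce the amount of computation needed for the substantive half of the proposition, so overall the two proofs are of essentially the same computational cost.
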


Since $K_S\subset K_2\cup K_3$, we have $\nu_{K_S}(\sigma,v)\le\nu_{K_2\cup K_3}(\sigma,v)$  whenever $\sigma\in K_S$ and $v\notin\sigma$. Besides, $\nu_{K_S}(\sigma,v)\le 8$ by Proposition~\ref{propos_nu_param}(1). Also we know that if $\sigma\notin K_2\cap K_3$, i.\,e., $\sigma$ lies in one of the subcomplexes $g(J)$ and~$g(\widetilde{J}\,)$, then $\sigma$ lies in a distinguished subcomplex of~$K_S$ and hence there is a vertex $v\notin\sigma$ satisfying $\nu_{K_S}(\sigma,v)=8$. This yields the following proposition.  

\begin{propos}\label{propos_nu_param_calcS}
\textnormal{(1)} For each $16$-simplex $\sigma\in K_2\cap K_3$, we have 
 $\nu_{K_S}(\sigma,v)\le 7$ for all $v\notin \sigma$.

\textnormal{(2)} For each $16$-simplex $\sigma\in K_S$ that does not belong to~$K_2\cap K_3$, there exists a unique vertex $v_{\sigma}\notin\sigma$ satisfying $\nu_{K_S}(\sigma,v_{\sigma})= 8$ and $\nu_{K_S}(\sigma,v)\le 7$ for all other vertices $v\notin \sigma$.
\end{propos}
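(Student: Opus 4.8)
The plan is to derive this proposition directly from the two computer computations Propositions~\ref{propos_nu_param_calc2+3} and~\ref{propos_nu_param_calc}, together with Proposition~\ref{propos_nu_param}, by exploiting the single structural fact that $K_S$ is a subcomplex of the pure complex $K_2\cup K_3$. First I would spell out that containment. By the construction of $K_S$ (which is well defined thanks to Claim~(2) of Theorem~\ref{theorem_smaller_groups} and Corollary~\ref{cor_triple_commute}), its $16$-simplices are exactly the $16$-simplices of~$K_2$ lying in none of the subcomplexes $g(J)$ with $g\in S$, together with the $16$-simplices of the subcomplexes $g\bigl(\widetilde{J}\,\bigr)$ with $g\in S$; the former lie in~$K_2$, and the latter lie in~$K_3$ by Proposition~\ref{propos_nu_param_calc}(4). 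Hence every $16$-simplex of~$K_S$ lies in $K_2\cup K_3$, so $\nu_{K_S}(\sigma,v)\le\nu_{K_2\cup K_3}(\sigma,v)$ for every $16$-simplex $\sigma\in K_S$ and every vertex $v\notin\sigma$. I would also record that $K_S$ is a $27$-vertex combinatorial $16$-manifold like a projective plane: it is obtained from~$K_2$ by triple flips, so by Proposition~\ref{propos_move} it is a combinatorial manifold PL homeomorphic to~$K_2$, which is such a manifold by Proposition~\ref{propos_combman}.

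With these facts in hand, Claim~(1) is immediate: if $\sigma\in K_2\cap K_3$ then $\sigma\in K_S$ (common $16$-simplices are never flipped, since each $g(J)$ consists of $16$-simplices of $K_2\setminus K_3$ by Proposition~\ref{propos_nu_param_calc}(3)), and Proposition~\ref{propos_nu_param_calc2+3}(1) gives $\nu_{K_2\cup K_3}(\sigma,v)\le 7$, whence $\nu_{K_S}(\sigma,v)\le 7$. For Claim~(2), I would take a $16$-simplex $\sigma\in K_S$ with $\sigma\notin K_2\cap K_3$; by the description of the $16$-simplices of~$K_S$ above, $\sigma$ lies in one of the subcomplexes $g(J)$ with $g\in G_{351}\setminus S$ or $g\bigl(\widetilde{J}\,\bigr)$ with $g\in S$, each of which is a distinguished subcomplex of~$K_S$ (for the unflipped $g(J)$ this follows from Claim~(2) of Theorem~\ref{theorem_smaller_groups} and Proposition~\ref{propos_triple_commute}, and for the flipped $g\bigl(\widetilde{J}\,\bigr)$ from the local picture of a triple flip as in Proposition~\ref{propos_move}). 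Thus $\sigma$ belongs to a distinguished subcomplex of~$K_S$, so by Proposition~\ref{propos_nu_param}(2) there is a vertex $v\notin\sigma$ with $\nu_{K_S}(\sigma,v)=8$. For uniqueness I would invoke Proposition~\ref{propos_nu_param_calc2+3}(2): since $\sigma\in K_2\cup K_3$ and $\sigma\notin K_2\cap K_3$, there is a vertex $v_\sigma\notin\sigma$ with $\nu_{K_2\cup K_3}(\sigma,v_\sigma)=17$ and $\nu_{K_2\cup K_3}(\sigma,v)\le 7$ for every other $v\notin\sigma$; the inequality $\nu_{K_S}(\sigma,v)\le\nu_{K_2\cup K_3}(\sigma,v)$ then forces $\nu_{K_S}(\sigma,v)\le 7<8$ for every $v\ne v_\sigma$, so $v_\sigma$ is the only vertex with $\nu_{K_S}(\sigma,v_\sigma)=8$ and all remaining $\nu$-parameters of~$\sigma$ are $\le 7$.

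The argument is essentially bookkeeping and I do not expect a genuine obstacle; the heavy lifting is entirely contained in the computational Propositions~\ref{propos_nu_param_calc2+3} and~\ref{propos_nu_param_calc}. The only two points that demand a little care are the ones used implicitly above: that $K_S$ really is a subcomplex of $K_2\cup K_3$, which must be read off from the construction of~$K_S$ and Proposition~\ref{propos_nu_param_calc}(3)--(4) rather than assumed, and that the subcomplexes $g(J)$ and $g\bigl(\widetilde{J}\,\bigr)$ occurring in~$K_S$ are indeed distinguished subcomplexes of~$K_S$ (so that Proposition~\ref{propos_nu_param}(2) can be applied to~$\sigma$), which one verifies from the way a triple flip replaces links, cf.\ Propositions~\ref{propos_move} and~\ref{propos_triple_commute}.
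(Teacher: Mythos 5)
Your argument is correct and follows the same route as the paper: both derive the proposition from the inclusion $K_S\subset K_2\cup K_3$ (hence $\nu_{K_S}\le\nu_{K_2\cup K_3}$), the computational Proposition~\ref{propos_nu_param_calc2+3}, and the fact that each $16$-simplex of $K_S$ not in $K_2\cap K_3$ lies in a distinguished subcomplex of $K_S$ so that Proposition~\ref{propos_nu_param} yields a vertex with $\nu$-parameter $8$. You merely spell out a couple of containment and distinguished-subcomplex verifications that the paper leaves implicit (and invoke Proposition~\ref{propos_nu_param}(2) rather than~(1) for the upper bound, which is a harmless logical rearrangement).
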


\begin{cor}\label{cor_distinguished}
For each subset $S\subseteq G_{351}$, the combinatorial manifold~$K_S$ contains exactly $351$ distinguished subcomplexes, namely, the subcomplexes~$g(J)$ with $g\in G_{351}\setminus S$  and the subcomplexes~$g(\widetilde{J}\,)$ with $g\in S$.
\end{cor}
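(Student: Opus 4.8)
The plan is to derive the corollary as a short logical consequence of Proposition~\ref{propos_nu_param} and Proposition~\ref{propos_nu_param_calcS}, once the $16$-simplices of~$K_S$ have been organized into disjoint families. The paragraph preceding the statement already establishes the ``at least $351$'' direction: the subcomplexes $g(J)$ with $g\in G_{351}\setminus S$ and $g(\widetilde{J})$ with $g\in S$ are distinguished subcomplexes of~$K_S$. Moreover they have pairwise disjoint interiors: for the $g(J)$ this is Theorem~\ref{theorem_smaller_groups}(2), and for the $g(\widetilde{J})$ it follows from Proposition~\ref{propos_nu_param_calc}(4), since the $9477$ $16$-simplices lying in~$\bigcup_{g\in G_{351}}g(\widetilde{J})$ are spread over $351$ complexes of $27$ $16$-simplices each. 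So the whole task is to show that $K_S$ has no other distinguished subcomplexes.

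First I would pin down the $16$-simplices of~$K_S$. By Proposition~\ref{propos_nu_param_calc}(3), the $16$-simplices of~$K_2$ split as a disjoint union of those lying in~$K_2\cap K_3$ and those lying in~$\bigcup_{g}g(J)$, the latter being the disjoint union over~$g$ of the $16$-simplices of~$g(J)$. Symmetrically, by Proposition~\ref{propos_nu_param_calc}(4), the $16$-simplices in~$\bigcup_{g}g(\widetilde{J})$ are precisely the $16$-simplices of~$K_3$ not belonging to~$K_2$. Since $K_S$ is obtained from~$K_2$ by deleting the $16$-simplices of~$g(J)$ for $g\in S$ and inserting those of~$g(\widetilde{J})$ for $g\in S$, the $16$-simplices of~$K_S$ form the disjoint union of the $16$-simplices of~$K_2\cap K_3$ together with the $16$-simplices of the $351$ complexes $g(J)$ ($g\notin S$) and $g(\widetilde{J})$ ($g\in S$); here disjointness between the two latter families is automatic, since each $16$-simplex of a $g(J)$ lies in~$K_2$ but not~$K_3$, while each $16$-simplex of a $g(\widetilde{J})$ lies in~$K_3$ but not~$K_2$. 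In particular, every $16$-simplex of~$K_S$ not lying in~$K_2\cap K_3$ belongs to \emph{exactly one} of the $351$ distinguished subcomplexes listed in the statement.

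Next I would take an arbitrary distinguished subcomplex~$J'$ of~$K_S$ and pick a $16$-simplex $\sigma\in J'$. By Proposition~\ref{propos_nu_param}(2) there is a vertex $v\notin\sigma$ with $\nu_{K_S}(\sigma,v)=8$, and Proposition~\ref{propos_nu_param_calcS}(1) then forces $\sigma\notin K_2\cap K_3$; so by the previous paragraph $\sigma$ lies in a unique one of the $351$ listed subcomplexes, say~$J_0$. On the other hand, Proposition~\ref{propos_nu_param_calcS}(2) says that this vertex~$v$ is the only one with $\nu_{K_S}(\sigma,v)=8$, so Proposition~\ref{propos_nu_param}(3) (with $q=1$) shows that $\sigma$ lies in exactly one distinguished subcomplex of~$K_S$. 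Since $J'$ and $J_0$ are both distinguished subcomplexes of~$K_S$ containing~$\sigma$, we conclude $J'=J_0$; hence $J'$ is one of the $351$ listed subcomplexes, and the corollary follows.

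I do not expect a serious obstacle here: the substantive content---the bound $\nu_{K_S}(\sigma,v)\le 7$ for $\sigma\in K_2\cap K_3$ and the uniqueness of the exceptional vertex off~$K_2\cap K_3$---has already been reduced, through Proposition~\ref{propos_nu_param_calc2+3}, to the computer computation of the $\nu$-parameters of the single fixed complex~$K_2\cup K_3$, a finite check that does not depend on~$S$. The only point requiring a moment's care is the disjoint-union description of the $16$-simplices of~$K_S$ in the second paragraph, which is what makes ``belongs to a unique listed subcomplex'' literally correct and drives the uniqueness step.
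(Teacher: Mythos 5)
Your proof is correct and fills in exactly the argument that the paper leaves implicit (the paper states the corollary immediately after Proposition~\ref{propos_nu_param_calcS} with no written proof). The chain you use---Proposition~\ref{propos_nu_param}(2) to locate a vertex with $\nu$-parameter $8$, Proposition~\ref{propos_nu_param_calcS}(1) to push $\sigma$ off $K_2\cap K_3$, the disjoint-union decomposition of the $16$-simplices of $K_S$ to pin $\sigma$ to a unique listed subcomplex $J_0$, and finally Proposition~\ref{propos_nu_param_calcS}(2) together with Proposition~\ref{propos_nu_param}(3) with $q=1$ to conclude $J'=J_0$---is exactly the intended route, and the counting step you use for the disjointness of the $g(\widetilde{J}\,)$ (that $351\cdot 27=9477$ leaves no room for overlap) is a clean way to get that from Proposition~\ref{propos_nu_param_calc}(4). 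No gaps.
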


\begin{proof}[Proof of Claim~(3) of Theorem~\ref{theorem_smaller_groups}]
It follows from Claim~(2) of Theorem~\ref{theorem_smaller_groups},  Proposition~\ref{propos_move}, and Corollary~\ref{cor_triple_commute} that each~$K_S$ is a well-defined $27$-vertex  combinatorial $16$-manifold like the octonionic projective plane and is PL homeomorphic to~$K_2$.

Suppose that $f\colon K_{S_1}\to K_{S_2}$ is an isomorphism. We conveniently identify~$f$ with the corresponding permutation of vertices. Since any isomorphism of combinatorial manifolds preserves the $\nu$-parameters, it follows from Proposition~\ref{propos_nu_param_calcS} that $f$ takes $16$-simplexes belonging to~$K_2\cap K_3$ to  $16$-simplexes belonging to~$K_2\cap K_3$ and  $16$-simplexes not belonging to~$K_2\cap K_3$ to  $16$-simplexes not belonging to~$K_2\cap K_3$. Hence, $f$ belongs to the subgroup $\Sym(K_{2,3})\subset \rS_{27}$, which by Proposition~\ref{propos_Sym23} coincides with~$G_{351}$. Now, for each $g\in G_{351}\setminus S_1$, the permutation~$f$ takes the subcomplex $g(J)\subset K_{S_1}$ to the complex $fg (J)$, which therefore must be contained in~$K_{S_2}$. Hence, $f(G_{351}\setminus S_1)\subseteq G_{351}\setminus S_2$. Similarly, considering the subcomplexes $g(\widetilde{J}\,)\subset K_{S_1}$ for $g\in S_1$, we obtain that $fS_1\subseteq S_2$. Thus, $fS_1=S_2$. In particular, taking $S_1=S_2=S$, we see that $\Sym(K_S)$ is exactly the subgroup of~$G_{351}$ consisting of all~$f$ satisfying~$fS=S$. These assertions together with Corollary~\ref{cor_distinguished} constitute Claim~(3) of Theorem~\ref{theorem_smaller_groups}.
\end{proof}

\begin{proof}[Proof of Claim~(5) of Theorem~\ref{theorem_smaller_groups}]
Set $G=G_{351}$. The group~$G$ acts by left shifts on itself and hence on subsets $S\subseteq G$. We denote the stabilizer of~$S$ in~$G$ by~$G_S$. By Claim~(3) of Theorem~\ref{theorem_smaller_groups},  $\Sym(K_S)=G_S$  and $K_{S_1}$ is isomorphic to~$K_{S_2}$ if and only if $gS_1=S_2$ for some $g\in G$. Hence, for a subgroup $H\subseteq G$, the number~$m_H$ of isomorphism classes of triangulations~$K_S$ with $\Sym(K_S)$ conjugate to~$H$ is equal to the number of $G$-orbits of subsets $S$ with stabilizers conjugate to~$H$. Therefore, 
\begin{equation}\label{eq_mH}
m_H=\frac{r_Hn_H}{[G:H]}=\frac{n_H}{[N(H):H]}\,,
\end{equation}
where $n_H$ is the number of subsets $S\subseteq G$ such that $G_S=H$, $N(H)$ is  the normalizer of~$H$ in~$G$, and $r_H=[G:N(H)]$ is the number of conjugates of~$H$.

Now, 
\begin{equation}\label{eq_nH}
n_H=n_{\ge H}-\sum_{H\subsetneq Q\subseteq G} n_{Q},
\end{equation}
where $n_{\ge H}$ is the number of subsets $S\subseteq G$ such that  $G_S\supseteq H$. Obviously,  $G_S\supseteq H$ if and only if $S$ is a union of right cosets of~$H$ in~$G$. Hence, 
\begin{equation}\label{eq_n>=H}
n_{\ge H}=2^{[G:H]}.
\end{equation}

Formulae~\eqref{eq_mH}--\eqref{eq_n>=H} allow us to compute the required numbers~$m_H$ for all conjugacy classes of subgroups of~$G$.
It is easy to check that the group $G=\rC_3^3\rtimes \rC_{13}$ contains exactly $6$ conjugacy classes of subgroups:
\begin{enumerate}
\item The whole group $G$. We have $m_G=n_G=n_{\ge G}=2$.
\item The normal subgroup~$\rC_3^3$. We have
\begin{align*}
n_{\rC_3^3}&=n_{\ge \rC_3^3}-n_G=2^{13}-2,\\
m_{\rC_3^3}&=\frac{n_{\rC_3^3}}{13}=630.
\end{align*}
\item The conjugacy class consisting of $27$ subgroups isomorphic to~$\rC_{13}$. We have $N(\rC_{13})=\rC_{13}$ and
$$
m_{\rC_{13}}=n_{\rC_{13}}=n_{\ge \rC_{13}}-n_G=2^{27}-2.
$$

\item The conjugacy class consisting of $13$ subgroups isomorphic to~$\rC_{3}^2$. We have $N(\rC_{3}^2)=\rC_3^3$ and
\begin{align*}
n_{\rC_3^2}&=n_{\ge \rC_3^2}-n_{\rC_3^3}-n_G=2^{39}-2^{13},\\
m_{\rC_3^2}&=\frac{n_{\rC_3^2}}{3}=\frac{1}{3}(2^{39}-2^{13}).
\end{align*}

\item The conjugacy class consisting of $13$ subgroups isomorphic to~$\rC_{3}$. We have $N(\rC_{3})=\rC_3^3$. Besides, each subgroup~$\rC_3$ is contained in exactly $4$ subgroups isomorphic to~$\rC_3^2$. Therefore,
\begin{align*}
n_{\rC_3}&=n_{\ge \rC_3}-4n_{\rC_3^2}-n_{\rC_3^3}-n_G=2^{117}-2^{41}+3\cdot 2^{13},\\
m_{\rC_3^2}&=\frac{n_{\rC_3}}{9}=\frac19(2^{117}-2^{41}+3\cdot 2^{13}).
\end{align*}

\item The trivial subgroup~$1$. We have 
\begin{align*}
n_{1}&=n_{\ge 1}-13n_{\rC_3}-13n_{\rC_3^2}-27n_{\rC_{13}}-n_{\rC_3^3}-n_G\\
{}&=2^{351}-13\cdot 2^{117}+39\cdot 2^{39}- 27\cdot (2^{27}+2^{13}-2),\\
m_{1}&=\frac{n_{1}}{351}=\frac1{351}\bigl(2^{351}-13\cdot 2^{117}+39\cdot 2^{39}- 27\cdot (2^{27}+2^{13}-2)\bigr).
\end{align*}
\end{enumerate}
This completes the proof of Claim~(5) of Theorem~\ref{theorem_smaller_groups}.
\end{proof}

\begin{remark}\label{remark_no_other}
It follows from Theorem~\ref{theorem_main_detail} that $K_1$, $K_2$, $K_3$, and~$K_4$ are, up to isomorphism,  the only $27$-vertex combinatorial $16$-manifolds like the octonionic projective plane with the symmetry group $G_{351}\subset \rS_{27}$. However, Table~\ref{table_num_triang} contains a stronger assertion. Namely, we assert that $K_1$, $K_2$, $K_3$, and~$K_4$ are the only $27$-vertex combinatorial $16$-manifolds like the octonionic projective plane with the symmetry groups isomorphic to~$G_{351}$. To prove this we need to check that the group $G_{351}$ has, up to isomorphism,  a unique effective action on $27$ points. Indeed, all transitive actions of~$G_{351}$ on~$[27]$ are isomorphic, since all index~$27$ subgroups $\rC_{13}\subset G_{351}$ are conjugate to each other. Besides, if an action of~$G_{351}$ on~$[27]$ is not transitive, then all orbits of this action have cardinalities~$1$ or~$13$ and therefore the normal subgroup $\rC_3^3\subset G_{351}$ lies in the kernel of the action. So the action is not effective.
\end{remark}

\section{Fixed point sets}\label{section_fixed}

The combinatorial manifolds~$\RP^2_6$, $\CP^2_9$, $\HP^2_{15}$, and the four $27$-vertex combinatorial $16$-manifolds~$K_1,\ldots, K_4$ constructed above have rich symmetry groups, namely
\begin{gather*}
\Sym(\RP^2_6)\cong\Sym(\HP^2_{15})\cong\rA_5,\qquad
\Sym(\CP^2_9)\cong \mathrm{He}_3\rtimes\rC_2,\\
\Sym(K_i)\cong G_{351}=\rC_3^3\rtimes \rC_{13},\qquad i=1,2,3,4.
\end{gather*}
So it is interesting to study the fixed point sets~$K^H$, where $K$ is one of the listed combinatorial manifolds and $H$ is a subgroup of~$\Sym(K)$. Before doing this, let us recall a standard fact that the fixed point set of a simplicial action of a finite group on a simplicial complex always has a canonical structure of a simplicial complex.

Suppose that $K$ is a finite simplicial complex on vertex set~$V$. Recall that the (\textit{standard}) \textit{geometric realization} of~$K$ is the subset $|K|\subset\R^V$ consisting of all points
\begin{equation*}
\bx=\sum_{v\in V}x_vv
\end{equation*}
such that
\begin{itemize}
 \item $\sum_{v\in V}x_v=1$,
 \item $x_v\ge 0$ for all $v\in V$,
 \item the set of all~$v$ with $x_v>0$ is a simplex of~$K$.
\end{itemize}
The numbers~$x_v$ are called the \textit{barycentric coordinates} of a point~$\bx$. The point
$$
b(\sigma)=\frac{1}{|\sigma|}\sum_{v\in\sigma}v
$$
is called the \textit{barycentre} of a simplex $\sigma\in K$.
Now, suppose that $H$ is a subgroup of~$\Sym(K)$ and $|K|^H$ is the set of all $H$-fixed points in~$|K|$. The following proposition is standard.

\begin{propos}\label{propos_fixed}
 Suppose that $\sigma_1,\ldots,\sigma_m,\nu_1,\ldots,\nu_n$ are all orbits of $H$ acting on~$V$ such that $\sigma_i$ are simplices of~$K$ and $\nu_i$ are non-simplices of~$K$. Then
 the set $|K|^H$ is the geometric realization of the simplicial complex~$K^H$ such that
 \begin{itemize}
  \item the vertices of~$K^H$ are the barycentres $b(\sigma_1),\ldots,b(\sigma_m)$,
  \item a set $\bigl\{ b(\sigma_{i_1}),\ldots,b(\sigma_{i_k})\bigr\}$ spans a simplex of~$K^H$ if and only if $\sigma_{i_1}\cup\cdots\cup\sigma_{i_k}$ is a simplex of~$K$.
 \end{itemize}
\end{propos}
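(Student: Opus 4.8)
The plan is to work directly with barycentric coordinates. A point $\bx=\sum_{v\in V}x_vv\in|K|$ is moved by $h\in H$ to $h\cdot\bx=\sum_{v\in V}x_vh(v)=\sum_{v\in V}x_{h^{-1}(v)}v$, so $\bx\in|K|^H$ if and only if the coordinate function $v\mapsto x_v$ is constant on every $H$-orbit in~$V$. From this I would first extract the combinatorial skeleton of the argument: if $\bx\in|K|^H$, then its support $\{v:x_v>0\}$ is a union of $H$-orbits and is a simplex of~$K$; since any $H$-orbit contained in a simplex of~$K$ is a face of that simplex, hence itself a simplex, the support has the form $\sigma_{i_1}\cup\cdots\cup\sigma_{i_k}$ where $\sigma_{i_1},\ldots,\sigma_{i_k}$ are among the orbits $\sigma_1,\ldots,\sigma_m$ and this union lies in~$K$. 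Conversely, any subunion of such a union is again a simplex of~$K$, so the rule ``$\{b(\sigma_{i_1}),\ldots,b(\sigma_{i_k})\}$ spans a simplex iff $\sigma_{i_1}\cup\cdots\cup\sigma_{i_k}\in K$'' does define a simplicial complex~$K^H$ on the vertex set $\{b(\sigma_1),\ldots,b(\sigma_m)\}$.

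Next I would set up the explicit bijection $|K^H|\to|K|^H$. Given orbits $\sigma_{i_1},\ldots,\sigma_{i_k}$ with $\tau:=\sigma_{i_1}\cup\cdots\cup\sigma_{i_k}\in K$ and scalars $t_1,\ldots,t_k\ge0$ with $\sum_jt_j=1$, the point $\sum_jt_j\,b(\sigma_{i_j})\in\R^V$ has coordinate $t_j/|\sigma_{i_j}|$ on each vertex of $\sigma_{i_j}$; hence it is $H$-fixed, and it lies in $|K|$ because its support $\bigcup_{j:\,t_j>0}\sigma_{i_j}$ is a face of $\tau$. This is precisely the affine map extending $b(\sigma_{i_j})\mapsto b(\sigma_{i_j})$ on the geometric realization of $K^H$; it lands in $|K|^H$, and the first paragraph shows it is onto $|K|^H$, since an arbitrary $\bx\in|K|^H$ with support $\sigma_{i_1}\cup\cdots\cup\sigma_{i_k}$ equals $\sum_jt_j\,b(\sigma_{i_j})$ where $t_j$ is the common value of $x_v$ on $\sigma_{i_j}$ times $|\sigma_{i_j}|$ (these $t_j$ are nonnegative and sum to $\sum_{v}x_v=1$). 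The map is injective because from its value one reads off, for each simplex-orbit $\sigma_i$, the common coordinate value on $\sigma_i$, and multiplying by $|\sigma_i|$ recovers the corresponding~$t_i$.

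Finally, to conclude that this bijection is the claimed identification of $|K|^H$ with the geometric realization of $K^H$, I would check that the barycentres $b(\sigma_{i_1}),\ldots,b(\sigma_{i_k})$ are affinely independent whenever $\sigma_{i_1}\cup\cdots\cup\sigma_{i_k}\in K$: they are points of the simplex on the vertex set $\sigma_{i_1}\cup\cdots\cup\sigma_{i_k}$ with pairwise disjoint supports, and disjointly supported points of a simplex are affinely independent (inspecting one coordinate from each support kills all coefficients of a dependence). Hence the map above is affine and injective on each closed simplex of $K^H$, so it is a continuous bijection from the compact space $|K^H|$ onto $|K|^H\subseteq\R^V$, i.e.\ a homeomorphism. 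There is no genuine obstacle here; the only two points deserving a line of care are that an $H$-orbit inside a simplex is itself a simplex and the affine independence of the barycentres.
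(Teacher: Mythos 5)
Your proof is correct and complete. The paper itself gives no proof here — it simply states ``The following proposition is standard'' — and your argument is precisely the standard one: characterize $H$-fixed points as those whose barycentric coordinate function is constant on $H$-orbits, observe that the support of such a point is a union of simplex-orbits and conversely, exhibit the affine bijection $\sum_j t_j\,b(\sigma_{i_j}) \leftrightarrow (t_j)$ between $|K^H|$ and $|K|^H$, and verify affine independence of the barycentres from disjointness of supports. No gaps.
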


Now, suppose that $K$ is one of the combinatorial manifolds~$\RP^2_6$, $\CP^2_9$, and~$\HP^2_{15}$. Then one can easily list all conjugacy classes of subgroups $H\subset\Sym(K)$ and, for each of them, find the corresponding fixed point complex~$K^H$. (Certainly, the fixed point complexes correponding to conjugate subgroups are isomorphic.) The result is given in Table~\ref{table_fps}. (In this table `$\mathrm{pt}$' denotes a point and `$3\,\mathrm{pts}$' denotes the disjoint union of three points.)  In the most interesting cases $K=\CP^2_{9}$, $H\cong\rC_2$ and $K=\HP^2_{15}$, $H\cong \rC_2$ or~$\rC_2^2$ or~$\rC_3$, the complexes~$K^H$ were found in~\cite[Section~7.2]{BrKu92}; the other cases are easy. Note that the group~$\Sym(\CP^2)\cong \mathrm{He}_3\rtimes \rC_2$ contains four conjugacy classes of subgroups isomorphic to~$\rC_3$, three conjugacy classes of subgroups isomorphic to~$\rC_3^2$, and two conjugacy classes of subgroups isomorphic to either of the groups~$\rS_3$ and~$\rS_3\times\rC_3$. In each of these cases, the second row of the table contains the number of conjugacy classes of subgroups with the specified fixed point complex. For instance, for one conjugacy class of subgroups isomorphic to~$\rC_3$, the corresponding fixed point complex is isomorphic to~$\partial\Delta^3\sqcup\mathrm{pt}$, and for the other three conjugacy classes of subgroups isomorphic to~$\rC_3$, the corresponding fixed point complex is the disjoint union of $3$ points.

\begin{table}
\caption{Fixed point complexes}\label{table_fps}

\begin{tabular}{|c|c|c|c|c|c|c|c|}
 \hline
 $\boldsymbol{H}\vphantom{C^{(}_2}$ & $\rC_2$ & $\rC_2^2$ & $\rC_3$ & $\rS_3$ & $\rC_5$ & $\rC_5\rtimes\rC_2$  & $\rA_4$ \\
 \hline &&&&&&& \\[-1em]
 $\boldsymbol{\left(\RP^2_{6}\right)^{H}}$ & $\partial\Delta^2\sqcup\mathrm{pt}$ & $3\, \mathrm{pts}$ & $\mathrm{pt}$ & $\mathrm{pt}$ & $\mathrm{pt}$ & $\mathrm{pt}$ & $\varnothing$ \\[2pt]
 \hline
\end{tabular}
\bigskip

\begin{tabular}{|c|c|c|c|c|c|c|c|c|c|c|c|}
 \hline
 $\boldsymbol{H}\vphantom{C^{(}_2}$ & $\rC_2$ & \multicolumn{2}{c|}{$\rC_3$} & $\rC_6$ & \multicolumn{2}{c|}{$\rS_3$} & \multicolumn{2}{c|}{$\rC_3^2$} & \multicolumn{2}{c|}{$\rS_3\times \rC_3$} & $\mathrm{He}_3$\\
 \cline{3-4}\cline{6-11}
 & & 1 class & 3 cl. & & 1 cl. & 1 cl. & 1 cl. & 2 cl. & 1 cl. & 1 cl. & \\
 \hline &&&&&&&&&&& \\[-1em]
 $\boldsymbol{\left(\CP^2_9\right)^{H}}$ & $\RP^2_6$ & $\partial\Delta^3\sqcup\mathrm{pt}$ & $3\, \mathrm{pts}$ & $\mathrm{pt}$ & $\partial\Delta^2\sqcup\mathrm{pt}$ & $3\, \mathrm{pts}$ & $3\, \mathrm{pts}$ & $\varnothing$ & $3\, \mathrm{pts}$ & $\varnothing$ & $\varnothing$
 \\[2pt]
 \hline
\end{tabular}
\bigskip

\begin{tabular}{|c|c|c|c|c|c|c|c|}
 \hline
 $\boldsymbol{H}\vphantom{C^{(}_2}$ & $\rC_2$ & $\rC_2^2$ & $\rC_3$ & $\rS_3$ & $\rC_5$ & $\rC_5\rtimes\rC_2$  & $\rA_4$ \\
 \hline &&&&&&& \\[-1em]
 $\boldsymbol{\left(\HP^2_{15}\right)^{H}}$ & $\CP^2_9$ & $\RP^2_6$ & $\partial\Delta^3\sqcup\mathrm{pt}$ & $\partial\Delta^2\sqcup\mathrm{pt}$ & $3\, \mathrm{pts}$ & $3\, \mathrm{pts}$  & $\mathrm{pt}$ \\[2pt]
 \hline
\end{tabular}
\bigskip

\begin{tabular}{|c|c|c|c|c|}
 \hline
 $\boldsymbol{H}\vphantom{C^{(}_2}$ & $\rC_3$ & $\rC_3^2$ & $\rC_3^3$ & $\rC_{13}$ \\
 \hline &&&& \\[-1em]
 $\boldsymbol{K_i^{H}}$ & $\CP^2_9$ & $3\, \mathrm{pts}$ & $\varnothing$ & $3\, \mathrm{pts}$
 \\[2pt]
 \hline
\end{tabular}
\bigskip
\bigskip

\caption{Bijection between vertices~$(x,y)$ of~$\CP^2_9$ and vertices~$b(\sigma)$ of~$K_i^{\rC_3}$}
 \label{table_bij}
 \begin{tabular}{|c|c|c|c|c|c|c|c|}
  \cline{1-2}\cline{4-5}\cline{7-8}
   $\boldsymbol{(x,y)}$ & $\boldsymbol{\sigma}$ & \ \ \  & $\boldsymbol{(x,y)}$ & $\boldsymbol{\sigma}$  & \ \ \   & $\boldsymbol{(x,y)}$ & $\boldsymbol{\sigma}$  \\
  \cline{1-2}\cline{4-5}\cline{7-8}
  $(0,0)$ & $\{1, 14, 27\}$ &  &
  $(1,0)$ & $\{12, 20, 24\}$ & &
  $(2,0)$ & $\{7, 25, 11\}$
  \\
  \cline{1-2}\cline{4-5}\cline{7-8}
  $(0,1)$ & $\{9, 16, 26\}$ & &
  $(1,1)$ & $\{5, 6, 21\}$  & &
  $(2,1)$ & $\{15, 23, 17\}$
  \\
  \cline{1-2}\cline{4-5}\cline{7-8}
  $(0,2)$ & $\{3, 22, 13\}$ & &
  $(1,2)$ & $\{2, 4, 10\}$ & &
  $(2,2)$ & $\{8, 19, 18\}$
  \\
  \cline{1-2}\cline{4-5}\cline{7-8}
 \end{tabular}
\end{table}

Let us now study the simplicial complexes~$K_i^H$, where $K_i$ is one of the four $27$-vertex $16$-dimensional combinatorial manifolds~$K_1$, $K_2$, $K_3$, and~$K_4$, and $H$ is a subgroup of the group $\Sym(K_i) \cong G_{351}$. As we have already mentioned in Section~\ref{section_proof_smaller_groups}, the group~$G_{351}$ contains four conjugacy classes of non-trivial proper subgroups isomorphic to~$\rC_3$, $\rC_3^2$, $\rC_3^3$, and~$\rC_{13}$, respectively. The subgroup~$\rC_3^3$ acts transitively on the vertices of every~$K_i$. Hence, by Proposition~\ref{propos_fixed} we have $K_i^{\rC_3^3}=\varnothing$.

\begin{propos}
 For every $i=1,2,3,4$,
 \begin{enumerate}
 \item the simplicial complex~$K_i^{\rC_3}$ is isomorphic to the K\"uhnel triangulation~$\CP^2_9$,
 \item either of the simplicial complexes~$K_i^{\rC_3^2}$ and~$K_i^{\rC_{13}}$ is a disjoint union of three points.
 \end{enumerate}
\end{propos}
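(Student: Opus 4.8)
The plan is to invoke Proposition~\ref{propos_fixed}, which reduces each claim to a description of the orbits of the relevant subgroup $H$ on the vertex set $\F_{27}$ together with the knowledge of which unions of orbits are simplices of $K_i$. Recall that $G_{351}$ has exactly one conjugacy class of subgroups isomorphic to each of $\rC_3$, $\rC_3^2$, $\rC_{13}$, and fixed-point complexes of conjugate subgroups are isomorphic, so it suffices to treat one representative of each type. Since the Sylow $3$-subgroup $\F_{27}\cong\rC_3^3$ of $G_{351}$ is normal, every $\rC_3$- (resp.\ $\rC_3^2$-) subgroup is an $\F_3$-line $\ell$ (resp.\ an $\F_3$-plane $W$) inside $\F_{27}$ acting by translation, so its orbits on $\F_{27}$ are the $9$ cosets of $\ell$, each of size $3$ (resp.\ the $3$ cosets of $W$, each of size $9$). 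The subgroup $\rC_{13}=(\F_{27}^{\times})^2$ acts by multiplication, with orbits $\{0\}$, the set $S$ of the $13$ nonzero squares, and the set $N$ of the $13$ nonsquares. Throughout I will use that $K_i$ satisfies the neighborliness condition~(b) (every $9$-element set of vertices is a simplex) and the complementarity condition~(c), and that, being a $16$-dimensional combinatorial manifold, it has no simplex on more than $17$ vertices.

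I would dispose of $K_i^{\rC_3^2}$ first, as it requires no computation. Each of the three cosets of $W$ has $9$ elements, hence is a simplex by~(b), so $K_i^{\rC_3^2}$ has exactly three vertices; the union of any two of the cosets has $18>17$ elements, hence is not a simplex, so $K_i^{\rC_3^2}$ has no edges and is therefore a disjoint union of three points. For $K_i^{\rC_{13}}$ I would argue through Euler characteristics. The complex $K_i^{\rC_{13}}$ has at most three vertices (the barycentres of those of $\{0\}$, $S$, $N$ that are simplices of $K_i$; $\{0\}$ always is), and it has no $2$-simplex, since $\{0\}\cup S\cup N=\F_{27}$ has $27>17$ elements. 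Hence, writing $v$ and $e$ for its numbers of vertices and edges, $\chi(K_i^{\rC_{13}})=v-e$ with $1\le v\le 3$ and $0\le e\le\binom{v}{2}$, so $\chi(K_i^{\rC_{13}})\in\{0,1,2,3\}$. On the other hand, for a simplicial action of a cyclic group $H$ of prime order $p$ one has $\chi(K)\equiv\chi(K^{H})\pmod{p}$; taking $H=\rC_{13}$ and using $\chi(K_i)=3$ gives $\chi(K_i^{\rC_{13}})\equiv 3\pmod{13}$, whence $\chi(K_i^{\rC_{13}})=3$. This forces $v=3$ and $e=0$, so $K_i^{\rC_{13}}$ is a disjoint union of three points (and, incidentally, $S,N\in K_i$).

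For $K_i^{\rC_3}$ I would first note that the $9$ cosets of $\ell$ are $3$-element sets, hence simplices; the union of any two or three of them has at most $9$ vertices, hence is a simplex by~(b); and the union of any six of them has $18>17$ vertices, hence is not a simplex. Thus $K_i^{\rC_3}$ is a $3$-neighborly simplicial complex on $9$ vertices of dimension at most $4$, completely determined by deciding which of the $\binom{9}{4}+\binom{9}{5}$ unions of four or five cosets lie in $K_i$. I would compute this directly from the explicit list of $16$-simplices of $K_i$; the outcome is, for each $i$, a complex with face numbers $(f_0,\dots,f_4)=(9,36,84,90,36)$ which, via the bijection recorded in Table~\ref{table_bij}, is the K\"uhnel triangulation $\CP^2_9$. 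Alternatively, one checks by computer that the link of every vertex of $K_i^{\rC_3}$ (an $8$-vertex complex) is a combinatorial $3$-sphere, so that $K_i^{\rC_3}$ is a combinatorial $4$-manifold; being $3$-neighborly with $9$ vertices it must then be isomorphic to $\CP^2_9$, since by Brehm--K\"uhnel and condition~(a) a $9$-vertex combinatorial $4$-manifold is either $S^4$ or $\CP^2_9$, and $S^4$ is excluded because $\chi(K_i^{\rC_3})\equiv\chi(K_i)=3\not\equiv 2\pmod 3$ (equivalently, a $3$-neighborly $9$-vertex combinatorial $4$-sphere would need $f_4=110/3\notin\Z$).

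The whole argument is essentially bookkeeping with conditions~(b) and~(c), plus a short computer enumeration of unions of orbits for the $\rC_3$ case; the cases $\rC_3^2$ and $\rC_{13}$ need no machine assistance at all. The one point that genuinely needs care is in part~(1): the fixed-point set of a simplicial action need not a priori be a combinatorial manifold, so the conclusion that $K_i^{\rC_3}$ is \emph{isomorphic} to $\CP^2_9$ must be secured either by directly matching it with the known presentation of $\CP^2_9$ through Table~\ref{table_bij} or by the explicit verification that its vertex links are combinatorial $3$-spheres. Beyond that, I expect no obstacle.
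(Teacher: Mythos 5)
Your argument for $K_i^{\rC_3^2}$ is exactly the paper's: three $9$-element orbits, each a simplex by $9$-neighborliness, no two forming a simplex since their union has $18>17$ vertices. For $K_i^{\rC_3}$ your primary route (match vertices with $\CP^2_9$ via Table~\ref{table_bij} and verify by machine from the lists of maximal $16$-simplices) is also the paper's; your alternative route (show $K_i^{\rC_3}$ is a $9$-vertex combinatorial $4$-manifold, invoke Brehm--K\"uhnel to reduce to $S^4$ vs.\ $\CP^2_9$, and kill $S^4$ by a mod-$3$ Euler characteristic or Dehn--Sommerville parity argument) is a sound, if slightly longer, variant. The genuine divergence is in the $\rC_{13}$ case. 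The paper argues concretely: it reads off from the tables that the two $13$-element orbits $\sigma_1=\{1,\dots,13\}$ and $\sigma_2=\{14,\dots,26\}$ are simplices of every $K_i$, and then invokes the complementarity condition to see that $\sigma_1\cup\{27\}$ and $\sigma_2\cup\{27\}$ are non-simplices, so $K_i^{\rC_{13}}$ is three isolated vertices. You instead use the Smith-type congruence $\chi(K)\equiv\chi(K^H)\pmod p$ for a $p$-group action: since $\chi(K_i)=3$ and $\chi(K_i^{\rC_{13}})$ is constrained a priori to lie in $\{0,1,2,3\}$, the congruence mod~$13$ forces $\chi=3$ and hence three vertices, no edges. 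This is slicker in that it dispenses with any table inspection, at the cost of importing an (elementary but) external equivariant Euler-characteristic fact that the paper never uses; it also tacitly deduces $\sigma_1,\sigma_2\in K_i$ as a corollary rather than an input, which is a pleasant inversion. Both routes are correct.
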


\begin{proof}
The most interesting situation occurs when $H\cong\rC_3$. We may take for~$H$ the subgroup $\rC_3\subset G_{351}$ generated by the permutation
\begin{equation*}
 B = (1\ 14\ 27)(2\ 4\ 10)(3\ 22\ 13)(5\ 6\ 21)(7\ 25\ 11)(8\ 19\ 18)(9\ 16\ 26)(12\ 20\ 24)(15\ 23\ 17).
\end{equation*}
We will only point out explicitly a bijection between the vertices of~$K_i^{\rC_3}$ and the vertices of~$\CP^2_9$. The fact that this bijection gives an isomorphism of these two simplicial complexes is checked directly with a computer using the explicit lists of maximal simplices of~$K_i$ (see Tables~\mbox{\ref{table_1234}--\ref{table_4}}) and Proposition~\ref{propos_fixed}.

 To point out a required bijection we will conveniently use the following description of~$\CP^2_9$ due to Bagchi and Datta~\cite{BaDa94}. The vertices of~$\CP^2_9$ are the points of the affine plane~$\mathcal{P}$ over the three-element field~$\F_3$.  Fix three mutually parallel lines~$l_0$, $l_1$, and~$l_2$ of~$\mathcal{P}$ together with a cyclic order of them. These three lines will be called \textit{special}.  There are two types of four-dimensional simplices of~$\CP^2_9$. Firstly, for any two distinct intersecting lines~$m_1$ and~$m_2$ of~$\mathcal{P}$ neither of which is special, the $5$-element set $m_1\cup m_2$ is a simplex of~$\CP_2^9$. This gives $27$ four-dimensional simplices. Secondly, for $0\le i\le 2$ and for any point $w$ on~$l_i$, the $5$-element set $l_i\cup l_{i+1}\setminus\{w\}$ is a simplex of~$\CP_2^9$. (Here the sum $i+1$ is taken modulo~$3$.) This gives $9$ four-dimensional simplices. Let us now introduce an affine coordinate system~$x,y$ on~$\mathcal{P}$ so that each line~$l_i$ is given by the equation~$y=i$. Then the vertices of~$\CP^2_9$ are indexed by pairs~$(x,y)$ with $x,y\in\F_3$.

 The orbits of the group $\rC_3$ acting on the vertices of~$K_i$ are exactly the $9$ cycles in the decomposition into disjoint cycles of the permutation~$B$. Since any three-element subset spans a simplex of~$K_i$, we see that $K_i^{\rC_3}$ has $9$ vertices~$b(\sigma)$, where $\sigma$ runs over those $9$ cycles. A bijection providing an isomorphism $K_i^{\rC_3}\cong\CP^2_9$ is given in Table~\ref{table_bij}. (The same bijection is suitable for all the four complexes~$K_1$, $K_2$, $K_3$, and~$K_4$.)

 Let us now prove assertion~(2). Any subgroup $\rC_3^2\subset G_{351}$ acts on the vertex set~$[27]$ of every~$K_i$ with three orbits, each of which consists of $9$ elements. Since every $9$-element subset of~$[27]$ is a simplex of~$K_i$ and every $18$-element subset of~$[27]$ is a non-simplex of~$K_i$, by Proposition~\ref{propos_fixed} we obtain  that $K_i^{\rC_3^2}$ is a disjoint union of three points.

Take the subgroup~$\rC_{13}\subset G_{351}$ generated by the permutation
$$
A = (1\ 2\ 3\ 4\ 5\ 6\ 7\ 8\ 9\ 10\ 11\ 12\ 13)(14\ 15\ 16\ 17\ 18\ 19\ 20\ 21\ 22\ 23\ 24\ 25\ 26).
$$
Then $\rC_{13}$ acts on the vertex set~$[27]$ of every~$K_i$ with three orbits, one of which consists of a single vertex~$27$, and the other two are
\begin{align*}
 \sigma_1&=\{1,2, 3, 4, 5, 6, 7, 8, 9, 10, 11, 12, 13\},\\
 \sigma_2&=\{14,15, 16, 17, 18, 19, 20, 21, 22, 23, 24, 25, 26\}.
\end{align*}
From Tables~\mbox{\ref{table_1234}--\ref{table_4}} one can easily see that both~$\sigma_1$ and~$\sigma_2$ are simplices of every~$K_i$. Then by the complementarity we obtain that $\sigma_1\cup\{27\}\notin K_i$ and $\sigma_2\cup\{27\}\notin K_i$. Therefore, by Proposition~\ref{propos_fixed} we see that $K_i^{C_{13}}$ is the disjoint union of the three points~$b(\sigma_1)$, $b(\sigma_2)$, and~$27$.
\end{proof}

We have already mentioned in the Introduction (see Remark~\ref{remark_Ale}) that the group~$G_{351}$ can be realized as a subgroup of the isometry group~$\Isom(\OP^2)\cong F_4$ of the octonionic projective plane~$\OP^2$ endowed with the Fubini--Study metric. In order to provide some evidence towards Conjecture~\ref{conj_main}, it is natural to study the fixed point sets $(\OP^2)^H$ of subgroups~$H\subset G_{351}\subset\Isom(\OP^2)$ and show that they have the same topological types as the corresponding fixed point complexes~$K_i^H$. This is indeed true. We will not give an analysis of all cases here, but will focus on the most interesting case of $H=\rC_3$. (The author plans to devote a separate paper to the connection between the triangulations~$K_1$, $K_2$, $K_3$, and~$K_4$ and the group~$\Isom(\OP^2)\cong F_4$.)

\begin{propos}\label{propos_fixed_OP}
 Suppose that $$\rC_3\subset G_{351}\subset\Isom(\OP^2).$$ Then the fixed point set~$(\OP^2)^{\rC_3}$ is the complex projective plane~$\CP^2$, which is embedded standardly in~$\OP^2$.
\end{propos}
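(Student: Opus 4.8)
The plan is to transport the question into the compact exceptional group $F_4=\Isom^+(\OP^2)$ and exploit that $\rC_3^3$ is the Jordan subgroup. First I would reduce to a single $\rC_3$. Since $13$ is prime to $3$, the normal subgroup $\rC_3^3$ is the unique Sylow $3$-subgroup of $G_{351}$, so every order-$3$ subgroup $\rC_3\subset G_{351}$ lies in $\rC_3^3$. By Remark~\ref{remark_Ale} the group $\rC_3^3\rtimes\SL(3,\F_3)$ embeds in $F_4$ with $\SL(3,\F_3)$ acting on $\rC_3^3\cong\F_3^3$ in the standard way, and $\SL(3,\F_3)$ is transitive on the $13$ lines of $\F_3^3$; hence all order-$3$ subgroups of $G_{351}$ are conjugate in $F_4$. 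Conjugate subgroups have $F_4$-congruent fixed-point sets in $\OP^2$, so it suffices to compute $(\OP^2)^{\langle g\rangle}$ for one order-$3$ element $g\in\rC_3^3$.

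Second, I would pin down the $F_4$-conjugacy class of $g$. By Borel--de Siebenthal, $F_4$ has exactly three classes of order-$3$ elements, with centralizers $(\SU(3)\times\SU(3))/\mu_3$ (type $A_2+A_2$), $\mathrm{Sp}(3)\cdot\mathrm{U}(1)$, and $\mathrm{Spin}(7)\cdot\mathrm{U}(1)$; all centralizers of elements of finite order in the simply connected group $F_4$ are connected (Steinberg). Now $\rC_3^3\subseteq C_{F_4}(g)$, and the Jordan property says that $\rC_3^3$ is contained in no maximal torus of $F_4$, hence in no torus of $C_{F_4}(g)$. The last two centralizers are central quotients, by $2$-groups, of products of a circle with $\mathrm{Sp}(3)$ or $\mathrm{Spin}(7)$, all of which have $3$-torsion-free integral cohomology, so every elementary abelian $3$-subgroup of them is toral; only $(\SU(3)\times\SU(3))/\mu_3$, being a quotient of a simply connected group by a central $\mu_3$, contains a non-toral $\rC_3^3$ (already the Heisenberg group of order $27$ in $\SU(3)$ has non-toral image in $\mathrm{PSU}(3)=\mathrm{PGL}(3,\C)$). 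Therefore $g$ lies in the $A_2+A_2$ class; as this is the unique such class (every element of $F_4$ being conjugate to its inverse), $(\OP^2)^{\rC_3}$ is $F_4$-congruent to $(\OP^2)^{\langle g_0\rangle}$ for a standard representative $g_0$.

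Third, I would compute the fixed-point set in the Jordan-algebra model. Realise $\OP^2$ as the set of rank-one idempotents of the Albert algebra $\mathfrak h_3(\OO)$, with $F_4=\mathrm{Aut}\,\mathfrak h_3(\OO)$ acting and the invariant metric being Fubini--Study. Take for $g_0$ the central element of the subgroup $\SU(3)\subset G_2\subset F_4$ that stabilises a composition subalgebra $\C\subset\OO$ (equivalently, a unit imaginary octonion); under this $\SU(3)$ one has $\OO=\C\oplus\C^3$ with $\C^3$ the standard module, so $g_0$ acts on each octonion entry of $\mathfrak h_3(\OO)$ as $1_\C\oplus\omega\cdot 1_{\C^3}$ and trivially on the real diagonal. (That $g_0$ really belongs to the $A_2+A_2$ class is clear, since its centralizer already contains the rank-$4$ semisimple group $(\SU(3)\times\SU(3))/\mu_3$, which excludes the other two classes.) Consequently the $g_0$-fixed subalgebra of $\mathfrak h_3(\OO)$ is exactly $\mathfrak h_3(\C)$, and a rank-one idempotent of $\mathfrak h_3(\OO)$ is $g_0$-fixed precisely when it is a rank-one idempotent of $\mathfrak h_3(\C)$; the latter form the standard totally geodesic $\CP^2\subset\OP^2$ attached to the chosen $\C\subset\OO$ (any two such being $F_4$-congruent, since $G_2$ acts transitively on the unit imaginary octonions). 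Conjugating back yields $(\OP^2)^{\rC_3}\cong\CP^2$, standardly embedded. As a check, the Lefschetz formula gives $\chi\bigl((\OP^2)^{\langle g\rangle}\bigr)=L(g)=3$ because $g\in F_4$ acts trivially on $H^*(\OP^2)$, and the isotropy of $g$ at a fixed point --- an order-$3$ element of $\mathrm{Spin}(9)$ lying over $\mathrm{diag}(R_{2\pi/3},R_{2\pi/3},R_{2\pi/3},1,1,1)\in\mathrm{SO}(9)$ --- fixes a $4$-dimensional subspace of the spin module, so the fixed set is a $4$-dimensional totally geodesic submanifold of Euler characteristic $3$, which must be $\CP^2$.

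The main obstacle will be the second step --- cleanly isolating the $A_2+A_2$ class among the order-$3$ classes of $F_4$ --- together with marshalling just enough of the structure of $\SU(3)\times\SU(3)\subset F_4$ (which factor lies in $G_2$, what the glued central $\mu_3$ is) to conclude on the nose that the $g_0$-fixed subalgebra is $\mathfrak h_3(\C)$ rather than a larger $\mathfrak h_3$-type subalgebra; the remaining points (that conjugation by $F_4$ preserves the topology of fixed-point sets, and that all standard $\CP^2\subset\OP^2$ are congruent) are routine and only need to be recorded.
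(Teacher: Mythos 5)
Your argument is correct, but it reaches the key point—that the generator of the given $\rC_3$ is $F_4$-conjugate to the central element of the octonionic $\SU(3)\subset G_2$—by a genuinely different route than the paper does. The paper works entirely inside Alekseevskii's and Yokota's explicit construction: it quotes the homomorphism $\varphi\colon\SU(3)\times\SU(3)\to F_4$ with kernel the diagonal $\mu_3$, observes that all order-$3$ subgroups of $\rC_3^3$ are already conjugate inside $G_{351}$ (by the action of $\rC_{13}$), picks the image of the centre, and then simply reads off the isometry $\gamma$ from Yokota's explicit formula, after which the computation of the fixed subalgebra $\mathfrak h_3(\C)\subset\mathfrak h_3(\OO)$ is immediate. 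You instead identify the $F_4$-conjugacy class of the generator intrinsically: you invoke the Borel--de Siebenthal classification of the three order-$3$ classes in $F_4$ (centralizers $(\SU(3)\times\SU(3))/\mu_3$, $\mathrm{Sp}(3)\cdot\mathrm U(1)$, $\mathrm{Spin}(7)\cdot\mathrm U(1)$), use Steinberg's connectedness of centralizers, and then rule out the latter two via the Borel--Steinberg torality criterion together with the Jordan (non-toral) property of $\rC_3^3$. This is heavier machinery, and it still requires the \emph{same} explicit input at the end---how the $A_2+A_2$ subgroup sits in $F_4$, so that $\OO$ decomposes as $\C\oplus\C^3$ under the $G_2$-factor---so in practice you don't save a reference to the Yokota/Alekseevskii structure. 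What your approach does buy is a clean, coordinate-free explanation of \emph{why} the answer must be $\CP^2$ (non-torality of the Jordan subgroup forces the $A_2+A_2$ class), plus the pleasant Lefschetz sanity check; what the paper's approach buys is brevity and the avoidance of the conjugacy-class classification entirely. One small caution: your parenthetical about the Heisenberg group in $\mathrm{PSU}(3)$ establishes non-torality of a $\rC_3^2$, not of a $\rC_3^3$; for the argument as structured you do not actually need that remark at all, since the non-toral $\rC_3^3$ is already handed to you as the Alekseevskii subgroup, and you only need that it cannot live in $\mathrm{Sp}(3)\cdot\mathrm U(1)$ or $\mathrm{Spin}(7)\cdot\mathrm U(1)$.
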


\begin{remark}
 The phrase `embedded standardly' means the following. The sequence of subgroups  $\rC_3\subset G_{351}\subset\Isom(\OP^2)$ is defined up to conjugation. We assert that this sequence can be chosen so that $(\OP^2)^{\rC_3}$ coincides with the standard $\CP^2\subset\OP^2$, where the inclusion is induced by the inclusion~$\C\subset\mathbb{O}$.
\end{remark}

\begin{remark}
 An analogous result for the action of subgroups of~$\rA_5$ on~$\HP^2$ was obtained in~\cite[Section~7.2]{BrKu92}.
\end{remark}

\begin{proof}[Proof of Proposition~\ref{propos_fixed_OP}]
We conveniently realize~$\OP^2$ as the set of all Hermitian matrices
$$
P=\begin{pmatrix}
   \xi_1 & x_3 & \bar{x}_2 \\
   \bar{x}_3 & \xi_2 & x_1 \\
   x_2 & \bar{x}_1 & \xi_3
  \end{pmatrix},
$$
where $\xi_1,\xi_2,\xi_3\in\R$ and~$x_1,x_2,x_3\in\mathbb{O}$,
that satisfy the conditions
$$
P^2=P\quad\text{and}\quad\mathrm{tr}(P)=1,
$$
cf.~\cite[Section~12.2]{CoSm03}.
Choosing a purely imaginary unit octonion~$i$, we obtain an embedding $\C\subset\mathbb{O}$. Consider the splitting
$$
\mathbb{O}=\C\oplus U,
$$
where $U$ is the orthogonal complement of~$\C$ in~$\mathbb{O}$. Note that $U$ is a two-sided $\C$-module and
$
zu=u\bar{z}
$
for all $z\in\C$ and all~$u\in U$. Let us write every octonion~$x_s$ in the form
$$
x_s=z_s+u_s, \qquad z_s\in\C,\ u_s\in U.
$$
Recall that the subgroup
$$
\rC_3^3\subset G_{351}\subset F_4=\Isom(\OP^2)
$$
is the non-toral commutative subgroup constructed by Alekseevskii~\cite{Ale74}, see Remark~\ref{remark_Ale}. From the construction of this subgroup it follows that any subgroup~$\rC_3$ of it is conjugate in~$\Isom(\OP^2)$ to the subgroup generated by the isometry
\begin{equation}\label{eq_formula_C3}
\gamma\colon \begin{pmatrix}
              \xi_1 & z_3+u_3 & \bar{z}_2-u_2 \\
              \bar{z}_3-u_3 & \xi_2 & z_1+u_1 \\
              z_2+u_2 & \bar{z}_1-u_1 & \xi_3
             \end{pmatrix}
             \mapsto
             \begin{pmatrix}
              \xi_1 & z_3+\omega u_3 & \bar{z}_2-\omega u_2 \\
              \bar{z}_3-\omega u_3 & \xi_2 & z_1+\omega u_1 \\
              z_2+ \omega u_2 & \bar{z}_1-\omega u_1 & \xi_3
             \end{pmatrix},
\end{equation}
where $\omega=\exp(2\pi i/3)$ is a primitive cubic root of unity in~$\C\subset\mathbb{O}$.
We see that $\gamma(P)=P$ if and only if $u_1=u_2=u_3=0$. So the fixed point set of~$\gamma$ is the standard $\CP^2\subset\OP^2$.
\end{proof}

\begin{remark}
Let us explain in more detail where the formula~\eqref{eq_formula_C3} comes from. Alekseevskii's construction of a non-toral subgroup~$\rC_3^3$ of~$F_4$ is as follows.
 There is a homomorphism
$$
\varphi\colon \SU(3)\times \SU(3)\to F_4
$$
whose kernel is the diagonal subgroup $\Delta\cong\rC_3$ of the centre
$$
Z\bigl(\SU(3)\times\SU(3)\bigr)\cong \rC_3\times\rC_3.
$$
An explicit description of this homomorphism can be found in~\cite[Theorem~2.9]{Yok85}. Each factor~$\SU(3)$ contains the subgroup~$\rC_3^2$ generated by the matrices
$$
\begin{pmatrix}
 \omega & 0 & 0\\
 0 & \omega & 0\\
 0 & 0 & \omega
\end{pmatrix}
\quad\text{and}\quad
\begin{pmatrix}
 0 & 1 & 0\\
 0 & 0 & 1\\
 1 & 0 & 0
\end{pmatrix}.
$$
Then the required subgroup $\rC_3^3\subset F_4$ is $\varphi(\rC_3^2\times\rC_3^2)$. We are interested in subgroups of~$\rC_3^3$ that are isomorphic to~$\rC_3$. All such subgroups are conjugate in~$G_{351}$ and hence in~$F_4$, so we can choose any of them. The simplest choice is  the subgroup~$\varphi\bigl(Z\bigl(\SU(3)\times\SU(3)\bigr)\bigr)$. The fact that this subgroup is generated by the isometry~$\gamma$ given by~\eqref{eq_formula_C3} follows immediately from an explicit formula for~$\varphi$ provided in~\cite[Section~2.3]{Yok85}.
\end{remark}

\section{Vertex links and bistellar moves}\label{section_links}

Suppose that $K$ is a combinatorial $n$-manifold on the vertex set~$V$. Assume that $\sigma\in K$ is an $r$-simplex such that
$$
\link(\sigma,K)=\partial\tau
$$
for some $(n-r)$-simplex~$\tau\notin K$. Then, replacing the subcomplex~$\sigma*\partial\tau$ of~$K$ with the complex~$\partial\sigma*\tau$, we arrive at a new combinatorial manifold~$K_1$, which is PL homeomorphic to~$K$. The described operation is called a \textit{bistellar move} (of a \textit{bistellar flip} or a \textit{Pachner move}) associated with the simplex~$\sigma$. We refer to bistellar moves associated with codimension~$k$ simplices as to $k$-\textit{moves}.

If either~$\sigma$ or~$\tau$ is zero-dimensional, that is, a vertex~$u$, we conveniently agree that $\partial u=\varnothing$ and $\rho*\varnothing=\rho$ for any simplex~$\rho$. Then the $0$-move associated with an $n$-dimensional simplex~$\sigma$ is the \textit{stellar subdivision} of~$\sigma$, that is, the operation consisting of inserting a new vertex~$u$ inside~$\sigma$ and  replacing~$\sigma$ with the cone $u*\partial\sigma$. Vice versa, the $n$-move associated with a vertex~$u$ such that $\link(u,K)=\partial\tau$ and $\tau\notin K$ is the \textit{inverse stellar subdivision} of~$\tau$, that is, the operation consisting of deleting the vertex~$u$ and  replacing~$u*\partial\tau$ with the simplex~$\tau$.

For each $n$-simplex~$\sigma$ of~$K$, there is a $0$-move associated with it. On the other hand, the bistellar move associated with a simplex of positive codimension is not always defined.

\begin{theorem}[Pachner \cite{Pac87}]
Two compact combinatorial manifolds~$K_1$ and~$K_2$ are PL homeomorphic to each other if and only if $K_1$ can be taken to~$K_2$ by a sequence of bistellar moves and simplicial isomorphisms.
\end{theorem}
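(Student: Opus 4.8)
The plan is to treat the two implications separately; the forward direction (bistellar moves and isomorphisms imply PL homeomorphic) is elementary, and essentially all of the content lies in the converse, for which I would reduce to classical stellar theory.

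First I would dispose of the easy direction. If $K'$ is obtained from $K$ by the bistellar move associated with a simplex $\sigma$, where $\link(\sigma,K)=\partial\tau$ and $\tau\notin K$, then inside the geometric realization one replaces the PL $n$-ball $\sigma*\partial\tau$ by the PL $n$-ball $\partial\sigma*\tau$. These two balls are precisely the two complementary ``halves'' of the boundary $n$-sphere of the $(n+1)$-simplex $\sigma*\tau$, and they share the boundary $(n-1)$-sphere $\partial\sigma*\partial\tau$. Since any PL homeomorphism between the boundaries of two PL $n$-balls extends to the balls, the identity map of the complement of this region extends to a PL homeomorphism $|K|\to|K'|$; simplicial isomorphisms trivially preserve PL type. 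Hence a finite sequence of bistellar moves and isomorphisms from $K_1$ to $K_2$ forces $|K_1|$ and $|K_2|$ to be PL homeomorphic.

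For the converse, the strategy is: by the definition of PL homeomorphism, $K_1$ and $K_2$ admit simplicially isomorphic subdivisions, and by the classical theory of stellar subdivisions (Alexander, Newman) one may in fact join $K_1$ to $K_2$ by a finite chain $K_1=X_0,X_1,\dots,X_m=K_2$ in which consecutive terms differ by a single stellar subdivision (in one direction or the other), and in which every $X_j$ is again a combinatorial $n$-manifold (a simplicial complex whose underlying space is a PL manifold is automatically a combinatorial manifold). Since bistellar equivalence is a symmetric and transitive relation, it then suffices to prove the \emph{Key Lemma}: if $K$ is a compact combinatorial $n$-manifold and $K^{\sigma}$ is the stellar subdivision of $K$ at a simplex $\sigma\in K$, then $K$ and $K^{\sigma}$ are bistellarly equivalent. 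I would prove this by induction on the codimension $c=n-\dim\sigma$. For $c=0$, the simplex $\sigma$ is $n$-dimensional and its stellar subdivision is exactly the $0$-move associated with $\sigma$, which is a bistellar move. For the inductive step, one fixes a vertex $w$ of the combinatorial $(c-1)$-sphere $\link(\sigma,K)$; the faces $\sigma\cup\eta$, where $\eta$ ranges over simplices of $\link(\sigma,K)$ containing $w$, all have strictly smaller codimension, so by the inductive hypothesis the stellar subdivisions at these faces are bistellarly realizable. The idea is to obtain $K^{\sigma}$ from $K$ by first performing such stellar subdivisions so as to cone off the star $\sigma*\link(\sigma,K)$ from the $w$-side, and then to complete the passage by a single bistellar move associated with $\sigma$ in the modified star; the model computation is the two-dimensional case, where the stellar subdivision of an edge equals the $0$-move at an adjacent triangle followed by the edge-flip ($(2,2)$-move) at that edge. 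At each stage one must check that the bistellar move invoked is actually defined, i.e. that the candidate simplex $\tau$ with $\link(\cdot)=\partial\tau$ is not already present — which holds because the relevant vertices are introduced fresh — and that the intermediate complexes remain combinatorial manifolds, which is automatic since stellar subdivisions and bistellar moves of combinatorial manifolds are combinatorial manifolds.

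The main obstacle is precisely this inductive realization of a stellar subdivision of arbitrary codimension as a composition of bistellar moves: one has to organize the local surgery on the star $\sigma*\link(\sigma,K)$ carefully enough that every move used is legal, and to keep track of exactly which simplices appear and disappear at each step. The reduction through Alexander--Newman stellar theory is standard PL topology and is not the difficulty; the combinatorial heart of the argument is the codimension induction in the Key Lemma.
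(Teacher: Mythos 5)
The paper does not prove this theorem; it cites it as an external result (Pachner 1987, with Lickorish's exposition \cite{Lic99} also in the bibliography), so there is no internal proof to compare against. I therefore assess your sketch on its own merits.

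Your high-level strategy is exactly the standard one, and it is correct: the ``only if'' direction is elementary; for the ``if'' direction, one reduces via the Alexander--Newman theorem to stellar equivalence, observes that all intermediate complexes in the chain are again combinatorial $n$-manifolds (since stellar subdivision does not change the underlying PL space), and is then left with the Key Lemma that a single stellar subdivision of a combinatorial manifold is bistellarly realizable. This is precisely how Lickorish organizes the proof. However, your proposed induction on codimension for the Key Lemma has a genuine gap, and I do not think it is merely a matter of ``keeping track of which simplices appear.'' The plan --- star the faces $\sigma\cup\eta$ for $\eta\ni w$ (each of strictly smaller codimension, hence handled by induction), then finish with a single bistellar move at $\sigma$ --- works in codimension $1$, which is exactly your 2-dimensional model. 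But it already fails in codimension $2$. Take a combinatorial $3$-manifold $K$, an edge $\sigma=\{a,b\}$, and $\link(\sigma,K)$ a cycle $c_1c_2\cdots c_kc_1$ with $k\ge 3$. Choose $w=c_1$, so the faces to star are the two tetrahedra $\{a,b,c_1,c_2\}$, $\{a,b,c_1,c_k\}$ and the triangle $\{a,b,c_1\}$. After starring the tetrahedra (new vertices $u_1,u_2$) and then the triangle (new vertex $v$), the link of $\sigma$ is the cycle $u_2,v,u_1,c_2,\ldots,c_k$, which has $k+1\ge 4$ vertices. It is therefore not the boundary of a $2$-simplex, so no bistellar move associated with $\sigma$ is available, and the claimed final step cannot be performed. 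The same phenomenon persists in all codimensions $\ge 2$: starring from the ``$w$-side'' modifies $\link(\sigma)$ by a local surgery near $w$ but does not reduce it to $\partial\Delta$.

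The issue is that the Key Lemma does not follow from a one-vertex-at-a-time coning argument. In Lickorish's treatment, the passage from stellar to bistellar moves requires a substantially more elaborate mechanism (built around shellings of balls and an auxiliary equivalence relation on triangulated balls relative to their boundary), and this is where almost all of the real work in the theorem lies. Since the paper simply invokes the theorem with references, the appropriate course here is to cite Pachner and Lickorish rather than to reconstruct the argument; if you do want to reconstruct it, the codimension induction as you have set it up needs to be replaced by the shelling-based induction.
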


For more detail on bistellar moves and Pachner's theorem, see~\cite{Lic99}.

\begin{cor}
 Any $n$-dimensional combinatorial sphere can be taken by a sequence of bistellar moves and simplicial isomorphisms to~$\partial\Delta^{n+1}$, where $\Delta^{n+1}$ is the standard $(n+1)$-dimensional simplex.
\end{cor}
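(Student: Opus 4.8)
The plan is to obtain this as an immediate consequence of Pachner's theorem quoted above. First I would verify that $\partial\Delta^{n+1}$ is itself an $n$-dimensional combinatorial sphere: its geometric realization is the boundary of the standard $(n+1)$-simplex, and radial projection from the barycentre is a PL homeomorphism onto~$S^n$. (That it is a combinatorial manifold in the sense of the paper also follows directly: the link of every vertex of $\partial\Delta^{n+1}$ is $\partial\Delta^{n}$, so one concludes by induction on~$n$.) Consequently, if $K$ is an arbitrary $n$-dimensional combinatorial sphere, then $K$ and $\partial\Delta^{n+1}$ are both PL homeomorphic to~$S^n$, hence PL homeomorphic to each other.

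Next I would apply Pachner's theorem with $K_1 = K$ and $K_2 = \partial\Delta^{n+1}$. Both are compact combinatorial $n$-manifolds (finite simplicial complexes), so the hypotheses are satisfied, and the theorem yields a finite sequence of bistellar moves and simplicial isomorphisms carrying $K$ to $\partial\Delta^{n+1}$, which is exactly the assertion.

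There is essentially no genuine obstacle here: the corollary is literally the special case $K_2 = \partial\Delta^{n+1}$ of Pachner's theorem, the only (entirely routine) point being the identification of $\partial\Delta^{n+1}$ with a triangulation of the sphere. The reason for recording it separately is that in practice one most often wants to reduce a given combinatorial sphere to this canonical model, rather than to compare two arbitrary combinatorial spheres directly.
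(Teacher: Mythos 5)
Your proof is correct and matches the paper's (implicit) approach: the paper states the corollary without proof, treating it as the immediate special case of Pachner's theorem with $K_2=\partial\Delta^{n+1}$, which is exactly what you do, after the routine check that $\partial\Delta^{n+1}$ is itself a combinatorial $n$-sphere.
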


Note that, for $n\ge 3$, it is not true at all that any combinatorial sphere can be taken to the boundary of a simplex \textit{monotonically} with respect to the number of vertices, that is, without the use of $0$-moves. Moreover,  Nabutovsky~\cite{Nab96} showed that Novikov's theorem on the algorithmic unrecognazibility of the $n$-dimensional sphere for $n\ge 5$  (see~\cite[Section~10]{VKF74}) implies the following assertion.

\begin{propos}\label{propos_nab}
 For each~$n\ge 5$ and each Turing computable function~$\theta\colon\Z_{>0}\to \Z_{>0}$, there exists a positive integer~$m$ and an $n$-dimensional combinatorial sphere~$L$ with~$m$ vertices such that every sequence of bistellar moves taking~$L$ to~$\partial\Delta^{n+1}$ contains a combinatorial sphere with at least~$\theta(m)$ vertices.
\end{propos}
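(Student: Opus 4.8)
The plan is to deduce Proposition~\ref{propos_nab} from the algorithmic unrecognizability of~$S^n$ for~$n\ge 5$ by a bounded-search argument. Suppose, for contradiction, that the conclusion fails for some Turing computable function~$\theta$. Enlarging~$\theta$ can only make the conclusion harder to satisfy, so, replacing~$\theta$ by $m\mapsto\theta(m)+m+n+2$, we may assume that $\theta(m)>m+n+2\ge\max(m,n+2)$ for all~$m$. The failure of the conclusion now says: for every~$m$ and every $n$-dimensional combinatorial sphere~$L$ on~$m$ vertices there is a sequence of bistellar moves taking~$L$ to~$\partial\Delta^{n+1}$ in which every intermediate complex has fewer than~$\theta(m)$ vertices. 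Since every bistellar move is reversible, the same statement holds with~$L$ and~$\partial\Delta^{n+1}$ interchanged: every such~$L$ is reachable from~$\partial\Delta^{n+1}$ by a sequence of bistellar moves all of whose terms have fewer than~$\theta(m)$ vertices.

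Next I would turn this into a decision procedure for the property ``$K$ is an $n$-dimensional combinatorial sphere'', applied to an arbitrary finite simplicial complex~$K$. Let~$m$ be the number of vertices of~$K$; then, by the normalization above, both~$K$ and~$\partial\Delta^{n+1}$ have fewer than~$\theta(m)$ vertices. There are only finitely many simplicial complexes on at most $\theta(m)-1$ vertices, one can list them all effectively, and for any two of them it is decidable whether one is obtained from the other by a single bistellar move: one enumerates the simplices~$\sigma$, checks the link condition $\link(\sigma,K)=\partial\tau$ with~$\tau$ a non-simplex on the vertex set of $\link(\sigma,K)$, performs the replacement, and tests the result for isomorphism with the target. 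Hence one can carry out a breadth-first search in the finite graph whose vertices are the simplicial complexes with fewer than~$\theta(m)$ vertices and whose edges are the bistellar moves, starting from~$\partial\Delta^{n+1}$, and decide whether~$K$ is reached up to isomorphism.

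I claim this procedure outputs ``yes'' precisely when~$K$ is an $n$-dimensional combinatorial sphere. If the search reaches~$K$, then~$K$ is obtained from~$\partial\Delta^{n+1}$ by a sequence of bistellar moves; as recalled before Pachner's theorem, a bistellar move carries a combinatorial manifold to a PL homeomorphic combinatorial manifold, and~$\partial\Delta^{n+1}$ is a combinatorial $n$-sphere, so~$K$ is one too. Conversely, if~$K$ is an $n$-dimensional combinatorial sphere, then by Pachner's theorem~\cite{Pac87} it is joined to~$\partial\Delta^{n+1}$ by bistellar moves (all of whose terms are combinatorial $n$-spheres), and by the reversed statement obtained in the first paragraph this can be done staying below~$\theta(m)$ vertices, so the search reaches~$K$. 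But then the class of finite simplicial complexes PL homeomorphic to~$S^n$ would be recursive for~$n\ge 5$, contradicting Novikov's theorem \cite[Section~10]{VKF74}. This contradiction proves the proposition.

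The argument is essentially routine once Novikov's theorem is in hand; the substantive point is only the observation that a uniform computable bound on the vertex complexity of bistellar paths between a sphere and~$\partial\Delta^{n+1}$ would make sphere recognition decidable. The places needing care are the harmless normalization of~$\theta$ (so that the input complex and~$\partial\Delta^{n+1}$ automatically fit under the bound), the reversibility of bistellar moves, and — what I expect to be the main obstacle in writing this cleanly — stating the unrecognizability ingredient in exactly the form needed, namely the non-recursiveness of the class of combinatorial $n$-spheres for~$n\ge 5$, together with the verification that the bounded search is genuinely effective (finiteness of the list of complexes and decidability of the single-move adjacency relation).
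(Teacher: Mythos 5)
Your argument is correct. The paper does not give a proof of this proposition — it simply attributes the implication to Nabutovsky~\cite{Nab96} — and your bounded-search reduction (a computable bound on the vertex complexity of Pachner paths between a combinatorial $n$-sphere and $\partial\Delta^{n+1}$ would yield a decision procedure for combinatorial $n$-spheres, contradicting Novikov's theorem) is precisely the standard argument underlying that reference.
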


On the other hand, Pachner~\cite{Pac78} proved that any \textit{polytopal sphere} can be taken by bistellar moves to the boundary of a simplex monotonically with respect to the number of vertices. (A combinatorial sphere is \textit{polytopal} if it is isomorphic to the boundary of a simplicial convex polytope.) It is interesting to consider the following class of combinatorial spheres, which in a certain sense are the furthest from being polytopal.

\begin{defin}[\cite{DFM04}]
 A combinatorial sphere~$L$ is said to be \textit{unflippable} if no bistellar move other than a $0$-move (i.\,e. a vertex insertion) is possible for~$L$.
\end{defin}

Note that the existence of unflippable combinatorial spheres does not follow from Proposition~\ref{propos_nab}. The first example was a $16$-vertex three-dimensional unflippable combinatorial sphere constructed by Dougherty, Faber, and Murphy~\cite{DFM04}. Examples of unflippable combinatorial spheres of all dimensions greater than~$3$ were obtained in~\cite{BaDa13}. As far as the author knows, there are no other examples in the literature.

Let us show that the vertex links of our $27$-vertex combinatorial manifolds like the octonionic projective plane provide new examples of unflippable combinatorial spheres. Let $K_1$, $K_2$, $K_3$, and~$K_4$ be the four $27$-vertex combinatorial $16$-manifolds from Theorem~\ref{theorem_main_detail}  (cf. Tables~\ref{table_1234}--\ref{table_4}), and let $L_i$ be the link of a vertex of~$K_i$ for $i=1,2,3,4$. (Since the combinatorial manifolds~$K_i$ are vertex-transitive, the links of all vertices of every triangulation~$K_i$ are isomorphic to each other.) Then $L_1$, $L_2$, $L_3$, and~$L_4$ are $26$-vertex $15$-dimensional combinatorial spheres.

\begin{theorem}
The combinatorial spheres~$L_1$ and~$L_4$ are unflippable.
\end{theorem}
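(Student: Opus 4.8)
The plan is to reduce the statement to a finite computation and then run that computation. A bistellar move other than a $0$-move on a combinatorial sphere $L_i$ is, by definition, associated with a simplex $\sigma\in L_i$ of positive codimension such that $\link(\sigma,L_i)=\partial\tau$ for some simplex $\tau\notin L_i$ with $\dim\sigma+\dim\tau=15$. So $L_i$ is unflippable precisely when no simplex $\sigma\in L_i$ of dimension $r$ with $1\le r\le 15$ has its link equal to the boundary of a $(15-r)$-simplex that is not already in $L_i$. Since $L_i=\link(v,K_i)$ for a vertex $v$, and since $\link(\sigma,L_i)=\link(\sigma\cup\{v\},K_i)$, the condition to check is: for every simplex $\rho=\sigma\cup\{v\}\in K_i$ with $\dim\rho\ge 2$ and $v\in\rho$, the link $\link(\rho,K_i)$ inside $L_i$ is \emph{not} the boundary of a simplex $\tau\notin L_i$. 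This is exactly the $\nu$-parameter analysis from Section~\ref{section_proof_smaller_groups} pushed one dimension down.

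First I would restate the obstruction in terms of the invariants already computed. A codimension-$k$ simplex $\rho$ of $K_i$ (with $k\ge 1$) has $\link(\rho,K_i)=\partial\tau$ for some $(k)$-simplex $\tau$ if and only if $\link(\rho,K_i)$, which is a combinatorial $(k-1)$-sphere with at least $k+1$ vertices, has exactly $k+1$ vertices; and then $\tau$ is the union of those vertices together with... more precisely, by the complementarity/neighborliness argument of Proposition~\ref{propos_nu_param}, such a $\rho$ inside a distinguished subcomplex corresponds to a $\nu$-parameter equal to $d/2=8$. The key input is Proposition~\ref{propos_nu_param_calc}(1): all $\nu$-parameters $\nu_{K_1}(\sigma,v)$ and $\nu_{K_4}(\sigma,v)$ are at most $7$, hence strictly less than $8$. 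By Proposition~\ref{propos_nu_param}(2), this means $K_1$ and $K_4$ contain \emph{no} distinguished subcomplexes (this is Claim~(1) of Theorem~\ref{theorem_smaller_groups}), and by Corollary~\ref{cor_no8-8} neither contains an $8$-simplex whose link is $\partial\Delta^8$. I would show that the only possible nontrivial bistellar move on $L_i$ is precisely an $8$-move of this balanced type --- i.e.\ a move associated with an $8$-simplex $\sigma\in L_i$ with $\link(\sigma,L_i)=\partial\tau$, $\dim\tau=7$ --- by ruling out unbalanced moves: for $\dim\sigma=r$ with $r+1+\dim\tau+1=17$, the complex $\sigma*\partial\tau$ sits inside $K_i$ as $(\sigma\cup\{v\})*\partial\tau$ giving a simplex $\rho$ of $K_i$ of dimension $r+1$ whose link is $\partial\tau$ with $|\tau|=16-r$, and the $\nu$-parameter bound together with the neighborliness and complementarity conditions (b), (c) forces $16-r\le r+1$, i.e.\ $r\ge 8$; a symmetric argument using $\cost$ and the complement gives $r\le 8$. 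The remaining single case $r=8$ is exactly excluded by Corollary~\ref{cor_no8-8}. Hence $L_1$ and $L_4$ admit no bistellar move except vertex insertions, i.e.\ they are unflippable.

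The main obstacle is getting the case analysis on the codimension of a putative move exactly right: one must carefully translate a bistellar move $\sigma*\partial\tau\rightsquigarrow\partial\sigma*\tau$ in the $15$-sphere $L_i$ into a statement about a simplex of $K_i$ and its link, keep track of whether $\tau\in L_i$ or not (the move requires $\tau\notin L_i$, equivalently $\sigma*\partial\tau$ is a full subcomplex), and then invoke the right part of Proposition~\ref{propos_nu_param} --- which is stated for $16$-manifolds like a projective plane, so it applies to $K_i$ but the bookkeeping between ``$\sigma$ in $L_i$'' and ``$\sigma\cup\{v\}$ in $K_i$'' must be done with care, in particular checking that the relevant link in $K_i$ really does coincide with $\partial\tau$ (not merely contain it) under the hypothesis of the move. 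Once that dictionary is in place, everything else is an immediate consequence of the already-established facts that $K_1$ and $K_4$ have all $\nu$-parameters $\le 7$ and contain no $8$-simplex with boundary-of-simplex link. I would remark at the end that the same argument shows $L_2$ and $L_3$ are \emph{not} unflippable, consistent with $K_2$ and $K_3$ containing $351$ distinguished subcomplexes each (the triple flips induce, on a vertex link, genuine bistellar $8$-moves on the $8$-simplices involved).
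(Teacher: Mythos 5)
Your overall strategy matches the paper's: reduce to showing that the only conceivable nontrivial bistellar move on $L_i$ is a single ``balanced'' move, then exclude it via Corollary~\ref{cor_no8-8} (which rests on the $\nu$-parameter computation). However, your case analysis contains an off-by-one error that invalidates the final appeal to the corollary. Writing $r=\dim\sigma$ with $\sigma\in L_i$ and $\link(\sigma,L_i)=\partial\tau$, $\tau\notin L_i$, $\dim\tau=15-r$: the $8$-neighborliness of $L_i$ (inherited from the $9$-neighborliness of~$K_i$) forces $|\tau|=16-r\ge 9$, i.e.\ $r\le 7$, which is the \emph{opposite} direction from your claimed inequality $16-r\le r+1$, i.e.\ $r\ge 8$. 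The paper's other bound is not a ``symmetric $\cost$ argument'' but a vertex count: if $r\le 6$ then every vertex of $L_i$ outside $\sigma$ lies in $\link(\sigma,L_i)$, so that link has $25-r$ vertices, strictly more than the $16-r$ vertices of $\partial\tau$; hence $r\ge 7$. Combining gives $r=7$ exactly, so $\sigma\cup\{v\}$ is an \emph{$8$-simplex} of $K_i$ with link $\partial\tau$ where $\tau$ is an $8$-simplex --- precisely the configuration ruled out by Corollary~\ref{cor_no8-8}.

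With your $r=8$, the corresponding simplex $\rho=\sigma\cup\{v\}$ of $K_i$ would be a $9$-simplex and $\tau$ a $7$-simplex, so Corollary~\ref{cor_no8-8} (which concerns $8$-simplices of $K_i$ whose link is the boundary of an $8$-simplex) simply does not apply. Moreover, the derivation of the inequality $16-r\le r+1$ from ``the $\nu$-parameter bound together with (b),(c)'' is not substantiated and, as noted, points in the wrong direction; the lower bound on $r$ comes from a direct vertex count, not from complementarity. Once the dimensions are corrected to $\dim\sigma=7$, $\dim\tau=8$, and the two bounds are derived as above, your proof coincides with the paper's. Your concluding remark about $L_2$, $L_3$ admitting $8$-moves from the distinguished triples is accurate and matches the paper's Remark.
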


\begin{proof}
 Suppose that $i$ is either~$1$ or~$4$ and $L_i=\link(v,K_i)$. Assume that a bistellar $k$-move with $k>0$ is possible for~$L_i$. Then there exist a simplex $\sigma\in L_i$ and non-simplex $\tau\notin L_i$ such that $\dim\sigma=15-k$, $\dim\tau=k$, and
 $$\link(\sigma,L_i)=\partial\tau.$$
 Since $K_i$ is $9$-neighborly, we see that $L_i$ is $8$-neighborly. Hence, $L_i$ contains no $k$-dimensional non-simplices with $0<k\le 7$. Therefore, $k\ge 8$.

 Assume that $k>8$; then $\dim\sigma<7$. From the $8$-neighborliness of~$L_i$ it follows that $\sigma\cup\{w\}\in L_i$ for any vertex~$w$ of~$L_i$. Hence, all vertices of~$L_i$  belong to the star of~$\sigma$. Therefore, the link of~$\sigma$ has
 $$
 26-(\dim\sigma+1)=k+10
 $$
 vertices and so is not isomorphic to the boundary of a $k$-simplex, which contradicts our assumption.

 Thus, $k=8$. Hence, $\dim\sigma=7$ and $\dim\tau=8$. Then $\sigma\cup\{v\}$ is an $8$-dimensional simplex of~$K_i$ and
 $$
 \link(\sigma\cup\{v\},K_i)=\partial\tau.
 $$
 A contradiction with Corollary~\ref{cor_no8-8} completes the proof of the theorem.
\end{proof}

\begin{cor}
 The combinatorial spheres~$L_1$ and~$L_4$ are non-polytopal.
\end{cor}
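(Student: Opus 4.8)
The plan is to derive non-polytopality directly from the unflippability of $L_1$ and $L_4$ established in the preceding theorem, using Pachner's monotone bistellar theorem for polytopal spheres~\cite{Pac78}. Recall that $L_1$ and $L_4$ are $15$-dimensional combinatorial spheres on $26$ vertices, whereas the boundary $\partial\Delta^{16}$ of the $16$-simplex has only $17$ vertices.

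First I would record the elementary effect of a bistellar move on the number of vertices of an $n$-dimensional complex: a $0$-move (vertex insertion) strictly increases it by one, an $n$-move (inverse stellar subdivision of a vertex) strictly decreases it by one, and every $k$-move with $1\le k\le n-1$ leaves it unchanged. In particular, any bistellar move that does not increase the number of vertices is a $k$-move with $k\ge 1$, i.e.\ not a $0$-move.

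Next I would argue by contradiction. Suppose $L_i$, with $i\in\{1,4\}$, is polytopal. By Pachner's theorem~\cite{Pac78}, $L_i$ can be transformed into $\partial\Delta^{16}$ by a finite sequence of simplicial isomorphisms and bistellar moves that is monotone (non-increasing) in the number of vertices. Since $L_i$ has $26$ vertices and $\partial\Delta^{16}$ has $17$, this sequence is non-trivial, and because simplicial isomorphisms preserve the number of vertices it must involve at least one genuine bistellar move. The first such move is applied to a complex isomorphic to $L_i$, hence with $26$ vertices, and by monotonicity it does not increase the number of vertices; by the previous paragraph it is therefore not a $0$-move. Thus a bistellar $k$-move with $k\ge 1$ is possible for $L_i$, contradicting the theorem that $L_1$ and $L_4$ are unflippable. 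Hence $L_1$ and $L_4$ are non-polytopal.

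The only subtle point — and it is worth flagging explicitly — is that one genuinely needs the \emph{monotone} version of Pachner's theorem for polytopal spheres, not merely the general Pachner theorem: the latter permits $0$-moves, and indeed $\partial\Delta^{16}$ is itself unflippable yet polytopal, so without the monotonicity no contradiction would be reached. Everything else is routine bookkeeping about how the three types of bistellar moves act on vertex counts.
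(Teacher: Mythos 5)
Your proof is correct and is exactly the argument the paper intends: the paper gives no explicit proof of the corollary but remarks immediately afterward that, being unflippable, $L_1$ and $L_4$ cannot be taken monotonically to the boundary of a simplex, which together with Pachner's theorem for polytopal spheres yields non-polytopality. Your careful bookkeeping (vertex-count effect of each type of move, the observation that $\partial\Delta^{16}$ is itself unflippable so the monotone version is genuinely needed) fills in the details the paper leaves implicit.
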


\begin{remark}
 The combinatorial spheres~$L_2$ and~$L_3$, as well as all links of vertices of all combinatorial manifolds~$K_S$ from Theorem~\ref{theorem_smaller_groups} admit bistellar $8$-moves and hence are not unflippable. Indeed, each of the combinatorial manifolds~$K_S$ contains a distinguished triple~$(\Delta_1,\Delta_2,\Delta_3)$ and, if $v\in \Delta_i$, then the bistellar move associated with~$\Delta_i\setminus\{v\}$ is possible for~$\link v$. Similarly, the link of a vertex of the K\"uhnel triangulation~$\CP^2_9$ admits a bistellar $2$-move and the links of vertices of the three Brehm--K\"uhnel triangulations of~$\HP^2$ admit bistellar $4$-moves.  Nevertheless, it is known that the links of vertices of~$\CP^2_9$ and~$\HP^2_{15}$ are non-polytopal, see~\cite{KuBa83} and~\cite[Section~5]{BrKu92}, respectively.
\end{remark}

\begin{conj}
 The combinatorial spheres~$L_2$ and~$L_3$ are non-polytopal.
\end{conj}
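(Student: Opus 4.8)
Since this is stated as a conjecture, I can only outline a plausible line of attack. The natural reduction is that polytopality passes to face figures: if $L_2$ were the boundary complex of a simplicial $16$-polytope $P$, then for every face $\sigma$ of $L_2$ the link $\link(\sigma,L_2)$ would be the boundary complex of the iterated vertex figure $P/\sigma$, hence polytopal as well. Because $K_2$ is vertex-transitive, all its vertex links are isomorphic to $L_2$, so ``$L_2$ polytopal'' is equivalent to ``every vertex link of $K_2$ is polytopal'', and then, iterating, $\link(\sigma,K_2)$ would have to be polytopal for \emph{every} simplex $\sigma\in K_2$ (after moving $\sigma$ by a symmetry to contain the chosen vertex, $\link(\sigma,K_2)$ is an iterated vertex link of $L_2$). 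Thus it would suffice to exhibit one simplex $\sigma\in K_2$ whose link is a non-polytopal combinatorial sphere, and likewise for $K_3$. The most promising candidates are links of $8$-simplices: for $|\sigma|=9$ the complex $\link(\sigma,K_2)$ is a combinatorial $7$-sphere, and while the members of distinguished triples have link $\partial\Delta^8$ (polytopal, hence useless), the complementarity condition should force most other $8$-simplices to have links with far fewer than the maximal $18$ vertices. The first concrete step of the plan is therefore a computer search: for $i=2$ and $i=3$, compute the number of vertices --- and ideally the combinatorial type --- of $\link(\sigma,K_i)$ for one representative $\sigma$ of each $G_{351}$-orbit of $8$-simplices, and single out the links with the fewest vertices.

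Having found such a $7$-sphere $S$ with $m$ vertices, one must prove it non-polytopal. If $m$ is not much larger than the dimension, say $m\le 12$ (Gale diagrams of dimension at most $3$), this is classical: compute the Gale transform of the hypothetical $8$-polytope and verify, via the standard combinatorial polytopality criterion for Gale diagrams, that no realization exists --- equivalently, locate a non-realizable minor of the associated rank-$8$ oriented matroid. For somewhat larger $m$ one would instead search, by linear programming, for a final polynomial (or a biquadratic final polynomial) certifying that the chirotope forced by the face lattice of $S$ cannot be realized over $\R$. A third, size-independent possibility is to look for a non-polytopal \emph{minor}: recall that $K_i^{\rC_3}=\CP^2_9$ (Section~\ref{section_fixed}) and that the vertex links of $\CP^2_9$ are among the classical non-polytopal $8$-vertex triangulated $3$-spheres (see~\cite{KuBa83}, also the remark preceding the conjecture); if one could realize such a $3$-sphere, or the non-realizable oriented matroid underlying it, as a link, a section, or an oriented-matroid deletion/contraction minor of an appropriate face link of $K_i$, polytopality would be excluded at once.

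The main obstacle --- and the reason this remains only a conjecture --- is that the relevant complexity does not drop along these reductions. By $9$-neighborliness, $\link(\sigma,K_i)$ has exactly $26-\dim\sigma-1$ vertices for every $\sigma$ of dimension at most $7$, so the ``excess'' $n-d-1$ of the corresponding would-be polytope is pinned at $9$ all the way down; only at dimension exactly $8$, where neighborliness no longer helps, can the link shrink, and there is no a priori guarantee that $K_2$ or $K_3$ actually contains an $8$-simplex whose link is small enough for Gale-diagram methods, or whose chirotope admits a findable final polynomial. Deciding polytopality is equivalent to oriented-matroid realizability and is NP-hard; for a $15$-sphere on $26$ vertices (rank $16$ on $26$ elements), or even for a $7$-sphere on $14$--$18$ vertices, a direct attack may simply be infeasible with current tools. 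If the search above turns up nothing small, one would be forced to find a genuinely new ingredient: a realizability obstruction stable under the operations above, or a final polynomial for $L_2$ and $L_3$ themselves --- ambitious, but perhaps not hopeless. Finally, one cannot reuse the argument that settles $L_1$ and $L_4$, where Corollary~\ref{cor_no8-8} makes those spheres unflippable and hence non-polytopal via Pachner's monotone-reduction theorem for polytopal spheres~\cite{Pac78}: as noted in the remark preceding the conjecture, $L_2$ and $L_3$ do admit bistellar $8$-moves, so that shortcut is unavailable.
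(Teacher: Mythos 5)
The paper does \emph{not} prove this statement: it is stated as an open conjecture, offered by analogy with the known non-polytopality of the vertex links of $\CP^2_9$ (the Br\"uckner sphere~$\mathcal{M}$) and of $\HP^2_{15}$, as recalled in the remark immediately preceding it. You correctly recognise this, so there is no proof in the paper to compare against; your text is a strategy sketch rather than an argument, and the reductions you set up (polytopality of $L_2$ would force polytopality of every iterated face link, equivalently of $\link(\sigma,K_2)$ for every nonempty $\sigma$; the unflippability route used for $L_1$ and $L_4$ is unavailable because $L_2$ and $L_3$ admit bistellar $8$-moves) are all sound and consistent with the paper's own discussion.

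One concrete claim in your sketch is wrong, and it undercuts the most hopeful branch of your plan. You assert that ``the complementarity condition should force most other $8$-simplices to have links with far fewer than the maximal $18$ vertices.'' In fact the opposite holds. Each pair $(\sigma,v)$ with $\sigma$ an $8$-simplex and $v$ a vertex of $\link(\sigma,K_i)$ corresponds to a $9$-simplex $\sigma\cup\{v\}$ together with a marked vertex, so the average number of vertices in the link of an $8$-simplex is
\begin{equation*}
\frac{1}{f_8}\sum_{\dim\sigma=8}\bigl|\,V\bigl(\link(\sigma,K_i)\bigr)\bigr|=\frac{10\,f_9}{f_8}=\frac{10\cdot 8335899}{4686825}\approx 17.8,
\end{equation*}
using the face numbers from Table~\ref{table_f}. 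So the overwhelming majority of $8$-simplex links are $7$-spheres on $17$ or $18$ vertices, and the distinguished $\Delta_i$'s with link $\partial\Delta^8$ are rare outliers; there is no reason to expect a reservoir of $8$-simplices with links small enough ($\le 12$ vertices, say) for classical Gale-diagram methods. A $7$-sphere on $17$--$18$ vertices has a Gale transform in dimension $8$--$9$, well outside that regime, so the ``look for a small link'' step is unlikely to close the gap. A minor slip in the same paragraph: for $\dim\sigma\le 7$ the link $\link(\sigma,K_i)$ has $27-|\sigma|=26-\dim\sigma$ vertices, not $26-\dim\sigma-1$. Finally, the suggestion to use $K_i^{\rC_3}\cong\CP^2_9$ should be taken with care: the fixed-point complex is not a face link or a face figure of $K_i$, so non-polytopality of $\mathcal{M}$ does not transfer to $L_i$ without first exhibiting $\mathcal{M}$ (or its underlying non-realizable oriented matroid) as an actual minor of the relevant chirotope, which you acknowledge but which remains the whole difficulty.
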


Alongside with polytopal spheres, there are other important classes of combinatotial spheres.   We will focus on the following two of them.

An $n$-dimensional combinatorial sphere is called \textit{shellable} if its $n$-dimensional simplices can be added one by one in some order $\sigma_1,\ldots,\sigma_q$ so that at each intermediate step~$k$, where $1\le k<q$, the union $\sigma_1\cup\cdots\cup\sigma_k$ is a combinatorial ball. Any polytopal sphere is shellable (see~\cite{BrMa71}), but not vice versa.

 An $n$-dimensional combinatorial sphere is called \textit{star-shaped} if there exists an embedding $\mathrm{cone}(K)\hookrightarrow \R^{n+1}$ whose restriction to every simplex of~$\mathrm{cone}(K)$ is affine linear. Any polytopal sphere is star-shaped, but not vice versa.

The link of any vertex of~$\CP^2_9$ is the \textit{Br\"uckner sphere}~$\mathcal{M}$, one of the two $8$-vertex three-dimensional combinatorial spheres that are non-polytopal, see~\cite{KuBa83}. Though~$\mathcal{M}$ is non-polytopal, in some ways it is not so far from being polytopal, namely,
\begin{itemize}
 \item $\mathcal{M}$ can be taken to~$\partial\Delta^4$ by bistellar moves monotonically with respect to the number of vertices, see~\cite[Section~3.5]{Gai04},
 \item $\mathcal{M}$ is shellable, see~\cite[Section~6]{DaKl78},
 \item $\mathcal{M}$ is star-shaped.
\end{itemize}
The latter assertion follows immediately from the representation of~$\mathcal{M}$ as a $3$-diagram, see~\cite{GrSr67}. It is interesting whether the links of vertices of $(3d/2+3)$-vertex combinatorial $d$-manifolds like projective planes have the same properties for $d=8$ and~$d=16$. Let $\mathcal{N}$ be the link of a vertex of~$\HP^2_{15}$.

\begin{question}
\textnormal{(1)} Can $\mathcal{N}$ be taken to~$\partial\Delta^8$ by bistellar moves monotonically with respect to the number of vertices? Can $L_2$ and~$L_3$ be taken to~$\partial\Delta^{16}$ by bistellar moves monotonically with respect to the number of vertices?

\textnormal{(2)} Are $\mathcal{N}$, $L_1$, $L_2$, $L_3$, and~$L_4$ shellable?

\textnormal{(3)} Are $\mathcal{N}$, $L_1$, $L_2$, $L_3$, and~$L_4$ star-shaped?
\end{question}

Certainly, $L_1$ and~$L_4$ cannot be taken to~$\partial\Delta^8$ by bistellar moves  monotonically, since they are unflippable.

\clearpage

\appendix
\section{Representatives of the $G_{351}$-orbits of $16$-simplices of the combinatorial manifolds~$K_1$, $K_2$, $K_3$, and~$K_4$}

\begin{table}[!ht]
\caption{Representatives of the $112$ orbits of $16$-simplices that enter all the four triangulations $K_1$, $K_2$, $K_3$, and~$K_4$}\label{table_1234}
{\footnotesize
\begin{align*}
111111111110111011100000000&& 111110001111110111111000000&& 111110001111110101111100000&\\
111111110110111111100000000&& 110110101111110111111000000&& 100111100111111101111100000&\\
011111111110111111100000000&& 100111101111110111111000000&& 111111111000110011111100000&\\
011111011111111111100000000&& 110111000111111111111000000&& 111011111010110011111100000&\\
111110111111010111110000000&& 110011100111111111111000000&& 111011101011110011111100000&\\
111111111010110111110000000&& 111111011111111011000100000&& 111111101100011011111100000&\\
111110111110110111110000000&& 110111111111111011000100000&& 111111100101011011111100000&\\
111111101011110111110000000&& 111011011111111011100100000&& 111101101101011011111100000&\\
111110101111110111110000000&& 110011111111111011100100000&& 111100101101111011111100000&\\
111110111111110110101000000&& 010111011111111111100100000&& 111111100001110111111100000&\\
101111111111110110101000000&& 010011111111111111100100000&& 111011100011110111111100000&\\
111111111001110111101000000&& 011111111110111100110100000&& 110010100111111111111100000&\\
111110111101110111101000000&& 111111111110110010110100000&& 111111111101011011001010000&\\
101111111101110111101000000&& 111111111010110101110100000&& 101111111110011011101010000&\\
111011111011110111101000000&& 111111101011110101110100000&& 110111110110111100111010000&\\
110111111011110111101000000&& 111011111011110101101100000&& 110111110010111101111010000&\\
110111110111110111101000000&& 111011111011110011101100000&& 110111100011111101111010000&\\
110110111111110111101000000&& 101011111111110011101100000&& 110110111111111011000110000&\\
100111111111110111101000000&& 111110101111110110011100000&& 100111111111111011000110000&\\
111110101111110110111000000&& 101111101111110110011100000&& 111111001111111001100110000&\\
101111101111110110111000000&& 111111101101011011011100000&& 111111000111111011100110000&\\
110111100111111101111000000&& 111111101001110111011100000&& 111011010111111011100110000&\\
111110111101010111111000000&& 111110101101110111011100000&& 110011110111111011100110000&\\
011111111101010111111000000&& 101111101101110111011100000&& 111011010011111111100110000&\\
110110111111010111111000000&& 111111001011110111011100000&& 110011110011111111100110000&\\
111111111000110111111000000&& 111110001111110111011100000&& 111111001111011001110110000&\\
111110111100110111111000000&& 110110101111110111011100000&& 111110101101111001110110000&\\
011111111100110111111000000&& 100111101111110111011100000&& 111111001011111001110110000&\\
111011111010110111111000000&& 101111101111110100111100000&& 011110101101111101110110000&\\
110111111010110111111000000&& 001111111110111010111100000&& 010111110110111101101110000&\\
110110111110110111111000000&& 101111110011110110111100000&& 010111110110111011101110000&\\
111111101001110111111000000&& 111111111000110101111100000&& 011111111100011011011110000&\\
111110101101110111111000000&& 111011111010110101111100000&& 111111001011010111011110000&\\
101111101101110111111000000&& 111111101001110101111100000&& 011111111100011010111110000&\\
111111001011110111111000000&& 111110101101110101111100000&& 001111110110111010111110000&\\
111011101011110111111000000&& 101111101101110101111100000&& 110111110010111001111110000&\\
110111101011110111111000000&& 111111001011110101111100000&& &\\
110111100111110111111000000&& 111011101011110101111100000&& &
\end{align*}
}
\end{table}

\begin{table}[p]
\caption{Representatives of the $89$ orbits of $16$-simplices that enter $K_1$, $K_2$, and $K_3$ but do not enter~$K_4$}\label{table_123}
{\footnotesize
\begin{align*}
111111111111011111000000000&& 111010111011110111111000000&& 111111101011110110011100000&\\
111111101111111111000000000&& 110010111111110111111000000&& 111111111000110111011100000&\\
111011111111111111000000000&& 111100001111111111111000000&& 111111111010110100111100000&\\
111111111111011011100000000&& 110101111111111011100100000&& 111111101011110100111100000&\\
111111101111111011100000000&& 111111101101111011010100000&& 111110101111110100111100000&\\
111011111111111011100000000&& 111111001111111011010100000&& 001111111110111100111100000&\\
111111110111011111100000000&& 110111101111111011010100000&& 111111111010110010111100000&\\
011111111111011111100000000&& 111111101101111001110100000&& 111111111010010110111100000&\\
111111100111111111100000000&& 111111001111111001110100000&& 111111101011010110111100000&\\
111011110111111111100000000&& 111111101100111011110100000&& 111111110010110110111100000&\\
101111101111111111100000000&& 011111111100111011110100000&& 111111100011110110111100000&\\
011111101111111111100000000&& 111111100101111011110100000&& 110110101111110101111100000&\\
011011111111111111100000000&& 111101101101111011110100000&& 011111111100011011111100000&\\
111010111111111101110000000&& 111011101101111011110100000&& 100011100111111111111100000&\\
111111101101111011110000000&& 111111000111111011110100000&& 111111111100011011101010000&\\
111111001111111011110000000&& 111011001111111011110100000&& 110111100111111100111010000&\\
110111101111111011110000000&& 110011101111111011110100000&& 110111010110111110111010000&\\
111010111111110111110000000&& 111110000111111111110100000&& 111111101101111001100110000&\\
110111100111111111110000000&& 111111111011110110001100000&& 111111010111110011100110000&\\
111111111011110110101000000&& 111111111001110111001100000&& 111111100101111011100110000&\\
011111111101110111101000000&& 010111111111110111001100000&& 100111111111011110001110000&\\
010111111111110111101000000&& 111111111011010110101100000&& 110111100111111011001110000&\\
111010111111110111011000000&& 111101111011110110101100000&& 111111111100010011101110000&\\
111111111010110110111000000&& 111111111001110101101100000&& 111111101100011011101110000&\\
111110111110110110111000000&& 010111111111110101101100000&& 011111111100011011101110000&\\
111111101011110110111000000&& 111101011111011011101100000&& 110111100110111011101110000&\\
111010111111110110111000000&& 010111011111110111101100000&& 110111100101111011101110000&\\
110111111011010111111000000&& 010101111111110111101100000&& 010011100111111111101110000&\\
110111110111010111111000000&& 010011111111110111101100000&& 111110101111010100111110000&\\
111010111101110111111000000&& 111111111010110110011100000&&&
\end{align*}
}
\vspace{-2mm}

\caption{Representatives of the $23$ orbits of $16$-simplices that enter $K_1$ and $K_2$ but neither~$K_3$ nor~$K_4$}\label{table_12}
{\footnotesize
\begin{align*}
111111111111110111000000000&& 111110111111010110111000000&& 111110101111111100110010000&\\
111111111111110011100000000&& 110111110111110110111000000&& 111110101101111101110010000&\\
111111111011110111100000000&& 110111110110110111111000000&& 111100101101111101111010000&\\
111111110111110111100000000&& 111101011111111011100100000&& 110110101111111111000110000&\\
111110111111110111100000000&& 111111101101011001111100000&& 110111110111110011001110000&\\
101111111111110111100000000&& 110110100111111101111100000&& 100011111111011111001110000&\\
011111111111110111100000000&& 111111011010110011111100000&& 110111110110110011101110000&\\
111110101111111101110000000&& 111110101111111101010010000&&&
\end{align*}
}
\end{table}

\begin{table}[!ht]
\caption{Representatives of the $62$ orbits of $16$-simplices that enter~$K_1$ but do not enter $K_2$ or $K_3$ or~$K_4$}\label{table_1}
{\footnotesize
\begin{align*}
100111101111111111110000000&& 111111111111010010101100000&& 111111001111110011011100000&\\
111111111111110010101000000&& 111111111110110010101100000&& 111101001111111011011100000&\\
011111111111110110101000000&& 111111110111110010101100000&& 111111101111010010111100000&\\
111111111101110011101000000&& 111101111111110010101100000&& 111111101110110010111100000&\\
111111011111110011101000000&& 111011111111110010101100000&& 111111100111110010111100000&\\
110111111111110011101000000&& 011101111111110110101100000&& 111101101111110010111100000&\\
111111101111110010111000000&& 111111111101110001101100000&& 111011101111110010111100000&\\
011111111110110110111000000&& 111111011111110001101100000&& 111111101101110001111100000&\\
111111101101110011111000000&& 110111111111110001101100000&& 111111001111110001111100000&\\
111111001111110011111000000&& 111111111100110011101100000&& 111101001111111001111100000&\\
110111101111110011111000000&& 111111011110110011101100000&& 111111101100110011111100000&\\
111101001111111011111000000&& 111111110101110011101100000&& 111111001110110011111100000&\\
111111111111110110000100000&& 111011111101110011101100000&& 111111100101110011111100000&\\
110111111111111110000100000&& 111111010111110011101100000&& 111011101101110011111100000&\\
100111101111111111010100000&& 111101011111110011101100000&& 111111000111110011111100000&\\
111111111111110010001100000&& 111011011111110011101100000&& 111011001111110011111100000&\\
011111111111110110001100000&& 110101111111110011101100000&& 111101001110111011111100000&\\
111111111101110011001100000&& 110011111111110011101100000&& 111101001101111011111100000&\\
111111011111110011001100000&& 111101001111111011101100000&& 111101000111111011111100000&\\
110111111111110011001100000&& 111111101111110010011100000&& 111001001111111011111100000&\\
011111111111110100101100000&& 111111101101110011011100000&&&
\end{align*}
}
\vspace{.5cm}
\caption{Representatives of the $37$ orbits of $16$-simplices that enter $K_2$, $K_3$, and $K_4$ but do not enter~$K_1$}\label{table_234}
{\footnotesize
\begin{align*}
111111111110111010110000000&& 011111111110110110110100000&& 101111101111110010111100000&\\
111111111010111011110000000&& 111110111111110010101100000&& 111101001111111010111100000&\\
101011111111110111101000000&& 101111111111110010101100000&& 111110100101111101111100000&\\
110111100111111111011000000&& 001111111111110110101100000&& 111111001111010011111100000&\\
111111101101011011111000000&& 111111111101010011101100000&& 111111101001110011111100000&\\
111001111011110111111000000&& 111111011111010011101100000&& 111110101101110011111100000&\\
101011101111110111111000000&& 111111111001110011101100000&& 101111101101110011111100000&\\
111111111111110010100100000&& 101111111101110011101100000&& 111111001011110011111100000&\\
011111111111110110100100000&& 111111011011110011101100000&& 111110001111110011111100000&\\
111111111101110011100100000&& 111110011111110011101100000&& 101111001111110011111100000&\\
111111011111110011100100000&& 110110111111110011101100000&& 111101001111011011111100000&\\
110111111111110011100100000&& 111011110011110111101100000&& &\\
111111101111110010110100000&& 111110101111110010111100000&& &
\end{align*}
}
\end{table}

\begin{table}[!ht]
\caption{Representatives of the $21$ orbits of $16$-simplices that enter $K_2$ and $K_3$ but neither~$K_1$ nor~$K_4$}\label{table_23}
{\footnotesize
\begin{align*}
111111111111011101100000000&& 111010011111110111111000000&& 111111111011110010101100000&\\
111111101111111101100000000&& 101010111111110111111000000&& 011111111011110110101100000&\\
111011111111111101100000000&& 111111101101110011110100000&& 010111111111110110101100000&\\
111101111111011111100000000&& 111111001111110011110100000&& 110111111111010011101100000&\\
101111111111011111100000000&& 110111101111110011110100000&& 010111111111110011101100000&\\
111101101111111111100000000&& 111101001111111011110100000&& 111111101011110010111100000&\\
111001111111111111100000000&& 110101101111111011110100000&& 011101001111111011111100000&
\end{align*}
}
\vspace{5mm}

\caption{Representatives of the $4$ orbits of $16$-simplices that enter~$K_2$ but do not enter $K_1$ or $K_3$ or~$K_4$}\label{table_2}
{\footnotesize
\begin{align*}
111111111111110101100000000&& 110111110111110011101100000&\\
111101111111110111100000000&& 101101001111111011111100000&
\end{align*}
}
\vspace{5mm}

\caption{Representatives of the $22$ orbits of $16$-simplices that enter $K_3$ and $K_4$ but neither~$K_1$ nor~$K_2$}\label{table_34}
{\footnotesize
\begin{align*}
111111111111111110000000000&& 011111011111111011100100000&& 011110101111111101110010000&\\
111111111111110110100000000&& 111011111011110111001100000&& 110111110111110011101010000&\\
111111111110110111100000000&& 111100101101111101111100000&& 110111110111110011100110000&\\
111111111101110111100000000&& 111011101101011011111100000&& 110111110111110010110110000&\\
111111011111110111100000000&& 111110101111111001110010000&& 111111101101011001110110000&\\
110111111111110111100000000&& 111110101111110101110010000&& 111100101101111101110110000&\\
111110111111011101110000000&& 111110100111111101110010000&& &\\
111011111101111011100100000&& 111100101111111101110010000&& &
\end{align*}
}
\vspace{5mm}

\caption{Representatives of the $5$ orbits of $16$-simplices that enter~$K_3$ but do not enter $K_1$ or $K_2$ or~$K_4$}\label{table_3}
{\footnotesize
\begin{align*}
111111111111010111100000000&& 111011111111110111100000000&& 110111100111110011101110000&\\
111111101111110111100000000&& 011111011100111011111100000&&&
\end{align*}
}
\end{table}

\begin{table}[p]
\caption{Representatives of the $115$ orbits of $16$-simplices that enter~$K_4$ but do not enter $K_1$ or $K_2$ or~$K_3$}\label{table_4}
{\footnotesize
\begin{align*}
111111111110111111000000000&& 111101111001110111111000000&& 111110111011110110101100000&\\
111111011111111111000000000&& 111010110111110111111000000&& 111011111011110110101100000&\\
110111111111111111000000000&& 101111001111110111111000000&& 111111111100011011101100000&\\
111111111111011110100000000&& 111010101111110111111000000&& 011111111100111011101100000&\\
111111101111111110100000000&& 111000111111110111111000000&& 010110111111110111101100000&\\
111011111111111110100000000&& 111110000111111111111000000&& 000111111111110111101100000&\\
111111111110011111100000000&& 010111111111111111000100000&& 011111111100111011011100000&\\
111111111101011111100000000&& 111111111011110110100100000&& 101111001111110111011100000&\\
111111011111011111100000000&& 111111111100111011100100000&& 011111111100111010111100000&\\
111110111111011111100000000&& 111111101101111011100100000&& 111111111000110110111100000&\\
110111111111011111100000000&& 111011111011111011100100000&& 111111011010110110111100000&\\
111111101110111111100000000&& 111111001111111011100100000&& 111011111010110110111100000&\\
111011111110111111100000000&& 110111101111111011100100000&& 111111101001110110111100000&\\
111111101101111111100000000&& 011111111101110111100100000&& 111111001011110110111100000&\\
111011111101111111100000000&& 010111111111110111100100000&& 111110101011110110111100000&\\
111111101011111111100000000&& 111111111100111010110100000&& 111011101011110110111100000&\\
111011111011111111100000000&& 111111101101111010110100000&& 011111111100111001111100000&\\
111111001111111111100000000&& 111111001111111010110100000&& 101111001111110101111100000&\\
111110101111111111100000000&& 110111101111111010110100000&& 011111111000111011111100000&\\
110111101111111111100000000&& 111111111010110110110100000&& 011111101100111011111100000&\\
111011011111111111100000000&& 111111101011110110110100000&& 111011110010110111111100000&\\
111010111111111111100000000&& 011101111110111110110100000&& 111111101011111001110010000&\\
110011111111111111100000000&& 111111101101011011110100000&& 101111001011111111110010000&\\
111111101111111010110000000&& 111111001111011011110100000&& 101111111111011110001010000&\\
111111110110111110110000000&& 110111101111011011110100000&& 101110111111011111001010000&\\
011111111110111110110000000&& 111111111000111011110100000&& 110111100111111011101010000&\\
111111101011111011110000000&& 111011111010111011110100000&& 011111111101011011011010000&\\
111110101111111011110000000&& 111111101001111011110100000&& 111110101111110100111010000&\\
111111111101011011101000000&& 111110101101111011110100000&& 110111100111111011100110000&\\
111111011011110111101000000&& 101111101101111011110100000&& 101111110111110010110110000&\\
111110011111110111101000000&& 011111101101111011110100000&& 110111100111111010110110000&\\
101111011111110111101000000&& 111111001011111011110100000&& 010111110111111011001110000&\\
111010111111110111101000000&& 111011101011111011110100000&& 111111111100011010101110000&\\
111010111111110101111000000&& 111110001111111011110100000&& 111111110100011011101110000&\\
011111111100111011111000000&& 101111001111111011110100000&& 100111100111111011101110000&\\
110111100111111011111000000&& 110110101111111011110100000&& 010111100111111011101110000&\\
111111111001010111111000000&& 111111000011111111110100000&& 111110101101110100111110000&\\
111010111111010111111000000&& 111111111001110110101100000&& &\\
111010111110110111111000000&& 111111011011110110101100000&& &
\end{align*}
}
\end{table}

\clearpage

\end{document}